\begin{document}

\title{Tensor
robust principal component analysis via the tensor nuclear over Frobenius norm
}

\titlerunning{TNF for TRPCA}        

\author{Huiwen Zheng         \and
        Yifei Lou  \and
        Guoliang Tian \and
        Chao Wang
}


\institute{Huiwen Zheng \at
              Department of Statistics and Data Science, Southern University of Science and Technology, Shenzhen 518005, China \\
              \email{12031217@mail.sustech.edu.cn}           
           \and
           Yifei Lou \at
              Department of Mathematics and School of Data Sciences and Society, University of North Carolina at Chapel Hill, Chapel Hill, NC, 27599 USA\\
              \email{yflou@unc.edu}           
           \and
           Guoliang Tian \at
              Department of Statistics and Data Science, Southern University of Science and Technology, Shenzhen 518005, China \\
              \email{tiangl@sustech.edu.cn}           
           \and
           Chao Wang \at
              Department of Statistics and Data Science, Southern University of Science and Technology, Shenzhen 518005, Guangdong Province, China  \\
             National Centre for Applied Mathematics Shenzhen, Shenzhen 518055, Guangdong Province, China  \\
              \email{wangc6@sustech.edu.cn}   
}

\date{Received: date / Accepted: date}

\maketitle

\begin{abstract}
We address the problem of tensor robust principal component analysis (TRPCA), which entails decomposing a given tensor into the sum of a low-rank tensor and a sparse tensor. By leveraging the tensor singular value decomposition (t-SVD), we introduce the ratio of the tensor nuclear norm to the tensor Frobenius norm (TNF) as a nonconvex approximation of the tensor's tubal rank in TRPCA. Additionally, we utilize the traditional $\ell_1$ norm to identify the sparse tensor. For brevity, we refer to the combination of TNF and $\ell_1$ as simply TNF. Under a series of incoherence conditions,  
we prove that a pair of tensors serves as a local minimizer of the proposed TNF-based TRPCA model if one tensor is sufficiently low in rank and the other tensor is sufficiently sparse. 
In addition, we propose replacing the $\ell_1$ norm with the ratio of the $\ell_1$ and Frobenius norm for tensors, the latter denoted as the $\ell_F$ norm. We refer to the combination of TNF and $\ell_1/\ell_F$ as the TNF$+$ model in short.
To solve both TNF and TNF$+$  models, we employ the alternating direction method of multipliers (ADMM) and prove subsequential convergence under certain conditions. Finally, extensive experiments on synthetic data, real color images, and videos are conducted to demonstrate the superior performance of our proposed models in comparison to state-of-the-art methods in TRPCA.
\keywords{tensor robust principal component analysis\and t-SVD\and tensor nuclear norm\and ADMM}
\subclass{49N45 \and 65K10 \and 90C05 \and 90C26}
\end{abstract}




\section{Introduction}
Over the past decade, there has been a significant rise in the volume of data, accompanied by 
a notable shift towards multidimensional data, as opposed to traditional data confined to one or two dimensions. This trend presents various challenges regarding storage, transmission, and analysis. Tensors \cite{de2008tensor,kolda2009tensor}, representing multidimensional arrays, have emerged as crucial tools in numerous applications such as computer vision \cite{brandoni2020tensor,liu2021tensors,xia2024tensor}, signal processing \cite{7891546,miron2020tensor}, seismic imaging \cite{ely20155d,popa2021improved}, statistics \cite{guo2011tensor,zhao2015bayesian,kossaifi2020tensor}, and machine learning \cite{anandkumar2014tensor,chen2018sharing,8884203}.  Exploring low-dimensional structures within such complex data has gained increasing importance, particularly when these structures can be effectively modeled by certain low-rank properties.

Unlike matrices designed to handle two-dimensional (2D) data, the concept of tensor rank is not universally defined and can vary depending on the chosen tensor decomposition methods.
The CANDECOMP/PARAFAC (CP) rank \cite{kolda2009tensor} originates from the CP decomposition \cite{kiers2000towards}, determining the minimum number of rank-one decompositions to represent a given tensor. On the other hand, the Tucker rank \cite{gandy2011tensor}, derived from the Tucker decomposition \cite{tucker1966some}, is a vector wherein each element represents the rank of a matrix unfolded from the original tensor. Additionally, the tensor multi-rank \cite{gandy2011tensor} and tubal rank \cite{kilmer2013third} have emerged from tensor singular value decomposition (t-SVD) \cite{kilmer2011factorization}, analogous to the singular value decomposition (SVD) of matrices. Consequently, various surrogates of tensor rank have been proposed. For instance, Liu et al.~\cite{liu2012tensor} introduced the sum of the nuclear norm (SNN) based on the Tucker decomposition.  The concept of a matrix nuclear norm was extended to the tensor nuclear norm (TNN) for the t-SVD in \cite{semerci2014tensor}. Moreover, several nonconvex alternatives to TNN have been proposed \cite{yang2020multiview,YANG2022108311,zhao2022robust}. Jiang et al.~\cite{jiang2020multi} introduced the partial sum of the tubal nuclear norm (PSTNN), which calculates the partial sum of smaller singular values for every frontal slice after applying a discrete Fourier transformation (DFT). Xu et al.~\cite{xu2019laplace} incorporated the Laplace function into TNN, leading to a Laplace-based nonconvex surrogate. Qiu et al.~\cite{qiu2022fast} proposed a nonconvex alternating projection method with linear convergence, followed by an acceleration leveraging the properties of the tangent space of low-rank tensors. Recently, Yan and Guo \cite{yan2024tensor} considered using the $\ell_{p}$ quasi-norm ($0<p<1$) to impose sparse constraints on both the singular values and sparse components simultaneously, which is referred to as  the $p$-TRPCA model.

A conventional yet valuable tool in data analysis is principal component analysis (PCA) \cite{wold1987principal}, utilized for extracting dominant patterns from matrices. However, a well-known drawback of PCA is its susceptibility to sparse errors and outlier observations. To address this limitation, robust PCA (RPCA) \cite{candes2011robust} was introduced as the first polynomial-time algorithm with robust recovery guarantees. Subsequently, tensor robust principal component analysis (TRPCA) \cite{lu2016tensor} extended RPCA from matrices to tensors, allowing for the identification of low-rank tensors from sparsely corrupted entries. 
Specifically, TRPCA aims to decompose an observed tensor $\mathcal{X} \in \mathbb{R}^{n_{1} \times n_2 \times n_{3}}$ into $\mathcal{X}=\mathcal{L}_0+\mathcal{E}_0$, where $\mathcal{L}_0$ represents a low-rank tensor and $\mathcal{E}_0$ is a tensor containing only a small number of nonzero elements. Mathematically,  TRPCA can be formulated as the following optimization problem:
\begin{equation}
(\mathcal L_0,\mathcal E_0)=\arg\min \limits_{(\mathcal{L}, \mathcal{E})}\text{rank}(\mathcal{L}) +\lambda \|\mathcal{E}\|_{0}, \quad
\text{s.t.}\quad  \mathcal{X}=\mathcal{L}+\mathcal{E}, \label{equ:TRPCAfirst}
\end{equation}
where $\lambda > 0$ is a fixed parameter,  $\| \cdot \|_0$ denotes the number of nonzero elements, and $\text{rank}(\mathcal{L})$ represents  some type of tensor rank. 
However, both the rank and the $\ell_0$ minimization problems are NP-hard \cite{natarajan1995sparse}. 
Alternatively, convex or non-convex surrogate functions \cite{yang2020low,han2022tensor} to approximate the rank and $\ell_0$ penalties are used in TRPCA.

The $\ell_1$ norm serves as a convex relaxation of the $\ell_0$ norm and has found widespread application in statistics,  as highlighted in \cite{tibshirani1996regression} with the introduction of the least absolute shrinkage and selection operator (LASSO). However, Fan and Li \cite{fan2001variable} noted that the $\ell_1$ norm may not always be statistically optimal to yield the best estimation performance.  Consequently, various nonconvex penalties \cite{yang2015robust,gu2017tvscad} have been proposed, including the bridge penalty by Huang et al. \cite{huang2008asymptotic}, the logistic penalty by Nikolova et al. \cite{nikolova2008efficient}, the hard thresholding penalty function by Fan and Li \cite{fan2001variable}, the minimax concave penalty by Zhang~\cite{zhang2010nearly}.
In the context of tensor recovery problems, two convex relaxation methods are the $\ell_1$ norm \cite{lu2019tensor,qin2021robust,gao2023tensor} and the $\|\cdot\|_{2,1}$ functional \cite{zhou2017outlier}. Nonconvex penalties include the $\ell_p$ regularization  \cite{li2018low} for low-rank tensor recovery problems.

All the aforementioned nonconvex surrogates of tensor rank come with internal parameters that significantly influence the model's performance. Motivated by the remarkable performance of the ratio of the $\ell_1$-norm and the $\ell_2$-norm for sparse signal recovery \cite{rahimi2019scale,wang2020accelerated,wang2021limited,wang2022minimizing}, we propose a parameter-free regularization technique utilizing the ratio of the tensor nuclear norm and the Frobenius norm (TNF) to approximate the tensor tubal rank. 
Specifically, in the TRPCA problem \eqref{equ:TRPCAfirst}, we utilize the TNF regularization to enforce the low rankness while using the $\ell_1$-norm for sparsity.
For brevity, we refer to the combination of TNF and $\ell_1$ as simply TNF. Following a set of incoherence conditions formulated in  the TNN-based TRPCA model \cite{lu2019tensor}, 
we prove that a pair of tensors serves as a local minimizer of the proposed TNF-based TRPCA model if one tensor is sufficiently low in rank and the other tensor is sufficiently sparse. 
In addition, we propose replacing the $\ell_1$ norm with the ratio of the $\ell_1$ and Frobenius norm for tensors, the latter denoted as the $\ell_F$ norm. We refer to the combination of TNF and $\ell_1/\ell_F$ as the TNF$+$ model in short. 

Computationally, we devise efficient algorithms based on the alternating direction method of multipliers (ADMM) \cite{boyd2011distributed} to solve for both TNF and TNF$+$ models. We also establish their subsequential convergence under certain conditions. Extensive experiments conducted on synthetic and real image data confirm the superiority of our proposed methods over state-of-the-art approaches. The key contributions of our work are summarized as follows:
\begin{itemize}

\item[$\bullet$] We propose two novel models (TNF and TNF$+$) for TRPCA.
\item[$\bullet$] We present an exact recovery theory of TNF under incoherence conditions.  
\item[$\bullet$] We adopt ADMM to solve the proposed models along with convergence analysis.
\end{itemize}

We organize the rest of the paper as follows. 
We introduce our proposed model TNF and its properties in Sect.~\ref{sect:modelsPCA}. The algorithm development with convergence analysis is presented in Algorithm~\ref{sec:algorithm}.  A variant model, called TNF$+$, along with an algorithm and its convergence is discussed in Sect.~\ref{sect:RPCAL}. We conduct numerical experiments in Sect.~\ref{sect:experimentsPCA}, including synthetic and real data to show the superiority of our proposed models. Finally, we conclude the paper in Sect.~\ref{sect:concludePCA}.

\section {TNF-based TRPCA model and recovery guarantee}\label{sect:modelsPCA}
In this section, we introduce basic tensor notations and discuss a non-convex regularization method, which involves the ratio of the tensor nuclear norm to the Frobenius norm (TNF), aimed at approximating the tensor tubal rank~\cite{zheng2024scale}. We adapt the TNF regularization to the TRPCA problem and establish a recovery guarantee of using TNF and $\ell_1$ to identify a low-rank tensor and a sparse tensor, respectively. 

\subsection{Notations and preliminary}
We provide an overview of necessary notations and definitions used throughout this paper, as summarized in Table~\ref{TRPCAnotation}. The field of real numbers and complex numbers are denoted as R and C, respectively. Considering a tensor $\mathcal{A}\in \mathbb{R}^{n_1\times n_2\times n_3}$, 
we denote ${\overline{\mathcal{A}}}$  as the tensor after applying the fast Fourier Transform (FFT) to the tensor $\mathcal{A}$ along the third (tubal) dimension, i.e., $\overline{\mathcal{A}}=\operatorname{fft}\left(\mathcal{A}, [\ ], 3\right)$ via the MATLAB command ``$\mathrm{fft}$''. We compute $\mathcal{A}$ via $\mathcal{A}=\operatorname{ifft}\left(\overline{\mathcal{A}}, [\ ], 3\right)$. 
Following the work \cite{lu2019tensor},  we define the tensor Frobenius norm and the tensor nuclear norm (TNN) as follows,
\begin{align}
&\|\mathcal{A}\|_{F}^2 =\frac{1}{n_{3}}\sum_{i=1}^{n_3}\|\overline{\mathbf{A}}^{(i)}\|_{F}^2  = \frac{1}{n_{3}} \sum_{i=1}^{n_{3}}\sum_{j=1}^{n_{(2)} } \sigma_{ij}^2,\\
\label{eq:tnn_tsn}
&\|\mathcal{A}\|_{\rm{*}} :=\frac{1}{n_{3}}\sum_{i=1}^{n_{3}}\left\|\overline{\mathbf{A}}^{(i)}\right\|_{*}=\frac{1}{n_{3}}\sum_{i=1}^{n_{3}}\sum_{j=1}^{ n_{(2)} } \sigma_{ij},
\end{align}
where $\overline{\mathbf{A}}^{(i)}$ is the $i$-th frontal slice of $\overline{\mathcal{A}}$, $\sigma_{ij}$ is the $j$-th singular value of $\overline{\mathbf{A}}^{(i)}$, and $n_{(2)}=\min{\{n_1,n_2\}}$. We define $n_{(1)}=\max{\{n_1,n_2\}}$.

Our models and algorithms are built on the t-SVD algebraic
framework \cite{kilmer2011factorization}. Please refer to Definition~\ref{def:tsvd} for t-SVD and other related concepts in Appendix~\ref{app:t_svd}.

\begin{definition} [tensor singular value decomposition: t-SVD \cite{kilmer2011factorization}]\label{def:tsvd}
Let ${\mathcal{A} \in \mathbb{R}^{n_{1} \times n_{2} \times n_{3}}}$, then the t-SVD of $\mathcal{A} $ is given by 
\begin{equation}
 \mathcal{A}=\mathcal{U} * \mathcal{S} * \mathcal{V}^{*},
\end{equation} 
where ${\mathcal{U} \in \mathbb{R}^{n_{1} \times n_{1} \times n_{3}}, \mathcal{V} \in \mathbb{R}^{n_{2} \times n_{2} \times n_{3}}}$ are orthogonal tensors, and ${\mathcal{S} \in \mathbb{R}^{n_{1} \times n_{2} \times n_{3}}}$ is an ${f}$-diagonal tensor. 
\end{definition}
Analogous to skinny matrix SVD, the skinny t-SVD requires the tensor tubal rank, which is defined in Definition~\ref{def:tar}.
\begin{definition}[tensor tubal rank \cite{lu2019tensor}]
\label{def:tar}
    For a  tensor ${\mathcal{A} \in \mathbb{R}^{n_{1} \times n_{2} \times n_{3}}}$, its tubal rank, denoted as $\operatorname{rank}_{t}(\mathcal{A})$ defined as the number of nonzero singular tubes of $\mathcal{S}$, where $\mathcal{S}$ comes from the t-SVD of $\mathcal{A}$, i.e. $\mathcal{A}=\mathcal{U} * \mathcal{S} * \mathcal{V}^{*}$. 
\end{definition}
For tensor ${\mathcal{A} \in \mathbb{R}^{n_{1} \times n_{2} \times n_{3}}}$ with tubal rank $r<n_{(2)}$, the \textit{skinny t-SVD} of $\mathcal{A}$ is defined by
$\mathcal{A}=\mathcal{U} * \mathcal{S} * \mathcal{V}^{*}$, where $\mathcal{U} \in \mathbb{R}^{n_{1} \times r \times n_{3}}, \mathcal{S} \in \mathbb{R}^{r \times r \times n_{3}}, \mathcal{V} \in \mathbb{R}^{n_{2} \times r \times n_{3}}$ with $\mathcal{U}^**\mathcal{U}=\mathcal{I}$ and $\mathcal{V}^**\mathcal{V}=\mathcal{I}$.

\begin{table}[]
\caption{Summary of main notations in the paper}
\scalebox{0.7}{
\begin{tabular}{llll}
\hline
Notation & Description & Notation & Description\\
\hline
$\boldsymbol{a} \in \mathbb{C}^n_{1}$       & vector    & $\boldsymbol{a}_{i}$ & the $i$-th entry of a vector $\boldsymbol{a}$  \\
$\mathbf{A} \in \mathbb{C}^{n_{1} \times n_{2}}$        & matrix    
& $\mathbf{A}_{i:}$ & the $i$-th row of $\mathbf{A}$ \\
  $\mathbf{A}_{:j}$ & the $j$-th column of $\mathbf{A}$ &
  $a_{ij}$ or $\mathbf{A}_{ij}$ & the $(i, j)$-th entry of $\mathbf{A}$\\
$\mathbf{I} \in \mathbb{C}^{n_{1} \times n_{2}}$        & identity matrix  & $\mathcal{A} \in \mathbb{C}^{n_{1} \times n_{2} \times n_3}$  & tensor \\
$\mathcal{A}_{i::}$ &  the $i$-th horizontal slice of $\mathcal{A}$ &  $\mathcal{A}_{:j:}$ &  the $j$-th  lateral slice of $\mathcal{A}$ \\
$\mathcal{A}_{::k}$ or $\mathbf{A}^{(k)}$ &  the $k$-th  frontal slice of $\mathcal{A}$ &
$\mathcal{A}_{:jk}$ &  the $(i, j)$-th tubal fiber of $\mathcal{A}$ \\
$\mathcal{O}$ & zero tensor &  $\|\boldsymbol{a}\|_{2}=\sqrt{\sum_{i}\left|\boldsymbol{a}_{i}\right|^{2}}$ & the $\ell_2$ norm of vector $\boldsymbol{a}$ \\
$\|\boldsymbol{a}\|_{1}=\sum_{i}\left|\boldsymbol{a}_{i}\right|$ & the $\ell_1$ norm of vector $\boldsymbol{a}$ & $\|\boldsymbol{a}\|_{\infty}=\max_{i}\left|\boldsymbol{a}_{i}\right|$ & the infinity norm of vector $\boldsymbol{a}$\\
$\mathbf{A}^*$ &  the conjugate transpose of $\mathbf{A}$ & $\operatorname{Tr}(\cdot)$ &  the matrix trace \\
$\sigma_{i}(\mathbf{A})$ & the $i$-th singular value of $\mathbf{A}$ & $\|\mathbf{A}\|=\max_{i} \sigma_{i}(\mathbf{A})$ & the
spectral norm of $\mathbf{A}$ \\
$\|\mathbf{A}\|_{*}=\sum_{i} \sigma_{i}(\mathbf{A})$ & the nuclear norm of $\mathbf{A}$ &
$\|\mathbf{A}\|_F=\sqrt{\sum_{ij}\left|a_{ij}\right|^{2}}$ & the Frobenius norm of $\mathbf{A}$
\\
$\|\mathbf{A}\|_1=\sum_{ij}\left|a_{ij}\right|$ & the $\ell_1$ norm of $\mathbf{A}$ &
$\|\mathbf{A}\|_{\infty}=\max_{ij}|a_{ij}|$ & the infinity norm of $\mathbf{A}$\\
$\langle \mathbf{A}, \mathbf{B}\rangle=\operatorname{Tr}\left(\mathbf{A}^{*} \mathbf{B}\right)$& the inner product of $\mathbf{A}$ and $\mathbf{B}$ &
$\|\mathcal{A}\|_1=\sum_{ijk}|a_{ijk}|$ & the $\ell_1$ norm of $\mathcal{A}$\\
$\|\mathcal{A}\|_F=\sqrt{\sum_{ijk}\left|a_{ijk}\right|^{2}}$ & the Frobenius norm of $\mathcal{A}$ & $\|\mathcal{A}\|_{\infty}=\max_{ijk}|a_{ijk}|$ & the infinity norm of $\mathcal{A}$\\
$\operatorname{conj}(\mathcal{A})$ & the complex conjugate of $\mathcal{A}$ & 
$\langle\mathcal{A}, \mathcal{B}\rangle=\sum_{k=1}^{n_{3}}\left\langle \mathbf{A}^{(k)}, \mathbf{B}^{(k)}\right\rangle$ & the inner product between $\mathcal{A}$ and $\mathcal{B}$ \\
\hline
\end{tabular} } \label{TRPCAnotation}
\end{table}

\subsection{TNF-based TRPCA model}
 Zheng et al.~\cite{zheng2024scale} proposed the ratio of the tensor nuclear norm and the Frobenius norm (TNF) as a nonconvex surrogate of tensor tubal rank for the tensor completion problem. The TNF regularization is defined as 
\begin{equation}
\label{eq:tnf}
    \|\mathcal{A}\|_{\rm{TNF}}:=\tfrac{\|\mathcal{A}\|_{*}}{\|\mathcal{A}\|_{F}}= \frac{\sum_{i=1}^{n_{3}}\sum_{j=1}^{n_{(2)} } \sigma_{ij}}{\sqrt{n_3\sum_{i=1}^{n_{3}}\sum_{j=1}^{n_{(2)}} \sigma_{ij}^2}}.  
\end{equation}
TNF effectively enforces a low-rank structure of the tensor, analogous to the $\ell_{1}/\ell_2$ model \cite{rahimi2019scale,wang2020accelerated} applied to the vector formed by stacking all singular values $\{\sigma_{ij}\}$.

This paper focuses on the TRPCA problem, which aims to decompose a given tensor into the sum of a low-rank tensor and a sparse tensor. We employ the TNF regularization for low rankness and the standard $\ell_1$ norm for sparsity. In short, 
the proposed model is formulated as 
\begin{equation}  \label{con:PCAa}
\min \limits_{\mathcal{L}, \mathcal{E}}\|\mathcal{L}\|_{\rm{TNF}} +\lambda \|\mathcal{E}\|_{1} \quad
\text{s.t.}\quad  \mathcal{X}=\mathcal{L}+\mathcal{E},
\end{equation}
where $\mathcal{X}\in \mathbb{R}^{n_{1} \times n_{2} \times n_{3}}$ is a given third-order tensor. Throughout the remainder of the paper, we refer to this TNF-based TRPCA model \eqref{con:PCAa} briefly as ``TNF''. 

\subsection{Recovery guarantee}
We establish a recovery theory for the proposed model \eqref{con:PCAa} to identify the two tensors (a low-rank tensor and a sparse tensor) under tensor incoherence conditions. Some notations are required to present the conditions in Definition~\ref{defi:incoherence} and Theorem~\ref{thm:TRPCA1}. The column basis is denoted as ${\overrightarrow{\boldsymbol{e}}_{i}}$, which is a tensor of size ${n_{1} \times}{1 \times n_{3}}$ with its ${(i, 1,1)}$-th entry set to 1 and the remaining entries set to 0.  The nonzero entry one only appears at the first frontal slice of ${\overrightarrow{\boldsymbol{e}}_{i}}$. Naturally its conjugate transpose ${\overrightarrow{\boldsymbol{e}}_{i}^{*}}$ is called row basis. The tube basis, denoted as ${\dot{\boldsymbol{e}}_{k}}$, is a tensor of size ${1 \times 1 \times n_{3}}$ with its ${(1,1, k)}$-th entry set to 1 and the reset set to 0.
We define $\mathbf{e}_{ijk}:=\overrightarrow{\boldsymbol{e}}_i * \dot{\boldsymbol{e}}_k * \overrightarrow{\boldsymbol{e}}_j^*\in \mathbb{R}^{n_1\times n_2\times n_3}$,  which is a unit tensor with the only non-zero entry at $(i,j,k)$ being to 1. 

\begin{definition}  \label{defi:incoherence}
{\rm (tensor incoherence conditions \cite{lu2019tensor})} For a low-rank tensor $\mathcal{L}_0 \in \mathbb{R}^{n_1\times n_2\times n_3}$, we assume ${\operatorname{rank}_{t}\left(\mathcal{L}_{0}\right)=r}$ and its skinny t-SVD is $\mathcal{L}_{0}=\mathcal{U} * \mathcal{S} * \mathcal{V}^{*}$, where $\mathcal{U} \in \mathbb{R}^{n_{1} \times r \times n_{3}}, \mathcal{S} \in \mathbb{R}^{r \times r \times n_{3}}, \mathcal{V} \in \mathbb{R}^{n_{2} \times r \times n_{3}}$. We say $\mathcal{L}_{0}$ satisfies the tensor incoherence conditions with parameter ${\mu>0}$ if 
\begin{equation}
\begin{aligned}
\max _{i=1, \ldots, n_1}\left\|\mathcal{U}^* * \overrightarrow{\boldsymbol{e}}_i\right\|_F &\leq \sqrt{\tfrac{\mu r}{n_1n_3}}, \\
\max _{j=1, \ldots, n_2}\left\|\mathcal{V}^* * \overrightarrow{\boldsymbol{e}}_j\right\|_F &\leq \sqrt{\tfrac{\mu r}{n_2n_3}},   \\
\left\|\mathcal{U} * \mathcal{V}^*\right\|_{\infty} &\leq \sqrt{\tfrac{\mu r}{n_1 n_2 n_3^{2}}},
\end{aligned} \label{condition:incoherence}
\end{equation}
where $\overrightarrow{\boldsymbol{e}}_i$ and $\overrightarrow{\boldsymbol{e}}_j$ are column basis of size $n_1\times 1\times n_3$ and $n_2\times1\times n_3,$ respectively. 
\end{definition} 

We present our main theoretical result regarding the recovery guarantee of the TNF regularization and the $\ell_1$ norm for finding a low-rank tensor and a sparse tensor, respectively. 
\begin{theorem}
Suppose $\mathcal{L}_{0} \in \mathbb{R}^{n_{1} \times n_{2} \times n_{3}}$ with tubal rank $r$ obeys the tensor incoherence conditions \eqref{condition:incoherence} with parameter $\mu$.  Suppose that the support
$\boldsymbol{\Omega}$ of $\mathcal{E}_0$ is uniformly distributed among 
all sets of cardinality $m=2\gamma n_1n_2n_3$, where $\gamma=\mathbb{P}(\operatorname{sgn}(\mathcal{E}_0)=1)=\mathbb{P}(\operatorname{sgn}(\mathcal{E}_0)=-1)$. If the parameter $\lambda$ in the TNF model \eqref{con:PCAa}
is selected within the interval 
$$\left[\max\left(\tfrac{2\sqrt{6r}n_1n_2n_3^2}{n_1n_2n_3^2\sqrt{1-2\gamma}\|\mathcal{L}_{0}\|_F-2\sqrt{6}},\sqrt{\tfrac{\log (n_{(1)}n_3)}{n_{(1)}n_3^2}}\right), \left(\tfrac{1}{4}\sqrt{\tfrac{n_{(1)}}{2\mu r}}-\sqrt{rn_3}\right)n_1n_2n_3^{3/2}\right]$$ 
with sufficiently large $n_1, n_2, n_3$, there exists a positive constant $c_0$ such that with 
probability at least $1-2(n_{(1)}n_3)^{-c_0}$, 
$(\mathcal L_0, \mathcal E_0)$ is a local minimum of \eqref{con:PCAa}, provided that
\begin{equation}
r \leq\min\left(n_{(2)}, \tfrac{c_{r} n_{(2)}n_3}{\mu\left(\log \left(n_{(1)} n_{3}\right)\right)^{2}}\right)\quad \text{and} \quad \gamma \leq \tfrac 1 2 - \tfrac{c_\gamma\mu r \log \left(n_{(1)} n_{3}\right)}{n_{(2)}n_3}.
\label{inequality:rankTRPCA}
\end{equation}
 \label{thm:TRPCA1}
\end{theorem}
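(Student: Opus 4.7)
The plan is to adapt the dual-certificate strategy used for the TNN-based TRPCA theorem in \cite{lu2019tensor} to the ratio regularizer $\|\cdot\|_{\rm{TNF}}$. Since the ratio is nonconvex, convex subgradient inequalities cannot be used globally; I therefore work locally via a difference-of-convex reformulation. Let $t_0 := \|\mathcal{L}_0\|_{\rm{TNF}}$. Because $\|\mathcal{L}_0\|_F>0$, the identity
$$\|\mathcal{L}\|_{\rm{TNF}}-t_0 \;=\; \tfrac{\|\mathcal{L}\|_*\,-\,t_0\,\|\mathcal{L}\|_F}{\|\mathcal{L}\|_F}$$
reduces the sign of the TNF increment near $\mathcal{L}_0$ to that of the difference-of-convex function $G(\mathcal{L}) := \|\mathcal{L}\|_* - t_0\,\|\mathcal{L}\|_F$, which vanishes at $\mathcal{L}_0$ and has limiting subgradients of the form $\mathcal{U}*\mathcal{V}^* + \mathcal{W} - (t_0/\|\mathcal{L}_0\|_F)\,\mathcal{L}_0$ with $\mathcal{P}_T(\mathcal{W})=\mathcal{O}$ and $\|\mathcal{W}\|\leq 1$, where $T$ is the tangent space at $\mathcal{L}_0$ determined by $\mathcal{U},\mathcal{V}$.

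For a small perturbation $(\mathcal{L}_0+\mathcal{H},\,\mathcal{E}_0-\mathcal{H})$, combining the convexity of $\|\cdot\|_*$, the first-order expansion of $\|\cdot\|_F$ at $\mathcal{L}_0$, and the convexity of $\|\cdot\|_1$ gives
$$F(\mathcal{L},\mathcal{E}) - F(\mathcal{L}_0,\mathcal{E}_0) \;\geq\; \tfrac{1}{\|\mathcal{L}_0\|_F}\bigl\langle \mathcal{U}*\mathcal{V}^* + \mathcal{W} - \tfrac{t_0}{\|\mathcal{L}_0\|_F}\mathcal{L}_0 - \tilde{\lambda}(\operatorname{sgn}(\mathcal{E}_0)+\mathcal{F}),\; \mathcal{H}\bigr\rangle + O(\|\mathcal{H}\|^2),$$
with $\tilde{\lambda}:=\lambda\,\|\mathcal{L}_0\|_F$. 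Forcing the right-hand side to be nonnegative for every admissible $\mathcal{H}$ amounts to producing a dual certificate $\mathcal{Y}$ meeting the four standard RPCA-style conditions: (a) $\mathcal{P}_T(\mathcal{Y}) = \mathcal{U}*\mathcal{V}^* - (t_0/\|\mathcal{L}_0\|_F)\,\mathcal{L}_0$, i.e., the usual TNN tangent-space target shifted by the new ratio correction; (b) $\|\mathcal{P}_{T^\perp}(\mathcal{Y})\|<1$; (c) $\mathcal{P}_{\boldsymbol{\Omega}}(\mathcal{Y}) = \tilde{\lambda}\,\operatorname{sgn}(\mathcal{E}_0)$; and (d) $\|\mathcal{P}_{\boldsymbol{\Omega}^c}(\mathcal{Y})\|_\infty<\tilde{\lambda}$.

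To build $\mathcal{Y}$ I deploy the tensor golfing scheme of \cite{lu2019tensor}: split the dual-free region into $j_0 = \lceil c \log(n_{(1)} n_3)\rceil$ independent Bernoulli subsets and iteratively produce $\mathcal{Y}^{(k)}$ whose tangent-space residual contracts geometrically toward the shifted target in (a). The tensor Bernstein and spectral-norm concentration estimates already developed in the appendix of \cite{lu2019tensor} propagate through, because the extra correction $-(t_0/\|\mathcal{L}_0\|_F)\,\mathcal{L}_0$ is deterministic, lies in $T$, and, under the incoherence hypotheses \eqref{condition:incoherence}, satisfies $\ell_\infty$ and operator-norm bounds of the same order as $\mathcal{U}*\mathcal{V}^*$ up to factors depending on $r$, $n_3$, and $\|\mathcal{L}_0\|_F$. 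The upper endpoint of the admissible $\lambda$ interval is exactly what keeps (b) and (d) strict, while the lower endpoint arises by demanding that the linear $\ell_1$ gain $\tilde{\lambda}\sqrt{1-2\gamma}\,\|\mathcal{H}\|$ dominate both the $O(\sqrt{r})$ tangent-space slack and the leading quadratic curvature term introduced by the ratio; this is what yields the explicit $2\sqrt{6r}/(\sqrt{1-2\gamma}\,\|\mathcal{L}_0\|_F-\cdots)$ expression.

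The main obstacle is the bookkeeping in this last step. Although the correction $-(t_0/\|\mathcal{L}_0\|_F)\,\mathcal{L}_0$ is deterministic and low-rank, it alters the $\ell_\infty$ and spectral mass of the golfing residuals at every iteration, so the iteration count and the tail exponents in the concentration bounds must be re-audited to verify that (b) and (d) remain strict with probability at least $1-2(n_{(1)} n_3)^{-c_0}$ under the rank and sparsity bounds in \eqref{inequality:rankTRPCA}. Tracking the dependence on $\|\mathcal{L}_0\|_F$, $r$, $\mu$, and $\gamma$ through this audit---so that the quadratic remainder is dominated on both endpoints---is precisely what forces the unusual $\max(\cdot,\cdot)$ lower boundary of the admissible $\lambda$ interval.
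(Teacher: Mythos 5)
Your overall architecture (linearize the ratio at $(\mathcal L_0,\mathcal E_0)$, reduce local minimality to a dual certificate, build the certificate by the tensor golfing scheme of Lu et al.) is the right family of ideas, and your reading of where the two endpoints of the $\lambda$-interval come from is essentially correct. However, there is a genuine gap at exactly the point you flag as "the main obstacle," and it is not mere bookkeeping. You propose to absorb the ratio correction $-(t_0/\|\mathcal L_0\|_F)\,\mathcal L_0$ into the tangent-space target of the certificate, i.e.\ to run golfing toward $\mathcal U*\mathcal V^* - (t_0/\|\mathcal L_0\|_F)\mathcal L_0$. The contraction of the golfing residuals in $\|\cdot\|_\infty$ and in spectral norm is driven by incoherence-type bounds on the target, and the incoherence conditions \eqref{condition:incoherence} bound $\|\mathcal U*\mathcal V^*\|_\infty$ but say nothing about $\|\mathcal L_0\|_\infty$ or $\|\mathcal L_0\|$; controlling the shifted target would force new hypotheses (e.g.\ on $\|\mathcal L_0\|/\|\mathcal L_0\|_F$) that do not appear in the theorem. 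Your assertion that the correction "satisfies $\ell_\infty$ and operator-norm bounds of the same order as $\mathcal U*\mathcal V^*$" is therefore unsupported, and the whole probabilistic part of the argument rests on it.

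The paper sidesteps this entirely: the certificate target is never modified. Along a ray $\mathcal L_0+t\mathcal Z$ the derivative of the denominator contributes the term $-\|\mathcal L_0\|_*\langle\mathcal L_0,\mathcal Z\rangle$, which is bounded deterministically by $\sqrt r\,\|\mathcal L_0\|_F^2\,\|\mathcal P_{\mathbf T}(\mathcal Z)\|_F$ using $\|\mathcal L_0\|_*\le\sqrt r\|\mathcal L_0\|_F$ and $\langle\mathcal L_0,\mathcal Z\rangle=\langle\mathcal L_0,\mathcal P_{\mathbf T}(\mathcal Z)\rangle$. The key new ingredient — absent from your plan — is then a separate concentration lemma showing
\begin{equation*}
\|\mathcal P_{\mathbf T}(\mathcal Z)\|_F\ \le\ \sqrt{\tfrac{6}{1-2\gamma}}\,\|\mathcal P_{\boldsymbol\Omega^{c}}(\mathcal Z)\|_1\ +\ 2\xi\,\|\mathcal P_{\mathbf T^{\perp}}(\mathcal Z)\|_F ,
\end{equation*}
which lets the $\sqrt r$-term be absorbed into the (positive) coefficients of $\|\mathcal P_{\boldsymbol\Omega^{c}}(\mathcal Z)\|_1$ and $\|\mathcal P_{\mathbf T^{\perp}}(\mathcal Z)\|_*$; this is precisely what produces the lower endpoint $\tfrac{2\sqrt{6r}\,n_1n_2n_3^2}{n_1n_2n_3^2\sqrt{1-2\gamma}\|\mathcal L_0\|_F-2\sqrt 6}$. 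With the ratio term handled this way, the golfing construction is the unmodified one from the TNN analysis (target $\mathcal P_{\mathbf T}(\mathcal U*\mathcal V^*-\lambda\|\mathcal L_0\|_F\operatorname{sgn}(\mathcal E_0))$, certificate supported on $\boldsymbol\Omega^{c}$), so no re-audit of the concentration bounds is needed. A secondary caution: your difference-of-convex lower bound has a \emph{negative} quadratic remainder coming from the concave part $-t_0\|\mathcal L\|_F$, so nonnegativity of the first-order term alone does not give local minimality; the paper instead shows the numerator $g(t)$ of the derivative of a lower-bounding function is strictly positive at $t=0$, uniformly over unit directions, and invokes continuity in $t$ to obtain a uniform radius $\overline t$. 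To repair your proposal you would either need to prove the missing incoherence bounds for the shifted target, or adopt the paper's device of bounding the ratio correction against $\|\mathcal P_{\mathbf T}(\mathcal Z)\|_F$.
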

Theorem~\ref{thm:TRPCA1} implies that for $\mathcal L_0$ with sufficiently low rank (its tubal rank is upper bounded) and  $\mathcal E_0$ with sufficiently sparse (its cardinality is upper bounded), the pair $(\mathcal L_0, \mathcal E_0)$ is a local minimum of the proposed TNF model \eqref{con:PCAa} with high probability under some certain conditions. In addition,  sufficiently large $n_1,n_2,n_3$ can ensure
that $\sqrt{\tfrac{24}{1-2\gamma}}/\|\mathcal L_0\|_F<n_1n_2n_3^2$, and $\tfrac{ 1}{4}\sqrt{\tfrac{n_{(1)}}{2\mu r}}>\sqrt{rn_3}$ such that the interval for $\lambda$ is well-defined. 
Its proof is given in the supplementary material. 

\section{Algorithmic developments} \label{sec:algorithm}
In this section, we employ the alternating direction method of multipliers (ADMM) to solve the proposed model \eqref{con:PCAa}, accompanied by analyses of its complexity and convergence.

\subsection {Numerical algorithm}\label{sect:RPCA}
We introduce an auxiliary variable $\mathcal{H}$ and design a specific splitting scheme that reformulates \eqref{con:PCAa} into 
\begin{equation}  
	{ \begin{array}{lc} \vspace{1ex}
\underset{\mathcal{L}, \mathcal{H}, \mathcal{E}}{\operatorname{min}} \  \tfrac{\|\mathcal{L}\|_{*}}{\|\mathcal{H}\|_{F}}\ +\lambda \|\mathcal{E}\|_{1} \\ 
\text{s.t.}\  \mathcal{X}=\mathcal{L}+\mathcal{E},\ \mathcal{H}=\mathcal{L}. \end{array} }
\end{equation}
The corresponding augmented Lagrangian function is expressed as 
\begin{equation}
    \begin{aligned}
    L_{1}\left(\mathcal{L},\mathcal{H},\mathcal{E},\mathcal{Y},\mathcal{Z}\right) =& \tfrac{\|\mathcal{L}\|_{*}}{\|\mathcal{H}\|_{F}}+\lambda\|\mathcal{E}\|_{1}+\tfrac{\mu_{1}}{2}\left\|\mathcal{L}-\mathcal{H}\right\|_{F}^{2}
          +\left\langle\mathcal{Y}, \mathcal{L}-\mathcal{H}\right\rangle \\ &\quad +\tfrac{\mu_{2}}{2}\|\mathcal{L}+\mathcal{E}-\mathcal{X}\|_{F}^{2}+\left\langle\mathcal{Z}, \mathcal{L}+\mathcal{E}-\mathcal{X}\right\rangle, \label{eq:trpca_lagra}
    \end{aligned}
\end{equation}
where $\mathcal{Y}, \mathcal{Z}$ are dual variables and $\mu_1, \mu_2$ are positive parameters. In the ADMM scheme, we   alternatively  update the variables $\mathcal{L}$, $\mathcal{H}$,
$\mathcal{E}$,  $\mathcal{Y}$, and $\mathcal{Z}$ by
\begin{equation}
    \left\{\begin{array}{l}
\mathcal{L}^{(k+1)} =\arg \min\limits _{\mathcal{L}} L_{1}\left(\mathcal{L},\mathcal{H}^{(k)},\mathcal{E}^{(k)},\mathcal{Y}^{(k)},\mathcal{Z}^{(k)} \right),\\
\mathcal{H}^{(k+1)}=\arg \min \limits_{\mathcal{H}} L_{1}\left(\mathcal{L}^{(k+1)},\mathcal{H},\mathcal{E}^{(k)},\mathcal{Y}^{(k)},\mathcal{Z}^{(k)} \right),\\
\mathcal{E}^{(k+1)}=\arg \min\limits _{\mathcal{E}} L_{1}\left(\mathcal{L}^{(k+1)},\mathcal{H}^{(k+1)},\mathcal{E},\mathcal{Y}^{(k)},\mathcal{Z}^{(k)}\right),\\
\mathcal{Y}^{(k+1)}=\mathcal{Y}^{(k)}+\mu_{1}\left(\mathcal{L}^{(k+1)}-\mathcal{H}^{(k+1)}\right),\\
\mathcal{Z}^{(k+1)}=\mathcal{Z}^{(k)}+\mu_{2}\left(\mathcal{L}^{(k+1)}+\mathcal{E}^{(k+1)}-\mathcal{X}\right).
\end{array}\right.\label{eq:TRPCA_ADMM}
\end{equation}

The $\mathcal{L}$-subproblem in \eqref{eq:TRPCA_ADMM} can be rewritten as 
\begin{equation}
   \min_{\mathcal{L}}  \left\{
  \tfrac{\|\mathcal{L}\|_{*}}{\|\mathcal{H}^{(k)}\|_{F}}+\tfrac{\mu_{1}}{2}\left\|\mathcal{L}-\mathcal{H}^{(k)}+\tfrac{\mathcal{Y}^{(k)}}{\mu_1}\right\|_{F}^{2}
           +\tfrac{\mu_{2}}{2}\left\|\mathcal{L}+\mathcal{E}^{(k)}-\mathcal{X}+\tfrac{\mathcal{Z}^{(k)}}{\mu_2}\right\|_{F}^{2}\right\}, \label{Lfirst}
\end{equation}
which has a closed-form solution by the tensor singular value thresholding (t-SVT)
\cite{lu2019tensor}. Specifically given a tensor $\mathcal A$ with its t-SVD ${\mathcal{A}=\mathcal{U} * \mathcal{S} * \mathcal{V}^{*}}$, the t-SVT operator is defined by
$${\mathcal{D}_{\tau}(\mathcal{A}):=\mathcal{U} * \mathcal{S}_{\tau} * \mathcal{V}^{*}, }$$
where $\tau>0$ and ${\mathcal{S}_{\tau}}$ is an 
tensor that satisfies $\overline{\mathcal{S}}_{\tau}=\max \{\overline{\mathcal{S}}-\tau, 0\}.$ 
Hence, we have the $\mathcal L$-update as follows,
\begin{equation}
    \mathcal{L}^{(k+1)} = \mathcal{D}_{\tau^{(k+1)}}\left(\tfrac{1}{\mu_{1}+\mu_{2}}\left(\mu_{1} \mathcal{H}^{(k)}+\mu_{2}\left(\mathcal{X}-\mathcal{E}^{(k)}\right)\right)-\tfrac{\mathcal{Y}^{(k)}+\mathcal{Z}^{(k)}}{\mu_{1}+\mu_{2}}\right),
    \label{con:TRPCALk}
\end{equation}
with $\tau^{(k+1)} = \tfrac{1}{(\mu_1+\mu_2)\|\mathcal{H}^{(k)}\|_F}$.

\medskip
The $\mathcal{H}$-subproblem of \eqref{eq:TRPCA_ADMM}  can be expressed as 
\begin{equation}
\begin{aligned}
\mathcal{H}^{(k+1)}=\arg \min _{\mathcal{H}}\left\{\tfrac{\rho^{(k+1)}}{\| \mathcal{H}\|_{F}}+ 
\tfrac{\mu_{1}}{2}\left\| \mathcal{H}- \mathcal{K}^{(k)}\right\|_{F}^{2}\right\},
\end{aligned} \label{con:newH}
\end{equation}
where a scalar $\rho^{(k+1)}=\|\mathcal{L}^{(k+1)}\|_{*}$ and a tensor $\mathcal{K}^{(k)}= \mathcal{L}^{(k+1)}+\tfrac{\mathcal{Y}^{(k)}}{\mu_{1}}$. Following the work of \cite{rahimi2019scale}, we  derive  the  closed-form solution to the problem \eqref{con:newH} given by 
\begin{equation}
\mathcal{H}^{(k+1)}= \begin{cases}\iota^{(k)} \mathcal{K}^{(k)} & \text { if } \mathcal{K}^{(k)} \neq \mathcal{O} \\ \mathcal{G}^{(k)} & \text { otherwise, }\end{cases} \label{con:newhupdate}  
\end{equation}
where $\mathcal{G}^{(k)}$ is a random tensor with its Frobenius norm being $ \sqrt[3]{\tfrac{\rho^{(k+1)}}{\mu_1}}$ and $\iota^{(k)} = \tfrac{1}{3}+\tfrac{1}{3}\left(C^{(k)}+\tfrac{1}{C^{(k)}}\right)$ with
$$ C^{(k)}=\sqrt[3]{\tfrac{27 E^{(k)}+2+\sqrt{(27 E^{(k)}+2)^{2}-4}}{2}} \text { and } E^{(k)}=\tfrac{\rho^{(k+1)}}{\mu_1 \|\mathcal{K}^{(k)}\|_{F}^{3}}. $$

Lastly,  the tensor $\mathcal{E}$-subproblem of \eqref{eq:TRPCA_ADMM}  can be equivalently expressed as minimizing the $\ell_1$ minimization elementwise, thus allowing for a closed-form solution  through a soft-thresholding operator, i.e.,
\begin{equation}
\mathcal{E}^{(k+1)}=\mathbf{shrink} \left(\mathcal{X}-\mathcal{L}^{(k+1)}-\mathcal{Z}^{(k)}/\mu_{2}, \ \lambda/\mu_2 \right),\label{e-step}
\end{equation}
where $\mathbf{shrink}(v, \rho) = \mathrm{sign}(v) \max \{|v|-\rho, 0\}. $ 

\subsection{Complexity}
We present the overall algorithm for solving problem \eqref{con:PCAa} in Algorithm~\ref{alg:ADMM2TRPCA}. 
Its primary computational complexity arises from updating $\mathcal{L}$ and $\mathcal{H}$. Specifically, in each iteration, updating $\mathcal{L}$ incurs a computational cost of $O((n_1n_2n_3(\log n_3+n_{(2)}))$, while updating $\mathcal{H}$ requires  $O(n_1n_2n_3n_{(2)})$. Consequently, the overall computational complexity of Algorithm~\ref{alg:ADMM2TRPCA} is 
$$O(n_1n_2(n_3\log n_3)+n_1n_2n_3n_{(2)}).$$ 

\begin{algorithm}[htb] 
\caption{ADMM for solving the TNF model \eqref{con:PCAa}. 
}
\label{alg:ADMM2TRPCA} 
\begin{algorithmic}[1] 
\REQUIRE  
  Observed data  $\mathcal{X}$, parameters: $\mu_1,\mu_2, \rm{kMax}, \epsilon$

\STATE{{\bf Initialization:} $(\mathcal {L}^{(0)}, \mathcal {E}^{(0)})$ by a TNN-based TRPCA model, $\mathcal {H}^{(0)}= \mathcal {L}^{(0)}, \mathcal Y^{(0)}=\mathcal Z^{(0)}=\mathcal{O},$ and $k=0$}
\WHILE {$k<\rm{kMax}$ or not converged}
\STATE $\tau^{(k+1)} = \tfrac{1}{(\mu_1+\mu_2)\|\mathcal{H}^{(k)}\|_F}$ 
\STATE $\mathcal{L}^{(k+1)} = \mathcal{D}_{\tau^{(k+1)}}\left(\tfrac{1}{\mu_{1}+\mu_{2}}\left(\mu_{1} \mathcal{H}^{(k)}+\mu_{2}\left(\mathcal{X}-\mathcal{E}^{(k)}\right)\right)-\tfrac{\mathcal{Y}^{(k)}+\mathcal{Z}^{(k)}}{\mu_{1}+\mu_{2}}\right)$
\STATE $\mathcal{E}^{(k+1)}=\mathbf{shrink} (\mathcal{X}-\mathcal{L}^{(k+1)}-\mathcal{Z}^{(k)}/\mu_{2}, \lambda/\mu_2 )$
\STATE $\mathcal{H}^{(k+1)}= \begin{cases}\iota^{(k)} (\mathcal{L}^{(k+1)}+\tfrac{\mathcal{Y}^{(k)}}{\mu_{1}}) & \text { if } \mathcal{L}^{(k+1)}+\tfrac{\mathcal{Y}^{(k)}}{\mu_{1}} \neq \mathbf{0} \\ \mathcal{G}^{(k)} & \text { otherwise }\end{cases}$
\STATE $\mathcal{Y}^{(k+1)}=\mathcal{Y}^{(k)}+\mu_{1}\left(\mathcal{L}^{(k+1)}-\mathcal{H}^{(k+1)}\right)$
\STATE $\mathcal{Z}^{(k+1)}=\mathcal{Z}^{(k)}+\mu_{2}\left(
\mathcal{L}^{(k+1)}+\mathcal{E}^{(k+1)}-\mathcal{X}\right)$
\STATE $k=k+1$ 
\STATE Check the convergence conditions \\
$\left\|\mathcal{L}^{(k+1)}-\mathcal{L}^{(k)}\right\|_{\infty} \leq \epsilon$, 
$\left\|\mathcal{E}^{(k+1)}-\mathcal{E}^{(k)}\right\|_{\infty} \leq \epsilon$, 
$\left\|\mathcal{H}^{(k+1)}-\mathcal{H}^{(k)}\right\|_{\infty} \leq \epsilon$,
$\left\|\mathcal{Y}^{(k+1)}-\mathcal{Y}^{(k)}\right\|_{\infty} \leq \epsilon$, 
$\left\|\mathcal{Z}^{(k+1)}-\mathcal{Z}^{(k)}\right\|_{\infty} \leq \epsilon$,
$\left\|\mathcal{L}^{(k+1)} + \mathcal{E}^{(k+1)} - \mathcal{X} \right\|_{\infty}\leq \epsilon$

\ENDWHILE
\RETURN $\hat{\mathcal{L}} = \mathcal{L}^{(k)}$ and $\hat{\mathcal{E}} = \mathcal{E}^{(k)}$ 
\end{algorithmic}
\end{algorithm}
\subsection{Convergence analysis}\label{sect:convPCA}
This section is devoted to the convergence analysis of our algorithm. Specifically, 
We show that the sequence generated by
Algorithm~\ref{alg:ADMM2TRPCA} has a subsequence convergent to a stationary point of  with the TNF model \eqref{con:PCAa} under the following two assumptions. 

A1: The sequence  $\{\mathcal{L}^{(k)}\}$ generated by \eqref{eq:TRPCA_ADMM} is bounded, so is its nuclear norm of $\mathcal{L}^{(k)}$, denoted by $\sup_k \{\|\mathcal{L}^{(k)}\|_{*}\}\leq M$.  
 
A2: The Frobenius norm of $\{\mathcal{H}^{(k)}\}$  has a uniform lower bound, i.e., there exists a positive constant $\delta$ such that $\|\mathcal{H}^{(k)}\|_F\geq \delta, \forall k. $

\begin{lemma}
 Under assumptions {\rm A1-A2} with sufficiently large parameters $\mu_1,\mu_2$, the sequence $\{\mathcal{Y}^{(k)}\}$ generated by {\rm \eqref{eq:TRPCA_ADMM}} satisfies
 \begin{equation}
    \left\|\mathcal{Y}^{(k+1)}-\mathcal{Y}^{(k)}\right\|_{F}^{2} \leq \tfrac{2 n_{(2)}}{\delta^{4}}\left\|\mathcal{L}^{(k+1)}-\mathcal{L}^{(k)}\right\|_F^2+\tfrac{4M^{2}}{\delta^{6}}\left\|\mathcal{H}^{(k+1)}-\mathcal{H}^{(k)}\right\|_{F}^{2}, \label{con:BsubY}
 \end{equation}
 where $M$ and $\delta$ are the constants defined in {\rm A1} and {\rm A2}, respectively. \label{lemma1P}
\end{lemma}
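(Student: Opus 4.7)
The plan is to first extract a closed-form expression for $\mathcal{Y}^{(k+1)}$ from the first-order optimality condition of the $\mathcal{H}$-subproblem \eqref{con:newH}. Differentiating its objective in $\mathcal{H}$ using $\nabla_{\mathcal{H}}(1/\|\mathcal{H}\|_F) = -\mathcal{H}/\|\mathcal{H}\|_F^3$, setting the result to zero at $\mathcal{H}^{(k+1)}$, substituting $\mathcal{K}^{(k)} = \mathcal{L}^{(k+1)} + \mathcal{Y}^{(k)}/\mu_1$, and combining with the dual update $\mathcal{Y}^{(k+1)} = \mathcal{Y}^{(k)} + \mu_1(\mathcal{L}^{(k+1)} - \mathcal{H}^{(k+1)})$ makes the $\mu_1$-dependent terms cancel, leaving
\[
\mathcal{Y}^{(k+1)} = -\tfrac{\|\mathcal{L}^{(k+1)}\|_*}{\|\mathcal{H}^{(k+1)}\|_F^3}\,\mathcal{H}^{(k+1)}.
\]
The same formula holds at step $k$, and both expressions are well-defined by A2. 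Notice that this closed form has no $\mu_1,\mu_2$ dependence, so the ``sufficiently large parameters'' hypothesis is not actually needed for the lemma itself but rather for the surrounding convergence argument.

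Next I would add and subtract $\|\mathcal{L}^{(k)}\|_*\,\mathcal{H}^{(k+1)}/\|\mathcal{H}^{(k+1)}\|_F^3$ to split
\[
\mathcal{Y}^{(k+1)} - \mathcal{Y}^{(k)} = \tfrac{\|\mathcal{L}^{(k)}\|_* - \|\mathcal{L}^{(k+1)}\|_*}{\|\mathcal{H}^{(k+1)}\|_F^3}\,\mathcal{H}^{(k+1)} + \|\mathcal{L}^{(k)}\|_*\!\left(\tfrac{\mathcal{H}^{(k)}}{\|\mathcal{H}^{(k)}\|_F^3} - \tfrac{\mathcal{H}^{(k+1)}}{\|\mathcal{H}^{(k+1)}\|_F^3}\right),
\]
and then apply $\|a+b\|_F^2\leq 2\|a\|_F^2+2\|b\|_F^2$. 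The first (``$\mathcal{L}$-change'') summand is handled by the reverse triangle inequality on the tensor nuclear norm combined with the slicewise Cauchy--Schwarz bound $\|\mathcal{A}\|_*\leq\sqrt{n_{(2)}}\|\mathcal{A}\|_F$, which I would justify by applying the matrix inequality $\|\overline{\mathbf{A}}^{(i)}\|_*\leq\sqrt{n_{(2)}}\|\overline{\mathbf{A}}^{(i)}\|_F$ and then Cauchy--Schwarz across the $n_3$ frontal slices in the Fourier domain. Combined with A2 this produces the $\tfrac{2n_{(2)}}{\delta^4}$ coefficient in \eqref{con:BsubY} with essentially no effort.

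The main obstacle is the second (``$\mathcal{H}$-change'') summand, which reduces to a Lipschitz-type estimate for the nonlinear map $h(\mathcal{H})=\mathcal{H}/\|\mathcal{H}\|_F^3$ on the region $\{\mathcal{H}:\|\mathcal{H}\|_F\geq\delta\}$. My plan is to further decompose
\[
\tfrac{\mathcal{H}^{(k)}}{\|\mathcal{H}^{(k)}\|_F^3} - \tfrac{\mathcal{H}^{(k+1)}}{\|\mathcal{H}^{(k+1)}\|_F^3} = \tfrac{\mathcal{H}^{(k)} - \mathcal{H}^{(k+1)}}{\|\mathcal{H}^{(k)}\|_F^3} + \mathcal{H}^{(k+1)}\!\left(\tfrac{1}{\|\mathcal{H}^{(k)}\|_F^3} - \tfrac{1}{\|\mathcal{H}^{(k+1)}\|_F^3}\right),
\]
bound the first addend directly via A2, and handle the second through the factorization $b^3-a^3=(b-a)(a^2+ab+b^2)$ together with the reverse triangle inequality $|\|\mathcal{H}^{(k+1)}\|_F - \|\mathcal{H}^{(k)}\|_F|\leq \|\mathcal{H}^{(k+1)}-\mathcal{H}^{(k)}\|_F$. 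The delicate point is to arrange the cancellations so that the factor $\|\mathcal{H}^{(k+1)}\|_F$ from the leading $\mathcal{H}^{(k+1)}$ absorbs one power of $\|\mathcal{H}^{(k+1)}\|_F^3$ in the denominator, leaving only $\delta$-powers after invoking A2. Multiplying the resulting squared inequality by $2\|\mathcal{L}^{(k)}\|_*^2\leq 2M^2$ then yields the $\tfrac{4M^2}{\delta^6}$ coefficient, and summing the two pieces gives \eqref{con:BsubY}.
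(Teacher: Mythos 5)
Your derivation of the closed form $\mathcal{Y}^{(k+1)}=-\|\mathcal{L}^{(k+1)}\|_*\,\mathcal{H}^{(k+1)}/\|\mathcal{H}^{(k+1)}\|_F^3$ from the $\mathcal{H}$-optimality condition and the dual update, the add-and-subtract splitting into an ``$\mathcal{L}$-change'' and an ``$\mathcal{H}$-change'' piece, and the bound $\bigl|\|\mathcal{L}^{(k+1)}\|_*-\|\mathcal{L}^{(k)}\|_*\bigr|\le\sqrt{n_{(2)}}\|\mathcal{L}^{(k+1)}-\mathcal{L}^{(k)}\|_F$ all coincide with the paper's argument, and your slicewise justification of the latter inequality is correct. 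Your side remark that the ``sufficiently large $\mu_1,\mu_2$'' hypothesis plays no role in this particular estimate is also accurate.

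The gap is in the second piece. The paper does not prove the Lipschitz estimate for $\mathcal{H}\mapsto\mathcal{H}/\|\mathcal{H}\|_F^3$ by hand; it invokes Lemma~\ref{con:lemmaA4} (Lemma B.4 of \cite{zheng2024scale}), which gives the constant $2/\delta^3$ on $\{\|\mathcal{H}\|_F\ge\delta\}$, and this is exactly what produces the coefficient $4M^2/\delta^6$ after squaring. Your elementary decomposition
$\tfrac{\mathcal{H}^{(k)}}{\|\mathcal{H}^{(k)}\|_F^3}-\tfrac{\mathcal{H}^{(k+1)}}{\|\mathcal{H}^{(k+1)}\|_F^3}=\tfrac{\mathcal{H}^{(k)}-\mathcal{H}^{(k+1)}}{\|\mathcal{H}^{(k)}\|_F^3}+\mathcal{H}^{(k+1)}\bigl(\tfrac{1}{\|\mathcal{H}^{(k)}\|_F^3}-\tfrac{1}{\|\mathcal{H}^{(k+1)}\|_F^3}\bigr)$
cannot reach $2/\delta^3$: writing $a=\|\mathcal{H}^{(k)}\|_F$, $b=\|\mathcal{H}^{(k+1)}\|_F$, the second addend is bounded by $\tfrac{|b-a|(a^2+ab+b^2)}{a^3b^2}$, which at $a=b=\delta$ already contributes $3/\delta^3$, and the first addend adds $1/\delta^3$, giving $4/\delta^3$ in total. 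The loss is intrinsic to this splitting: the two pieces partially cancel in the radial direction (the Jacobian of $h$ has eigenvalues $1/\|\mathcal{H}\|_F^3$ tangentially and $-2/\|\mathcal{H}\|_F^3$ radially), and the triangle inequality discards that cancellation. So as written your proof yields $\tfrac{16M^2}{\delta^6}\|\mathcal{H}^{(k+1)}-\mathcal{H}^{(k)}\|_F^2$ rather than the stated $\tfrac{4M^2}{\delta^6}$ term. This weaker constant would still serve the downstream convergence analysis (it only shifts how large $\mu_1$ must be), but it does not prove the lemma as stated; to get the sharp constant you need the gradient-based Lipschitz bound of Lemma~\ref{con:lemmaA4} rather than the factorization $b^3-a^3=(b-a)(a^2+ab+b^2)$.
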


\begin{lemma}
Under assumptions {\rm A1-A2}, the augmented Lagrangian function \eqref{eq:trpca_lagra} of the sequence $\{\mathcal{L}^{(k)},\mathcal{H}^{(k)},\mathcal{E}^{(k)},\mathcal{Y}^{(k)},\mathcal{Z}^{(k)}\}$ generated by {\rm \eqref{eq:TRPCA_ADMM}} satisfies
\begin{equation}
\begin{aligned}
& L_{1}\left(\mathcal{L}^{(k+1)},\mathcal{H}^{(k+1)},\mathcal{E}^{(k+1)},\mathcal{Y}^{(k+1)},\mathcal{Z}^{(k+1)} \right)\\
\leq &L_{1}\left(\mathcal{L}^{(k)},\mathcal{H}^{(k)},\mathcal{E}^{(k)},\mathcal{Y}^{(k)},\mathcal{Z}^{(k)}\right)-c_{1} \| \mathcal{L}^{(k+1)}-\mathcal{L}^{(k)} \|_{F}^{2} \\
&-c_{2} \|\mathcal{H}^{(k+1)}-\mathcal{H}^{(k)} \|_{F}^{2} 
-c_{3} \|\mathcal{E}^{(k+1)}-\mathcal{E}^{(k)} \|_{F}^{2}
+c_{4} \|\mathcal{Z}^{(k+1)}-\mathcal{Z}^{(k)} \|_{F}^{2},
\end{aligned} \label{con:BsubL}
\end{equation}
with four positive constants $c_{1}, c_{2}, c_{3},c_4$. \label{lemma2P}
\end{lemma}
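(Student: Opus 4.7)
The plan is to decompose $L_{1}^{(k+1)} - L_{1}^{(k)}$ into a telescoping chain of five one-block differences, one for each update in \eqref{eq:TRPCA_ADMM}, bound each term separately, and finally absorb the $\mathcal{Y}$-ascent using Lemma~\ref{lemma1P}. Concretely, writing $L_{1}^{(k)} = L_{1}(\mathcal{L}^{(k)},\mathcal{H}^{(k)},\mathcal{E}^{(k)},\mathcal{Y}^{(k)},\mathcal{Z}^{(k)})$, I would insert the intermediate states obtained after updating $\mathcal{L}$, then $\mathcal{H}$, then $\mathcal{E}$, then $\mathcal{Y}$, then $\mathcal{Z}$, so that the three primal updates contribute non-positive increments and the two dual updates contribute the explicit positive quadratics $\tfrac{1}{\mu_{1}}\|\mathcal{Y}^{(k+1)}-\mathcal{Y}^{(k)}\|_{F}^{2}$ and $\tfrac{1}{\mu_{2}}\|\mathcal{Z}^{(k+1)}-\mathcal{Z}^{(k)}\|_{F}^{2}$ (obtained from $\langle \mathcal{Y}^{(k+1)}-\mathcal{Y}^{(k)}, \mathcal{L}^{(k+1)}-\mathcal{H}^{(k+1)}\rangle$ and its $\mathcal{Z}$-analogue by substituting the dual-update rule).

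For the $\mathcal{L}$-step, the subproblem \eqref{Lfirst} is the sum of the convex map $\|\mathcal{L}\|_{*}/\|\mathcal{H}^{(k)}\|_{F}$ and two quadratic penalties with total curvature $\mu_{1}+\mu_{2}$, so it is $(\mu_{1}+\mu_{2})$-strongly convex and its global minimizer $\mathcal{L}^{(k+1)}$ yields a descent of at least $\tfrac{\mu_{1}+\mu_{2}}{2}\|\mathcal{L}^{(k+1)}-\mathcal{L}^{(k)}\|_{F}^{2}$. The $\mathcal{E}$-step is analogously $\mu_{2}$-strongly convex and gives a descent of at least $\tfrac{\mu_{2}}{2}\|\mathcal{E}^{(k+1)}-\mathcal{E}^{(k)}\|_{F}^{2}$. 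The $\mathcal{H}$-step is the main obstacle: the ratio term $\rho^{(k+1)}/\|\mathcal{H}\|_{F}$ in \eqref{con:newH} is nonconvex, so I would exploit the fact that $\mathcal{H}^{(k+1)}$ is a critical point of the subproblem (via the closed-form \eqref{con:newhupdate}) and write a second-order expansion of $f(\mathcal{H}) := \rho^{(k+1)}/\|\mathcal{H}\|_{F} + \tfrac{\mu_{1}}{2}\|\mathcal{H}-\mathcal{K}^{(k)}\|_{F}^{2}$ at $\mathcal{H}^{(k+1)}$. Assumption~A2 keeps the iterates away from the origin and assumption~A1 bounds $\rho^{(k+1)} = \|\mathcal{L}^{(k+1)}\|_{*} \leq M$, so the Hessian of the reciprocal term is bounded in operator norm by some constant of order $M/\delta^{3}$, while the quadratic part contributes $\mu_{1}I$. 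This yields a descent of $c_{2}\|\mathcal{H}^{(k+1)}-\mathcal{H}^{(k)}\|_{F}^{2}$ with $c_{2}$ of the form $(\mu_{1} - C_{g}M/\delta^{3})/2$, which is positive as soon as $\mu_{1}$ is sufficiently large.

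With these three primal descents in hand, the $\mathcal{Y}$- and $\mathcal{Z}$-ascents remain to be handled. Invoking Lemma~\ref{lemma1P} bounds $\tfrac{1}{\mu_{1}}\|\mathcal{Y}^{(k+1)}-\mathcal{Y}^{(k)}\|_{F}^{2}$ by an $\mathcal{L}$-difference and an $\mathcal{H}$-difference scaled by $\tfrac{2n_{(2)}}{\mu_{1}\delta^{4}}$ and $\tfrac{4M^{2}}{\mu_{1}\delta^{6}}$ respectively, and these can be absorbed into the $\mathcal{L}$- and $\mathcal{H}$-descents once $\mu_{1},\mu_{2}$ are sufficiently large, leaving strictly positive coefficients $c_{1}$ and $c_{2}$. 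No analogous bound is available (or needed) for $\mathcal{Z}$, so the corresponding ascent survives as the positive term $c_{4}\|\mathcal{Z}^{(k+1)}-\mathcal{Z}^{(k)}\|_{F}^{2}$ with $c_{4} = 1/\mu_{2}$, matching the form stated in \eqref{con:BsubL}. The hardest technical point will be making the $\mathcal{H}$-descent rigorous in the absence of convexity: I anticipate having to argue carefully that the segment between $\mathcal{H}^{(k)}$ and $\mathcal{H}^{(k+1)}$ remains bounded away from $\mathcal{O}$ so that the Hessian bound on $\rho/\|\mathcal{H}\|_{F}$ is uniform, which should follow by exploiting the closed form $\mathcal{H}^{(k+1)} = \iota^{(k)}\mathcal{K}^{(k)}$ together with the magnitude estimate on $\iota^{(k)}$ implicit in \eqref{con:newhupdate} and the lower bound~A2.
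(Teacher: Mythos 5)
Your proposal follows the paper's proof essentially step for step: the same telescoping decomposition over the five updates, the same strong-convexity descents for the $\mathcal{L}$- and $\mathcal{E}$-subproblems (with moduli $\mu_1+\mu_2$ and $\mu_2$), the same exact identities $\tfrac{1}{\mu_1}\|\mathcal{Y}^{(k+1)}-\mathcal{Y}^{(k)}\|_F^2+\tfrac{1}{\mu_2}\|\mathcal{Z}^{(k+1)}-\mathcal{Z}^{(k)}\|_F^2$ for the dual ascents, and the same use of Lemma~\ref{lemma1P} to absorb the $\mathcal{Y}$-term into $c_1,c_2$ while leaving $c_4=1/\mu_2$ for the surviving $\mathcal{Z}$-term. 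The only cosmetic difference is the $\mathcal{H}$-step, where the paper applies the Lipschitz-gradient descent inequality for $1/\|\mathcal{H}\|_F$ at $\mathcal{H}^{(k)}$ combined with the stationarity identity $\mathcal{Y}^{(k+1)}=-\|\mathcal{L}^{(k+1)}\|_*\,\mathcal{H}^{(k+1)}/\|\mathcal{H}^{(k+1)}\|_F^3$, whereas you expand to second order at the minimizer $\mathcal{H}^{(k+1)}$; both routes give $c_2=\tfrac{\mu_1}{2}-O(M/\delta^3)$, and the segment-staying-in-$\mathcal{M}_\delta$ subtlety you flag is equally present (and left implicit) in the paper's version.
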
 

\begin{lemma}
 Let $\mathcal{C}^{(k)}:=\left(\mathcal{L}^{(k)}, \mathcal{H}^{(k)}, \mathcal{E}^{(k)}, \mathcal{Y}^{(k)}, \mathcal{Z}^{(k)}\right)$ be the sequence generated by {\rm \eqref{eq:TRPCA_ADMM}}, then there exist a tensor $\mathcal{V}^{(k+1)}\in \partial L_{1}\left(\mathcal{C}^{(k+1)}\right)$
 and a constant $\kappa>0$ such that 
  \begin{equation}
 \begin{aligned}
\left\|\mathcal{V}^{(k+1)}\right\|_{F}^{2} &\leq \kappa\left\| \mathcal{C}^{(k+1)}-\mathcal{C}^{(k)}\right \|_{F}^{2}. 
 \end{aligned} \label{ine:vk}
 \end{equation} \label{lem:partialTRPCA}
\end{lemma}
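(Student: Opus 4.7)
\medskip
\noindent\textbf{Proof proposal.} The plan is to construct an explicit element $\mathcal{V}^{(k+1)}\in\partial L_1(\mathcal{C}^{(k+1)})$ by exploiting the optimality conditions of each of the three primal subproblems, and then absorb the ``mismatch'' between the arguments used in the subproblems and $\mathcal{C}^{(k+1)}$ itself into differences of successive iterates. Writing out the subdifferential componentwise, we have $\partial_{\mathcal{L}}L_1 = \tfrac{1}{\|\mathcal{H}\|_F}\partial\|\mathcal{L}\|_* + \mu_1(\mathcal{L}-\mathcal{H}) + \mathcal{Y} + \mu_2(\mathcal{L}+\mathcal{E}-\mathcal{X}) + \mathcal{Z}$, $\partial_{\mathcal{H}}L_1 = -\tfrac{\|\mathcal{L}\|_*}{\|\mathcal{H}\|_F^3}\mathcal{H} - \mu_1(\mathcal{L}-\mathcal{H}) - \mathcal{Y}$, $\partial_{\mathcal{E}}L_1 = \lambda\partial\|\mathcal{E}\|_1 + \mu_2(\mathcal{L}+\mathcal{E}-\mathcal{X}) + \mathcal{Z}$, while the $\mathcal{Y}$ and $\mathcal{Z}$ gradients are just the primal residuals $\mathcal{L}-\mathcal{H}$ and $\mathcal{L}+\mathcal{E}-\mathcal{X}$.

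The first key step is to use the optimality of the $\mathcal{L}$-update to produce a particular $\mathcal{W}^{(k+1)}\in\partial\|\mathcal{L}^{(k+1)}\|_*$ satisfying $\tfrac{\mathcal{W}^{(k+1)}}{\|\mathcal{H}^{(k)}\|_F} + \mu_1(\mathcal{L}^{(k+1)}-\mathcal{H}^{(k)}) + \mathcal{Y}^{(k)} + \mu_2(\mathcal{L}^{(k+1)}+\mathcal{E}^{(k)}-\mathcal{X}) + \mathcal{Z}^{(k)} = \mathcal{O}$. Define $\mathcal{V}_{\mathcal{L}}^{(k+1)}$ as $\partial_{\mathcal{L}}L_1$ at $\mathcal{C}^{(k+1)}$ using this same $\mathcal{W}^{(k+1)}$; subtracting the above identity gives
\begin{equation*}
\mathcal{V}_{\mathcal{L}}^{(k+1)} = \Bigl(\tfrac{1}{\|\mathcal{H}^{(k+1)}\|_F}-\tfrac{1}{\|\mathcal{H}^{(k)}\|_F}\Bigr)\mathcal{W}^{(k+1)} + \mu_1(\mathcal{H}^{(k)}-\mathcal{H}^{(k+1)}) + (\mathcal{Y}^{(k+1)}-\mathcal{Y}^{(k)}) + \mu_2(\mathcal{E}^{(k+1)}-\mathcal{E}^{(k)}) + (\mathcal{Z}^{(k+1)}-\mathcal{Z}^{(k)}).
\end{equation*}
Analogously, the optimality of the $\mathcal{E}$-update, together with the $\mathcal{Z}$-update rule, shows that $\mathcal{V}_{\mathcal{E}}^{(k+1)} = \mathcal{Z}^{(k+1)}-\mathcal{Z}^{(k)}$, and the optimality of the $\mathcal{H}$-update (which is a gradient equation) simplifies $\mathcal{V}_{\mathcal{H}}^{(k+1)}$ to $\mathcal{Y}^{(k)}-\mathcal{Y}^{(k+1)}$. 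The $\mathcal{Y}$ and $\mathcal{Z}$ components are already $\tfrac{1}{\mu_1}(\mathcal{Y}^{(k+1)}-\mathcal{Y}^{(k)})$ and $\tfrac{1}{\mu_2}(\mathcal{Z}^{(k+1)}-\mathcal{Z}^{(k)})$ by construction.

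It then remains to bound each piece in Frobenius norm by $\|\mathcal{C}^{(k+1)}-\mathcal{C}^{(k)}\|_F$. For this I would use three standard ingredients: the universal bound $\|\mathcal{W}^{(k+1)}\|_F\le\sqrt{n_{(2)}}$ for any tensor subgradient of the nuclear norm, the mean value estimate $\bigl|\tfrac{1}{\|\mathcal{H}^{(k+1)}\|_F}-\tfrac{1}{\|\mathcal{H}^{(k)}\|_F}\bigr|\le \tfrac{1}{\delta^2}\|\mathcal{H}^{(k+1)}-\mathcal{H}^{(k)}\|_F$ coming from A2, and Lemma~\ref{lemma1P} to dominate $\|\mathcal{Y}^{(k+1)}-\mathcal{Y}^{(k)}\|_F^2$ by a constant multiple of $\|\mathcal{L}^{(k+1)}-\mathcal{L}^{(k)}\|_F^2+\|\mathcal{H}^{(k+1)}-\mathcal{H}^{(k)}\|_F^2$. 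Combining these via $(a_1+\cdots+a_m)^2\le m(a_1^2+\cdots+a_m^2)$ produces a constant $\kappa$ depending only on $\mu_1,\mu_2,n_{(2)},M,\delta$ for which the stated inequality holds.

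The main obstacle is the $\mathcal{L}$-component, because it is the only place where the nonsmooth ratio structure $\|\mathcal{L}\|_*/\|\mathcal{H}\|_F$ forces one to carry the subgradient $\mathcal{W}^{(k+1)}$ through a change of denominator from $\|\mathcal{H}^{(k)}\|_F$ to $\|\mathcal{H}^{(k+1)}\|_F$; controlling the resulting error requires \emph{both} a uniform upper bound on $\|\mathcal{W}^{(k+1)}\|_F$ and the uniform lower bound $\delta$ on $\|\mathcal{H}^{(k)}\|_F$ from A2. All other components are linear in the iterates and follow by routine term-by-term estimates.
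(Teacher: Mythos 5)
Your proposal is correct and follows essentially the same route as the paper's proof: the same componentwise construction of the subgradient from the three subproblem optimality conditions, the same cancellation yielding differences of successive iterates, and the same three ingredients (a Frobenius bound on the nuclear-norm subgradient, the $\delta^{-2}$ Lipschitz estimate for $1/\|\mathcal{H}\|_F$ from A2, and Lemma~\ref{lemma1P} to control $\|\mathcal{Y}^{(k+1)}-\mathcal{Y}^{(k)}\|_F$). The only difference is cosmetic: you bound the nuclear-norm subgradient by $\sqrt{n_{(2)}}$ while the paper uses the looser $2\sqrt{n_{(2)}}$ via a triangle inequality, which only changes the constant $\kappa$.
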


\begin{theorem}
 Under assumptions {\rm A1-A2}, 
the sequence $$\mathcal{C}^{(k)}:=\left(\mathcal{L}^{(k)},\mathcal{H}^{(k)},\mathcal{E}^{(k)},\mathcal{Y}^{(k)},\mathcal{Z}^{(k)}\right)$$
generated by {\rm \eqref{eq:TRPCA_ADMM}} satisfies

{\rm \textrm{(i)}} The sequences \{$\mathcal{H}^{(k)}$\}, \{$\mathcal{E}^{(k)}\}$, \{$\mathcal{Y}^{(k)}\}$, and \{$\mathcal{Z}^{(k)}\}$ are bounded.

{\rm \textrm{(ii)}}The sequence $\{\mathcal{C}^{(k)}\}$ has a convergent subsequence. If $\lim\limits_{k\rightarrow +\infty}\|\mathcal{Z}^{(k+1)}-\mathcal{Z}^{(k)}\|_{F}=0$, this subsequence  converges to a critical point $\{\mathcal{L}^{\ast}, \mathcal{H}^{\ast}, \mathcal{E}^{\ast}, \mathcal{Y}^{\ast}, \mathcal{Z}^{\ast}\}$ with 
$\mathcal{O}\in \partial L_{1}(\mathcal{L}^{\ast}, \mathcal{H}^{\ast}, \mathcal{E}^{\ast}, \mathcal{Y}^{\ast}, \mathcal{Z}^{\ast})$,
where the zero tensor $\mathcal{O}$ is composed of five tensors, each of dimension ${n_1\times n_2\times n_3}$.
\label{proofthmP}
\end{theorem}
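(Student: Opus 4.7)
The plan is to first establish the boundedness assertion (i), then use it together with Lemmas~\ref{lemma1P}--\ref{lem:partialTRPCA} to extract a convergent subsequence and identify its limit as a critical point of $L_1$. For part (i), assumption A1 already gives boundedness of $\{\mathcal{L}^{(k)}\}$ and hence of $\rho^{(k+1)}=\|\mathcal{L}^{(k+1)}\|_*$. Combined with A2 ($\|\mathcal{H}^{(k)}\|_F\ge\delta$), the explicit closed form $\mathcal{H}^{(k+1)}=\iota^{(k)}\mathcal{K}^{(k)}$ together with the cubic identity defining $\iota^{(k)}$ yields a uniform upper bound on $\|\mathcal{H}^{(k)}\|_F$. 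For $\mathcal{Z}^{(k)}$, I would read off the optimality condition of the $\mathcal{E}$-subproblem: the dual update combined with that condition gives $\mathcal{Z}^{(k+1)}\in -\lambda\,\partial\|\mathcal{E}^{(k+1)}\|_1$, so the entrywise bound $\|\mathcal{Z}^{(k+1)}\|_\infty\le\lambda$ is immediate. The dual recursion then yields $\mathcal{E}^{(k+1)}=\mathcal{X}-\mathcal{L}^{(k+1)}+(\mathcal{Z}^{(k+1)}-\mathcal{Z}^{(k)})/\mu_2$, which is bounded. Finally, the optimality condition of the $\mathcal{L}$-subproblem, after substituting the dual updates, expresses $\mathcal{Y}^{(k+1)}$ in terms of already-bounded quantities plus a subgradient of $\|\cdot\|_*$ that is spectrally bounded by $1$ on each FFT slice; this yields the bound on $\{\mathcal{Y}^{(k)}\}$.

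For part (ii), Bolzano--Weierstrass applied to the bounded quintuple $\mathcal{C}^{(k)}$ supplies a convergent subsequence $\mathcal{C}^{(k_j)}\to \mathcal{C}^\ast$. To show the limit is critical, I aim to prove $\|\mathcal{C}^{(k+1)}-\mathcal{C}^{(k)}\|_F\to 0$ along the whole sequence. Telescoping the descent inequality of Lemma~\ref{lemma2P} from $k=0$ to $N$ gives
\begin{equation*}
c_1\sum_{k=0}^{N}\|\Delta\mathcal{L}^{(k)}\|_F^2+c_2\sum_{k=0}^{N}\|\Delta\mathcal{H}^{(k)}\|_F^2+c_3\sum_{k=0}^{N}\|\Delta\mathcal{E}^{(k)}\|_F^2 \le L_1^{(0)}-L_1^{(N+1)}+c_4\sum_{k=0}^{N}\|\Delta\mathcal{Z}^{(k)}\|_F^2,
\end{equation*}
where $\Delta$ denotes consecutive differences. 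Since $L_1$ is bounded below (the TNF and $\ell_1$ terms are nonnegative while the coupling terms are controlled by part (i)), it suffices to dominate the final sum using the hypothesis $\|\Delta\mathcal{Z}^{(k)}\|_F\to 0$ together with the explicit form $\Delta\mathcal{Z}^{(k)}=\mu_2(\mathcal{L}^{(k+1)}+\mathcal{E}^{(k+1)}-\mathcal{X})$ and the boundedness of the primal iterates. Once this is achieved, the primal differences tend to zero; Lemma~\ref{lemma1P} then forces $\|\Delta\mathcal{Y}^{(k)}\|_F\to 0$, and the entire increment $\mathcal{C}^{(k+1)}-\mathcal{C}^{(k)}$ vanishes in Frobenius norm.

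With $\|\mathcal{C}^{(k+1)}-\mathcal{C}^{(k)}\|_F\to 0$ in hand, Lemma~\ref{lem:partialTRPCA} produces $\mathcal{V}^{(k+1)}\in\partial L_1(\mathcal{C}^{(k+1)})$ with $\|\mathcal{V}^{(k+1)}\|_F\to 0$. Along the subsequence $k_j$ we have $\mathcal{C}^{(k_j)}\to\mathcal{C}^\ast$; because A2 keeps $\|\mathcal{H}^\ast\|_F\ge\delta>0$, the quotient $\|\mathcal{L}\|_*/\|\mathcal{H}\|_F$ is continuous at $\mathcal{C}^\ast$ and smooth in $\mathcal{H}$, so $L_1(\mathcal{C}^{(k_j)})\to L_1(\mathcal{C}^\ast)$. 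The closed-graph (outer semicontinuity) property of the limiting subdifferential of $L_1$ at $\mathcal{C}^\ast$ then yields $\mathcal{O}\in\partial L_1(\mathcal{C}^\ast)$, which is the asserted critical-point condition.

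The main obstacle I anticipate lies in the telescoping step: the hypothesis only provides $\|\Delta\mathcal{Z}^{(k)}\|_F\to 0$, not square-summability, so the right-hand side $c_4\sum\|\Delta\mathcal{Z}^{(k)}\|_F^2$ need not be finite a~priori. Bridging this gap will require either a refined upper estimate on $\sum_{k=0}^N\|\Delta\mathcal{Z}^{(k)}\|_F^2$ derived from the boundedness of $\mathcal{L}^{(k)}$, $\mathcal{E}^{(k)}$ and the shrinkage structure of the $\mathcal{E}$-update, or a subsequence-level argument where the $\mathcal{Z}$-contributions are extracted to be negligible at the relevant scale. A secondary delicacy is handling the subdifferential of the nonconvex ratio $\|\mathcal{L}\|_*/\|\mathcal{H}\|_F$ when passing to the limit, but the uniform lower bound A2 keeps $\|\mathcal{H}\|_F$ away from the singular set, so standard outer-semicontinuity results for a sum of the convex nuclear-norm piece and a smooth rational piece in $\mathcal{H}$ should apply, making the last step essentially routine once the differences vanish.
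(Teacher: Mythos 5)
Your overall architecture for part (ii) --- telescoping the descent estimate of Lemma~\ref{lemma2P}, invoking a lower bound on $L_1$, deducing square-summability of the primal increments, using Lemma~\ref{lemma1P} to force $\|\mathcal{Y}^{(k+1)}-\mathcal{Y}^{(k)}\|_F\to 0$, and finishing with Lemma~\ref{lem:partialTRPCA} plus outer semicontinuity of $\partial L_1$ --- is exactly the paper's route. The obstacle you flag is real: the hypothesis $\|\mathcal{Z}^{(k+1)}-\mathcal{Z}^{(k)}\|_F\to 0$ does not give $\sum_k\|\mathcal{Z}^{(k+1)}-\mathcal{Z}^{(k)}\|_F^2<\infty$, so the telescoped right-hand side is not a priori finite. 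The paper does not close this gap either; it asserts that $L_1$ has an upper bound under the stated limit and proceeds, so on this point you are at least as careful as the source.

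The concrete defect in your proposal is in part (i), where your ordering is circular. You bound $\|\mathcal{H}^{(k+1)}\|_F$ from the closed form $\mathcal{H}^{(k+1)}=\iota^{(k)}\mathcal{K}^{(k)}$, but $\mathcal{K}^{(k)}=\mathcal{L}^{(k+1)}+\mathcal{Y}^{(k)}/\mu_1$, so this step requires $\{\mathcal{Y}^{(k)}\}$ bounded; yet you only bound $\mathcal{Y}$ at the end, via the $\mathcal{L}$-optimality condition, which itself involves $\mathcal{H}^{(k)}$ through the term $\mu_1(\mathcal{L}^{(k+1)}-\mathcal{H}^{(k)})$. The cubic identity alone cannot rescue this: taking norms in the $\mathcal{H}$-stationarity equation gives $\|\mathcal{H}^{(k+1)}\|_F\,\bigl|\mu_1-\|\mathcal{L}^{(k+1)}\|_*/\|\mathcal{H}^{(k+1)}\|_F^3\bigr|=\mu_1\|\mathcal{K}^{(k)}\|_F$, so $\|\mathcal{H}^{(k+1)}\|_F$ is unbounded whenever $\|\mathcal{K}^{(k)}\|_F$ is. The paper breaks the cycle differently: combining the $\mathcal{H}$-stationarity condition with the dual update yields the closed form $\mathcal{Y}^{(k)}=-\|\mathcal{L}^{(k)}\|_*\,\mathcal{H}^{(k)}/\|\mathcal{H}^{(k)}\|_F^{3}$, hence $\|\mathcal{Y}^{(k)}\|_F=\|\mathcal{L}^{(k)}\|_*/\|\mathcal{H}^{(k)}\|_F^{2}\le M/\delta^2$ directly from A1--A2, and only then is $\{\mathcal{H}^{(k)}\}$ bounded via $\mathcal{H}^{(k+1)}=\mathcal{L}^{(k+1)}+(\mathcal{Y}^{(k)}-\mathcal{Y}^{(k+1)})/\mu_1$. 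Your bounds for $\mathcal{Z}$ (via $\mathcal{Z}^{(k+1)}\in-\lambda\,\partial\|\mathcal{E}^{(k+1)}\|_1$, giving $\|\mathcal{Z}^{(k+1)}\|_\infty\le\lambda$) and for $\mathcal{E}$ are correct and coincide with the paper's.
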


The proofs of Lemma~\ref{lemma1P}-Lemma~\ref{lem:partialTRPCA} and Theorem~\ref{proofthmP} are provided in the supplement. 
\begin{remark}
It is challenging to analyze the convergence of 
\eqref{eq:TRPCA_ADMM} due to the appearance of two Lagrangian multipliers, or so-called three-block ADMM \cite{chen2016direct}. Some existing works in the general optimization literature \cite{wang2019global} require an accompanying function, such as an objective function, merit function, or augmented Lagrangian function, that possesses properties such as being coercive, separable, or Lipschitz differentiable within a specific domain. However, none of these properties are satisfied for our TNF model.  Because \eqref{con:BsubL} includes a positive term  $\|\mathcal{Z}^{(k+1)}-\mathcal{Z}^{(k)} \|_{F}^{2}$ on the right-hand side, while the others are negative, we need to make an assumption about $\mathcal Z$ for the convergence analysis in Theorem~\ref{proofthmP}. This line of proof follows from two recent works \cite{du2021unifying,mu2020weighted}. 
\end{remark}

\section {A variant of the TNF-based TRPCA model}\label{sect:RPCAL}
This section introduces an alternative model based on TNF and $\ell_1/\ell_F$, along with an algorithm and its convergence analysis.

\subsection{The TNF$+$ model and its algorithm}
To mitigate the bias caused by the $\ell_1$ norm of $\mathcal E$ in \eqref{con:PCAa}, we propose utilizing $\ell_1/\ell_F$ to encourage sparsity of the tensor $\mathcal E$, thereby introducing a new model. The formulation of the second proposed model is given by
\begin{equation}  \label{con:PCAaL}
\min \limits_{\mathcal{L}, \mathcal{E}}\|\mathcal{L}\|_{\rm{TNF}} +\lambda \tfrac{\|\mathcal{E}\|_{1}}{\|\mathcal{E}\|_{F}} \quad
\text{s.t.}\quad  \mathcal{X}=\mathcal{L}+\mathcal{E},
\end{equation}
referred to as ``TNF$+$'' for the rest of the paper. Note that it is challenging to establish the recovery guarantee of the TNF$+$ model. The main difficulty lies in the two denominators in \eqref{con:PCAaL}, which change in opposite directions to satisfy the constraint $\mathcal{L}+\mathcal{E}=\mathcal{X}$, whereas TNF has only one fractional term. The analysis on the TNF$+$ model will be left to future work.

Similar to TNF, ADMM is employed to solve \eqref{con:PCAaL}. Specifically, we introduce two auxiliary variables $\mathcal{H}$ and $\mathcal{D}$ along with a specific splitting scheme that reformulates \eqref{con:PCAaL} into 
\begin{equation}  
{ \begin{array}{lc} \vspace{1ex}
\underset{\mathcal{L}, \mathcal{H}, \mathcal{E}, \mathcal{D}}{\operatorname{min}} \  \tfrac{\|\mathcal{L}\|_{*}}{\|\mathcal{H}\|_{F}}\ +\lambda \tfrac{\|\mathcal{E}\|_{1}}{\|\mathcal{D}\|_{F}} \\ 
\text{s.t.}\  \mathcal{X}=\mathcal{L}+\mathcal{E},\ \mathcal{H}=\mathcal{L},\ \mathcal{E}=\mathcal{D}. \end{array} }
\end{equation}
Its augmented Lagrangian function is written by
\begin{equation}
    \begin{aligned}
&L_{2}\left(\mathcal{L},\mathcal{H},\mathcal{E},\mathcal{D},\mathcal{Y},\mathcal{Z},\mathcal{U}\right) = \tfrac{\|\mathcal{L}\|_{*}}{\|\mathcal{H}\|_{F}}+\lambda\tfrac{\|\mathcal{E}\|_{1}}{\|\mathcal{D}\|_{F}}+\tfrac{\mu_{1}}{2}\left\|\mathcal{L}-\mathcal{H}\right\|_{F}^{2}
+\left\langle\mathcal{Y}, \mathcal{L}-\mathcal{H}\right\rangle \\
& +\tfrac{\mu_{2}}{2}\|\mathcal{L}+\mathcal{E}-\mathcal{X}\|_{F}^{2} 
+\left\langle\mathcal{Z},\mathcal{L}+\mathcal{E}-\mathcal{X}\right\rangle +\tfrac{\mu_{3}}{2}\left\|\mathcal{E}-\mathcal{D}\right\|_{F}^{2}
+\left\langle\mathcal{U}, \mathcal{E}-\mathcal{D}\right\rangle,
    \end{aligned} \label{eq:trpca_lagraL}
\end{equation}
with dual variables $\mathcal{Y}$, $\mathcal{Z}, \mathcal{U}$ and  positive parameters $\mu_1, \mu_2, \mu_3$. At each iteration, ADMM involves the following updates.
\begin{equation}
    \left\{\begin{array}{l}
\mathcal{L}^{(k+1)} =\arg \min\limits _{\mathcal{L}} L_{2}\left(\mathcal{L},\mathcal{H}^{(k)},\mathcal{E}^{(k)},\mathcal{D}^{(k)},\mathcal{Y}^{(k)},\mathcal{Z}^{(k)},\mathcal{U}^{(k)} \right),\\
\mathcal{H}^{(k+1)}=\arg \min \limits_{\mathcal{H}} L_{2}\left(\mathcal{L}^{(k+1)},\mathcal{H},\mathcal{E}^{(k)},\mathcal{D}^{(k)},\mathcal{Y}^{(k)},\mathcal{Z}^{(k)},\mathcal{U}^{(k)} \right),\\
\mathcal{E}^{(k+1)}=\arg \min\limits _{\mathcal{E}} L_{2}\left(\mathcal{L}^{(k+1)},\mathcal{H}^{(k+1)},\mathcal{E},\mathcal{D}^{(k)},\mathcal{Y}^{(k)},\mathcal{Z}^{(k)},\mathcal{U}^{(k)}\right),\\
\mathcal{D}^{(k+1)}=\arg \min\limits _{\mathcal{D}} L_{2}\left(\mathcal{L}^{(k+1)},\mathcal{H}^{(k+1)},\mathcal{E}^{(k+1)},\mathcal{D},\mathcal{Y}^{(k)},\mathcal{Z}^{(k)},\mathcal{U}^{(k)}\right),\\
\mathcal{Y}^{(k+1)}=\mathcal{Y}^{(k)}+\mu_{1}\left(\mathcal{L}^{(k+1)}-\mathcal{H}^{(k+1)}\right),\\
\mathcal{Z}^{(k+1)}=\mathcal{Z}^{(k)}+\mu_{2}\left(\mathcal{L}^{(k+1)}+\mathcal{E}^{(k+1)}-\mathcal{X}\right),\\
\mathcal{U}^{(k+1)}=\mathcal{U}^{(k)}+\mu_{3}\left(\mathcal{E}^{(k+1)}-\mathcal{D}^{(k+1)}\right).
\end{array}\right.\label{eq:TRPCA_ADMML}
\end{equation}

Since the $\mathcal{L}$-subproblem and the $\mathcal{H}$-subproblem
are the same as the ones in \eqref{eq:TRPCA_ADMM}, we use the same closed-form solutions for $\mathcal{L}^{(k+1)}$ and $\mathcal{H}^{(k+1)}$. 
The $\mathcal{E}$-subproblem of \eqref{eq:TRPCA_ADMML} can be expressed as
\begin{equation*}
\arg \min_{\mathcal{E}}  \left\{ \lambda
  \tfrac{\|\mathcal{E}\|_{1}}{\|\mathcal{D}^{(k)}\|_{F}}+\tfrac{\mu_{2}}{2}\left\|\mathcal{E}+\mathcal{L}^{(k+1)}-\mathcal{X}+\tfrac{\mathcal{Z}^{(k)}}{\mu_2}\right\|_{F}^{2}
 +\tfrac{\mu_{3}}{2}\left\|\mathcal{E}-\mathcal{D}^{(k)}+\tfrac{\mathcal{U}^{(k)}}{\mu_3}\right\|_{F}^{2}\right\}, \label{EfirstL}
\end{equation*}
which is equivalent to the $\ell_1$ minimization elementwise. Hence, it has a closed-form solution given by the soft-thresholding operator, i.e.,
\begin{equation}
\mathcal{E}^{(k+1)}=\mathbf{shrink} \left(\tfrac{\mu_2(\mathcal{X}-\mathcal{L}^{(k+1)})+\mu_3\mathcal{D}^{(k)}-\mathcal{Z}^{(k)}-\mathcal{U}^{(k)}}{\mu_{2}+\mu_{3}}, \ \tfrac{\lambda}{(\mu_2+\mu_3)\|\mathcal{D}^{(k)}\|_F}\right). \label{e-stepL}
\end{equation}

Lastly, the $\mathcal{D}$-subproblem of \eqref{eq:TRPCA_ADMML} can be expressed as
\begin{equation}
\mathcal{D}^{(k+1)}=  \arg \min_{\mathcal{E}}  \left\{ \lambda
  \tfrac{\|\mathcal{E}^{(k+1)}\|_{1}}{\|\mathcal{D}\|_{F}}+\tfrac{\mu_{3}}{2}\left\|\mathcal{D}-\mathcal{E}^{(k+1)}-\tfrac{\mathcal{U}^{(k)}}{\mu_3}\right\|_{F}^{2}
 \right\}. \label{DfirstL}
\end{equation}
Similar to $\mathcal{H}$-subproblem \eqref{con:newhupdate}, we  derive  the  closed-form solution of \eqref{DfirstL} to be
\begin{equation}
\mathcal{D}^{(k+1)}= \begin{cases}\zeta^{(k)}(\mathcal{E}^{(k+1)}+\tfrac{\mathcal{U}^{(k)}}{\mu_3})  & \text { if } \mathcal{E}^{(k+1)}+\tfrac{\mathcal{U}^{(k)}}{\mu_3} \neq \mathcal{O} \\ \mathcal{J}^{(k)} & \text { otherwise, }\end{cases} \label{con:newhupdateD}  
\end{equation}
where $\mathcal{J}^{(k)}$ is a random tensor with its Frobenius norm being $ \sqrt[3]{\tfrac{\beta^{(k+1)}}{\mu_1}}$ for $\beta^{(k+1)}=\lambda\|\mathcal{E}^{(k+1)}\|_{1}$ and $\zeta^{(k)} = \tfrac{1}{3}+\tfrac{1}{3}\left(B^{(k)}+\tfrac{1}{B^{(k)}}\right)$ for
$$ B^{(k)}=\sqrt[3]{\tfrac{27 A^{(k)}+2+\sqrt{(27 A^{(k)}+2)^{2}-4}}{2}} \text { and } A^{(k)}=\tfrac{\beta^{(k+1)}}{\mu_1 \|\mathcal{J}^{(k)}\|_{F}^{3}}.$$
We summarize the overall algorithm of ADMM for solving the problem \eqref{con:PCAaL} in Algorithm~\ref{alg:ADMM2LTRPCA}. Compared to Algorithm~\ref{alg:ADMM2TRPCA}, Algorithm~\ref{alg:ADMM2LTRPCA} incurs additional complexity due to the update of $\mathcal{D}$, which takes $O(n_1n_2n_3n_{(2)})$ and is of the same order as updating $\mathcal H$. Consequently, Algorithm~\ref{alg:ADMM2LTRPCA} exhibits equivalent complexity Algorithm~\ref{alg:ADMM2TRPCA}, that is,
$$O(n_1n_2(n_3\log n_3)+2n_1n_2n_3n_{(2)}).$$ 

\begin{algorithm}[htb] 
\caption{ The ADMM of the TNF$+$ model. 
}
\label{alg:ADMM2LTRPCA} 
\begin{algorithmic}[1] 
\REQUIRE  
  Observed data  $\mathcal{X}$, parameters: $\mu_1, \mu_2, \mu_3, \rm{kMax}, \epsilon$

\STATE{{\bf Initialization: } $(\mathcal {L}^{(0)}, \mathcal {E}^{(0)})$ by a TNN-based TRPCA model,   $\mathcal {H}^{(0)}= \mathcal {L}^{(0)}, \mathcal {D}^{(0)}= \mathcal {E}^{(0)},\mathcal Y^{(0)}=\mathcal Z^{(0)}=\mathcal U^{(0)}=\mathcal O,$ and $k = 0$. }
\WHILE {$k<\rm{kMax}$ or not converged}
\STATE $\tau^{(k+1)} = \tfrac{1}{(\mu_1+\mu_2)\|\mathcal{H}^{(k)}\|_F}$ 
\STATE $\mathcal{L}^{(k+1)} = \mathcal{D}_{\tau^{(k+1)}}\left(\tfrac{1}{\mu_{1}+\mu_{2}}\left(\mu_{1} \mathcal{H}^{(k)}+\mu_{2}\left(\mathcal{X}-\mathcal{E}^{(k)}\right)\right)-\tfrac{\mathcal{Y}^{(k)}+\mathcal{Z}^{(k)}}{\mu_{1}+\mu_{2}}\right)$
\STATE $\mathcal{H}^{(k+1)}= \begin{cases}\iota^{(k)} (\mathcal{L}^{(k+1)}+\tfrac{\mathcal{Y}^{(k)}}{\mu_{1}}) & \text { if } \mathcal{L}^{(k+1)}+\tfrac{\mathcal{Y}^{(k)}}{\mu_{1}} \neq \mathbf{0} \\ \mathcal{G}^{(k)} & \text { otherwise }\end{cases}$
\STATE $\mathcal{E}^{(k+1)}=\mathbf{shrink} \left(\tfrac{\mu_2(\mathcal{X}-\mathcal{L}^{(k+1)})+\mu_3\mathcal{D}^{(k)}-\mathcal{Z}^{(k)}-\mathcal{U}^{(k)}}{\mu_{2}+\mu_{3}}, \ \tfrac{\lambda}{(\mu_2+\mu_3)\|\mathcal{D}^{(k)}\|_F}\right)$
\STATE $\mathcal{D}^{(k+1)}= \begin{cases}\zeta^{(k)}(\mathcal{E}^{(k+1)}+\tfrac{\mathcal{U}^{(k)}}{\mu_3})  & \text { if } \mathcal{E}^{(k+1)}+\tfrac{\mathcal{U}^{(k)}}{\mu_3} \neq \mathcal{O} \\ \mathcal{J}^{(k)} & \text { otherwise }\end{cases}$
\STATE $\mathcal{Y}^{(k+1)}=\mathcal{Y}^{(k)}+\mu_{1}\left(\mathcal{L}^{(k+1)}-\mathcal{H}^{(k+1)}\right)$
\STATE $\mathcal{Z}^{(k+1)}=\mathcal{Z}^{(k)}+\mu_{2}\left(
\mathcal{L}^{(k+1)}+\mathcal{E}^{(k+1)}-\mathcal{X}\right)$
\STATE $\mathcal{U}^{(k+1)}=\mathcal{U}^{(k)}+\mu_{3}\left(\mathcal{E}^{(k+1)}-\mathcal{D}^{(k+1)}\right)$
\STATE $k=k+1$ 
\STATE Check the convergence conditions \\
$\left\|\mathcal{L}^{(k+1)}-\mathcal{L}^{(k)}\right\|_{\infty} \leq \epsilon$,  
$\left\|\mathcal{H}^{(k+1)}-\mathcal{H}^{(k)}\right\|_{\infty} \leq \epsilon$,
$\left\|\mathcal{E}^{(k+1)}-\mathcal{E}^{(k)}\right\|_{\infty} \leq \epsilon$,
$\left\|\mathcal{D}^{(k+1)}-\mathcal{D}^{(k)}\right\|_{\infty} \leq \epsilon$, 
$\left\|\mathcal{Y}^{(k+1)}-\mathcal{Y}^{(k)}\right\|_{\infty} \leq \epsilon$,
$\left\|\mathcal{Z}^{(k+1)}-\mathcal{Z}^{(k)}\right\|_{\infty} \leq \epsilon$,
$\left\|\mathcal{L}^{(k+1)}+\mathcal{E}^{(k+1)}-\mathcal{X}\right\|_{\infty} \leq \epsilon$

\ENDWHILE
\RETURN $\hat{\mathcal{L}} = \mathcal{L}^{(k)}$ and $\hat{\mathcal{E}} = \mathcal{E}^{(k)}$ 
\end{algorithmic}
\end{algorithm}

\subsection{Convergence for the TNF$+$ model}
\label{sec:conv-TRPCAl2}

We show that the sequence generated by
Algorithm~\ref{alg:ADMM2LTRPCA} has a subsequence convergent to a stationary point of \eqref{con:PCAaL} under the following two assumptions. 

A3: The sequence  $(\{\mathcal{L}^{(k)}\}, \{\mathcal{E}^{(k)}\})$ generated by \eqref{eq:TRPCA_ADMML} is bounded, so are the nuclear norm of $\mathcal{L}^{(k)}$ and the $\ell_1$ norm of $\{\mathcal{E}^{(k)}\}$, denoted by $\sup_k \{\|\mathcal{L}^{(k)}\|_{*}\}\leq M$ and $\sup_k \{\|\mathcal{E}^{(k)}\|_{1}\}\leq m$.  
 
A4: The Frobenius norm of $\{\mathcal{H}^{(k)}\}$ and $\{\mathcal{D}^{(k)}\}$  have uniform bounds, i.e., there exist positive constants $\delta_1$ and $\delta_2$ such that $\|\mathcal{H}^{(k)}\|_F\geq \delta_1, \forall k$ and $\|\mathcal{D}^{(k)}\|_F\geq \delta_2, \forall k$.

\begin{lemma}\label{lemma4}
 Under assumptions {\rm A3-A4} with sufficiently large parameters $\mu_1,\mu_2$, the sequence $\{\mathcal{Y}^{(k)}\}$ and $\{\mathcal{U}^{(k)}\}$ generated by {\rm \eqref{eq:TRPCA_ADMML}} satisfies
 \begin{equation}
    \left\|\mathcal{Y}^{(k+1)}-\mathcal{Y}^{(k)}\right\|_{F}^{2} \leq \tfrac{2 n_{(2)}}{\delta_1^{4}}\left\|\mathcal{L}^{(k+1)}-\mathcal{L}^{(k)}\right\|_F^2+\tfrac{4M^{2}}{\delta_1^{6}}\left\|\mathcal{H}^{(k+1)}-\mathcal{H}^{(k)}\right\|_{F}^{2}, \label{ine:yTRPCAL}
 \end{equation}
  \begin{equation}
    \left\|\mathcal{U}^{(k+1)}-\mathcal{U}^{(k)}\right\|_{F}^{2} \leq \tfrac{2 \lambda^2 n_1n_{2}n_3}{\delta_2^{4}}\left\|\mathcal{E}^{(k+1)}-\mathcal{E}^{(k)}\right\|_F^2+\tfrac{4\lambda^2 m^{2}}{\delta_2^{6}}\left\|\mathcal{D}^{(k+1)}-\mathcal{D}^{(k)}\right\|_{F}^{2}, \label{ine:uTRPCAL}
 \end{equation}
 where $M$, $m$, $\delta_1$ and $\delta_2$ are the constants defined in {\rm A3} and {\rm A4}, respectively. 
\end{lemma}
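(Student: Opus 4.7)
The plan is to prove \eqref{ine:yTRPCAL} and \eqref{ine:uTRPCAL} separately, treating each as an analogue of Lemma~\ref{lemma1P} applied to one of the two fractional penalties in the TNF$+$ model. For \eqref{ine:yTRPCAL} there is essentially nothing new to do: the $\mathcal{L}$- and $\mathcal{H}$-subproblems of \eqref{eq:TRPCA_ADMML} are identical to those of \eqref{eq:TRPCA_ADMM}, so the optimality condition for the $\mathcal{H}$-update combined with the $\mathcal{Y}$-update yields the same closed form $\mathcal{Y}^{(k+1)} = -\|\mathcal{L}^{(k+1)}\|_{*}\,\mathcal{H}^{(k+1)}/\|\mathcal{H}^{(k+1)}\|_{F}^{3}$ that drives the proof of Lemma~\ref{lemma1P}. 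Hence \eqref{ine:yTRPCAL} follows by applying Lemma~\ref{lemma1P} verbatim with $\delta$ replaced by $\delta_{1}$.

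For \eqref{ine:uTRPCAL} the strategy is to mirror that derivation, with the nuclear norm replaced by the $\ell_{1}$ norm and $\mathcal{H}$ replaced by $\mathcal{D}$. First I would write the first-order optimality condition of the $\mathcal{D}$-subproblem \eqref{DfirstL} for $\mathcal{D}^{(k+1)}\neq\mathcal{O}$, combine it with the $\mathcal{U}$-update in \eqref{eq:TRPCA_ADMML}, and eliminate $\mathcal{U}^{(k)}$ to obtain the explicit identity
\begin{equation*}
\mathcal{U}^{(k+1)} \;=\; -\lambda\,\|\mathcal{E}^{(k+1)}\|_{1}\,\mathcal{D}^{(k+1)}\big/\|\mathcal{D}^{(k+1)}\|_{F}^{3}.
\end{equation*}
Then I would split
\begin{equation*}
\mathcal{U}^{(k+1)}-\mathcal{U}^{(k)} \;=\; -\lambda\,\tfrac{\|\mathcal{E}^{(k+1)}\|_{1}-\|\mathcal{E}^{(k)}\|_{1}}{\|\mathcal{D}^{(k+1)}\|_{F}^{3}}\,\mathcal{D}^{(k+1)} \;-\; \lambda\|\mathcal{E}^{(k)}\|_{1}\Bigl(\tfrac{\mathcal{D}^{(k+1)}}{\|\mathcal{D}^{(k+1)}\|_{F}^{3}}-\tfrac{\mathcal{D}^{(k)}}{\|\mathcal{D}^{(k)}\|_{F}^{3}}\Bigr),
\end{equation*}
and bound the two terms separately. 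For the first I would combine the reverse triangle inequality with the norm equivalence $\|\cdot\|_{1}\leq\sqrt{n_{1}n_{2}n_{3}}\|\cdot\|_{F}$ and the lower bound $\|\mathcal{D}^{(k+1)}\|_{F}\geq\delta_{2}$ from A4; this is the $\ell_{1}$-analogue of the $\sqrt{n_{(2)}}$-subgradient bound used for the nuclear norm in Lemma~\ref{lemma1P}, and it accounts for the $\|\mathcal{E}^{(k+1)}-\mathcal{E}^{(k)}\|_{F}^{2}$ term with prefactor $2\lambda^{2}n_{1}n_{2}n_{3}/\delta_{2}^{4}$. For the second I would use $\|\mathcal{E}^{(k)}\|_{1}\leq m$ from A3 together with a Lipschitz-type estimate for $\mathcal{D}\mapsto\mathcal{D}/\|\mathcal{D}\|_{F}^{3}$ on $\{\|\mathcal{D}\|_{F}\geq\delta_{2}\}$, which gives the $\|\mathcal{D}^{(k+1)}-\mathcal{D}^{(k)}\|_{F}^{2}$ term with prefactor $4\lambda^{2}m^{2}/\delta_{2}^{6}$; standard $(a+b)^{2}$-bookkeeping then assembles \eqref{ine:uTRPCAL}.

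The main obstacle is producing the Lipschitz estimate for $\mathcal{D}\mapsto\mathcal{D}/\|\mathcal{D}\|_{F}^{3}$: A4 only bounds the iterates from below, whereas a naive mean-value argument would need the entire segment between $\mathcal{D}^{(k)}$ and $\mathcal{D}^{(k+1)}$ to stay in $\{\|\mathcal{D}\|_{F}\geq\delta_{2}\}$. I would sidestep this through the purely algebraic identity
\begin{equation*}
\tfrac{\mathcal{D}^{(k+1)}}{b_{k+1}^{3}}-\tfrac{\mathcal{D}^{(k)}}{b_{k}^{3}} \;=\; \tfrac{\mathcal{D}^{(k+1)}-\mathcal{D}^{(k)}}{b_{k+1}^{3}} \;+\; \tfrac{b_{k}^{3}-b_{k+1}^{3}}{b_{k+1}^{3}b_{k}^{3}}\,\mathcal{D}^{(k)},\qquad b_{k}:=\|\mathcal{D}^{(k)}\|_{F},
\end{equation*}
together with the reverse triangle inequality $|b_{k+1}-b_{k}|\leq\|\mathcal{D}^{(k+1)}-\mathcal{D}^{(k)}\|_{F}$ and the factorization $b_{k}^{3}-b_{k+1}^{3}=(b_{k}-b_{k+1})(b_{k}^{2}+b_{k}b_{k+1}+b_{k+1}^{2})$, so that only the endpoint estimates $b_{k},b_{k+1}\geq\delta_{2}$ are invoked. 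This is the same workaround already used in the proof of Lemma~\ref{lemma1P}, and once it is in place the remaining computation is a direct transcription of that earlier argument.
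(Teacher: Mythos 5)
Your proposal is correct and follows essentially the same route as the paper: the first inequality is obtained by reusing the proof of Lemma~1 verbatim (the paper likewise omits it), and the second is derived from the identity $\mathcal{U}^{(k+1)}=-\lambda\|\mathcal{E}^{(k+1)}\|_{1}\mathcal{D}^{(k+1)}/\|\mathcal{D}^{(k+1)}\|_{F}^{3}$ followed by exactly the same two-term splitting, the bound $\|\cdot\|_{1}\le\sqrt{n_1n_2n_3}\|\cdot\|_{F}$, and a Lipschitz estimate for $\mathcal{D}\mapsto\mathcal{D}/\|\mathcal{D}\|_{F}^{3}$ on $\{\|\mathcal{D}\|_{F}\ge\delta_2\}$. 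The only cosmetic difference is that the paper invokes the cited Lemma~B.4 of \cite{zheng2024scale} for that Lipschitz step (which already requires only the endpoint bounds), whereas you re-derive it via the algebraic factorization of $b_k^3-b_{k+1}^3$; both are valid.
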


\begin{lemma}
Under assumptions {\rm A3-A4}, the augmented Lagrangian function \eqref{eq:trpca_lagraL} of the sequence $\{\mathcal{L}^{(k)},\mathcal{H}^{(k)},\mathcal{E}^{(k)},\mathcal{D}^{(k)},\mathcal{Y}^{(k)},\mathcal{Z}^{(k)},\mathcal{U}^{(k)}\}$ generated by {\rm \eqref{eq:TRPCA_ADMML}} satisfies
\begin{equation}
\begin{aligned}
& L_{2}\left(\mathcal{L}^{(k+1)},\mathcal{H}^{(k+1)},\mathcal{E}^{(k+1)},\mathcal{D}^{(k+1)},\mathcal{Y}^{(k+1)},\mathcal{Z}^{(k+1)},\mathcal{U}^{(k+1)} \right)\\
\leq &L_{2}\left(\mathcal{L}^{(k)},\mathcal{H}^{(k)},\mathcal{E}^{(k)},\mathcal{D}^{(k)},\mathcal{Y}^{(k)},\mathcal{Z}^{(k)},\mathcal{U}^{(k)}\right)-c_{5} \| \mathcal{L}^{(k+1)}-\mathcal{L}^{(k)} \|_{F}^{2} \\
& -c_{6} \|\mathcal{H}^{(k+1)}-\mathcal{H}^{(k)} \|_{F}^{2} 
-c_{7} \|\mathcal{E}^{(k+1)}-\mathcal{E}^{(k)} \|_{F}^{2}
-c_{8} \|\mathcal{D}^{(k+1)}-\mathcal{D}^{(k)} \|_{F}^{2}\\
& +c_{9} \|\mathcal{Z}^{(k+1)}-\mathcal{Z}^{(k)} \|_{F}^{2},
\end{aligned} 
\end{equation}
where $c_{5}, c_{6}, c_{7},c_8,c_9$ are positive constants. \label{lemmasdTRPCAL}
\end{lemma}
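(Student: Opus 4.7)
The plan is to mirror the proof of Lemma~\ref{lemma2P} (which establishes the analogous descent for the TNF model) and extend the bookkeeping to the additional $\mathcal{D}$-block and its dual variable $\mathcal{U}$. I would start by telescoping the one-step change $L_2^{(k+1)} - L_2^{(k)}$ via the update order $\mathcal{L}\to\mathcal{H}\to\mathcal{E}\to\mathcal{D}\to\mathcal{Y}\to\mathcal{Z}\to\mathcal{U}$, so each term in the telescope records how much the Lagrangian moves when exactly one block is refreshed.

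For the $\mathcal{L}$-subproblem, the objective is $\frac{\|\mathcal{L}\|_*}{\|\mathcal{H}^{(k)}\|_F}$, convex in $\mathcal{L}$, plus a quadratic that is strongly convex with modulus $\mu_1+\mu_2$; since $\mathcal{L}^{(k+1)}$ is the global minimizer, strong convexity yields descent of at least $\tfrac{\mu_1+\mu_2}{2}\|\mathcal{L}^{(k+1)}-\mathcal{L}^{(k)}\|_F^2$. The $\mathcal{E}$-subproblem is handled identically: its objective is $\lambda\tfrac{\|\mathcal{E}\|_1}{\|\mathcal{D}^{(k)}\|_F}$ (convex in $\mathcal{E}$) plus a $(\mu_2+\mu_3)$-strongly convex quadratic, producing descent $\tfrac{\mu_2+\mu_3}{2}\|\mathcal{E}^{(k+1)}-\mathcal{E}^{(k)}\|_F^2$. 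The dual updates $\mathcal{Y},\mathcal{Z},\mathcal{U}$ raise $L_2$ by $\tfrac{1}{\mu_i}$ times the squared dual increment; invoking Lemma~\ref{lemma4} bounds $\|\mathcal{Y}^{(k+1)}-\mathcal{Y}^{(k)}\|_F^2$ in terms of $\|\mathcal{L}^{(k+1)}-\mathcal{L}^{(k)}\|_F^2$ and $\|\mathcal{H}^{(k+1)}-\mathcal{H}^{(k)}\|_F^2$, and $\|\mathcal{U}^{(k+1)}-\mathcal{U}^{(k)}\|_F^2$ in terms of $\|\mathcal{E}^{(k+1)}-\mathcal{E}^{(k)}\|_F^2$ and $\|\mathcal{D}^{(k+1)}-\mathcal{D}^{(k)}\|_F^2$, so that for $\mu_1,\mu_3$ large enough these dual increments are absorbed into the primal descent margins. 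The $\mathcal{Z}$-increment admits no such Lemma~\ref{lemma4}-style bound (there is no equality constraint tying $\mathcal{Z}$ to an auxiliary-primal pair), so $\tfrac{1}{\mu_2}\|\mathcal{Z}^{(k+1)}-\mathcal{Z}^{(k)}\|_F^2$ remains on the right-hand side as the $c_9$-term.

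The delicate step is the $\mathcal{H}$- and $\mathcal{D}$-subproblem analysis, and this is where A3 and A4 really earn their keep. The $\mathcal{H}$-objective is $\tfrac{\rho^{(k+1)}}{\|\mathcal{H}\|_F}+\tfrac{\mu_1}{2}\|\mathcal{H}-\mathcal{K}^{(k)}\|_F^2$, which is nonconvex. However, on the sublevel region where $\|\mathcal{H}\|_F\ge\delta_1$ and $\rho^{(k+1)}=\|\mathcal{L}^{(k+1)}\|_*\le M$, the Hessian of $\mathcal{H}\mapsto\rho^{(k+1)}/\|\mathcal{H}\|_F$ has spectrum bounded below by $-C_1 M/\delta_1^3$ for an explicit $C_1$, so the combined objective is strongly convex with modulus $c_6:=\mu_1-C_1M/\delta_1^3>0$ once $\mu_1$ exceeds this threshold. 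Combined with global optimality of $\mathcal{H}^{(k+1)}$, this yields a descent of $c_6\|\mathcal{H}^{(k+1)}-\mathcal{H}^{(k)}\|_F^2$. The $\mathcal{D}$-subproblem is treated identically using $\|\mathcal{D}^{(k)}\|_F\ge\delta_2$ and $\beta^{(k+1)}=\lambda\|\mathcal{E}^{(k+1)}\|_1\le\lambda m$, producing descent $c_8\|\mathcal{D}^{(k+1)}-\mathcal{D}^{(k)}\|_F^2$ when $\mu_3$ is large enough.

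Summing all seven telescoped contributions and collecting constants gives the stated inequality with
$c_5,\,c_6,\,c_7,\,c_8>0$ all positive and $c_9=\tfrac{1}{\mu_2}>0$, exactly matching the claim. The main obstacle I expect is the bookkeeping for $c_6$ and $c_8$: verifying that after subtracting the dual-absorption contributions from Lemma~\ref{lemma4} (which scale like $\tfrac{2n_{(2)}}{\mu_1\delta_1^4}$ and $\tfrac{4M^2}{\mu_1\delta_1^6}$, and analogously for $\mathcal{U}$), the residual coefficients in front of $\|\mathcal{H}^{(k+1)}-\mathcal{H}^{(k)}\|_F^2$ and $\|\mathcal{D}^{(k+1)}-\mathcal{D}^{(k)}\|_F^2$ are still strictly positive. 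This is the place where the ``sufficiently large $\mu_1,\mu_3$'' hypothesis is really used, and all other steps reduce to the convex or strongly-convex optimality arguments already used in Lemma~\ref{lemma2P}.
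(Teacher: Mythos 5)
Your proposal follows essentially the same architecture as the paper's proof: telescope $L_2$ over the seven block updates, use strong convexity with moduli $\mu_1+\mu_2$ and $\mu_2+\mu_3$ for the $\mathcal{L}$- and $\mathcal{E}$-steps, observe that each dual update raises $L_2$ by $\tfrac{1}{\mu_i}$ times the squared dual increment, invoke Lemma~\ref{lemma4} to absorb the $\mathcal{Y}$- and $\mathcal{U}$-increments into the primal descent margins (which is exactly where the paper's constants $c_5,\dots,c_8$ pick up the subtracted terms $\tfrac{2n_{(2)}}{\mu_1\delta_1^4}$, $\tfrac{4M^2}{\mu_1\delta_1^6}$, $\tfrac{2\lambda^2 n_1n_2n_3}{\mu_3\delta_2^4}$, $\tfrac{4\lambda^2 m^2}{\mu_3\delta_2^6}$), and leave the unabsorbable $\mathcal{Z}$-increment as $c_9=\tfrac{1}{\mu_2}$. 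The one place you diverge is the $\mathcal{H}$- and $\mathcal{D}$-steps: you argue via a Hessian lower bound $-M/\delta_1^3$ for $\mathcal{H}\mapsto\rho/\|\mathcal{H}\|_F$ on $\{\|\mathcal{H}\|_F\ge\delta_1\}$, making the subproblem strongly convex for large $\mu_1$, whereas the paper instead applies the descent (smoothness) inequality for $1/\|\cdot\|_F$ from Lemma~\ref{con:lemmaA4} together with the explicit stationarity identity $\mathcal{Y}^{(k+1)}=-\|\mathcal{L}^{(k+1)}\|_*\mathcal{H}^{(k+1)}/\|\mathcal{H}^{(k+1)}\|_F^3$ and some inner-product bookkeeping, arriving at the coefficient $\tfrac{\mu_1}{2}-\tfrac{3M}{\delta_1^3}$ (and $\tfrac{\mu_3}{2}-\tfrac{3m}{\delta_2^3}$ for $\mathcal{D}$). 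The two mechanisms are interchangeable here and both hinge on A3--A4 in the same way; be aware, though, that both share the caveat that the region $\{\|\mathcal{H}\|_F\ge\delta_1\}$ is not convex, so a two-point strong-convexity (or descent-lemma) inequality between $\mathcal{H}^{(k)}$ and $\mathcal{H}^{(k+1)}$ formally requires the connecting segment to stay in that region --- a gap the paper itself elides. Also note a minor constant slip: strong convexity with modulus $m$ plus global optimality yields a descent of $\tfrac{m}{2}\|\cdot\|_F^2$, not $m\|\cdot\|_F^2$, but this does not affect positivity of the constants.
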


\begin{lemma}
 Let $\mathcal{C}^{(k)}:=\left(\mathcal{L}^{(k)}, \mathcal{H}^{(k)}, \mathcal{E}^{(k)},\mathcal{D}^{(k)},\mathcal{Y}^{(k)},\mathcal{Z}^{(k)},\mathcal{U}^{(k)}\right)$ be the sequence generated by {\rm \eqref{eq:TRPCA_ADMML}}, then there exist a tensor $\mathcal{W}^{(k+1)}\in \partial L_{2}\left(\mathcal{C}^{(k+1)}\right)$
 and a constant $\kappa_2>0$ such that 
  \begin{equation}
 \begin{aligned}
\left\|\mathcal{W}^{(k+1)}\right\|_{F}^{2} &\leq \kappa_2\left\| \mathcal{C}^{(k+1)}-\mathcal{C}^{(k)}\right \|_{F}^{2}. 
 \end{aligned} \label{ine:vk2}
 \end{equation} \label{lem:partialTRPCA2}
\end{lemma}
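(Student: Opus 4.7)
The plan is to mirror the proof of Lemma~\ref{lem:partialTRPCA} for the TNF model, extending it to account for the additional primal variable $\mathcal{D}$ and dual variable $\mathcal{U}$ that arise from the $\ell_1/\ell_F$ regularizer on $\mathcal{E}$. First, I would compute a subdifferential element of $L_2$ at $\mathcal{C}^{(k+1)}$ componentwise. The components with respect to $\mathcal{Y}, \mathcal{Z}, \mathcal{U}$ are smooth and equal $\mathcal{L}^{(k+1)}-\mathcal{H}^{(k+1)} = \tfrac{1}{\mu_1}(\mathcal{Y}^{(k+1)}-\mathcal{Y}^{(k)})$, $\mathcal{L}^{(k+1)}+\mathcal{E}^{(k+1)}-\mathcal{X} = \tfrac{1}{\mu_2}(\mathcal{Z}^{(k+1)}-\mathcal{Z}^{(k)})$, and $\mathcal{E}^{(k+1)}-\mathcal{D}^{(k+1)} = \tfrac{1}{\mu_3}(\mathcal{U}^{(k+1)}-\mathcal{U}^{(k)})$, all of which are immediately controlled by successive differences of the dual iterates.

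Next, for the primal components, I would exploit the first-order optimality conditions from the four subproblems in \eqref{eq:TRPCA_ADMML} and compare them to the subgradient expressions evaluated at $\mathcal{C}^{(k+1)}$. For the $\mathcal{L}$-component, the $\mathcal{L}$-subproblem yields the existence of some $\mathcal{T}^{(k+1)}\in\partial\|\mathcal{L}^{(k+1)}\|_*$ with $\tfrac{\mathcal{T}^{(k+1)}}{\|\mathcal{H}^{(k)}\|_F}+\mu_1(\mathcal{L}^{(k+1)}-\mathcal{H}^{(k)})+\mathcal{Y}^{(k)}+\mu_2(\mathcal{L}^{(k+1)}+\mathcal{E}^{(k)}-\mathcal{X})+\mathcal{Z}^{(k)}=\mathcal{O}$. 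Subtracting this from the expression for $\partial_{\mathcal{L}}L_2(\mathcal{C}^{(k+1)})$ leaves a candidate subgradient element containing $\mathcal{T}^{(k+1)}\bigl(\tfrac{1}{\|\mathcal{H}^{(k+1)}\|_F}-\tfrac{1}{\|\mathcal{H}^{(k)}\|_F}\bigr)$, $\mu_1(\mathcal{H}^{(k)}-\mathcal{H}^{(k+1)})$, $\mu_2(\mathcal{E}^{(k+1)}-\mathcal{E}^{(k)})$, $(\mathcal{Y}^{(k+1)}-\mathcal{Y}^{(k)})$, and $(\mathcal{Z}^{(k+1)}-\mathcal{Z}^{(k)})$. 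The reciprocal-norm difference is handled by $\bigl|\tfrac{1}{\|\mathcal{H}^{(k+1)}\|_F}-\tfrac{1}{\|\mathcal{H}^{(k)}\|_F}\bigr|\le\tfrac{1}{\delta_1^2}\|\mathcal{H}^{(k+1)}-\mathcal{H}^{(k)}\|_F$ via assumption A4, together with $\|\mathcal{T}^{(k+1)}\|_F\le\sqrt{n_{(2)}}$. An analogous computation for the $\mathcal{E}$-component produces a subgradient element bounded by differences in $\mathcal{D}$, $\mathcal{L}$, $\mathcal{Z}$, and $\mathcal{U}$, after using $\|\partial\|\mathcal{E}^{(k+1)}\|_1\|_F\le\sqrt{n_1n_2n_3}$.

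For the $\mathcal{H}$- and $\mathcal{D}$-components (the smooth but nonconvex fractional terms), I would use the gradients $\nabla_{\mathcal{H}}L_2=-\tfrac{\|\mathcal{L}^{(k+1)}\|_*}{\|\mathcal{H}^{(k+1)}\|_F^3}\mathcal{H}^{(k+1)}-\mu_1(\mathcal{L}^{(k+1)}-\mathcal{H}^{(k+1)})-\mathcal{Y}^{(k+1)}$ and $\nabla_{\mathcal{D}}L_2=-\lambda\tfrac{\|\mathcal{E}^{(k+1)}\|_1}{\|\mathcal{D}^{(k+1)}\|_F^3}\mathcal{D}^{(k+1)}-\mu_3(\mathcal{E}^{(k+1)}-\mathcal{D}^{(k+1)})-\mathcal{U}^{(k+1)}$ and compare each with the corresponding optimality condition for the $\mathcal{H}$- and $\mathcal{D}$-subproblems from the previous iteration. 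The fractional terms differ only in evaluation arguments, so another application of A3--A4 (the bounds $\|\mathcal{L}^{(k+1)}\|_*\le M$, $\|\mathcal{E}^{(k+1)}\|_1\le m$, $\|\mathcal{H}^{(k+1)}\|_F\ge\delta_1$, $\|\mathcal{D}^{(k+1)}\|_F\ge\delta_2$) together with Lipschitz estimates on the maps $\mathcal{H}\mapsto\mathcal{H}/\|\mathcal{H}\|_F^3$ and $\mathcal{D}\mapsto\mathcal{D}/\|\mathcal{D}\|_F^3$ on the region $\|\mathcal{H}\|_F\ge\delta_1$ and $\|\mathcal{D}\|_F\ge\delta_2$ yields bounds purely in terms of $\|\mathcal{H}^{(k+1)}-\mathcal{H}^{(k)}\|_F$ and $\|\mathcal{D}^{(k+1)}-\mathcal{D}^{(k)}\|_F$.

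Finally, I would invoke Lemma~\ref{lemma4} to replace $\|\mathcal{Y}^{(k+1)}-\mathcal{Y}^{(k)}\|_F$ and $\|\mathcal{U}^{(k+1)}-\mathcal{U}^{(k)}\|_F$ by terms involving $\|\mathcal{L}^{(k+1)}-\mathcal{L}^{(k)}\|_F$, $\|\mathcal{H}^{(k+1)}-\mathcal{H}^{(k)}\|_F$, $\|\mathcal{E}^{(k+1)}-\mathcal{E}^{(k)}\|_F$, and $\|\mathcal{D}^{(k+1)}-\mathcal{D}^{(k)}\|_F$, then square and sum all seven component bounds with repeated use of $\|a+b\|_F^2\le 2\|a\|_F^2+2\|b\|_F^2$ to extract a single constant $\kappa_2$ depending on $\mu_1,\mu_2,\mu_3,\lambda,M,m,\delta_1,\delta_2,n_1,n_2,n_3$. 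The main obstacle I anticipate is the careful bookkeeping in the $\mathcal{H}$- and $\mathcal{D}$-components: the fractional Lipschitz estimate must be performed on both the $1/\|\cdot\|_F$ factor and the numerator that itself changes with the iterate, producing cross terms that must be separated cleanly so that the final bound is quadratic only in the successive iterate differences and free of absolute iterate norms beyond the constants $M$, $m$, $\delta_1$, $\delta_2$ already controlled by the assumptions.
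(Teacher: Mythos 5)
Your proposal is correct and follows essentially the same route the paper takes: the paper omits the proof of this lemma precisely because it is the verbatim extension of the proof of Lemma~\ref{lem:partialTRPCA}, namely componentwise subgradient construction from the subproblem optimality conditions, Lipschitz/boundedness estimates on the fractional terms via A3--A4, the bounds $\|\partial\|\cdot\|_*\|_F\le 2\sqrt{n_{(2)}}$ and $\|\partial\|\cdot\|_1\|_F\le\sqrt{n_1n_2n_3}$, and substitution of the dual differences via Lemma~\ref{lemma4}. The only minor simplification you could adopt from the paper's template is that the $\mathcal{H}$- and $\mathcal{D}$-components need no Lipschitz estimate at all: the optimality conditions \eqref{Hoptimal} and \eqref{Doptimal} combined with the dual updates give the exact identities $\nabla_{\mathcal{H}}L_2(\mathcal{C}^{(k+1)})=\mathcal{Y}^{(k)}-\mathcal{Y}^{(k+1)}$ and $\nabla_{\mathcal{D}}L_2(\mathcal{C}^{(k+1)})=\mathcal{U}^{(k)}-\mathcal{U}^{(k+1)}$.
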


\begin{theorem}
 Under assumptions {\rm A3-A4}, 
the sequence $$\mathcal{C}^{(k)}:=\left(\mathcal{L}^{(k)},\mathcal{H}^{(k)},\mathcal{E}^{(k)},\mathcal{D}^{(k)},\mathcal{Y}^{(k)},\mathcal{Z}^{(k)},\mathcal{U}^{(k)}\right)$$ generated by {\rm \eqref{eq:TRPCA_ADMML}} satisfies

{\rm \textrm{(i)}} The sequences \{$\mathcal{H}^{(k)}$\}, \{$\mathcal{E}^{(k)}\}$,\{$\mathcal{D}^{(k)}\}$, \{$\mathcal{Y}^{(k)}\}$, \{$\mathcal{Z}^{(k)}\}$ and \{$\mathcal{U}^{(k)}\}$ are bounded. 

{\rm \textrm{(ii)}} 
The sequence $\{\mathcal{C}^{(k)}\}$ has a convergent subsequence. If $$\lim\limits_{k\rightarrow +\infty}\|\mathcal{Z}^{(k+1)}-\mathcal{Z}^{(k)}\|_{F}=0,$$  then this subsequence  converges to a 
 critical point $\{\mathcal{L}^{\ast}, \mathcal{H}^{\ast}, \mathcal{E}^{\ast}, \mathcal{D}^{\ast},\mathcal{Y}^{\ast}, \mathcal{Z}^{\ast},\mathcal{U}^{\ast}\}$, i.e., 
 $\mathcal{O}\in \partial L_{2}(\mathcal{L}^{\ast}, \mathcal{H}^{\ast}, \mathcal{E}^{\ast}, \mathcal{D}^{\ast}, \mathcal{Y}^{\ast}, \mathcal{Z}^{\ast},\mathcal{U}^{\ast})$ 
  where the zero tensor $\mathcal{O}$ is composed of seven tensors, each of  dimension ${n_1\times n_2\times n_3}$.
\label{prooflastTRPCAL}
\end{theorem}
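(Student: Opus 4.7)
The plan is to mirror the proof of Theorem \ref{proofthmP}, using Lemmas \ref{lemma4}--\ref{lem:partialTRPCA2} in place of Lemmas \ref{lemma1P}--\ref{lem:partialTRPCA}; the extra primal variable $\mathcal{D}$ and dual variable $\mathcal{U}$ introduced by splitting the $\ell_1/\ell_F$ ratio contribute new terms that are structurally symmetric to those for $\mathcal{H}$ and $\mathcal{Y}$, so each step simply recycles the corresponding argument for the nuclear-over-Frobenius block.

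For part (i), I would first note boundedness of $\{\mathcal{H}^{(k)}\}$ from the closed-form update \eqref{con:newhupdate}: $\|\mathcal{H}^{(k+1)}\|_F$ equals either $\sqrt[3]{\rho^{(k+1)}/\mu_1}$ with $\rho^{(k+1)}=\|\mathcal{L}^{(k+1)}\|_*\leq M$ by A3, or the prescribed Frobenius norm of $\mathcal{G}^{(k)}$ by construction. The identical argument applied to \eqref{con:newhupdateD} with $\beta^{(k+1)}=\lambda\|\mathcal{E}^{(k+1)}\|_1\leq\lambda m$ gives boundedness of $\{\mathcal{D}^{(k)}\}$. Boundedness of $\{\mathcal{Y}^{(k)}\}$ and $\{\mathcal{U}^{(k)}\}$ then follows from the first-order optimality conditions of the $\mathcal{H}$- and $\mathcal{D}$-subproblems: the subdifferentials of the fractional objectives have bounded pieces because A4 keeps the denominators $\|\mathcal{H}\|_F,\|\mathcal{D}\|_F$ uniformly above $\delta_1,\delta_2$, while the bounded primal variables fill the remaining slots. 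The sequence $\{\mathcal{E}^{(k)}\}$ is already bounded by A3.

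For part (ii), I would telescope the descent inequality of Lemma \ref{lemmasdTRPCAL} from $k=0$ to $N$ to obtain
\begin{equation*}
\sum_{k=0}^{N}\bigl(c_5\|\Delta\mathcal{L}^{(k)}\|_F^2+c_6\|\Delta\mathcal{H}^{(k)}\|_F^2+c_7\|\Delta\mathcal{E}^{(k)}\|_F^2+c_8\|\Delta\mathcal{D}^{(k)}\|_F^2\bigr)\leq L_2^{(0)}-L_2^{(N+1)}+c_9\sum_{k=0}^{N}\|\Delta\mathcal{Z}^{(k)}\|_F^2,
\end{equation*}
with $\Delta\mathcal{L}^{(k)}:=\mathcal{L}^{(k+1)}-\mathcal{L}^{(k)}$ and analogous definitions for the other blocks. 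Combining part (i) with a lower bound on $L_2$ over the bounded sequence and the summability implicit in $\|\Delta\mathcal{Z}^{(k)}\|_F\to 0$ (as exploited in \cite{du2021unifying,mu2020weighted}) forces each primal increment to be square-summable, so $\|\Delta\mathcal{L}^{(k)}\|_F,\|\Delta\mathcal{H}^{(k)}\|_F,\|\Delta\mathcal{E}^{(k)}\|_F,\|\Delta\mathcal{D}^{(k)}\|_F\to 0$. Bolzano--Weierstrass applied to the bounded sequence $\{\mathcal{C}^{(k)}\}$ extracts a subsequence $\mathcal{C}^{(k_j)}\to\mathcal{C}^\ast$. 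Finally, Lemma \ref{lem:partialTRPCA2} bounds $\|\mathcal{W}^{(k_j+1)}\|_F$ by the primal and $\mathcal{Z}$ increments just shown to vanish, so $\mathcal{W}^{(k_j+1)}\to\mathcal{O}$; closedness of $\partial L_2$ at $\mathcal{C}^\ast$ then yields $\mathcal{O}\in\partial L_2(\mathcal{C}^\ast)$, which is the required critical-point condition.

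The hard part will be the $\mathcal{Z}$-block: its dual update contributes the \emph{positive} term $c_9\|\Delta\mathcal{Z}^{(k)}\|_F^2$ on the right-hand side of Lemma \ref{lemmasdTRPCAL}, which destroys the naive monotone-descent narrative. This is precisely why the extra assumption $\|\mathcal{Z}^{(k+1)}-\mathcal{Z}^{(k)}\|_F\to 0$ must be imposed and why the conclusion is only subsequential; promoting this zero-limit into the summability needed to close the telescoping argument is the most delicate point, and it is here that the recipe of \cite{du2021unifying,mu2020weighted} must be adapted. A secondary subtlety is verifying closedness of $\partial L_2$ at $\mathcal{C}^\ast$: the fractional terms $\|\mathcal{L}\|_*/\|\mathcal{H}\|_F$ and $\|\mathcal{E}\|_1/\|\mathcal{D}\|_F$ are only locally smooth in their denominator variables on $\{\|\mathcal{H}\|_F\geq\delta_1\}\cap\{\|\mathcal{D}\|_F\geq\delta_2\}$, so A4 must also be inherited by the limit, which holds by passing to the limit along the convergent subsequence.
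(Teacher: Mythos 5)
Your overall route is the same as the paper's (the paper omits this proof precisely because it repeats the argument for Theorem~\ref{proofthmP}), and part (ii) of your proposal faithfully reproduces that argument, including the correct identification of the delicate point: the positive term $c_9\sum_k\|\mathcal{Z}^{(k+1)}-\mathcal{Z}^{(k)}\|_F^2$ is what forces the extra hypothesis and the merely subsequential conclusion. However, part (i) has two concrete gaps.

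First, your boundedness argument for $\{\mathcal{H}^{(k)}\}$ and $\{\mathcal{D}^{(k)}\}$ misreads the closed-form updates. In the generic branch of \eqref{con:newhupdate} one has $\|\mathcal{H}^{(k+1)}\|_F=\iota^{(k)}\|\mathcal{K}^{(k)}\|_F$ with $\iota^{(k)}\geq 1$ and $\mathcal{K}^{(k)}=\mathcal{L}^{(k+1)}+\mathcal{Y}^{(k)}/\mu_1$; the value $\sqrt[3]{\rho^{(k+1)}/\mu_1}$ is only the prescribed norm of the random tensor $\mathcal{G}^{(k)}$ in the degenerate case $\mathcal{K}^{(k)}=\mathcal{O}$ (it is the limit of $\iota^{(k)}\|\mathcal{K}^{(k)}\|_F$ as $\|\mathcal{K}^{(k)}\|_F\to 0$, not its value in general), so $\|\mathcal{H}^{(k+1)}\|_F$ is not a priori controlled until $\mathcal{Y}^{(k)}$ is. The order must be reversed, exactly as in the paper's proof of Theorem~\ref{proofthmP}: the optimality conditions of the $\mathcal{H}$- and $\mathcal{D}$-subproblems give $\mathcal{Y}^{(k)}=-\|\mathcal{L}^{(k)}\|_*\,\mathcal{H}^{(k)}/\|\mathcal{H}^{(k)}\|_F^{3}$ and $\mathcal{U}^{(k)}=-\lambda\|\mathcal{E}^{(k)}\|_1\,\mathcal{D}^{(k)}/\|\mathcal{D}^{(k)}\|_F^{3}$, whence $\|\mathcal{Y}^{(k)}\|_F\leq M/\delta_1^{2}$ and $\|\mathcal{U}^{(k)}\|_F\leq\lambda m/\delta_2^{2}$ directly from A3--A4; boundedness of $\mathcal{H}^{(k)}$ and $\mathcal{D}^{(k)}$ then follows from the dual updates $\mathcal{H}^{(k+1)}=\mathcal{L}^{(k+1)}+(\mathcal{Y}^{(k)}-\mathcal{Y}^{(k+1)})/\mu_1$ and $\mathcal{D}^{(k+1)}=\mathcal{E}^{(k+1)}+(\mathcal{U}^{(k)}-\mathcal{U}^{(k+1)})/\mu_3$. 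Second, you never prove boundedness of $\{\mathcal{Z}^{(k)}\}$, which is part of claim (i). This follows from the optimality condition of the $\mathcal{E}$-subproblem of \eqref{eq:TRPCA_ADMML}: there exists $g^{(k+1)}\in\partial\|\mathcal{E}^{(k+1)}\|_1$ with $\|g^{(k+1)}\|_\infty\leq 1$ such that $\mathcal{Z}^{(k+1)}=-\lambda g^{(k+1)}/\|\mathcal{D}^{(k)}\|_F-\mu_3(\mathcal{E}^{(k+1)}-\mathcal{D}^{(k)})-\mathcal{U}^{(k)}$, and every term on the right-hand side is bounded by A3--A4 together with the bounds just obtained. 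With these two repairs, the rest of your argument goes through as written.
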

We present the proofs of Lemma~\ref{lemma4} and Lemma~\ref{lemmasdTRPCAL} in the supplementary material while omitting the proofs of Lemma~\ref{lem:partialTRPCA2} and Theorem~\ref{prooflastTRPCAL} due to their similarity to the ones in the TNF model. 

\section{Experiments}\label{sect:experimentsPCA}
This section contains extensive experiments aimed at evaluating the performance of our proposed TNF and TNF$+$ models using both synthetic and real-world datasets. 
In the synthetic scenario, the observed data are generated as the sum of a low-rank tensor and a sparse tensor. For denoising experiments, we use real color images with manually added sparse noises. Furthermore, we employ surveillance videos for background modeling, wherein the video is decomposed into a low-rank background tensor and a sparse motion component. Although real-world data may not be strictly low rank, the underlying tensors can be predominantly approximately by their top singular values. Therefore, the proposed methodologies still yield satisfactory results. All experiments are conducted using MATLAB (R2023a) on the Windows 10 platform with an Intel Core i5-1135G7 2.40 GHz processor and 16 GB of RAM. 

\subsection {Synthetic data}
We generate each observation $\mathcal{X} \in \mathbb{R}^{n_1 \times n_2 \times n_{3}}$ by adding a low-rank tensor $\mathcal{L}_{0}$ and a sparse component $\mathcal{E}_{0}$ of the same dimension. Here $\mathcal{L}_{0}$ is obtained by the t-product of two tensors of smaller dimensions, i.e., $\mathcal{L}_{0}=\mathcal{P}*\mathcal{Q}$, where $\mathcal{P} \in \mathbb{R}^{n_1 \times r \times n_{3}}$ and $\mathcal{Q} \in \mathbb{R}^{r \times n_2 \times n_{3}}$ with $r\ll n_{(2)}$. The tubal rank of the resulting tensor $\mathcal{L}_{0}$ is at most $r$. The elements of tensor $\mathcal{P}$ are drawn from an independent and identically distributed (i.i.d.) Gaussian distribution $\mathcal{N}\left(0, 1/n_1\right)$, while the elements of $\mathcal{Q}$ are drawn from $\mathcal{N}\left(0, 1/n_2\right)$. As for the sparse components $\mathcal{E}_{0}$, we assume its support follows a Bernoulli distribution. Specifically, we randomly set the values of its entries to either $+1$ or $-1$, each with probability $\gamma$, and set to 0 with probability $1-2\gamma,$ where $2\gamma\in [0,1]$ is referred to as a sampling rate or a sparsity level.

We start with empirical evidence of convergence in the proposed TRPCA methodologies by considering a third-order tensor of dimensions $40 \times 40 \times 30$, with a tubal rank of 3 and a sampling rate of 0.2. The relative square errors of tensors $\mathcal{L}^{(k)}$ and $\mathcal{E}^{(k)}$ to the corresponding ground truth $\mathcal L_0$ and $\mathcal E_0$ at each iteration  
$k$ are depicted in Fig.~\ref{fig:empirical_conv}, showing that the errors of both models
reduce to less than $10^{-15}$ in about 100 iterations. Also, TNF$+$ has a faster convergence compared to TNF. 

 \begin{figure}
 		\begin{center}
 			\begin{tabular}{cc}
 				TNF & TNF$+$\\
 				\includegraphics[width=0.45\textwidth]{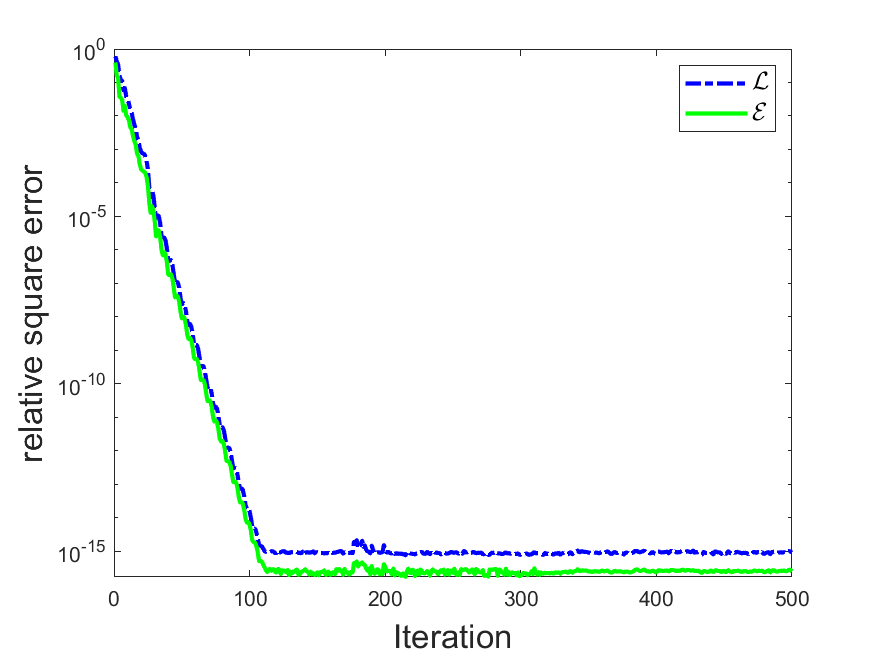}  &
 				\includegraphics[width=0.45\textwidth]{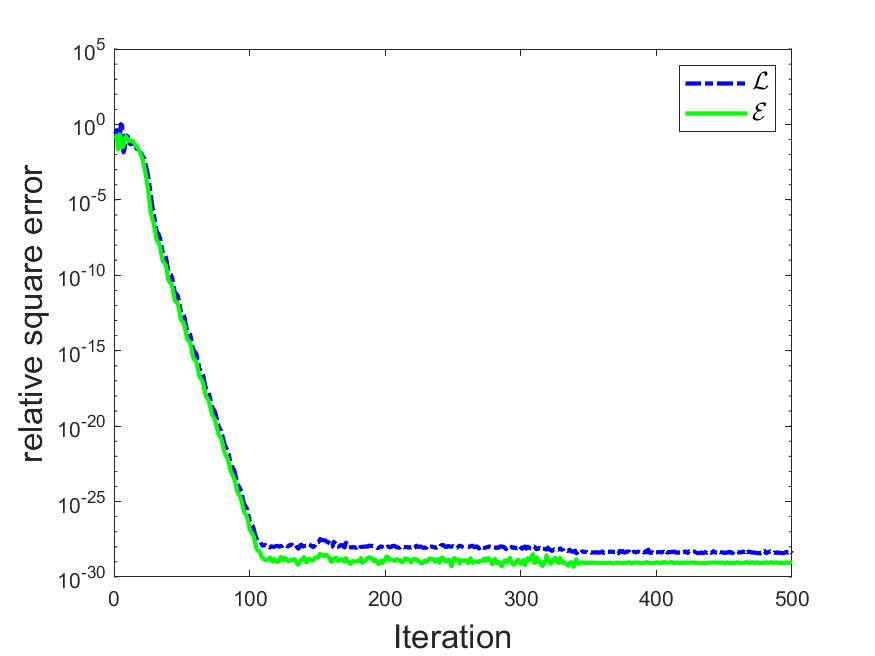} 
 			\end{tabular}
 		\end{center}
 		\caption{Empirical evidence on convergence in TRPCA by plotting
   the relative square errors between the current tensor $\mathcal L^{(k)}$ ($\mathcal E^{(k)}$) and the ground truth $\mathcal{L}_{0}$ ($\mathcal{E}_{0}$) with respect to the iteration index $k$ for TNF (left) and TNF$+$ (right) models. 
   } \label{fig:empirical_conv}
 	\end{figure} 

Next, we conduct a comparative study of our proposed TNF and TNF$+$ models with some existing works, including TNN \cite{lu2019tensor}, 
Laplace \cite{xu2019laplace}, $\mathrm{t}\mbox{-}S_{w, p}$ \cite{YANG2022108311}, and $p$-TRPCA \cite{yan2024tensor}. For our models, we set $\epsilon$ to $10^{-4}$ in both  Algorithm~\ref{alg:ADMM2TRPCA} and Algorithm~\ref{alg:ADMM2LTRPCA}. We gradually increase the values of $\mu_1$ and $\mu_2$ instead of fixed values for acceleration, as considered in \cite{lu2019tensor}.  Specifically, 
 we initialize $\mu_1=10^{-4}$ and $\mu_2=10^{-3}$ in TNF
while $\mu_1=10^{-4}$ and $\mu_2=\mu_3=10^{-3}$ in TNF$+$. 
For both TNF and TNF$+,$ we consider an increment factor of 1.1 in each iteration and a maximum cap of these parameters by
$10^{10}$. We set $\lambda=2\times 10^{-4}$ in TNF  for all synthetic experiments, although a finer tuning of $\lambda$ could potentially enhance our model's performance.
For TNF$+$ model, we set the value of $\lambda$ to $\frac{1}{\sqrt{n_{(1)}n_3}}=0.0289$, consistent with the $\lambda$ value used in the TNN model \cite{lu2019tensor}.
As for the competing methods (TNN, $\mathrm{t}\mbox{-}S_{w, p}$, and $p$-TRPCA), we employ the Matlab codes provided by the respective authors with default parameter settings. For the Laplace model, we adapt the code of the tensor completion model into the TRPCA model while setting $\epsilon$ to $10^{-6}$.

We employ success rates as a metric to assess recovery performance, which is defined as the ratio of successful trials to the total number of trials. Specifically, we conduct ten independent random trials for each combination of a predetermined tubal rank $(r)$ and sampling rate $(2\gamma)$.
A trial is deemed successful if the relative square error between the recovered tensor $\hat{\mathcal{L}}$ and the ground-truth tensor $\mathcal{L}_{0}$, denoted as $\frac{\|\hat{\mathcal{L}}-\mathcal{L}_{0}\|_{F}^{2}}{\|\mathcal{L}_{0}\|_{F}^{2}}$, is less than $10^{-3}$. The success rate is then calculated by dividing the number of successful trials by 10.
Finally, we adhere to the experimental setup in \cite{jiang2020multi} for using the tensor dimension of $40 \times 40 \times 30$. The tubal rank in $\mathcal{L}$ ranges from 1 to 19 with an increment of 2, while the sparsity in $\mathcal{E}$ varies from 0.05 to 0.5 with an increment of 0.05.

Each cell in Fig.~\ref{fig:TRPCAratio} illustrates the success rate corresponding to a combination of tubal rank and sparsity levels. Generally, successful recovery is more probable when the sparsity level or tubal rank is relatively low. Fig.~\ref{fig:TRPCAratio} showcases that our models outperform the state-of-the-art methods, particularly when the specified $\mathcal{L}$ rank is low.

                   

\begin{figure}
   \begin{center}
 			\begin{tabular}{ccc}
 				  TNN  & Laplace & $\mathrm{t}\mbox{-}S_{w, p}(0.9)$\\
                  \includegraphics[width=0.3\textwidth]{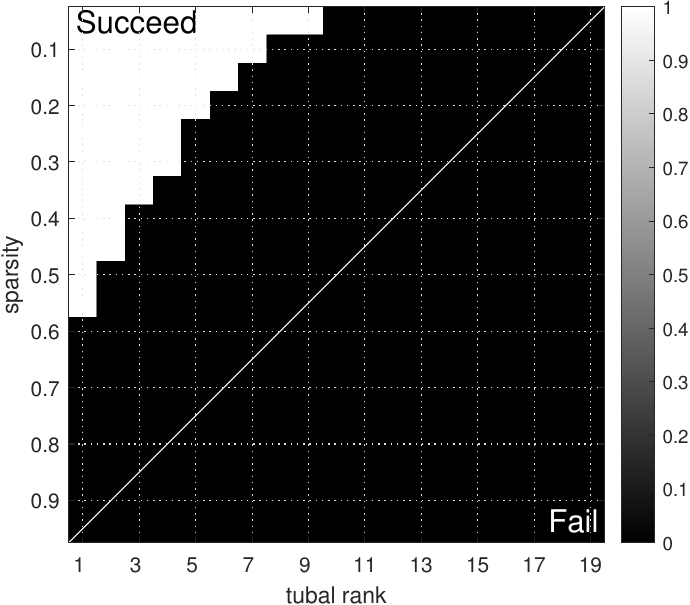}  &
 				\includegraphics[width=0.3\textwidth]{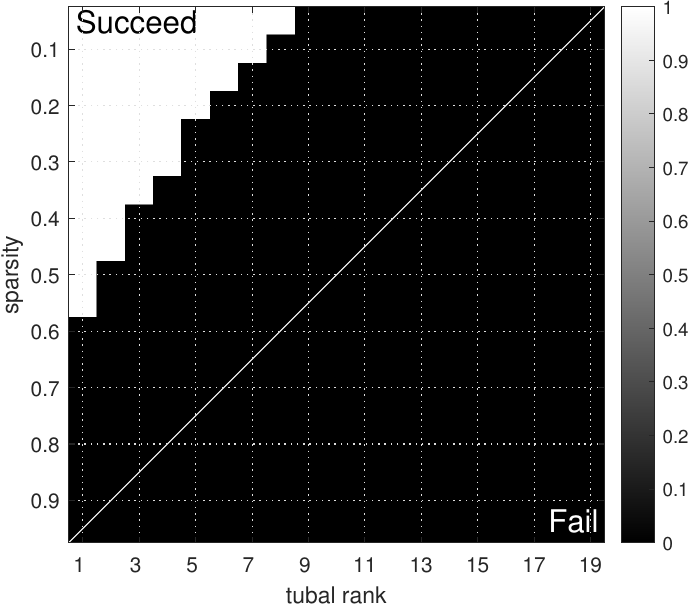} &
                 \includegraphics[width=0.3\textwidth]{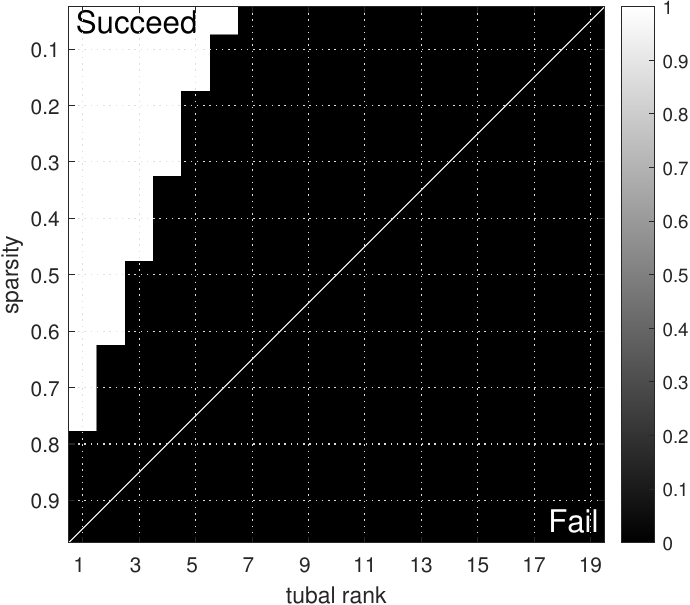}\\
                    $p$-TRPCA & TNF & TNF$+$\\
 				 \includegraphics[width=0.3\textwidth]{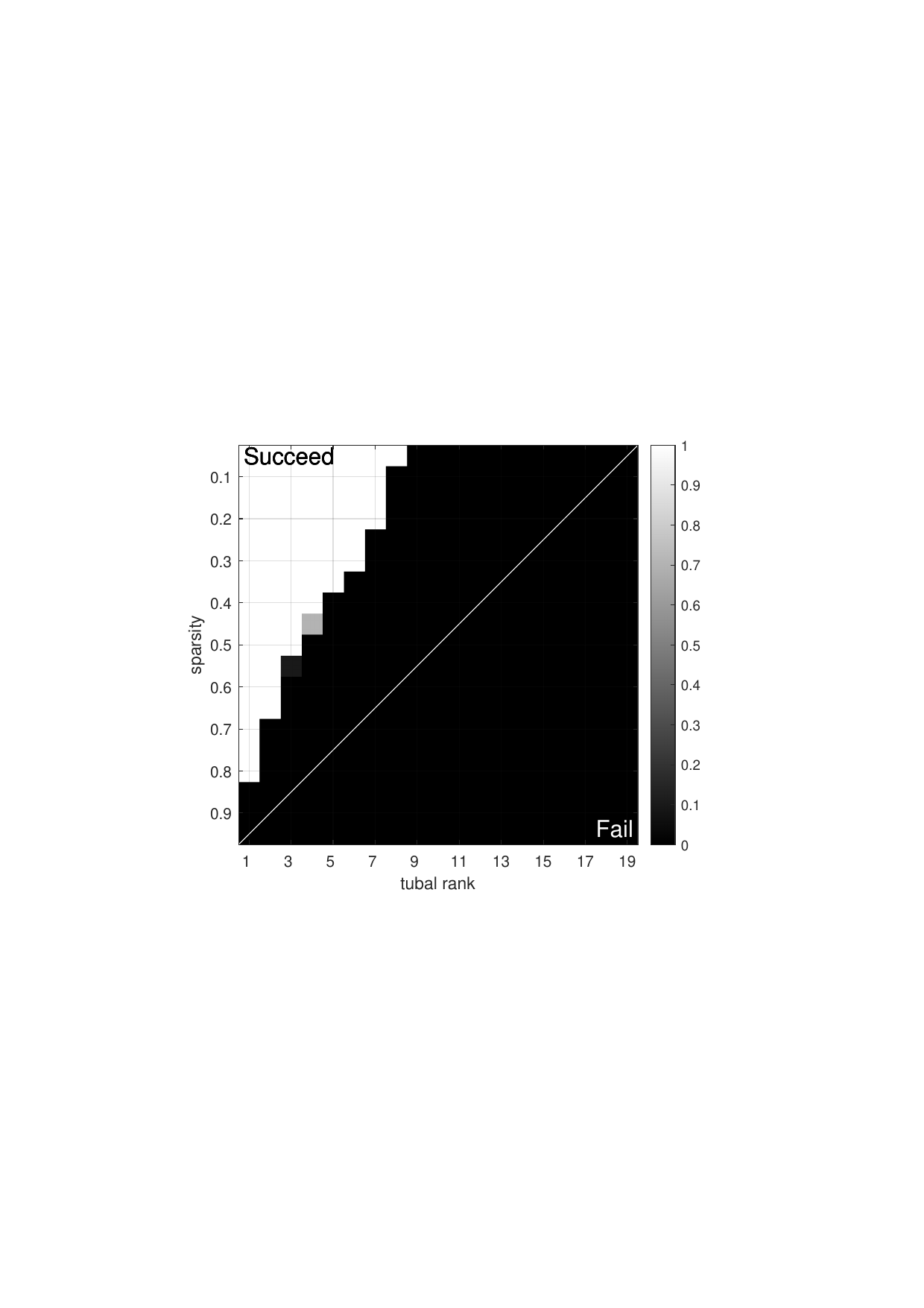} &
                      \includegraphics[width=0.3\textwidth]{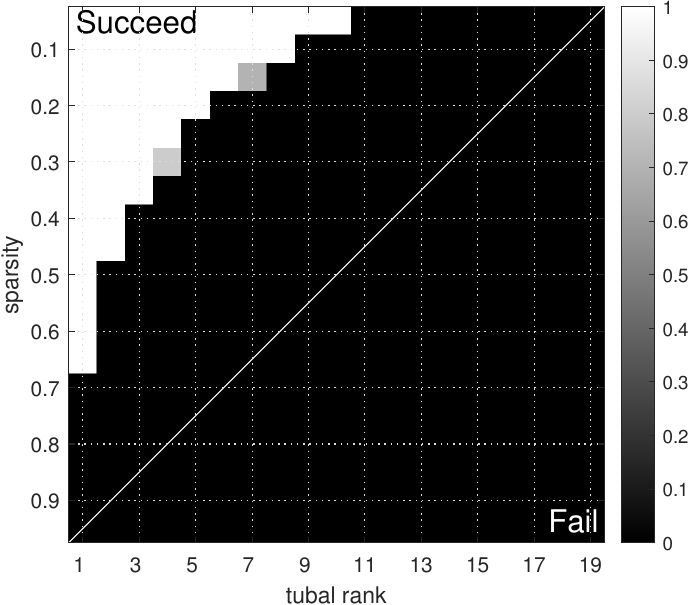} &
 				 \includegraphics[width=0.3\textwidth]{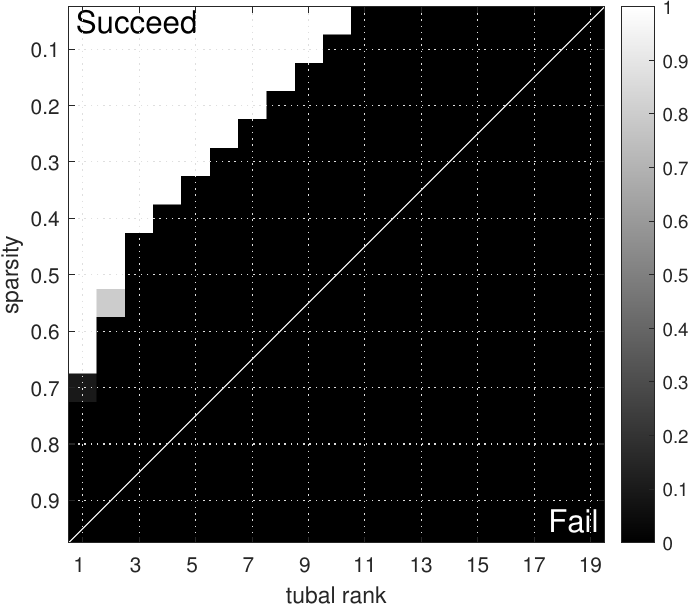} \\
                   
 			\end{tabular}
 		\end{center}
 		\caption{The success rates of various methods for the TRPCA problem with varying tubal ranks $(r)$ and sparsity levels $(\rho)$. Each cell represents the percentage of successful recoveries over ten independent realizations. White dashed lines have been added along the diagonal to facilitate comparison.
   } \label{fig:TRPCAratio}
 	\end{figure}
    
\subsection {Real-world data}
We perform experiments on real-world data comprising color images and videos. For image denoising tests, we employ the peak signal-to-noise ratio (PSNR) \cite{lu2019tensor} and the structural similarity index (SSIM) \cite{wang2004image} to quantitatively evaluate recovery performance. Additionally, we present background separation results using grayscale videos. Since the final results are evaluated using PSNR and SSIM, we have adopted the same termination criterion for all competing algorithms; see Algorithm~\ref{alg:ADMM2TRPCA} and Algorithm~\ref{alg:ADMM2LTRPCA}. 


\subsubsection{Color image denoising.}

We conduct image denoising experiments on five color images, labeled by ``boat'', ``houses'', ``seabeach'', ``bicycle'', and ``brook.'' These images can be obtained online\footnote{\url{http://r0k.us/graphics/kodak}\label{note2}}. Each image is corrupted by sparse noise, where 20$\%$ of the pixels randomly receive values in the range of 0 to 255, with the locations of the distorted pixels unspecified.
Both TNF and TNF$+$ models are applied for image denoising. In the TNF model, we select the best $\lambda$ value from the range $[4.5:0.5:6.5]\times 10^{-5}$ that achieves the highest PSNR. The initial values are set as $\mu_1=\mu_2=10^{-4}$. Conversely, for the TNF$+$ model, we choose $\lambda$ among the set $[1.6:0.4:2.8]\times 10^{-2}$ and initialize $\mu_1=10^{-4}, \mu_2=10^{-2}, \mu_3=10^{-4}$. 

We compare the TNF and TNF$+$ models with 
TNN \cite{lu2019tensor}, Laplace \cite{xu2019laplace}, $\mathrm{t}\mbox{-}S_{w, p}$ \cite{YANG2022108311}, and $p$-TRPCA \cite{yan2024tensor}. Quantitative evaluations in terms of PSNR and SSIM are presented in Table~\ref{table:TRPCA}, indicating improved performance with our proposed TNF regularization over state-of-the-art TRPCA methods for denoising sparse noise. 
Among the five color images, the best performance is achieved either by TNF or TNF$+$. Notably, the TNF model achieves the highest PSNR for the ``houses'', ``bicycle'', and ``brook'' images, while the TNF$+$ model appears to perform the best according to the SSIM metric. TNF and TNF$+$ achieve the top two performances in most cases. Specifically, the average PSNR of TNF and TNF$+$ is $28.1005$ and $28.0875$, respectively, both exceeding the rest of the methods by more than $0.2.$

We present visual recovery results in Fig.~\ref{fig:TRPCAreal}. Each image contains a zoomed region for ease of comparison. 
The noisy inputs are depicted in the second column of Fig.~\ref{fig:TRPCAreal}, exhibiting severe speckle artifacts. TNF and TNF$+$ provide results with fewer speckles, particularly noticeable in the zoomed region of the ``bicycle'' image. In the ``house'' and ``seabeach'' images, our methods better preserve the details of zoomed-in letters and trees, while TNN exhibits some blurring. Additionally, Laplace and $p$-TRPCA models retain artifacts from sparse noise. Moreover, our proposed model effectively removes noise without excessively smoothing the image, as compared to  $\mathrm{t}\mbox{-}S_{w, p}(0.9)$. This difference is particularly evident in the ``boat" and ``brook" images.

To analyze the efficiency of all approaches, we summarize the runtime of the algorithms in Table \ref{table:time_comparison}.
\begin{table}[h!]
\centering
\caption{Comparison of computation time.}\label{table:time_comparison}
\begin{tabular}{|c|c|c|c|c|c|c|}
\hline
\diagbox{Name}{Time} & TNN  & Laplace & $S_{wp}(0.9)$ &$p$-TRPCA & TNF & TNF$+$ \\ \hline
``boat''  & 13.7680  & 11.9506   & 8.6724 &36.3645  & 16.8582  & 16.8190   \\ \hline
``house''  & 13.5732  & 13.0155   & 8.1021 &36.4282  & 12.3516  & 16.8545   \\ \hline
``seabeach''  & 13.4972  & 12.2643   & 8.3995& 32.7584  & 12.2160  & 17.2248   \\ \hline
``bicycle''  & 14.0821  & 12.1993   & 8.4048&37.6260   & 13.3229  & 17.3746   \\ \hline
``brook''  & 14.2004  & 13.7930   & 7.9922& 34.4966  & 14.7146  & 17.1482   \\ \hline
\end{tabular}
\end{table}
The TNF algorithm generally runs faster than $p$-TRPCA and TNF$+$, but slower than Laplace and $S_{wp}(0.9)$.  Although TNF+ is slightly slower than TNF, it is still faster than PSTNN and $p$-TRPCA and is competitive with Laplace in some cases.

\begin{table}
\centering
\caption{Quantitative comparisons of denoising results. }
\label{table:TRPCA}
\scalebox{0.85}{
\begin{tabular}{llllllllll}
\hline
Image   & Index  & observed  & TNN  & Laplace  & $S_{wp}(0.9)$  & $p$-TRPCA & TNF & TNF$+$  \\ 
\hline
``boat'' & \begin{tabular}[c]{@{}l@{}}PSNR\\ SSIM\end{tabular} & \begin{tabular}[c]{@{}l@{}}15.5721\\ 0.4187\end{tabular} & \begin{tabular}[c]{@{}l@{}}28.6729\\ 0.9394\end{tabular} & \begin{tabular}[c]{@{}l@{}}29.1380\\ 0.9365\end{tabular}  & \begin{tabular}[c]{@{}l@{}}29.7413\\ 0.9492 \end{tabular}& \begin{tabular}[c]{@{}l@{}}29.5749\\ 0.9562\end{tabular} & \begin{tabular}[c]{@{}l@{}}29.9560\\ 0.9547\end{tabular} & \begin{tabular}[c]{@{}l@{}}\textbf{29.9658}\\ \textbf{0.9625}\end{tabular}                                                \\ \hline
``house''  & \begin{tabular}[c]{@{}l@{}}PSNR\\ SSIM\end{tabular} & \begin{tabular}[c]{@{}l@{}}15.4087\\ 0.6319\end{tabular} & \begin{tabular}[c]{@{}l@{}}24.9451\\ 0.9379\end{tabular} & \begin{tabular}[c]{@{}l@{}}25.2512\\ 0.9409\end{tabular}  & \begin{tabular}[c]{@{}l@{}}26.1093\\ 0.9463\end{tabular}& \begin{tabular}[c]{@{}l@{}}26.1068\\ 0.9354\end{tabular} & \begin{tabular}[c]{@{}l@{}}\textbf{26.3986}\\ 0.9515\end{tabular} & \begin{tabular}[c]{@{}l@{}} 26.1202\\ \textbf{0.9548}\end{tabular}                                                \\ \hline
``seabeach''& \begin{tabular}[c]{@{}l@{}}PSNR\\ SSIM\end{tabular} & \begin{tabular}[c]{@{}l@{}}16.1142\\ 0.3389\end{tabular} & \begin{tabular}[c]{@{}l@{}}31.8564\\ 0.9552\end{tabular} & \begin{tabular}[c]{@{}l@{}}32.6281\\ 0.9565\end{tabular} & \begin{tabular}[c]{@{}l@{}}33.5045\\ 0.9660\end{tabular}& \begin{tabular}[c]{@{}l@{}}32.7584\\ 0.9653\end{tabular} & \begin{tabular}[c]{@{}l@{}}33.2951\\ 0.9653\end{tabular} & \begin{tabular}[c]{@{}l@{}}\textbf{33.7234}\\ \textbf{0.9712}\end{tabular}                                                \\ \hline
``bicycle''  & \begin{tabular}[c]{@{}l@{}}PSNR\\ SSIM\end{tabular} & \begin{tabular}[c]{@{}l@{}}15.0582\\ 0.3607\end{tabular}  & \begin{tabular}[c]{@{}l@{}}24.2996\\ 0.9159\end{tabular} & \begin{tabular}[c]{@{}l@{}}24.5643\\ 0.9082\end{tabular} &\begin{tabular}[c]{@{}l@{}} 25.3992\\ 0.9324\end{tabular}& \begin{tabular}[c]{@{}l@{}}24.9952\\ 0.9174\end{tabular} & \begin{tabular}[c]{@{}l@{}}\textbf{25.6698}\\ 0.9350\end{tabular} & \begin{tabular}[c]{@{}l@{}}25.6359\\ \textbf{0.9471}\end{tabular}                                                 \\ \hline
``brook''  & \begin{tabular}[c]{@{}l@{}}PSNR\\ SSIM\end{tabular} & \begin{tabular}[c]{@{}l@{}}15.5316\\ 0.5614\end{tabular} & \begin{tabular}[c]{@{}l@{}}23.9839\\ 0.9013\end{tabular} & \begin{tabular}[c]{@{}l@{}}24.6561\\ 0.9074\end{tabular} & \begin{tabular}[c]{@{}l@{}}24.6090\\ 0.9076\end{tabular}& \begin{tabular}[c]{@{}l@{}}23.1515\\ 0.8896\end{tabular} & \begin{tabular}[c]{@{}l@{}}\textbf{25.1829}\\ 0.9263\end{tabular} & \begin{tabular}[c]{@{}l@{}}24.9921\\ \textbf{0.9331}\end{tabular} \\ \hline
average  & \begin{tabular}[c]{@{}l@{}}PSNR\\ SSIM\end{tabular} & \begin{tabular}[c]{@{}l@{}}15.5370\\ 0.4623\end{tabular} & \begin{tabular}[c]{@{}l@{}}26.7516\\ 0.9299\end{tabular} & \begin{tabular}[c]{@{}l@{}}27.2475\\ 0.9299\end{tabular} & \begin{tabular}[c]{@{}l@{}}27.8727\\ 0.9403\end{tabular}& \begin{tabular}[c]{@{}l@{}}27.5476\\ 0.9341\end{tabular} & \begin{tabular}[c]{@{}l@{}}\textbf{28.1005}\\ 0.9466\end{tabular} & \begin{tabular}[c]{@{}l@{}}28.0875\\ \textbf{0.9537}\end{tabular} \\ \hline
\end{tabular}}
\end{table}

\begin{figure}
	\begin{center}
		\begin{tabular}{cccccc}   
			\hspace{-5mm}(a)\hspace{-3mm} & \includegraphics[width=2.2cm]{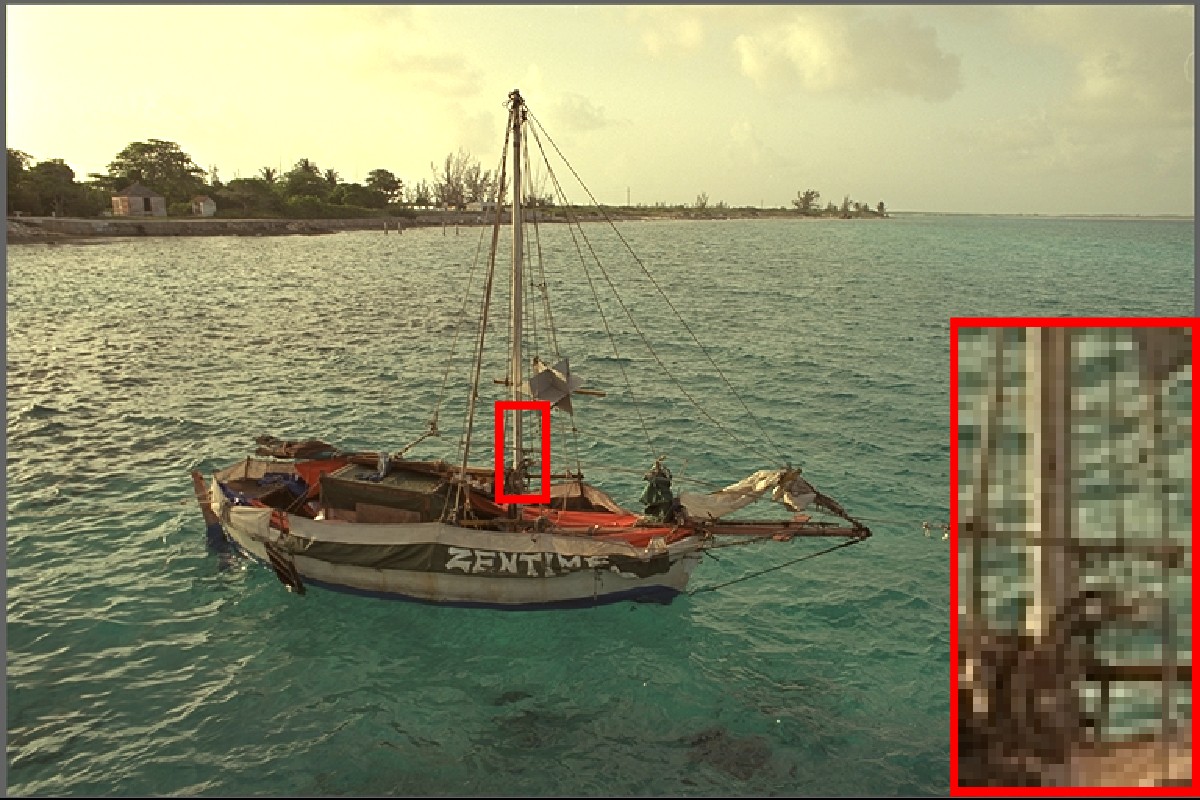} \hspace{-3.9mm} & 
			\includegraphics[width=2.2cm]{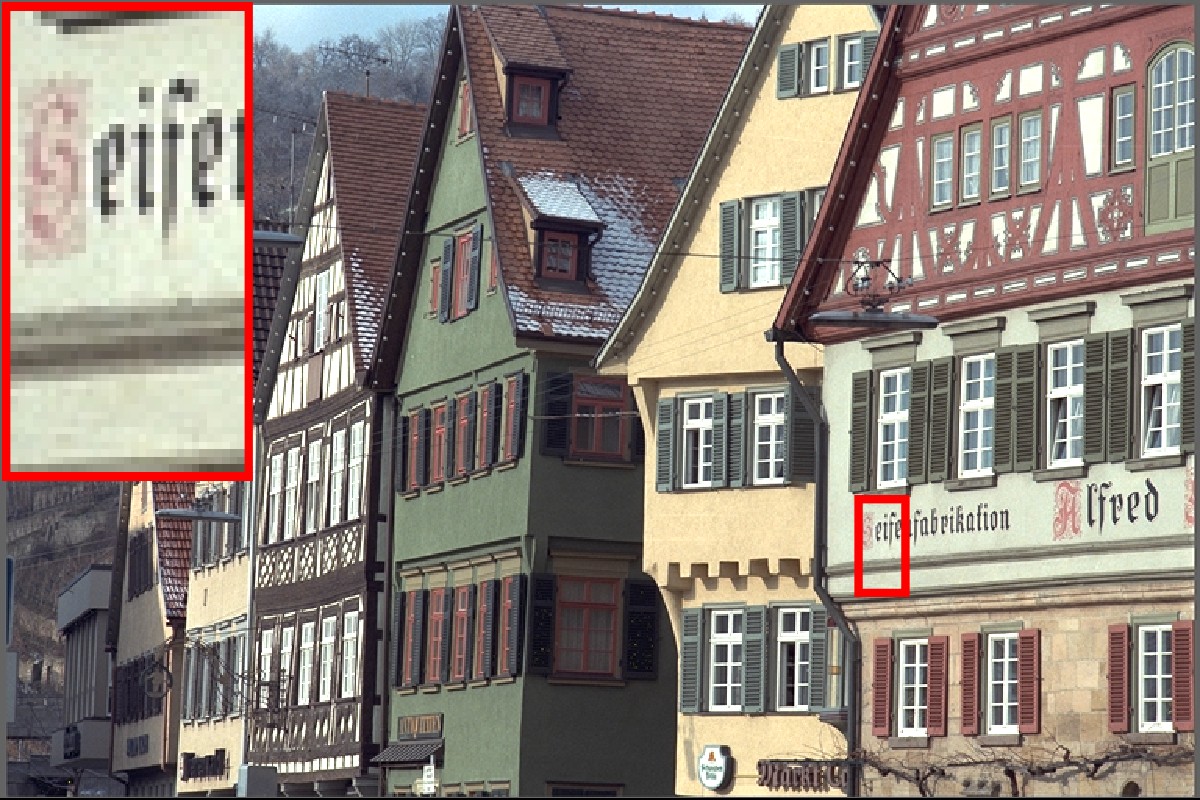} \hspace{-3.9mm} & 
			\includegraphics[width=2.2cm]{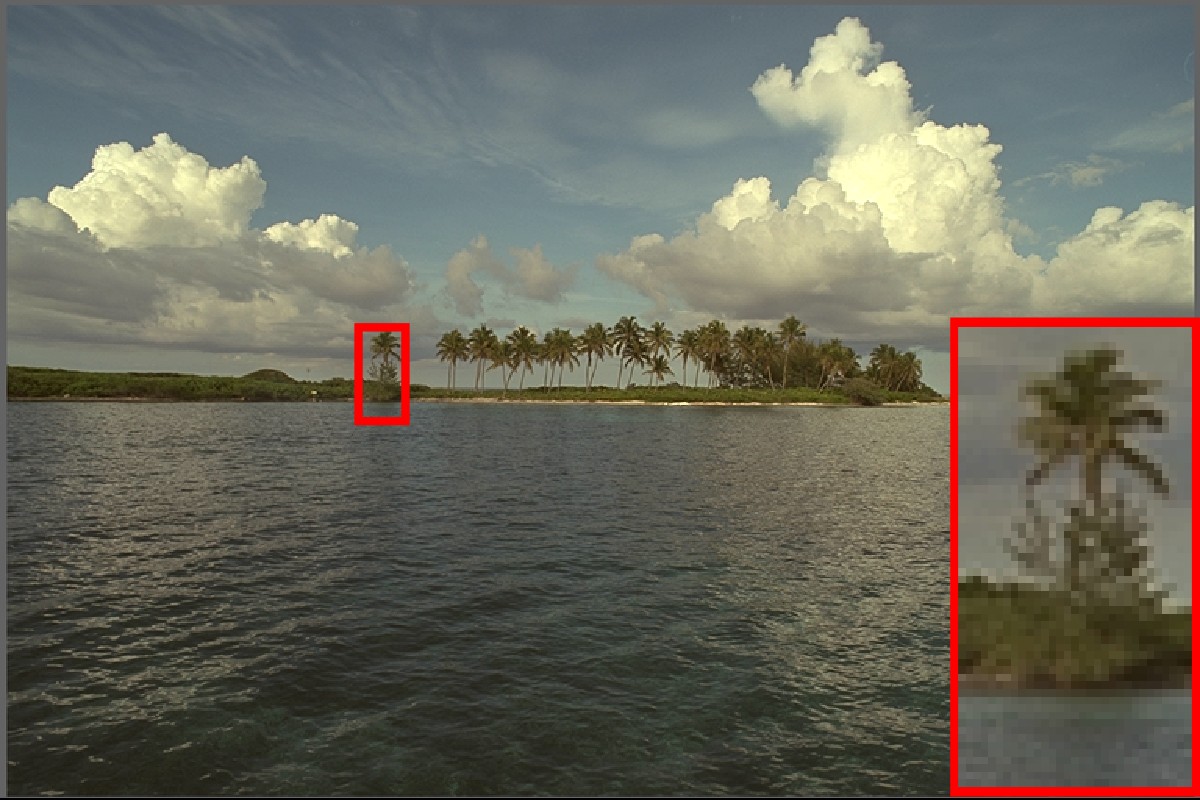} \hspace{-3.9mm} & 
			\includegraphics[width=2.2cm]{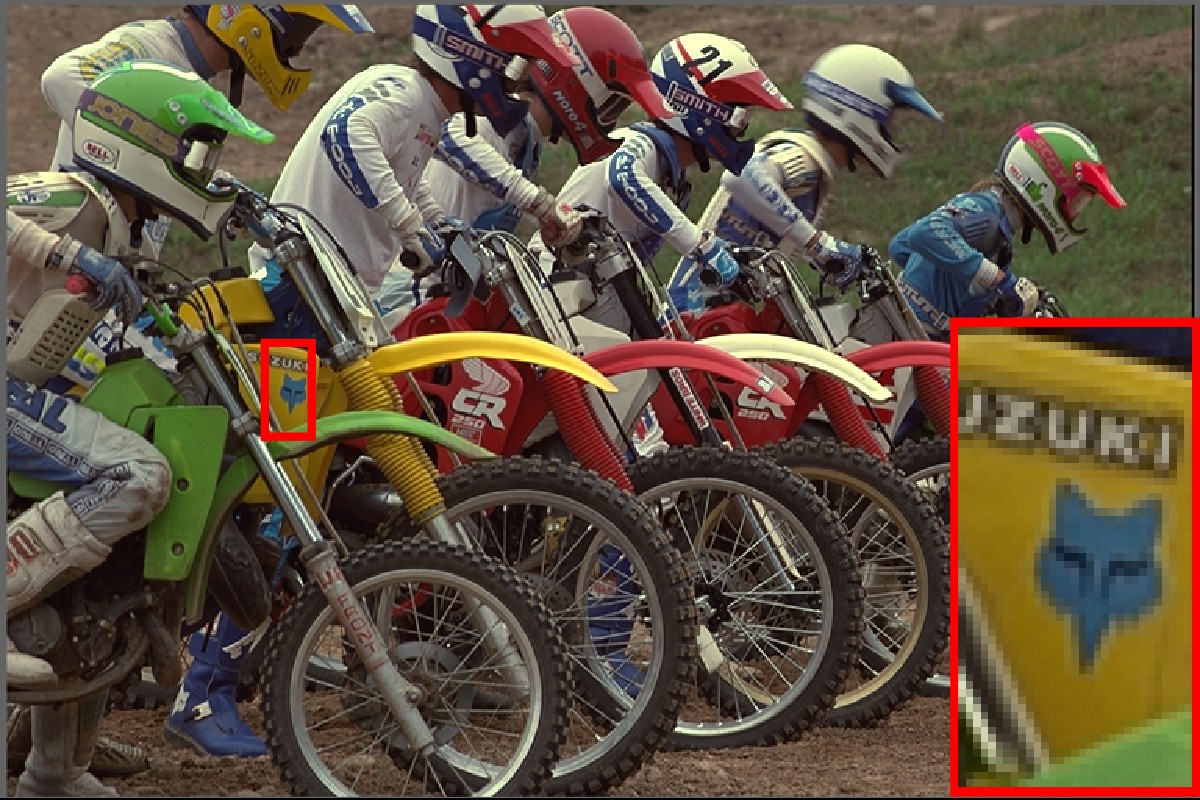} \hspace{-3.9mm} & 
			\includegraphics[width=2.2cm]{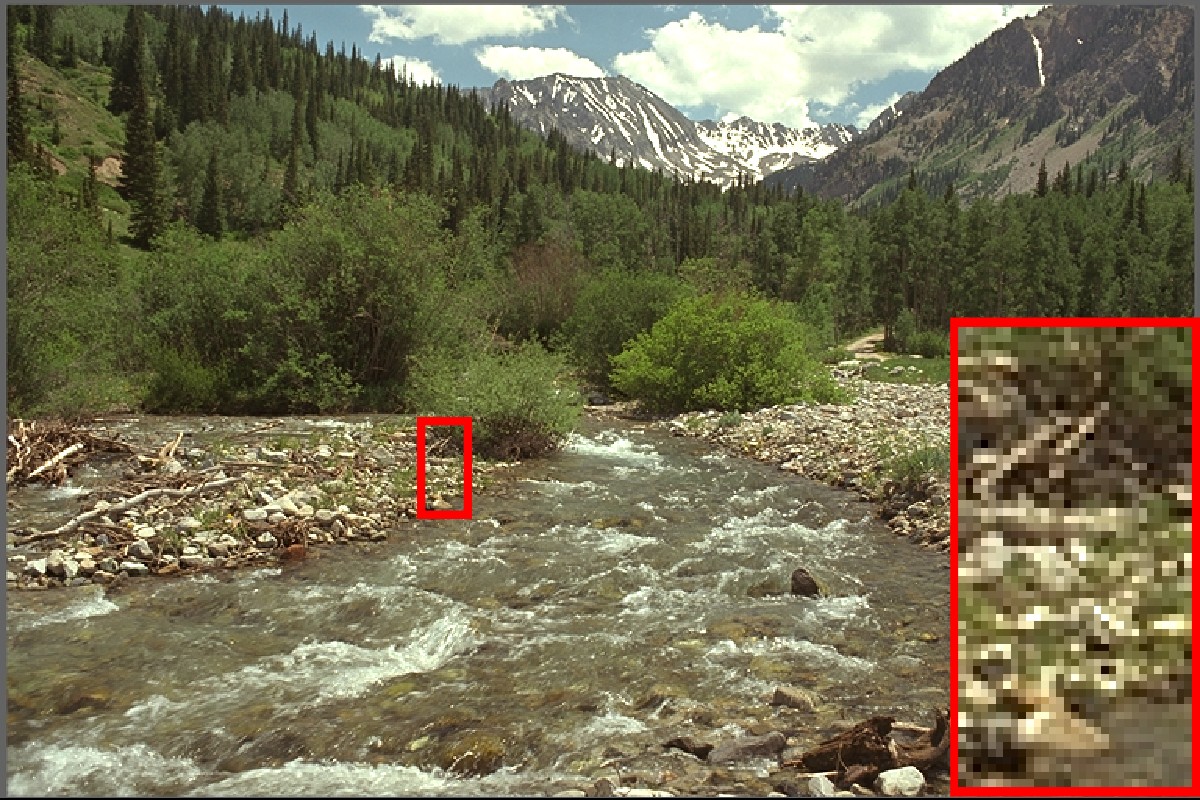} \hspace{-3.9mm} \\ 			
			\hspace{-5mm}(b)\hspace{-3mm} & \includegraphics[width=2.2cm]{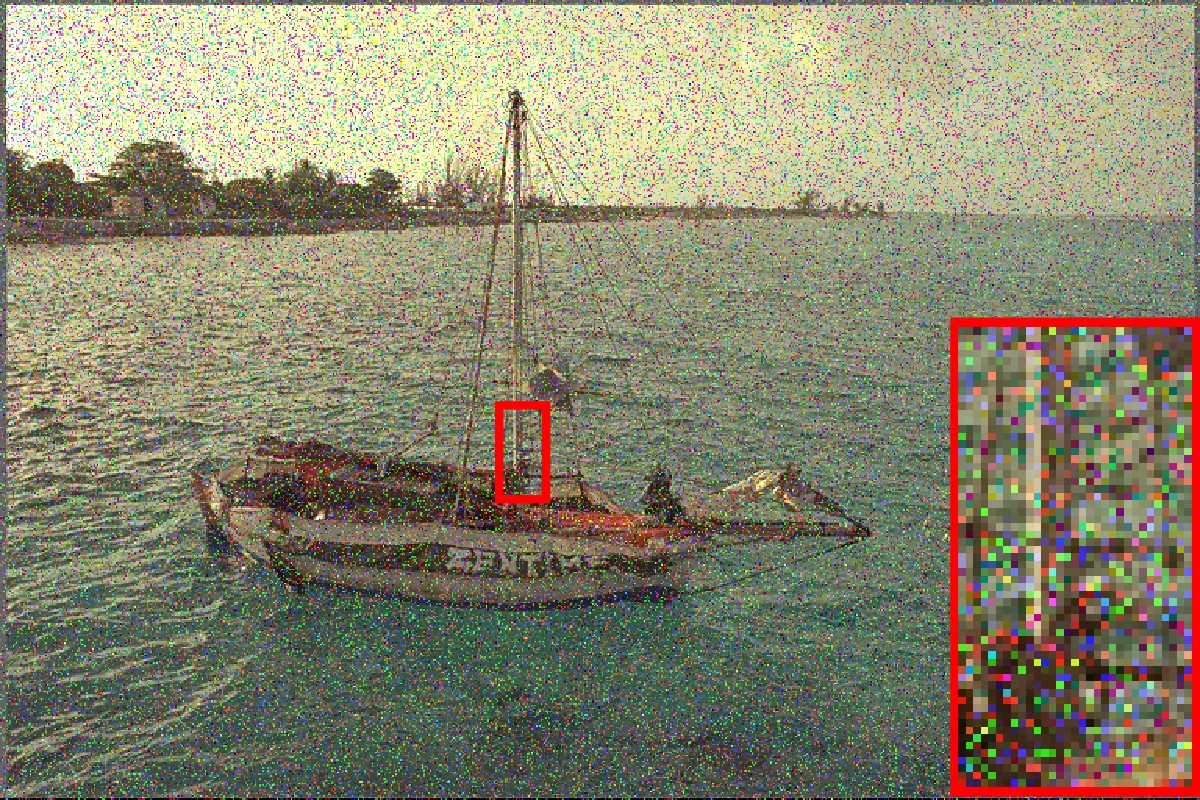} \hspace{-3.9mm} & 
			\includegraphics[width=2.2cm]{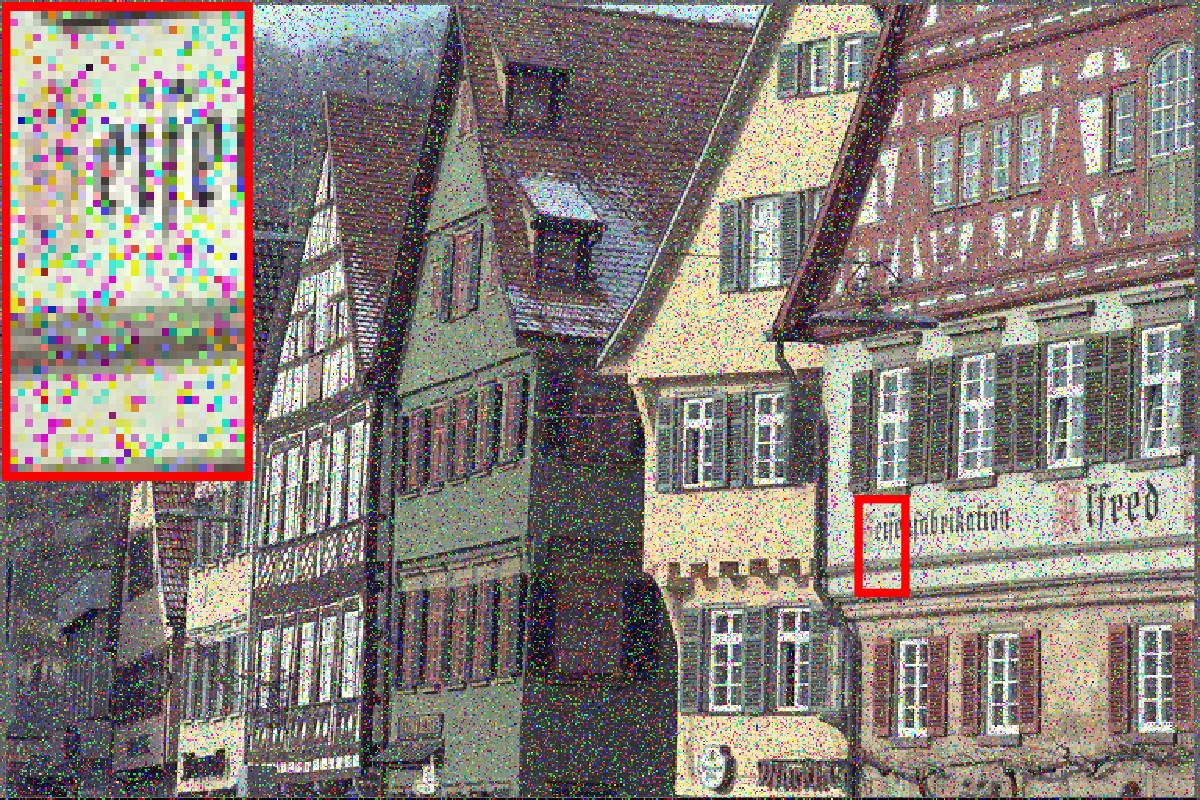} \hspace{-3.9mm} & 
			\includegraphics[width=2.2cm]{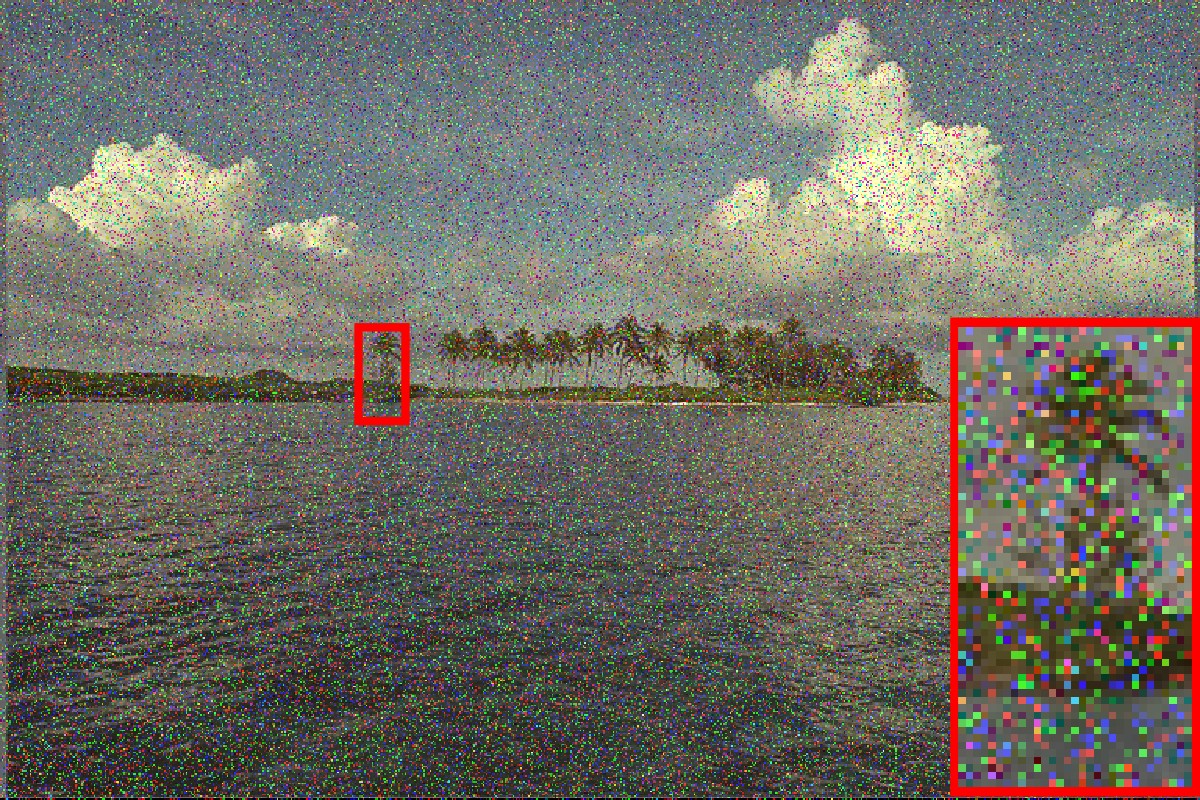} \hspace{-3.9mm} & 
			\includegraphics[width=2.2cm]{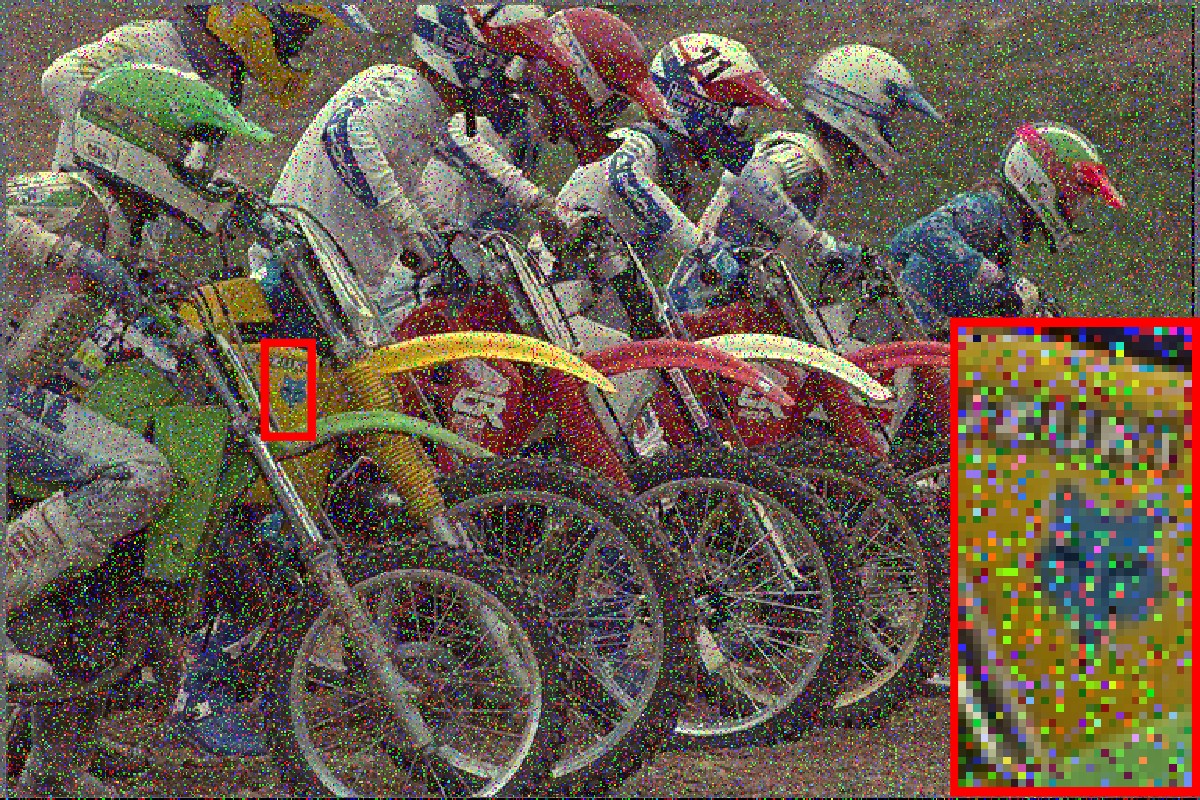} \hspace{-3.9mm} & 
			\includegraphics[width=2.2cm]{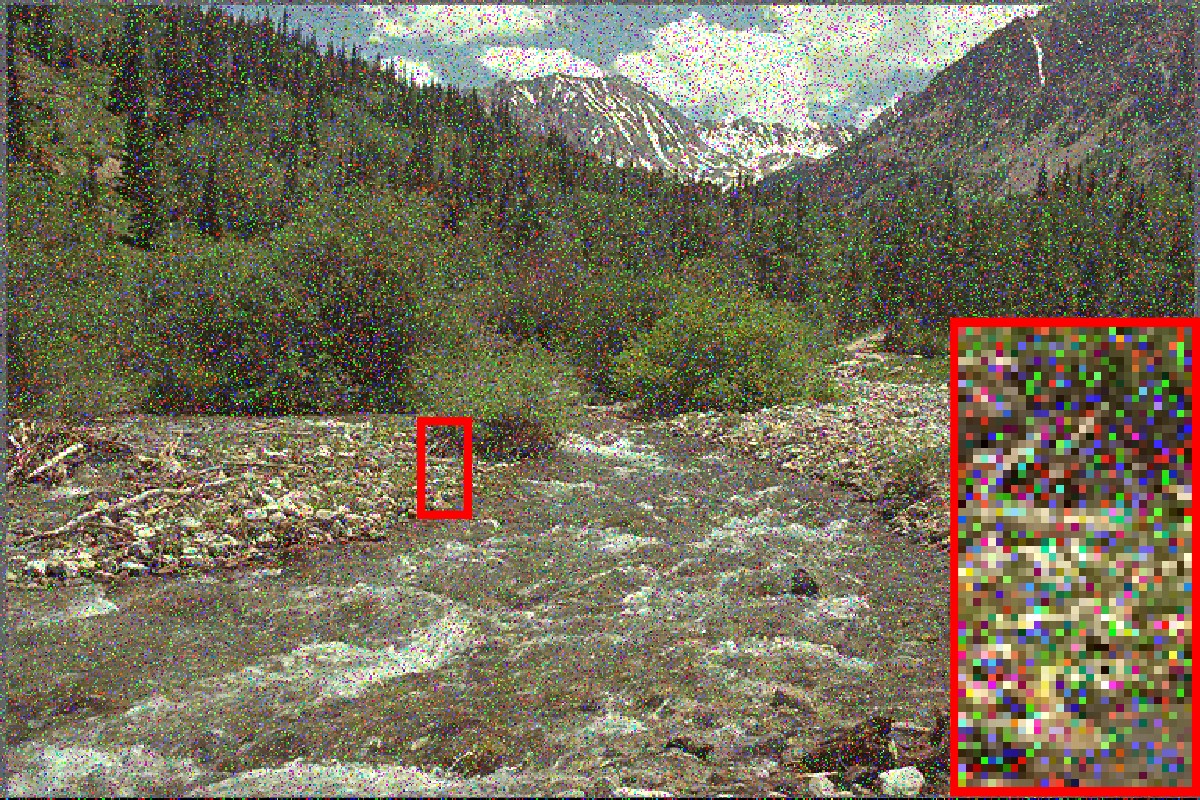} \hspace{-3.9mm} \\ 			
			\hspace{-5mm}(c)\hspace{-3mm} & \includegraphics[width=2.2cm]{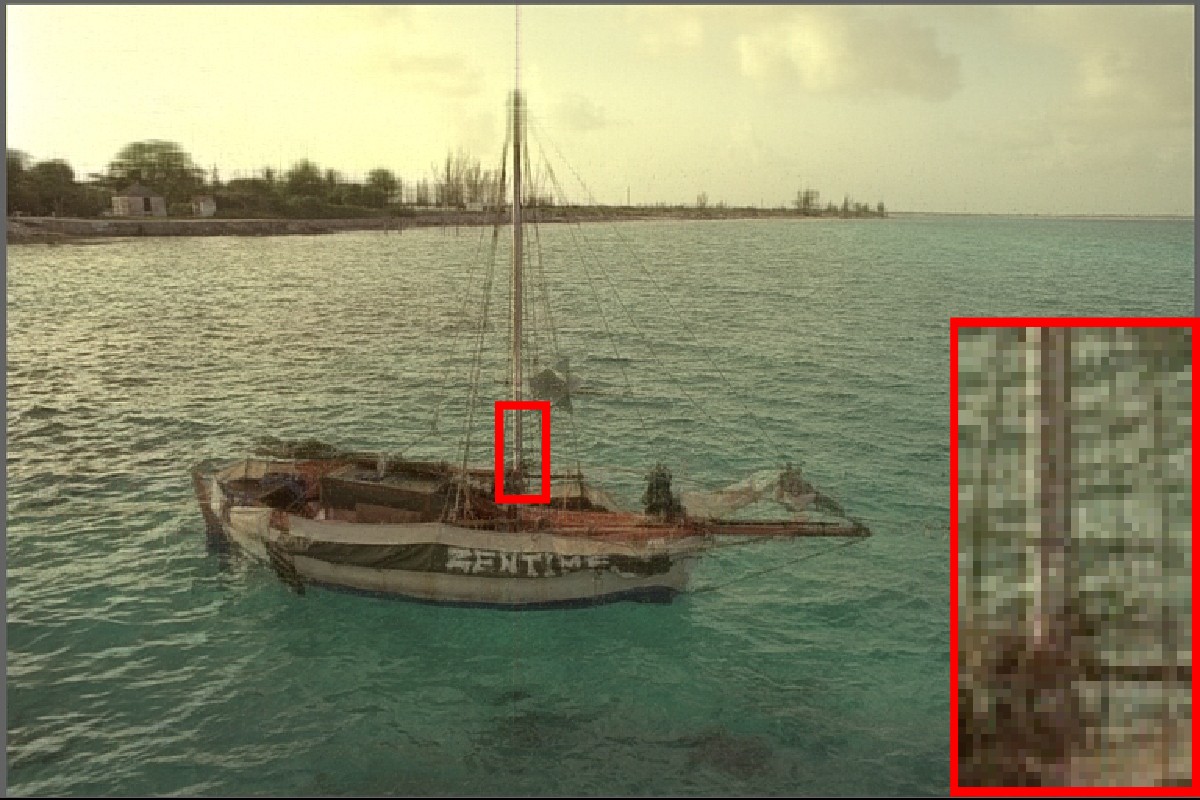} \hspace{-3.9mm} & 
			\includegraphics[width=2.2cm]{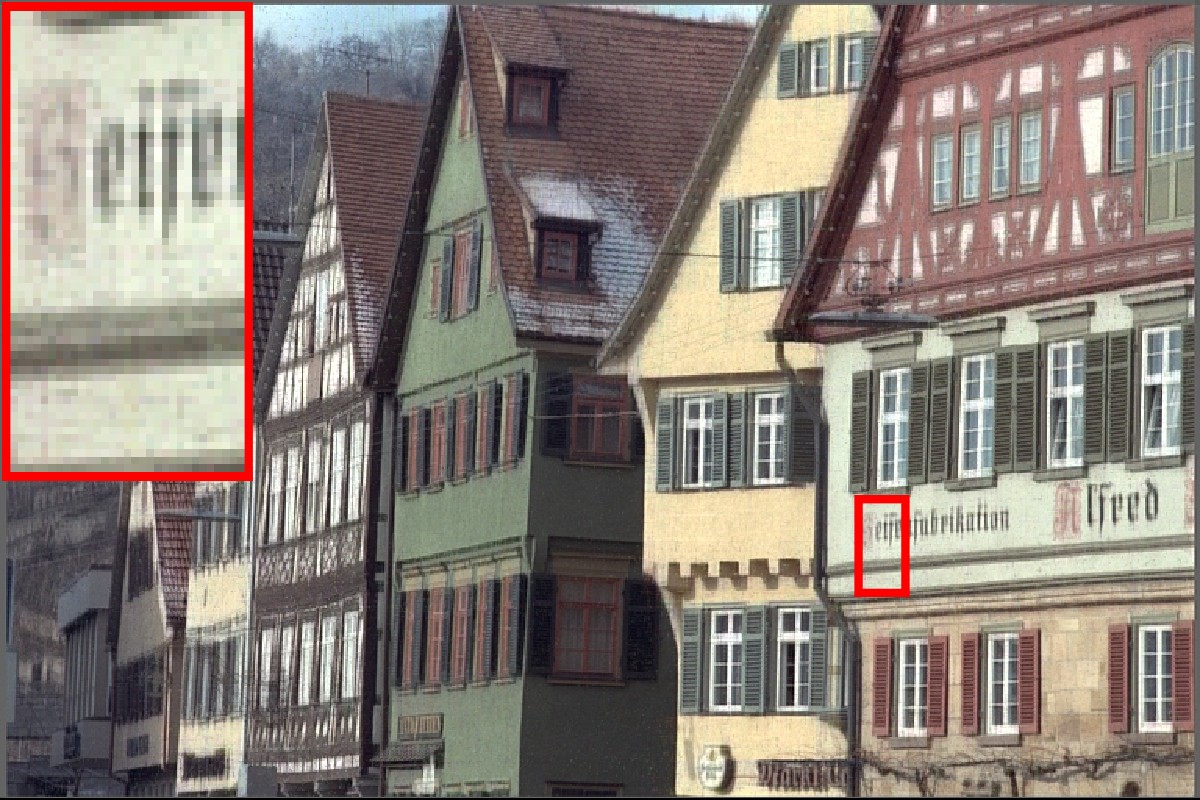} \hspace{-3.9mm} & 
			\includegraphics[width=2.2cm]{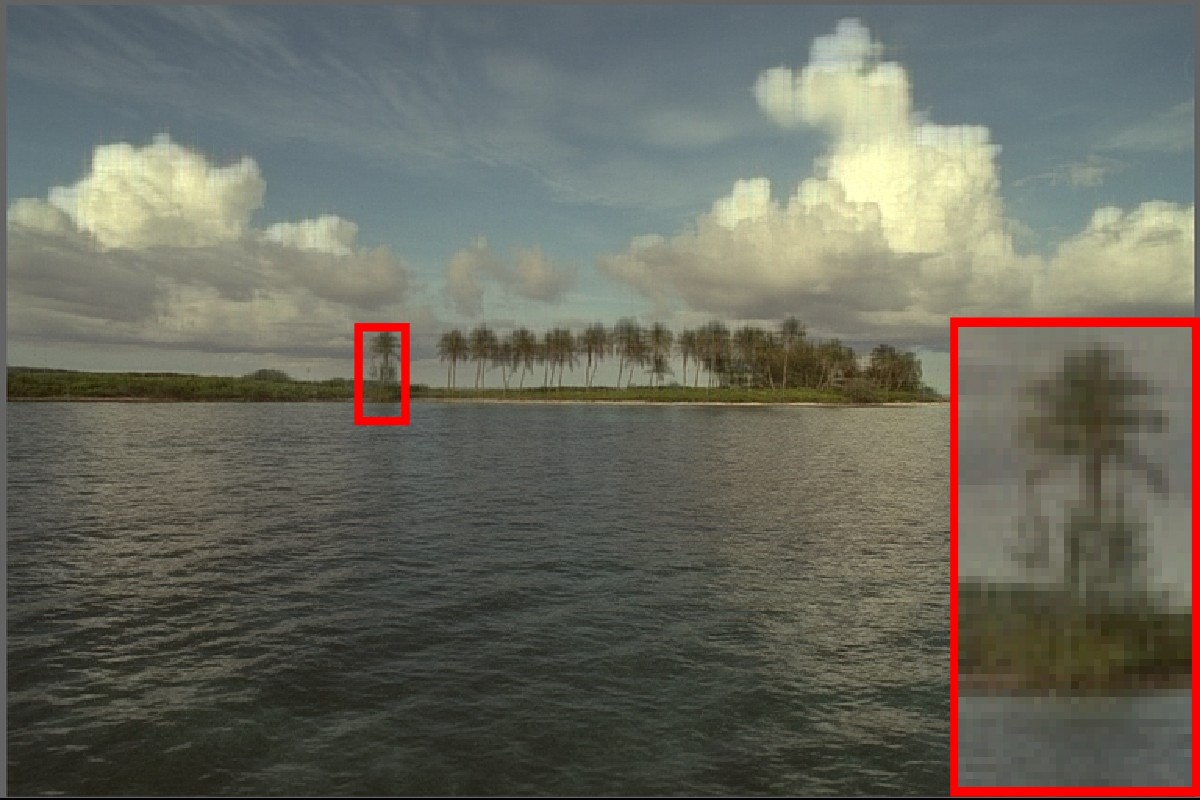} \hspace{-3.9mm} & 
			\includegraphics[width=2.2cm]{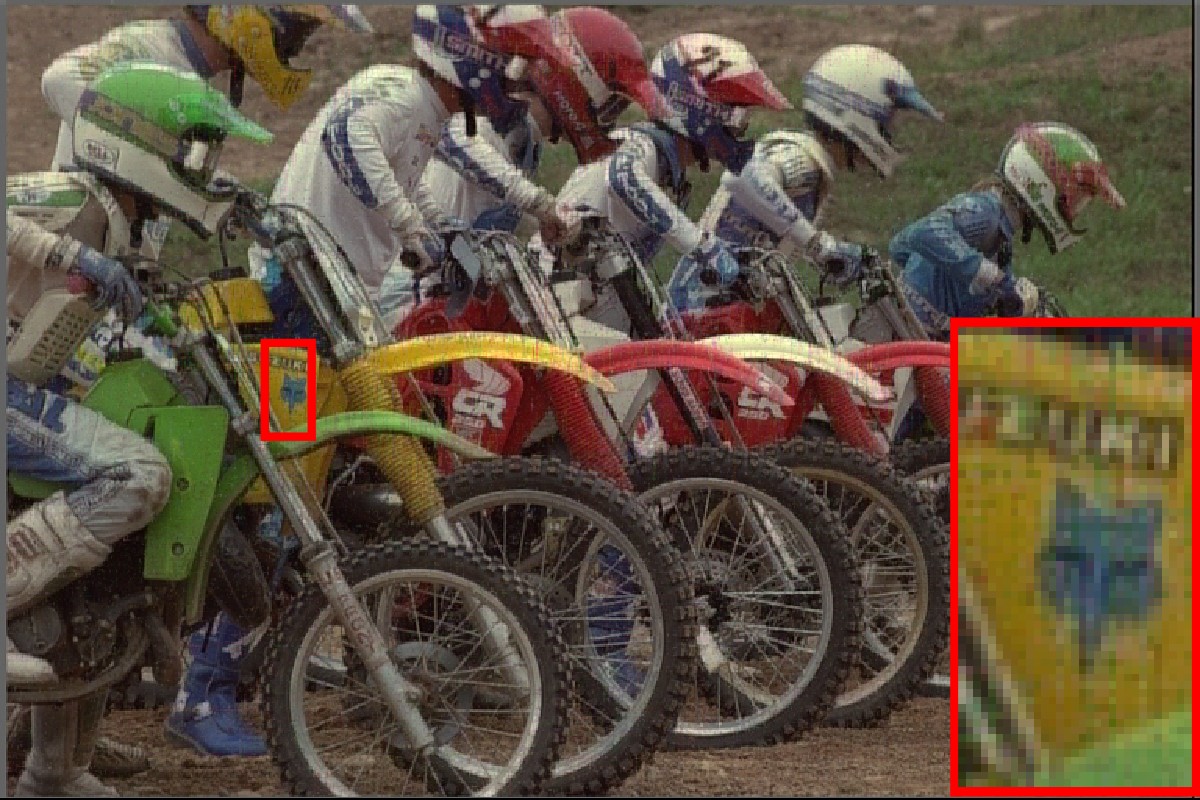} \hspace{-3.9mm} & 
			\includegraphics[width=2.2cm]{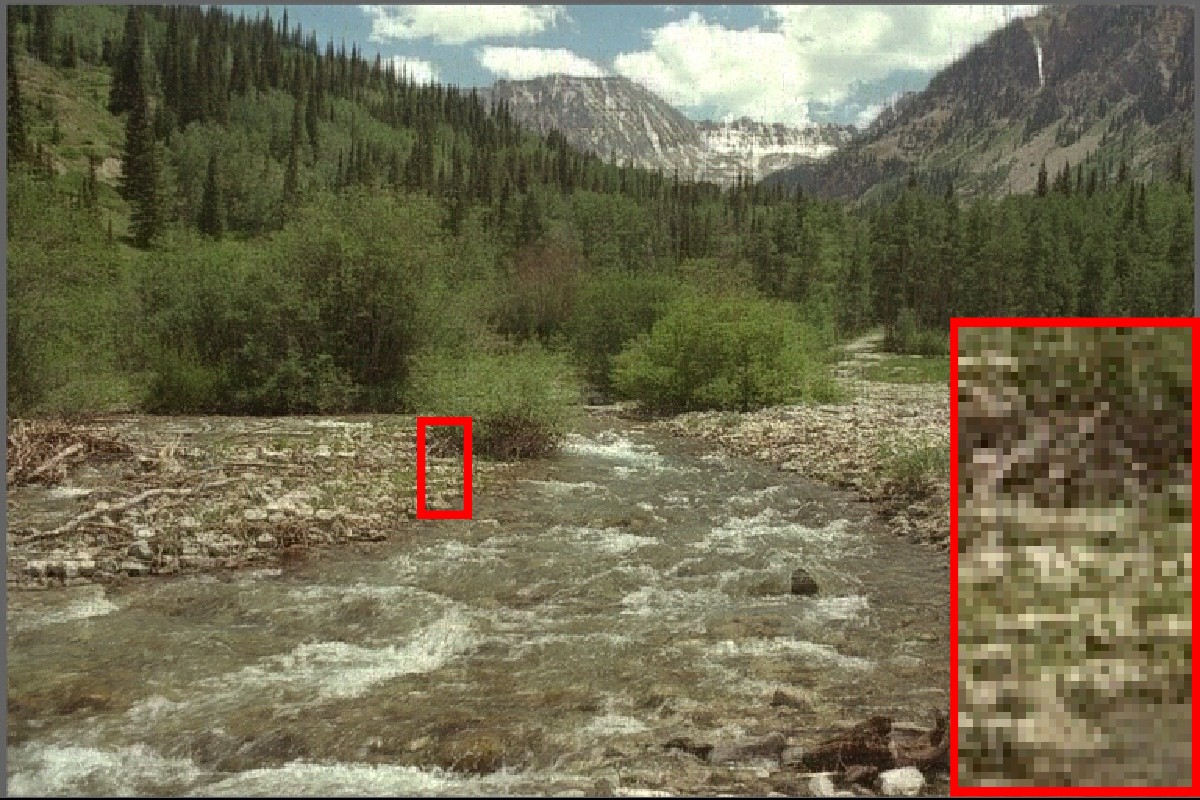} \hspace{-3.9mm} \\ 			
			\hspace{-5mm}(d)\hspace{-3mm} & \includegraphics[width=2.2cm]{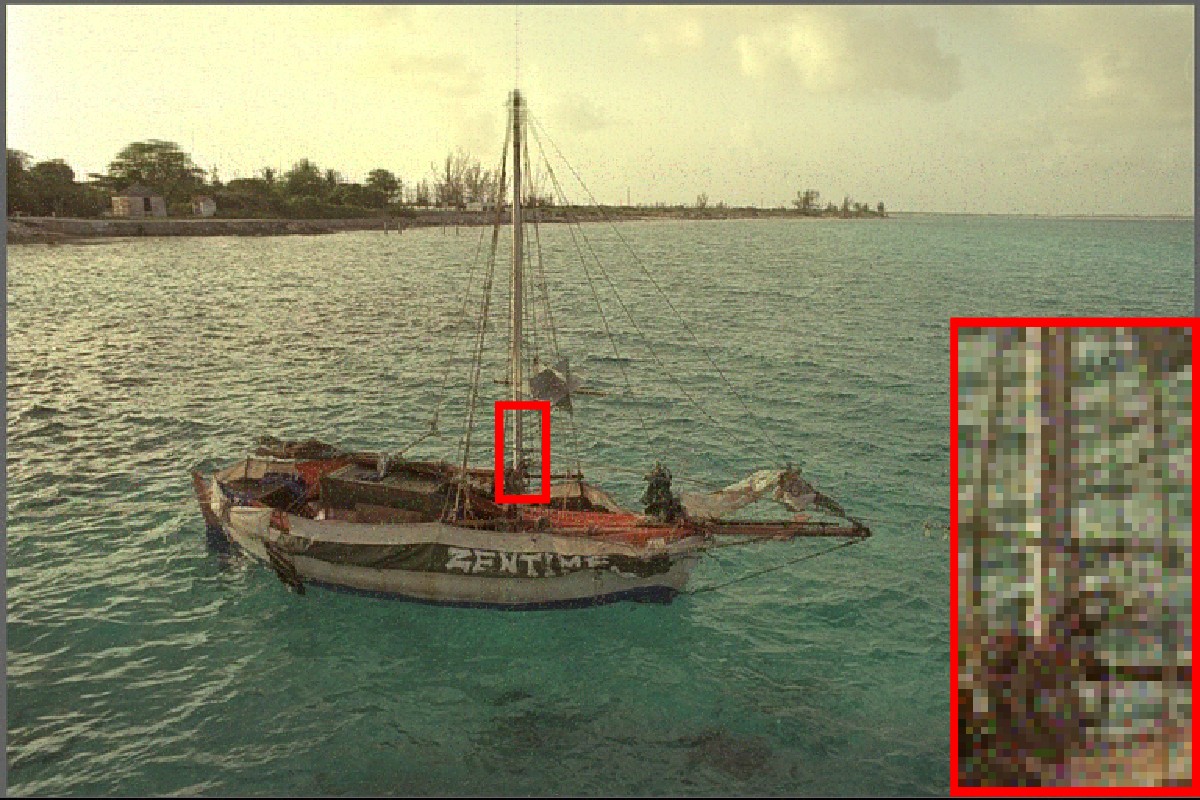} \hspace{-3.9mm} & 
			\includegraphics[width=2.2cm]{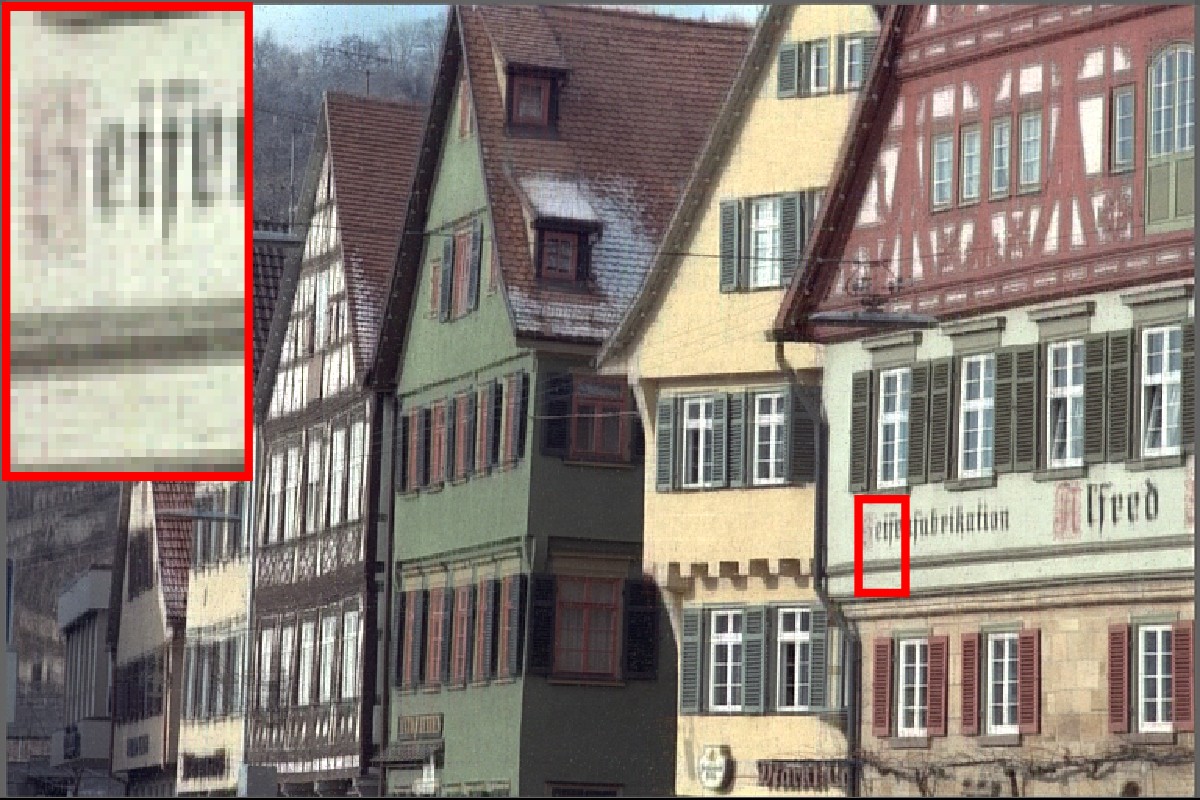} \hspace{-3.9mm} & 
			\includegraphics[width=2.2cm]{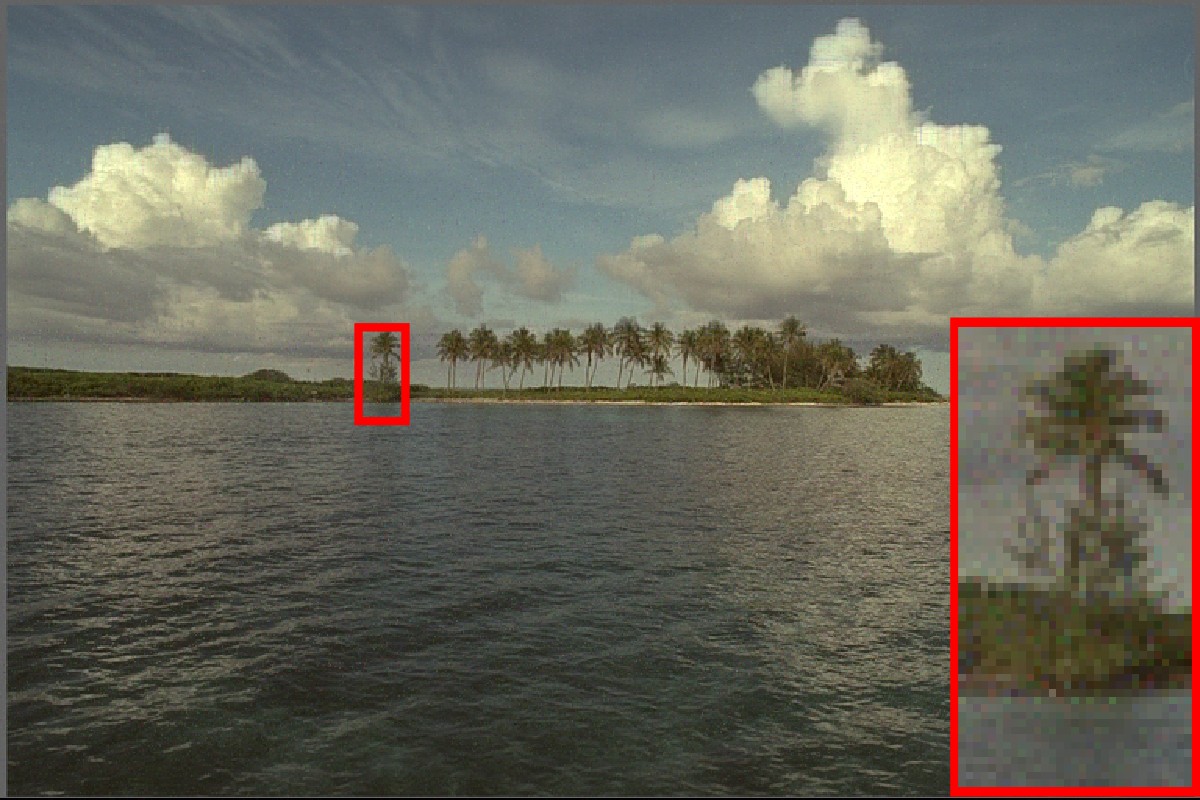} \hspace{-3.9mm} & 
			\includegraphics[width=2.2cm]{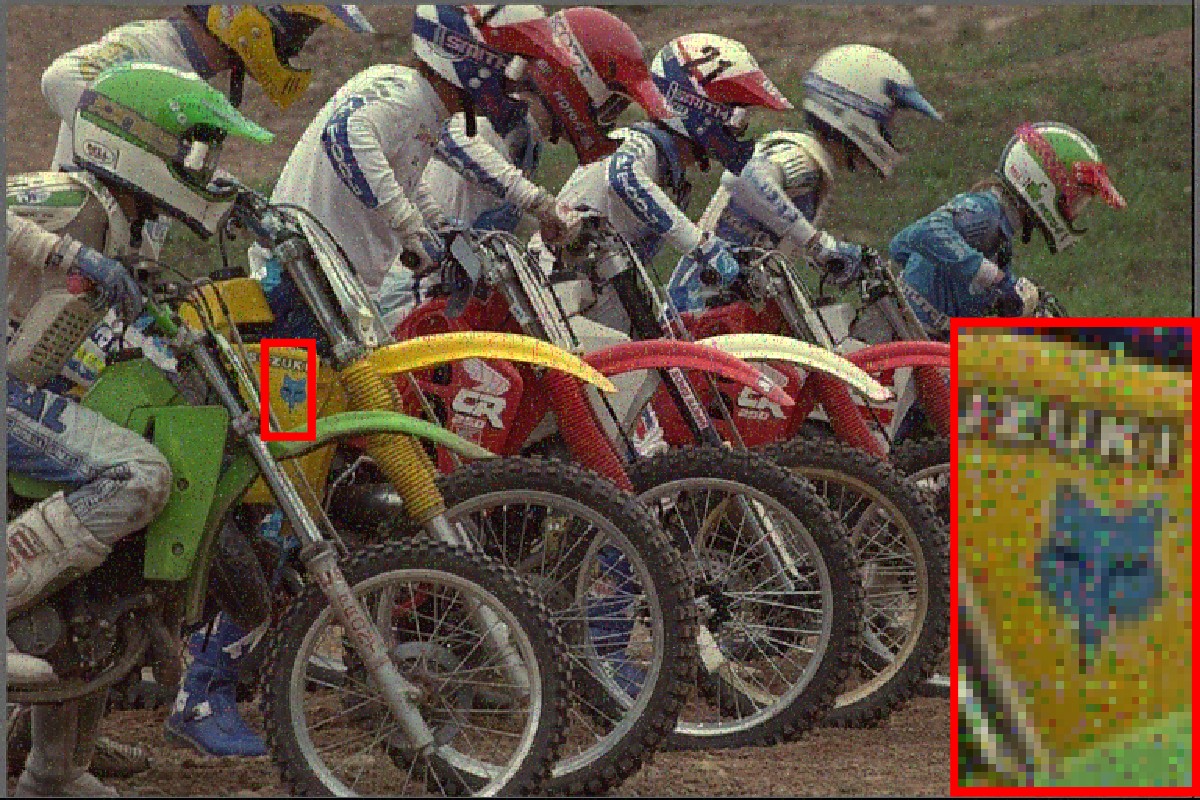} \hspace{-3.9mm} & 
			\includegraphics[width=2.2cm]{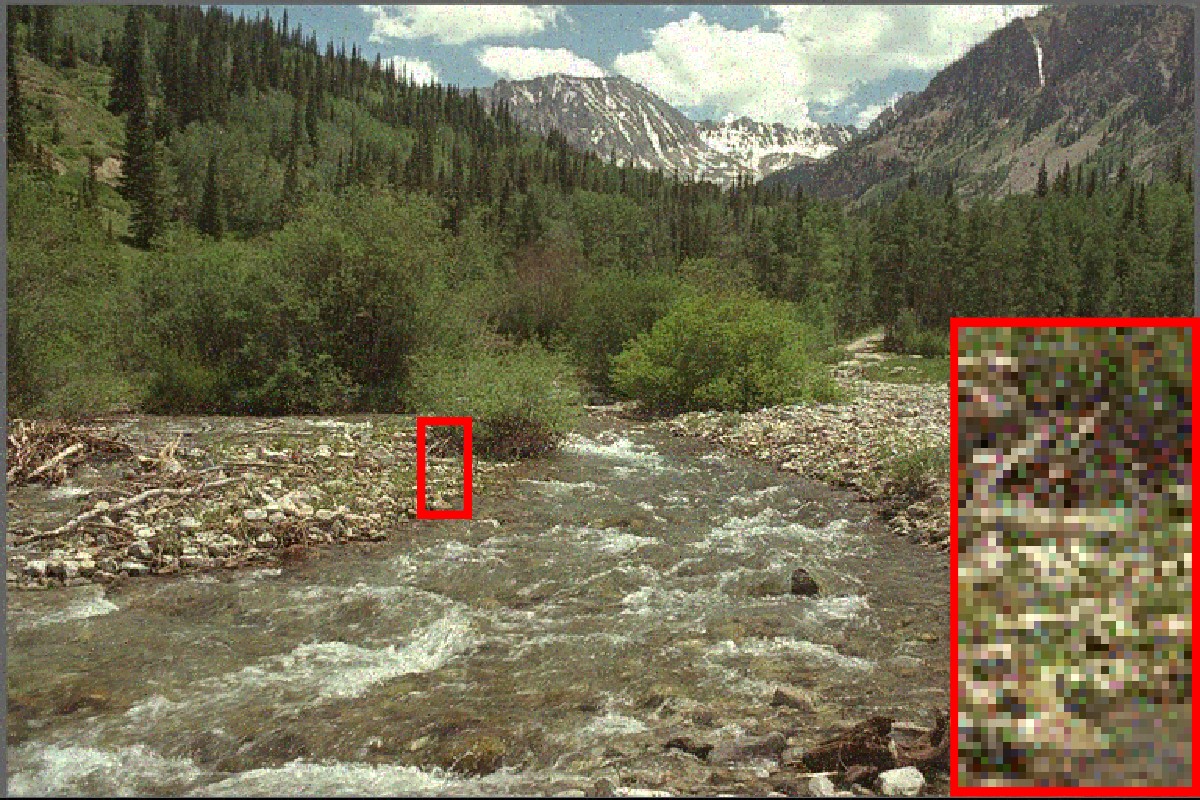} \hspace{-3.9mm} \\ 			
			\hspace{-5mm}(e)\hspace{-3mm} & \includegraphics[width=2.2cm]{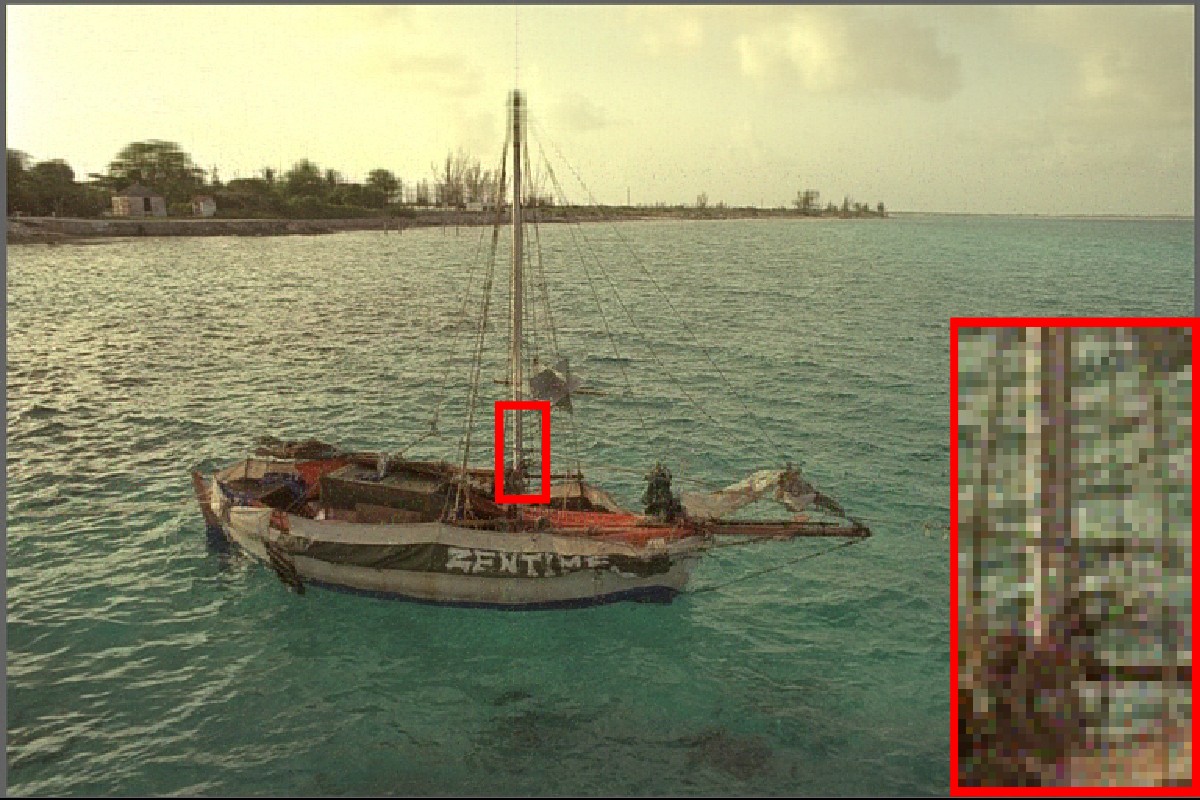} \hspace{-3.9mm} & 
			\includegraphics[width=2.2cm]{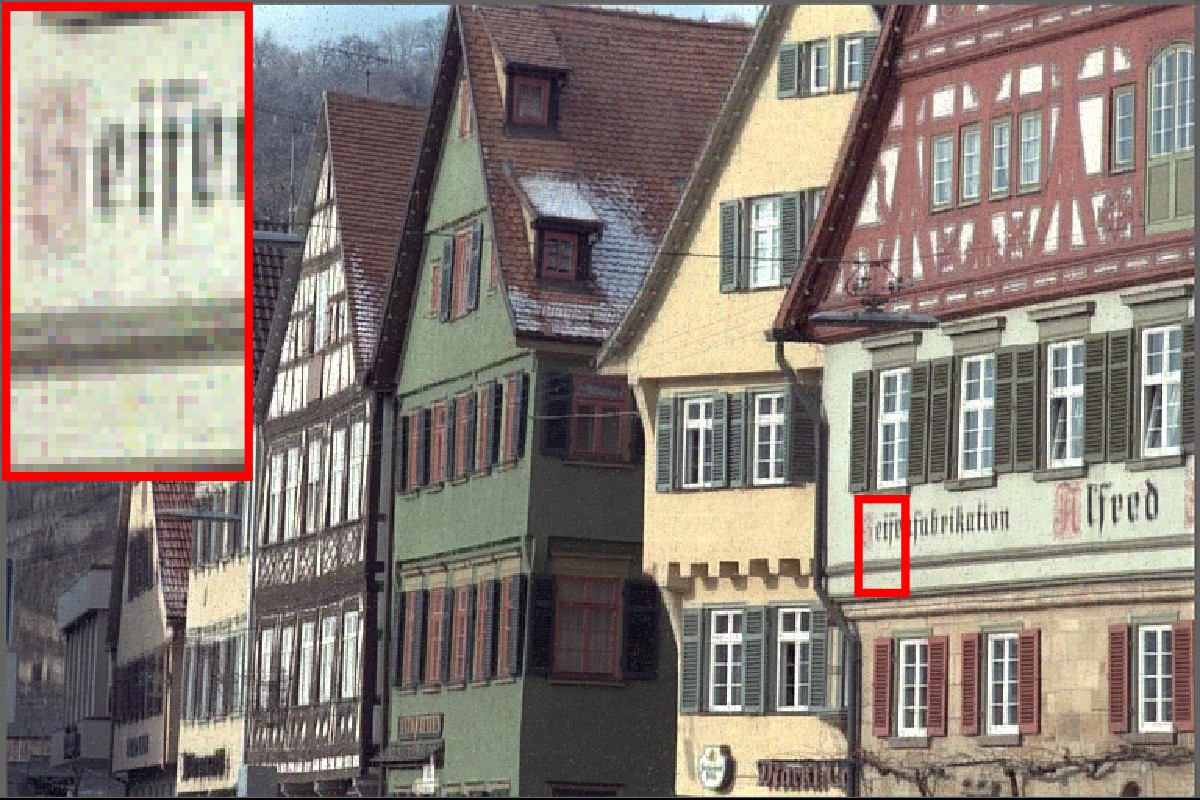} \hspace{-3.9mm} & 
			\includegraphics[width=2.2cm]{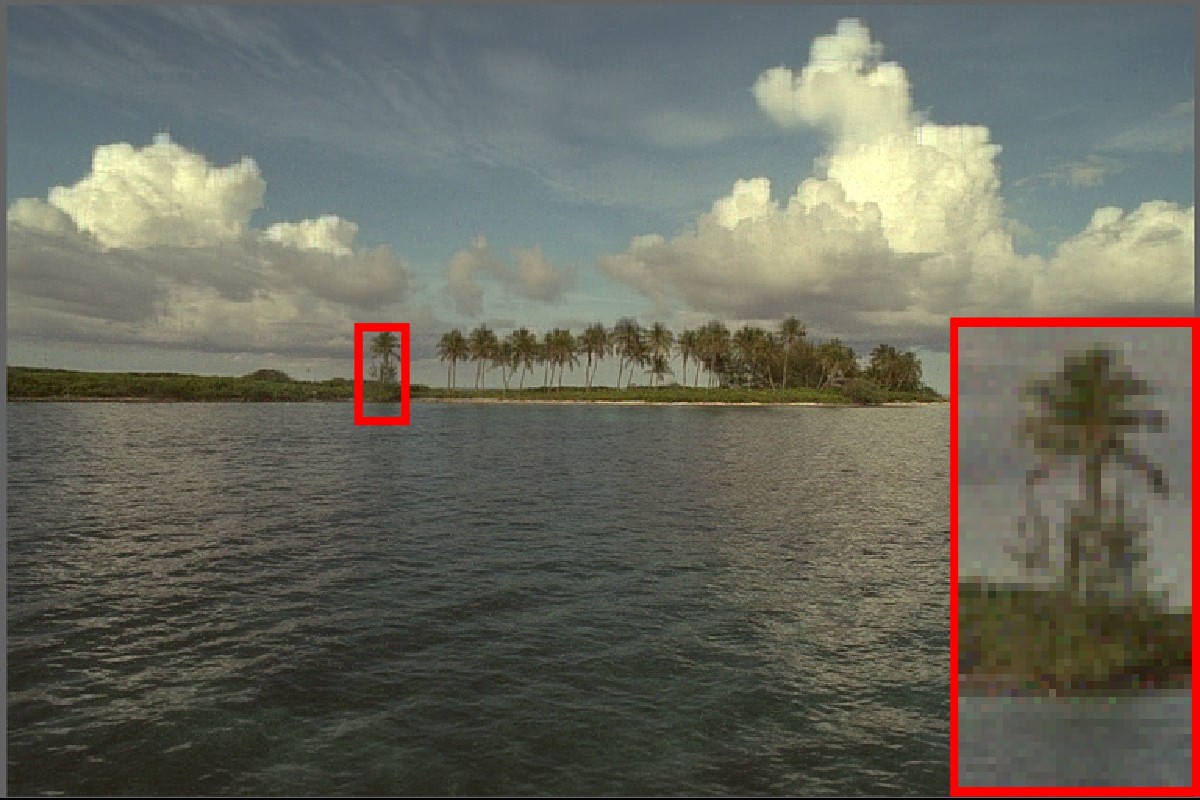} \hspace{-3.9mm} & 
			\includegraphics[width=2.2cm]{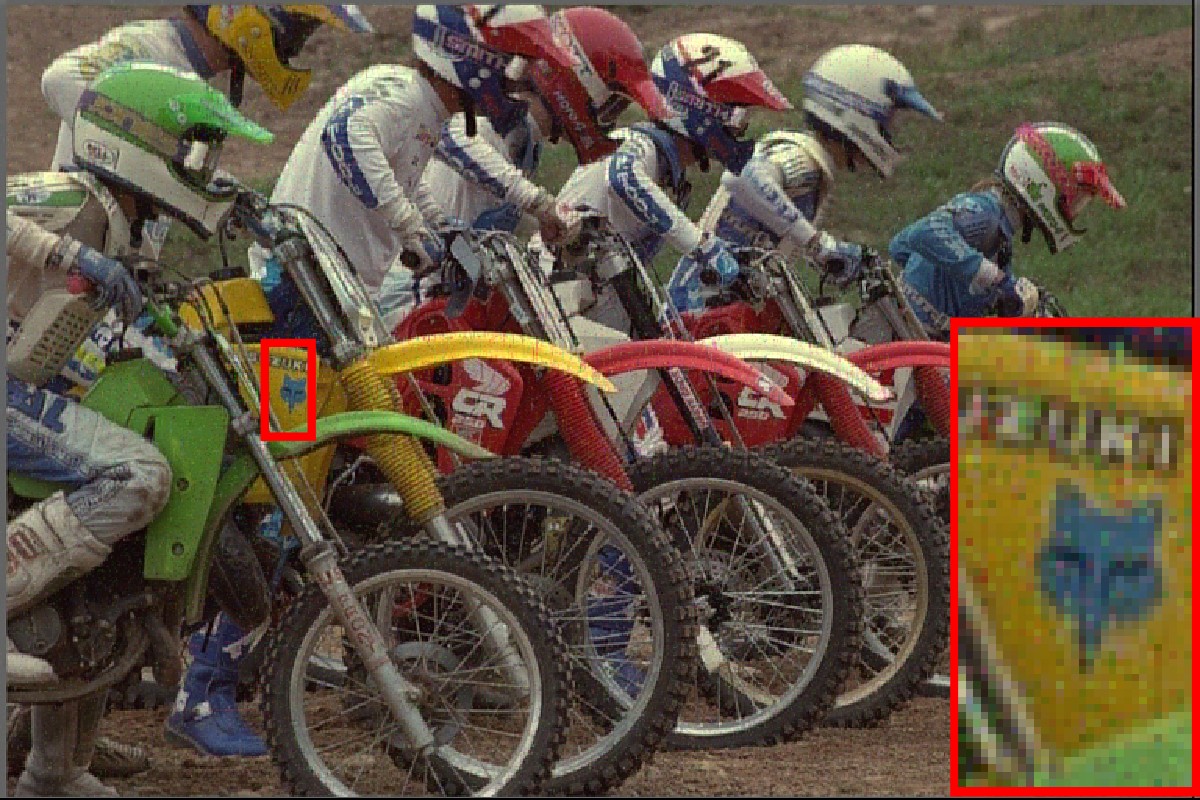} \hspace{-3.9mm} & 
			\includegraphics[width=2.2cm]{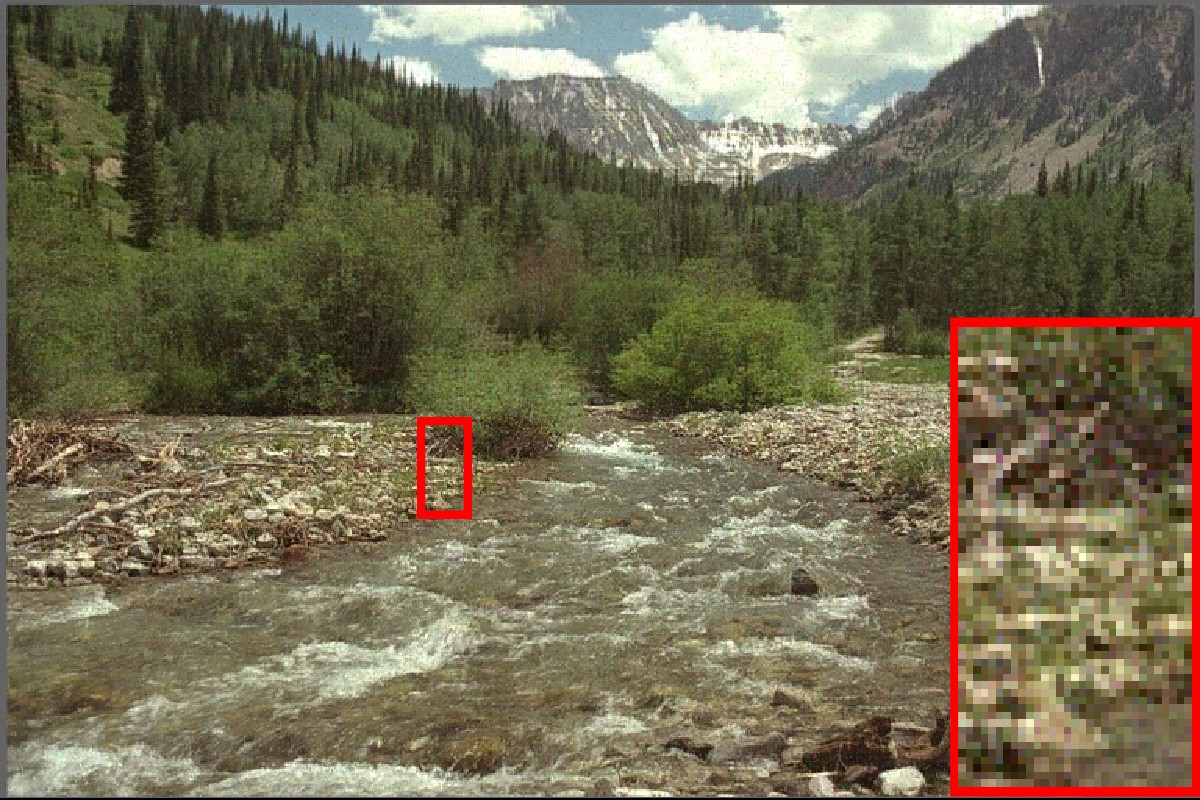} \hspace{-3.9mm} \\ 			
			\hspace{-5mm}(f)\hspace{-3mm} & \includegraphics[width=2.2cm]{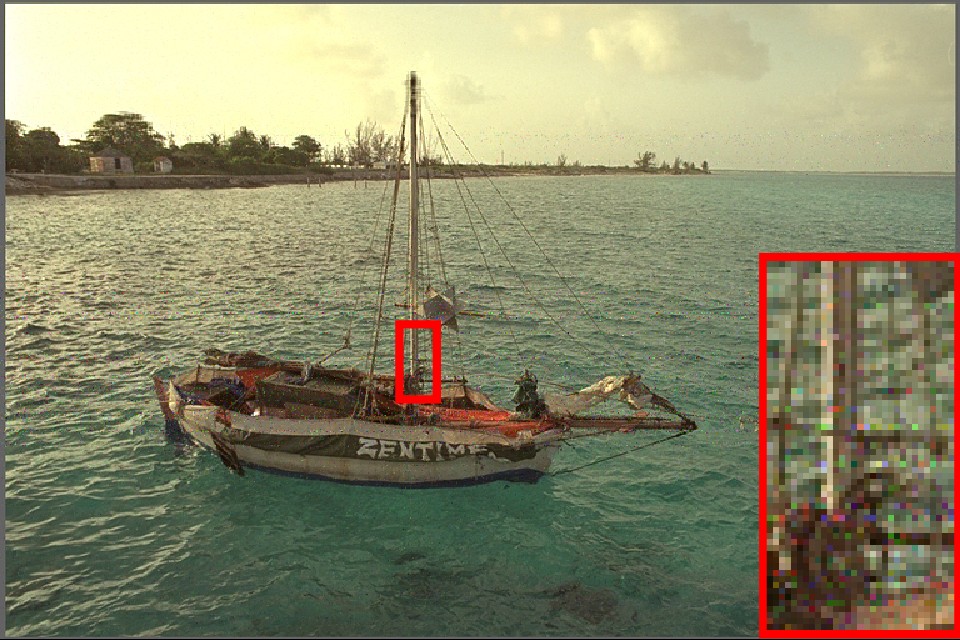} \hspace{-3.9mm} & 
			\includegraphics[width=2.2cm]{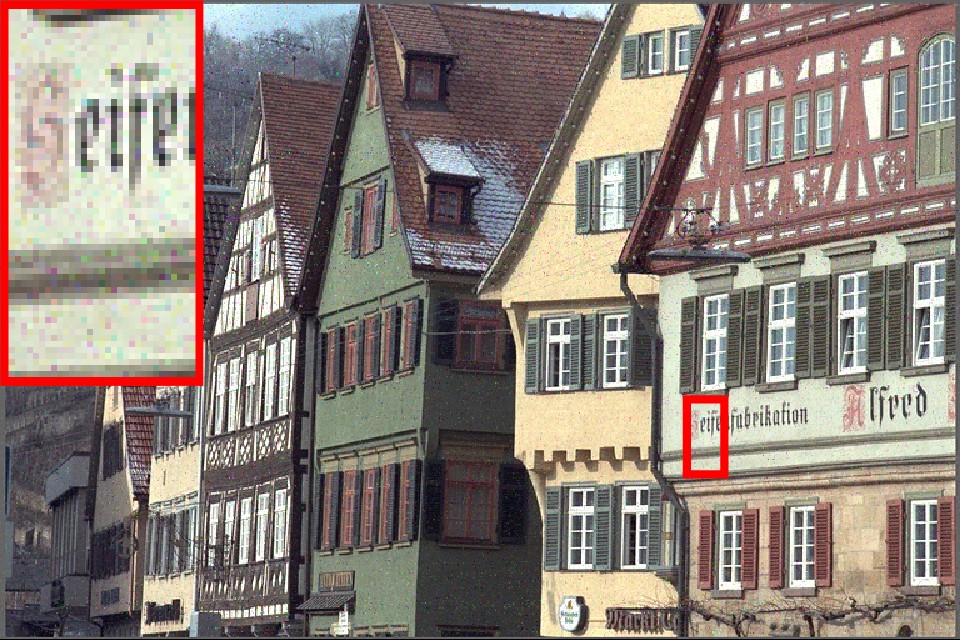} \hspace{-3.9mm} & 
			\includegraphics[width=2.2cm]{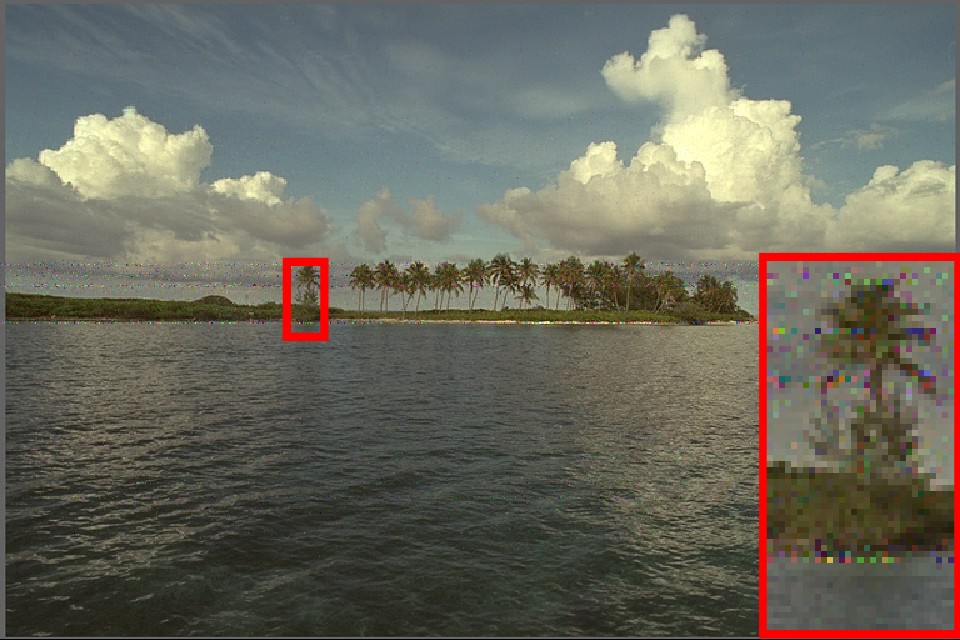} \hspace{-3.9mm} & 
			\includegraphics[width=2.2cm]{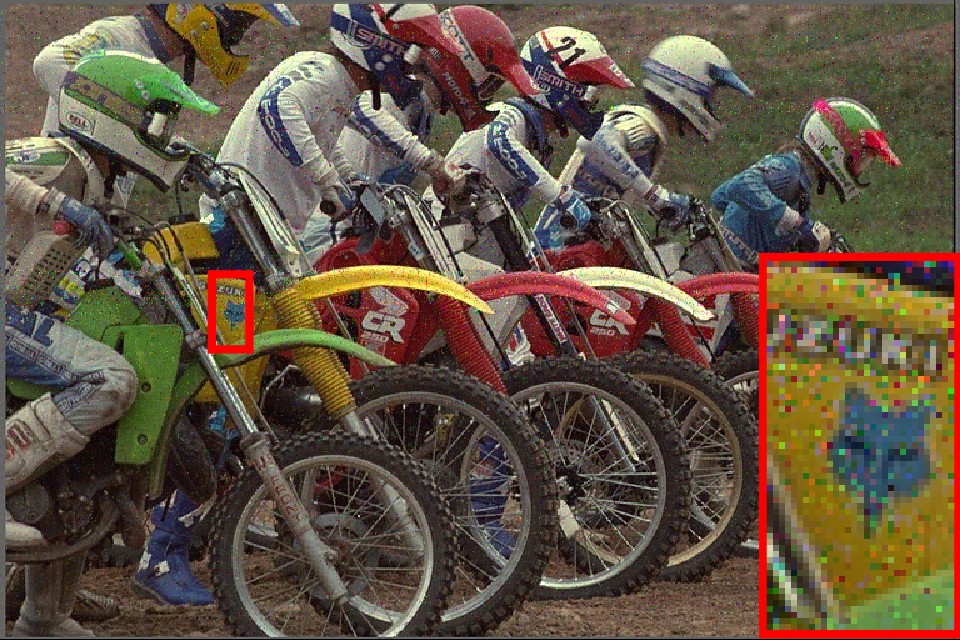} \hspace{-3.9mm} & 
			\includegraphics[width=2.2cm]{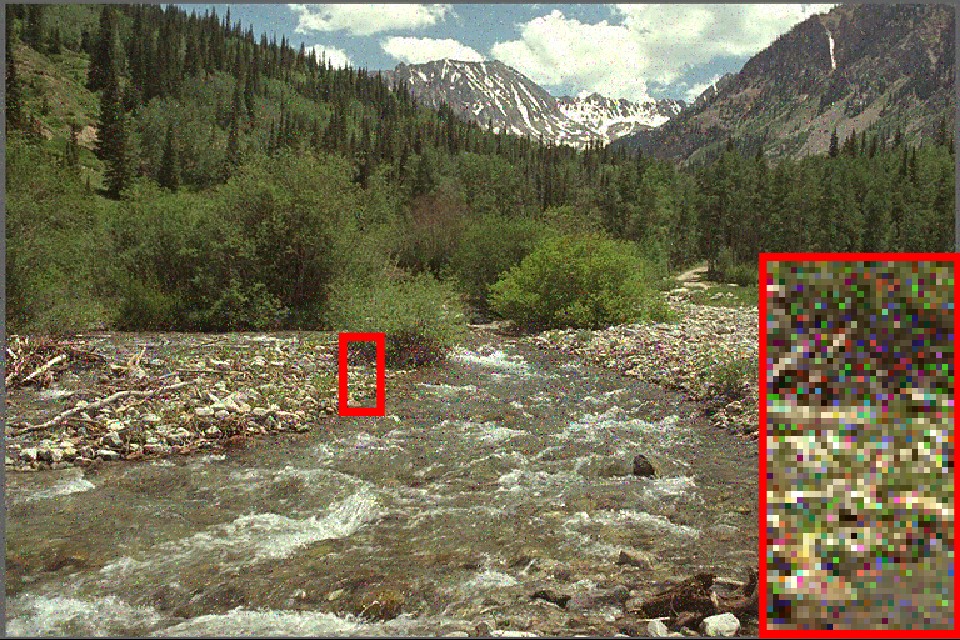} \hspace{-3.9mm} \\ 			
			\hspace{-5mm}(g)\hspace{-3mm} & \includegraphics[width=2.2cm]{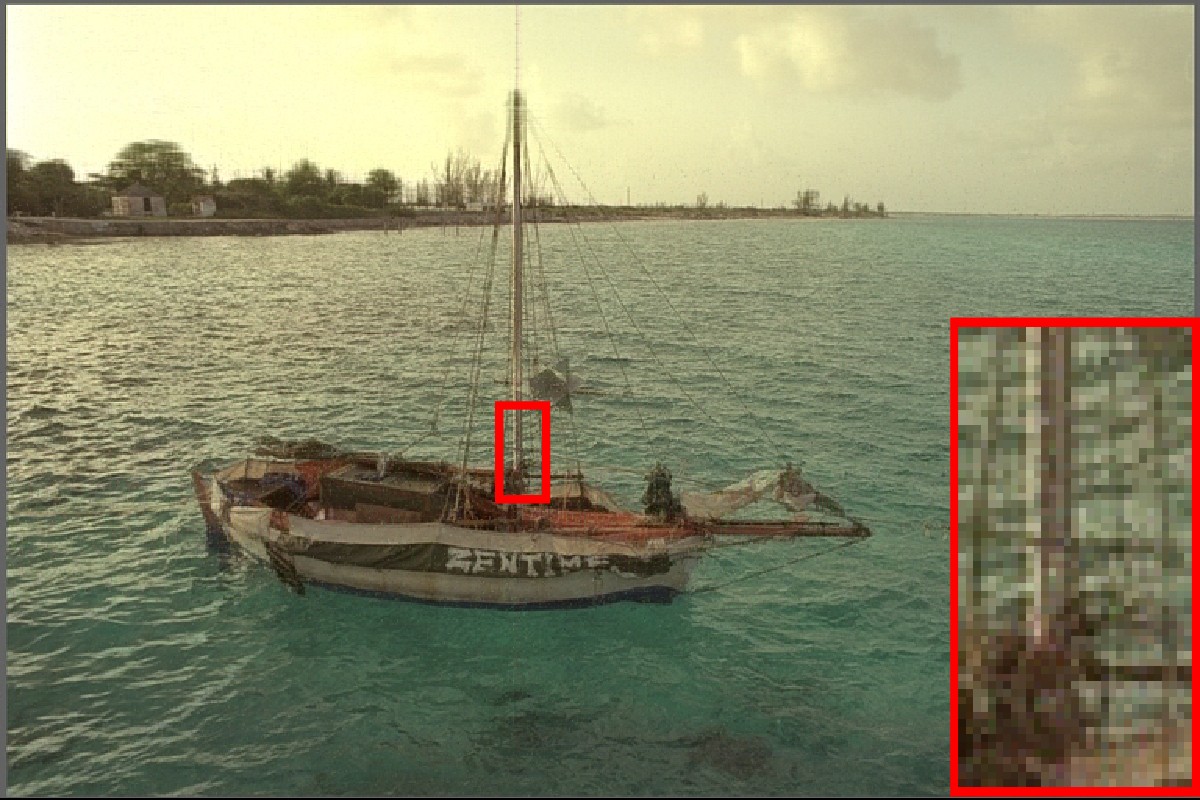} \hspace{-3.9mm} & 
			\includegraphics[width=2.2cm]{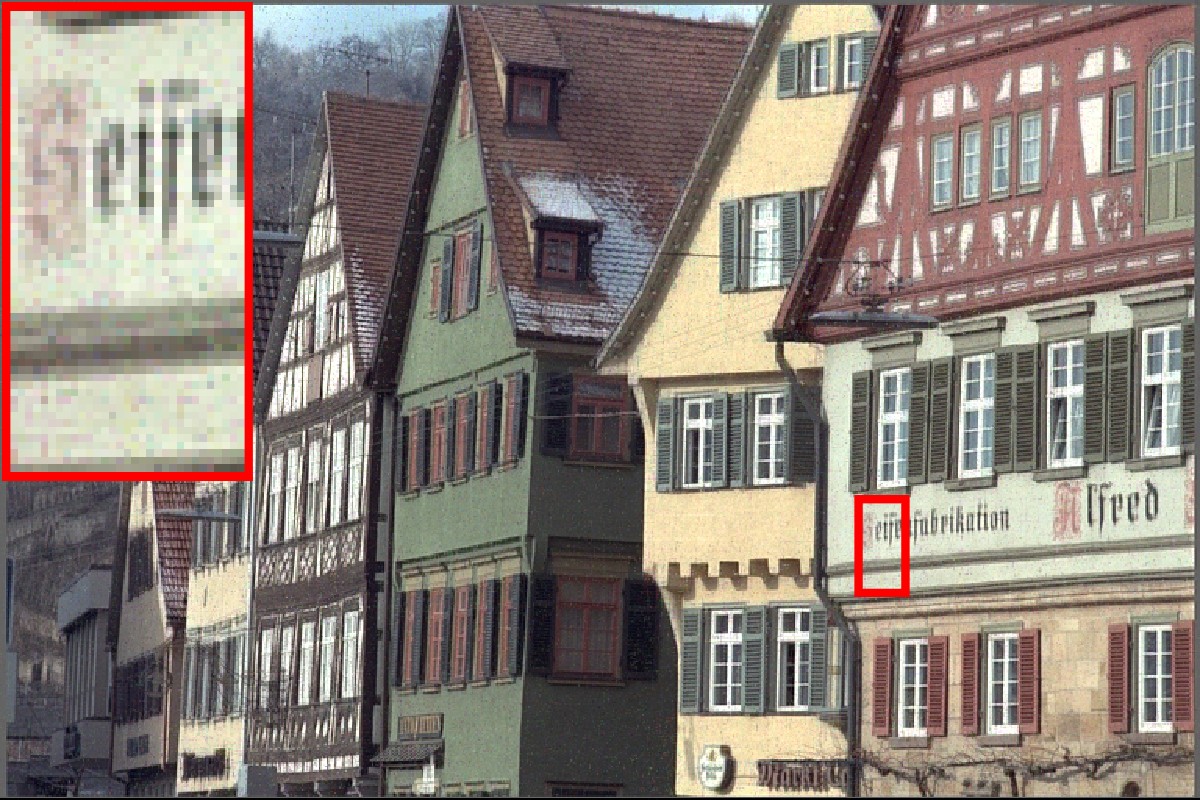} \hspace{-3.9mm} & 
			\includegraphics[width=2.2cm]{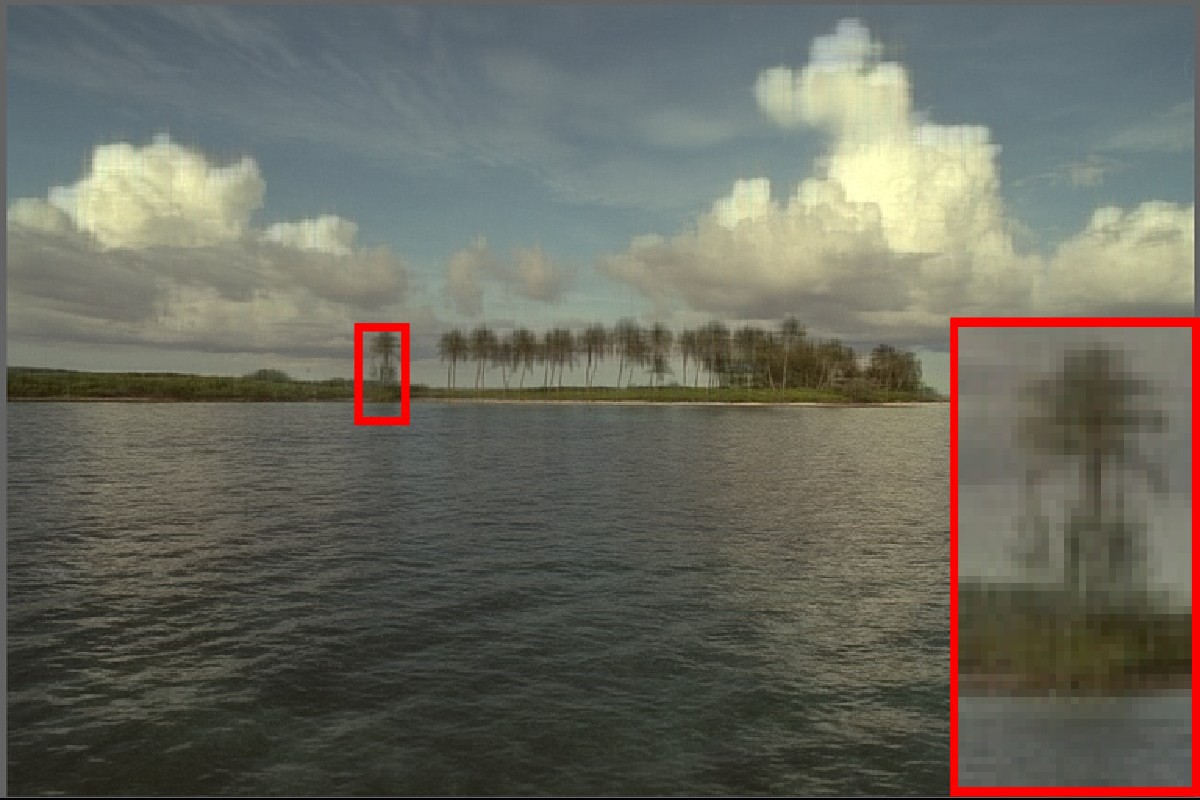} \hspace{-3.9mm} & 
			\includegraphics[width=2.2cm]{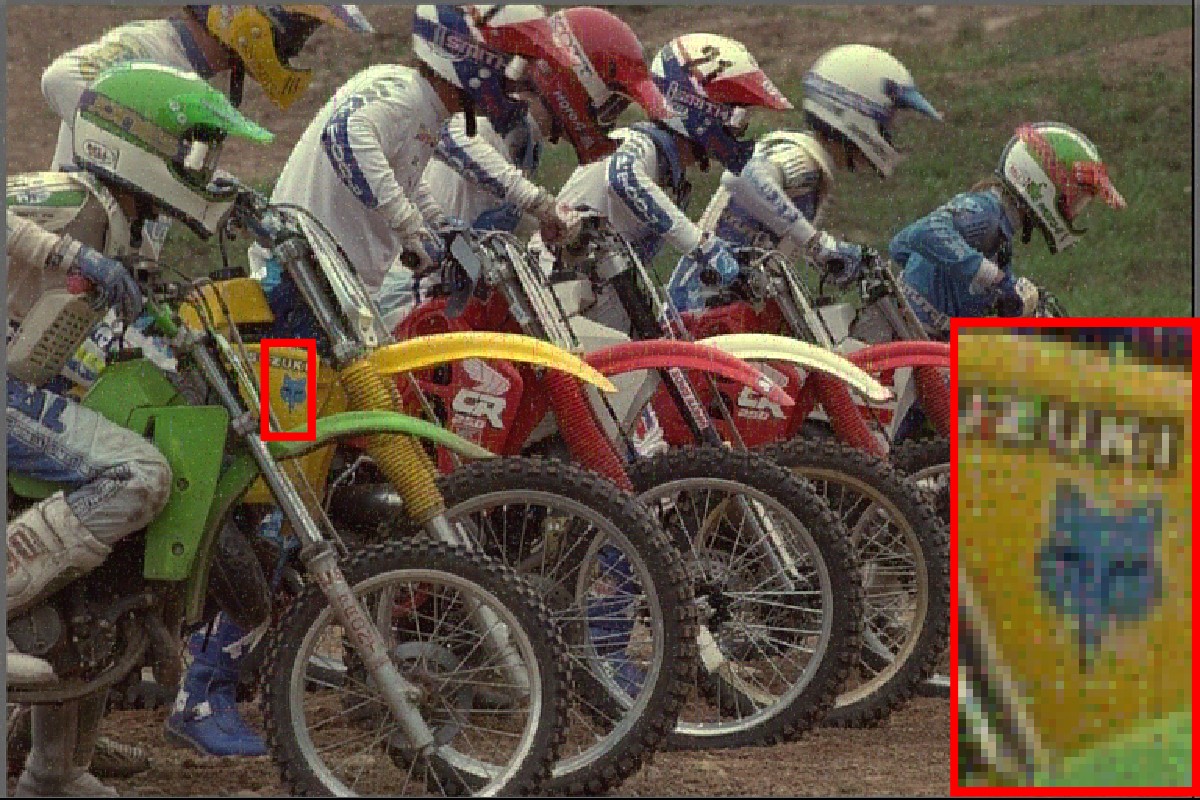} \hspace{-3.9mm} & 
			\includegraphics[width=2.2cm]{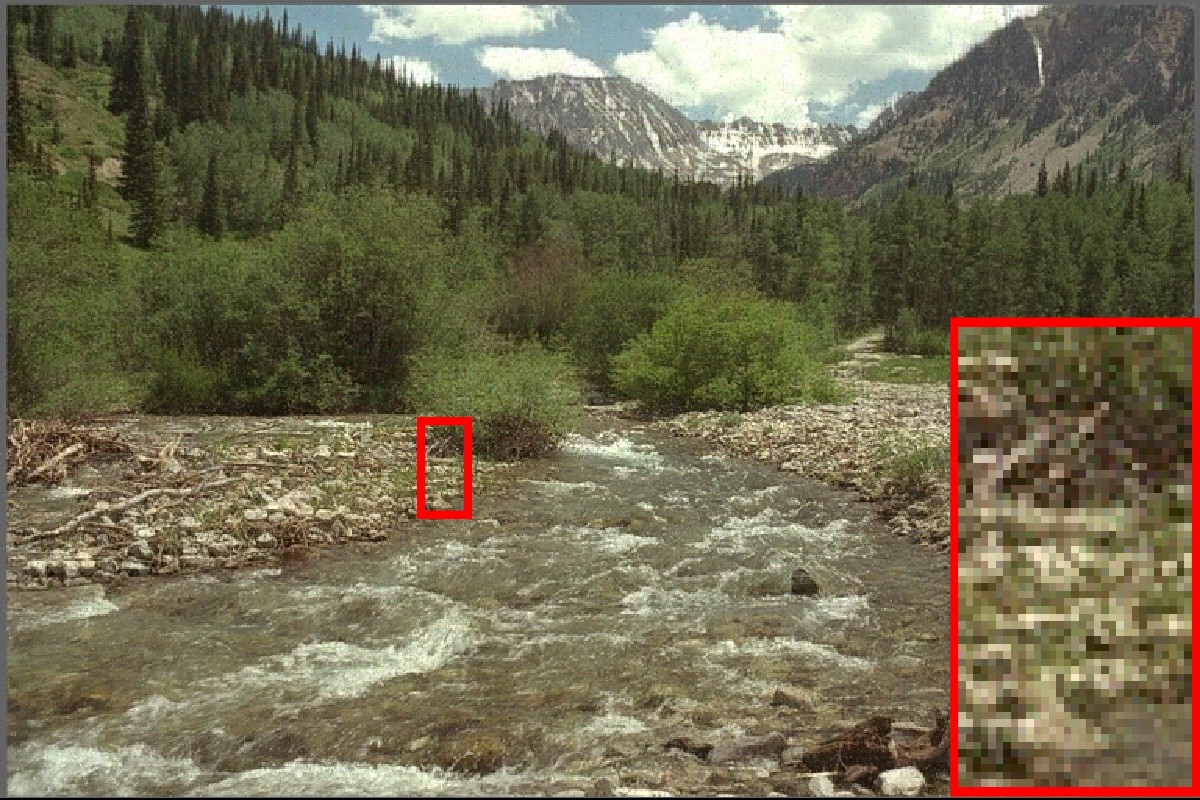} \hspace{-3.9mm} \\ 			
			\hspace{-5mm}(h)\hspace{-3mm} & \includegraphics[width=2.2cm]{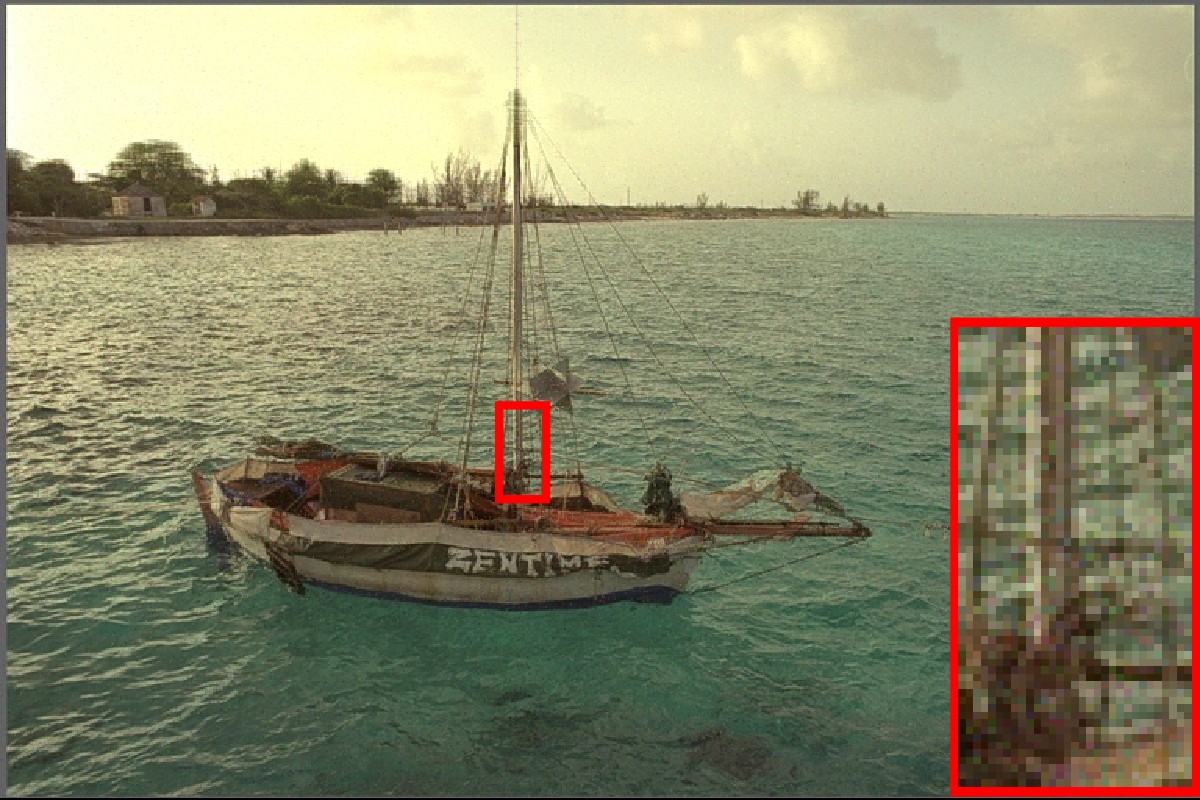} \hspace{-3.9mm} & 
			\includegraphics[width=2.2cm]{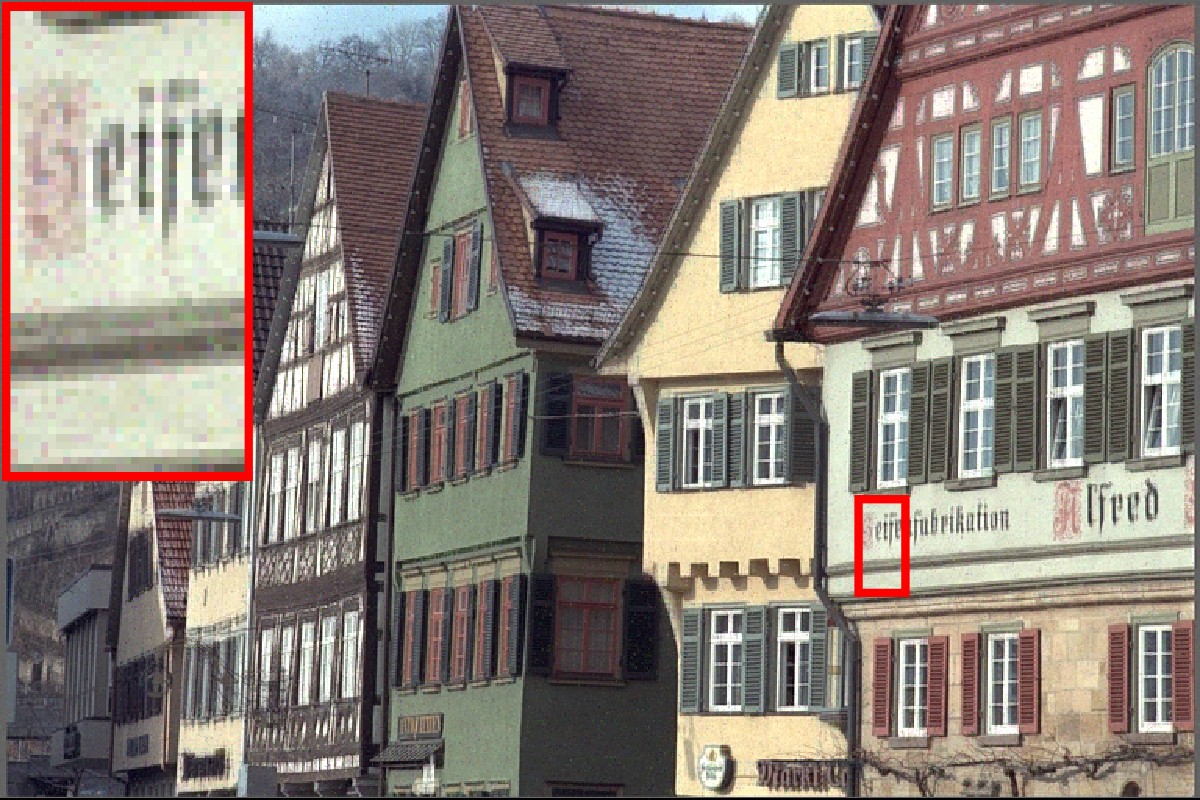} \hspace{-3.9mm} & 
			\includegraphics[width=2.2cm]{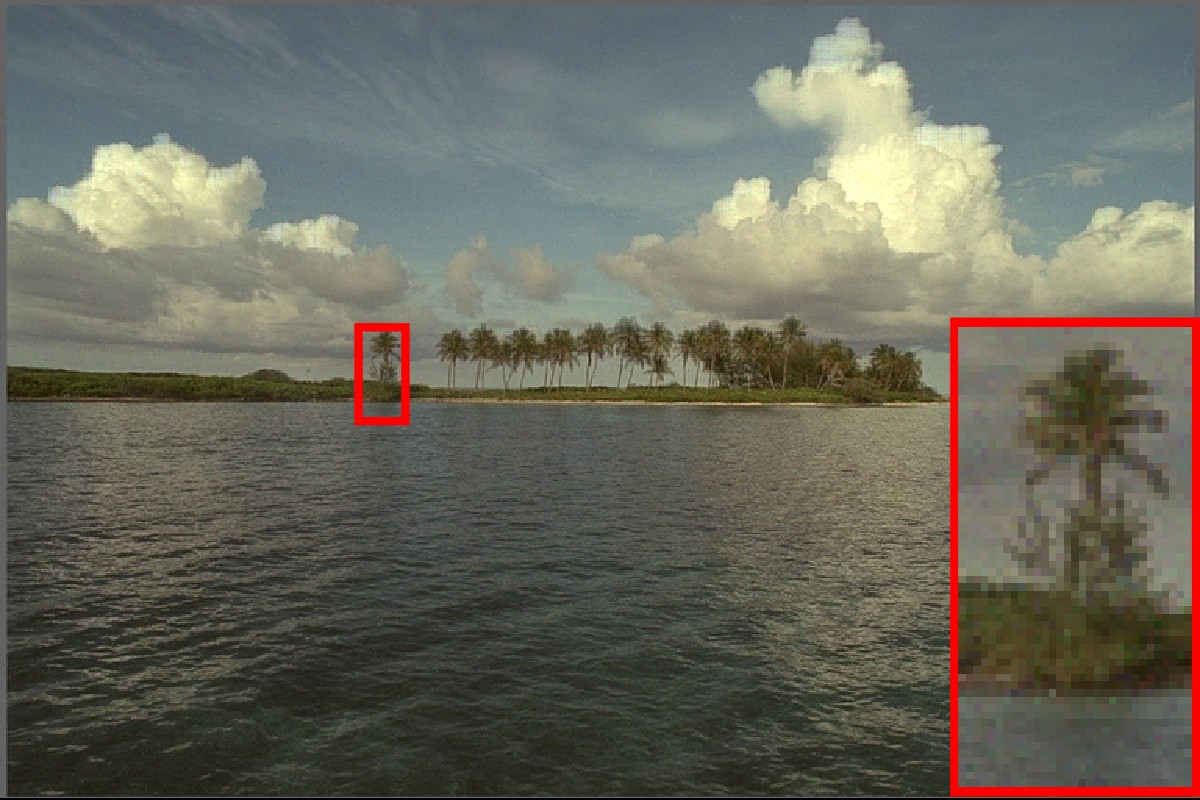} \hspace{-3.9mm} & 
			\includegraphics[width=2.2cm]{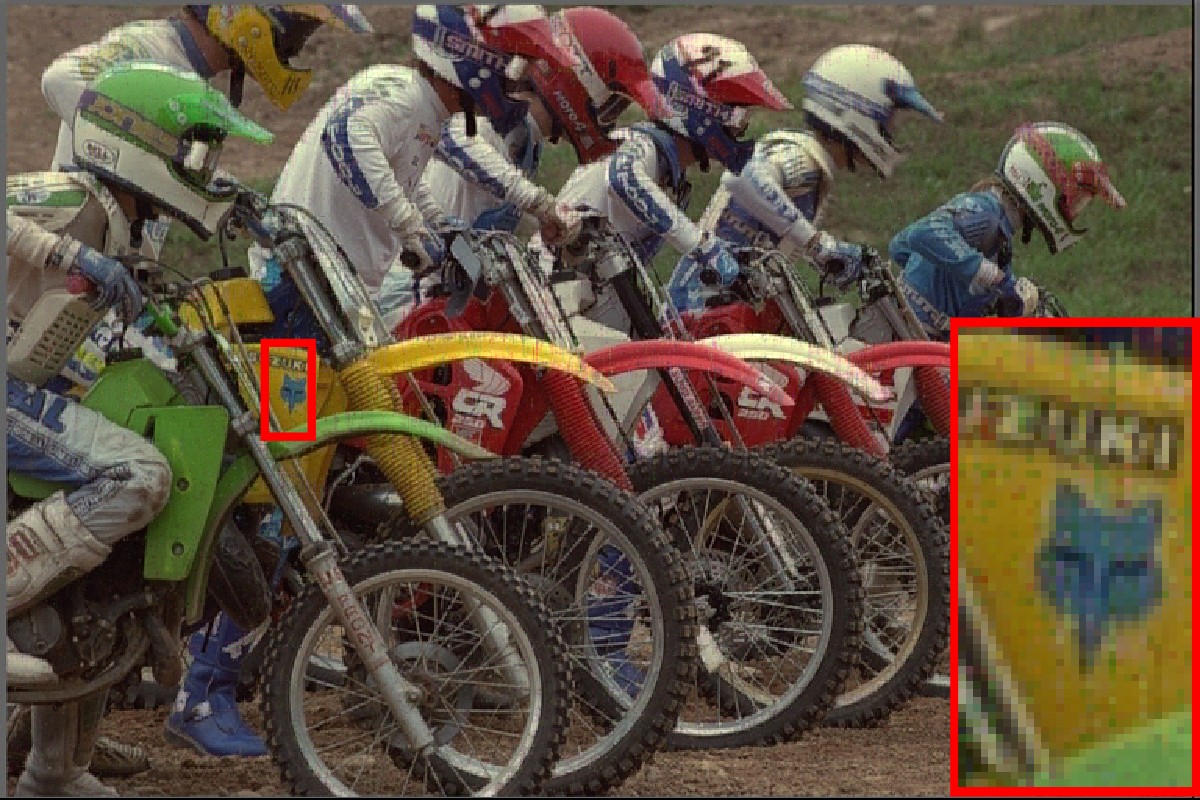} \hspace{-3.9mm} & 
			\includegraphics[width=2.2cm]{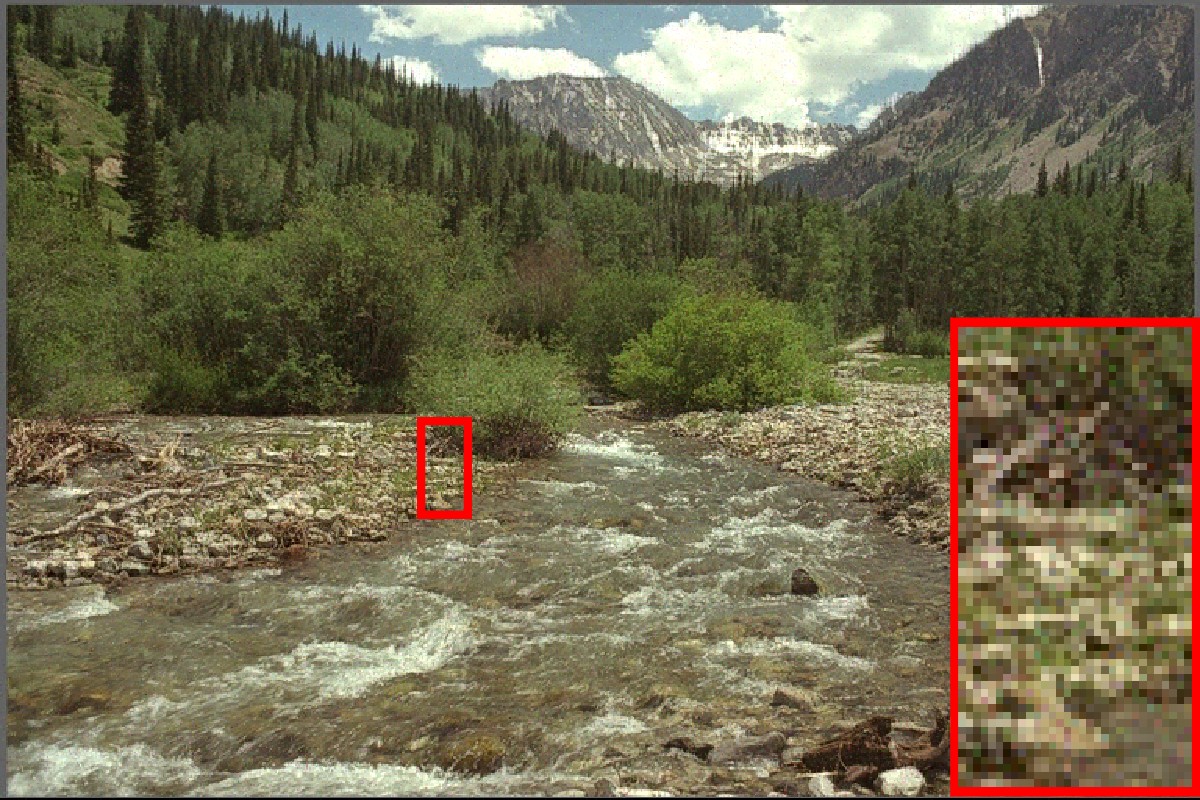} \hspace{-3.9mm} \\ 
		\end{tabular}
	\end{center}
	\vspace{-0.05in}
	\caption{Comparison of denoising performance on five example images. From top to bottom:
		(a) Original image, (b) observed image, recovered images by (c) TNN, (d) Laplace function based nonconvex surrogate, (e) $\mathrm{t}\mbox{-}S_{w, p}(0.9)$, (f) $p$-TRPCA, (g) TNF, and (h) TNF$+$.
		From left to right: five color images (``boat,'' ``house,'' ``seabeach,'' ``bicycle,'' and ``brook'').}\label{fig:TRPCAreal}
	\vspace{-0.05in}
    \end{figure}

\subsubsection{Background modeling.}
The background modeling problem aims to separate foreground objects from the background. In videos, the background is typically approximated as a low-rank tensor since it remains nearly constant across the timeframes. In contrast, moving foreground objects are treated as sparse components. In the context of the TRPCA problem, the background and foreground tensors correspond to the low-rank tensor $\mathcal{L}_0$ and the sparse tensor $\mathcal{S}_0$, respectively.
We conduct experiments using sequences of ``airport" ($144\times176\times400$) and ``bootstrap" ($120\times160\times400$) from the 12R dataset \cite{li2004statistical}, as well as ``shoppingmall" ($220\times352\times400$) from \cite{li2004statistical}. All three videos have slow object movement against different background scenarios.  We compare our TNF and TNF$+$ models with SNN, TNN, PSTNN, Laplace, and $\mathrm{t}\mbox{-}S_{w, p}(0.9)$ models. We set $\lambda=10^{-6}$ and $\mu_1=\mu_2=10^{-5}$ in TNF and $\lambda=1/\sqrt{\max(n_1,n_2)n_3}$, $\mu_1=\mu_3=10^{-5}$ and $\mu_2=10^{-3}$ in TNF$+$.

Fig.~\ref{fig:TRPCAback} presents the visual results of background modeling obtained by various methods. 
For each video, we select one image in the sequence as shown in the first column (a) of Fig.~\ref{fig:TRPCAback}, followed by the background images of the same frame obtained by (b) TNN, (c) Laplace, (d) $\mathrm{t}\mbox{-}S_{w, p}(0.9)$, (e) EAP-TRPCA-FFT \cite{qiu2022fast}, (g) TNF, and (h) TNF$+$. The second row of each video depicts the motion in the scene. 
In the ``airport'' video, the background recovered by both TNF and TNF$+$ contains less ghost silhouette compared to other methods, indicating a better background separation. Similarly, in the ``bootstrap'' and ``shopping mall'' videos, the humans identified by the proposed method look sharper and clearer than the ones by other methods.

\begin{figure*}
\begin{center}
\begin{tabular}{cccccccc}
(a)&(b)&(c)&(d)& (e)&(f)&(g)\\
\includegraphics[width=1.6cm]{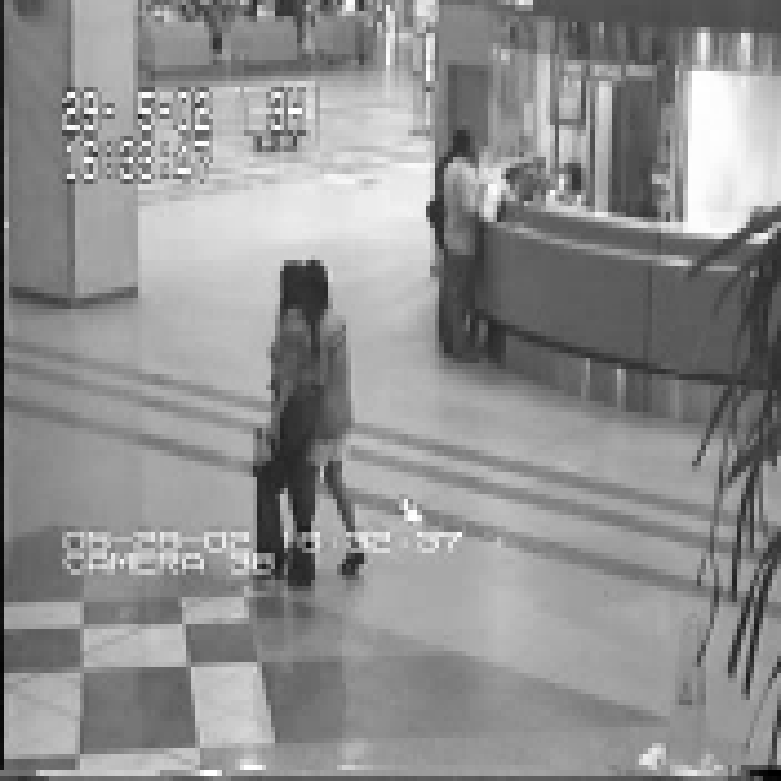} \hspace{-5.2mm} &
\includegraphics[width=1.6cm]{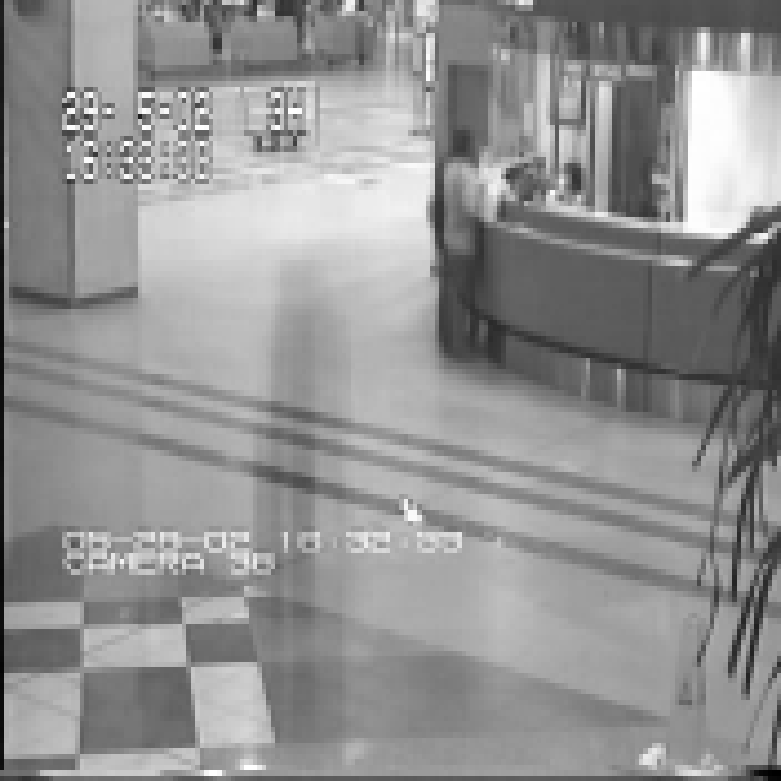} \hspace{-5.2mm} &
\includegraphics[width=1.6cm]{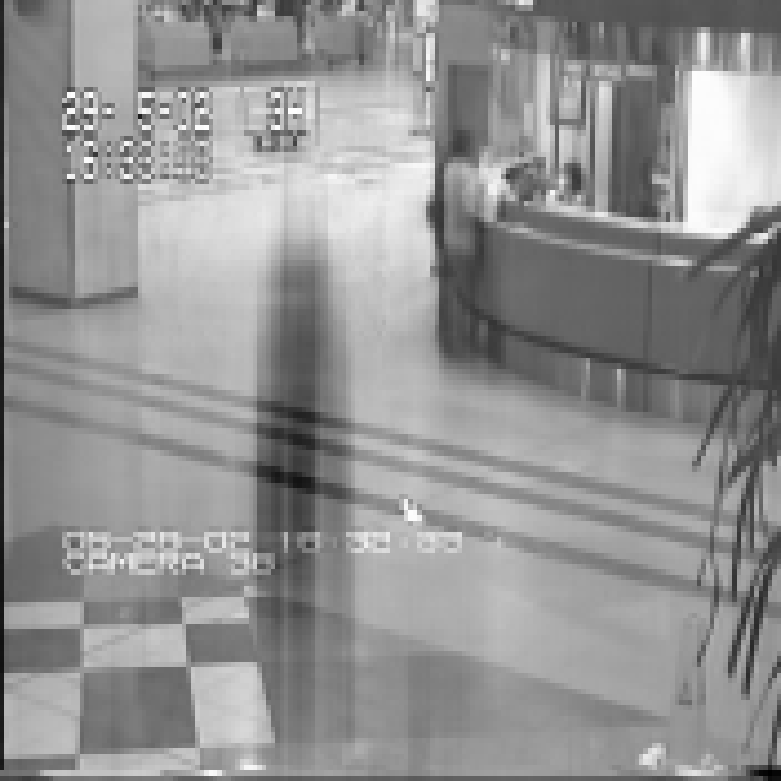} \hspace{-5.2mm} &
\includegraphics[width=1.6cm]{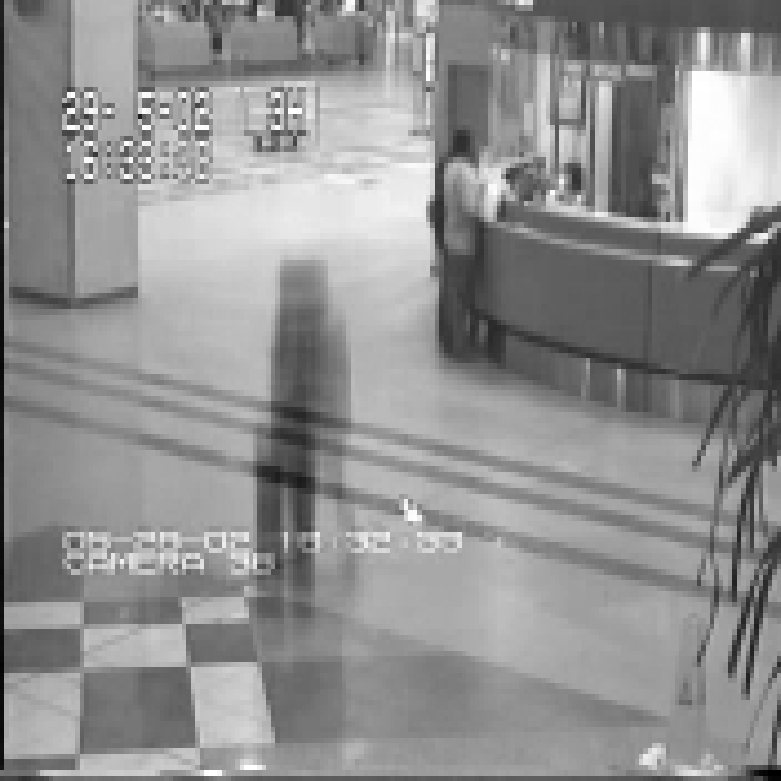} \hspace{-5.2mm} &
\includegraphics[width=1.6cm]{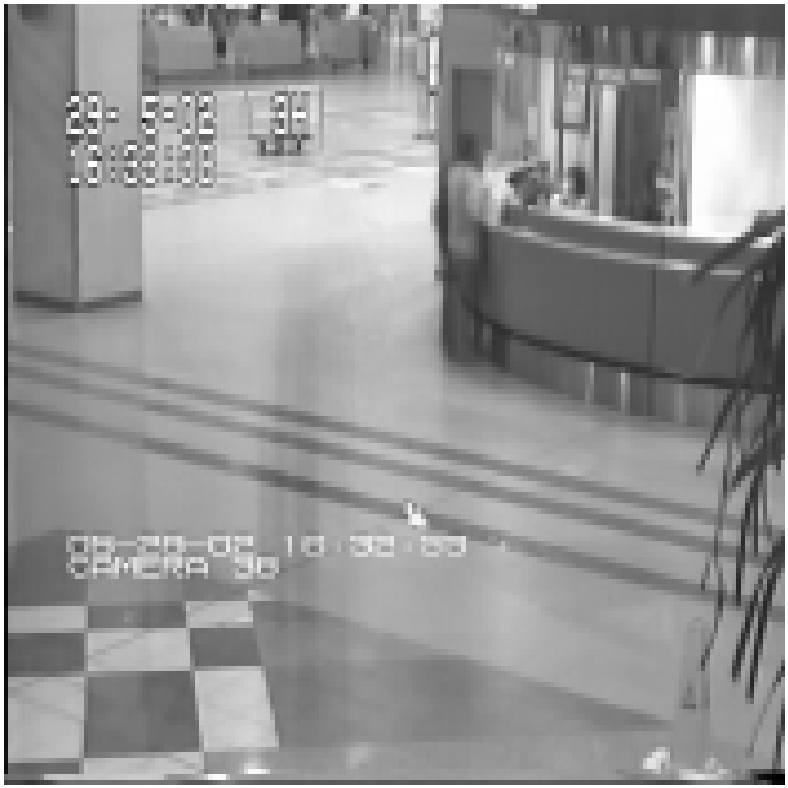} \hspace{-5.2mm} &
\includegraphics[width=1.6cm]{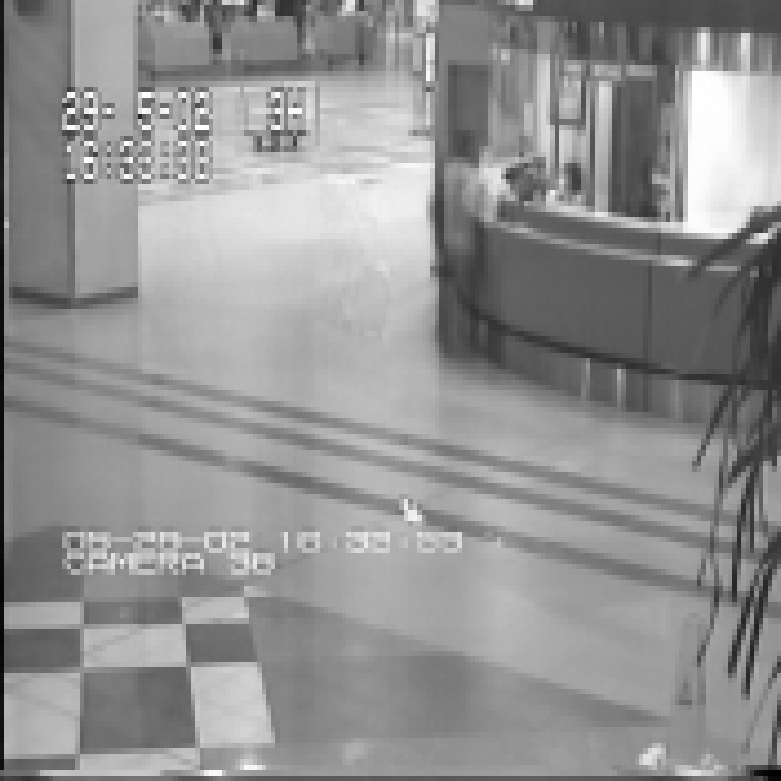} \hspace{-5.2mm} &
\includegraphics[width=1.6cm]{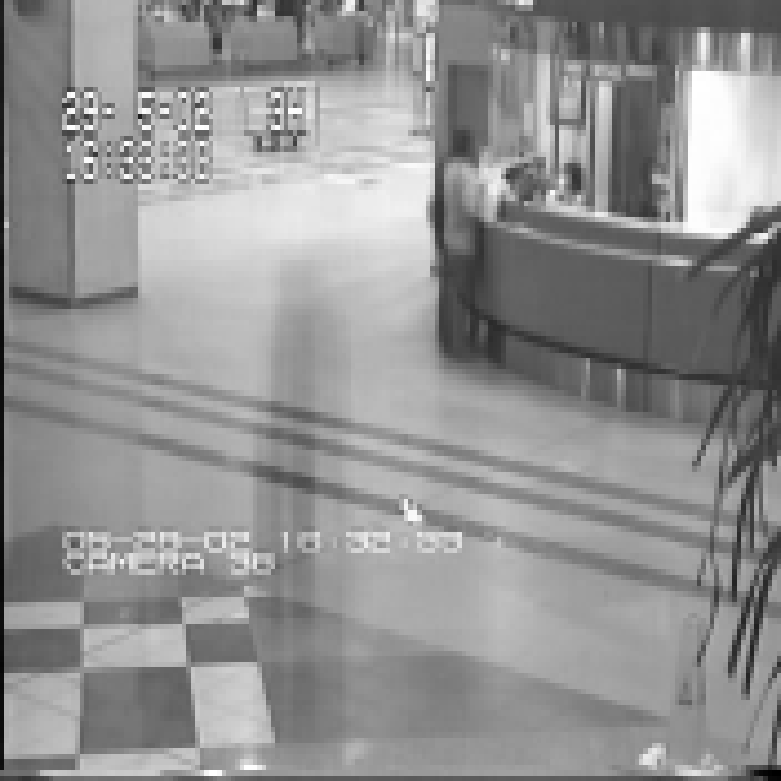} \hspace{-5.2mm} 
 \\  
\includegraphics[width=1.6cm]{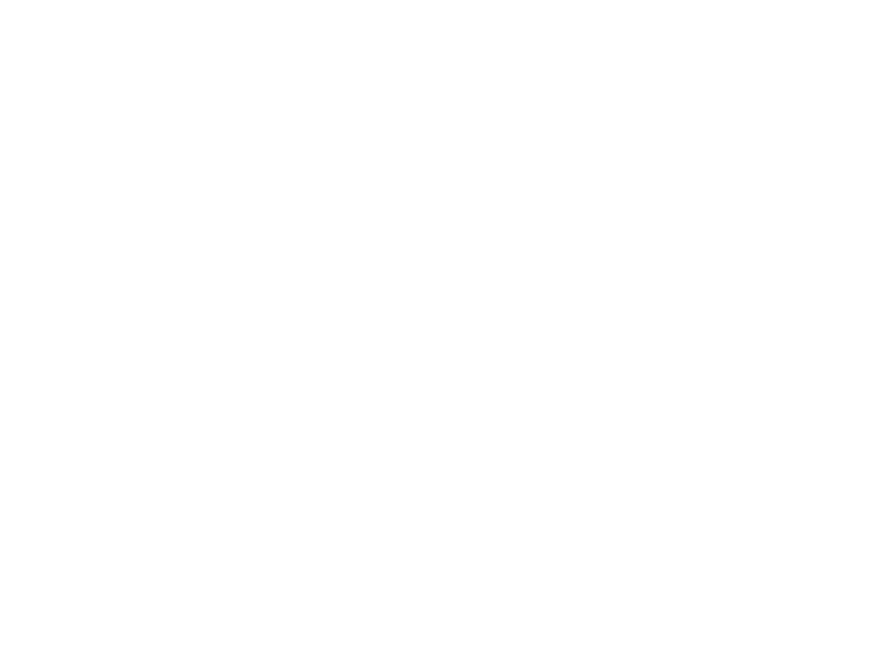} \hspace{-5.2mm} &
\includegraphics[width=1.6cm]{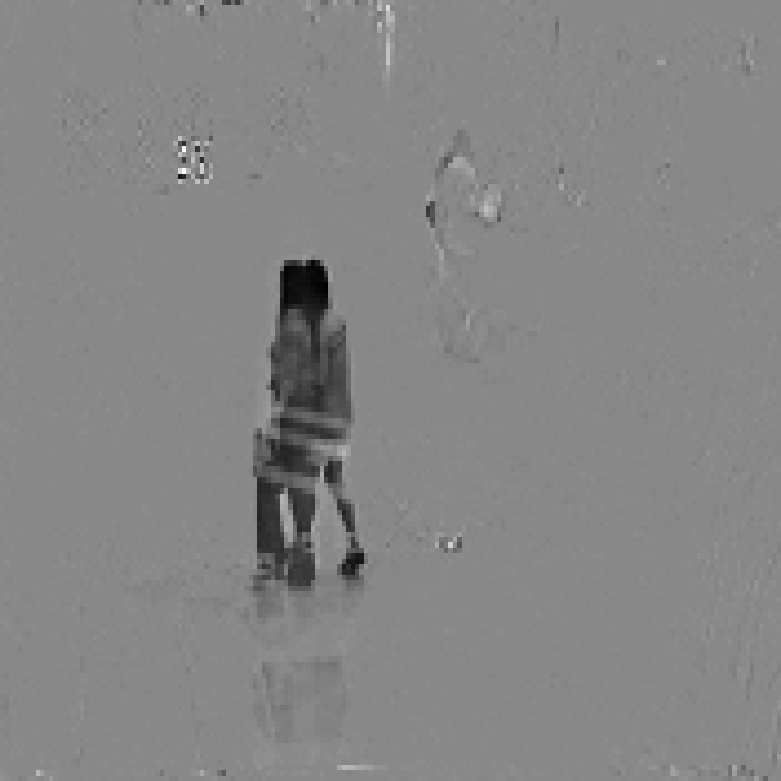} \hspace{-5.2mm} &
\includegraphics[width=1.6cm]{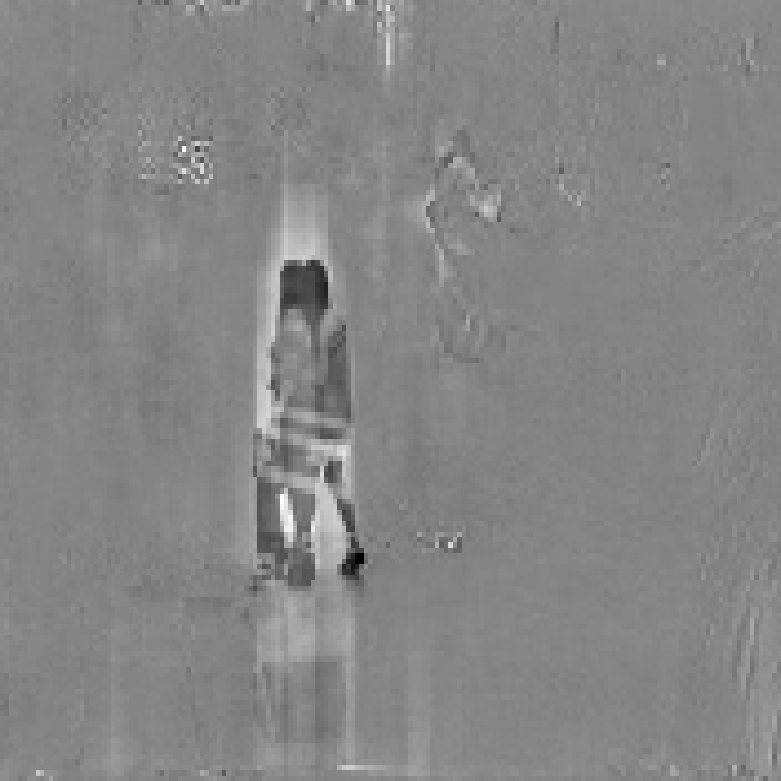} \hspace{-5.2mm} &
\includegraphics[width=1.6cm]{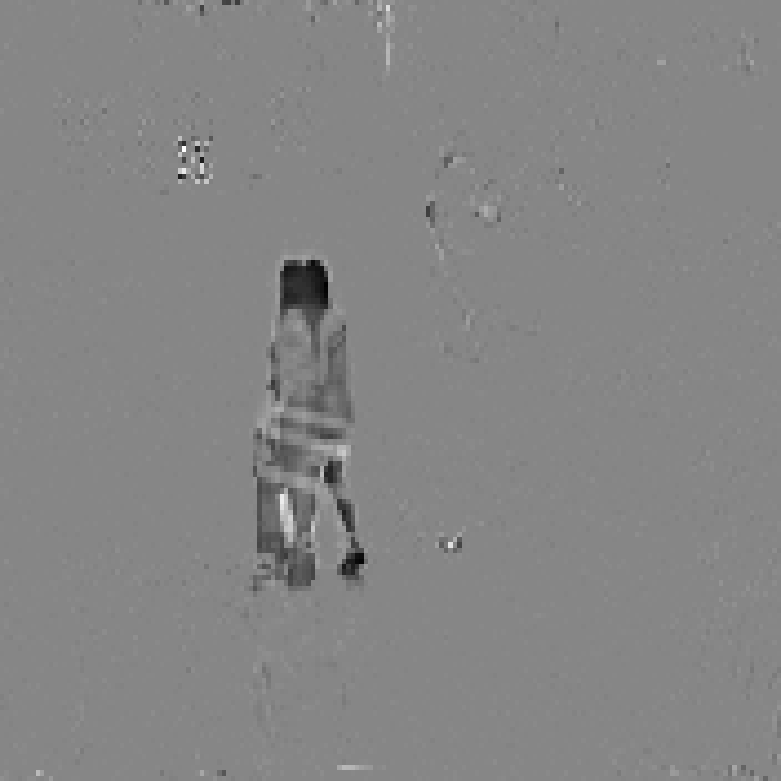} \hspace{-5.2mm} &
\includegraphics[width=1.6cm]{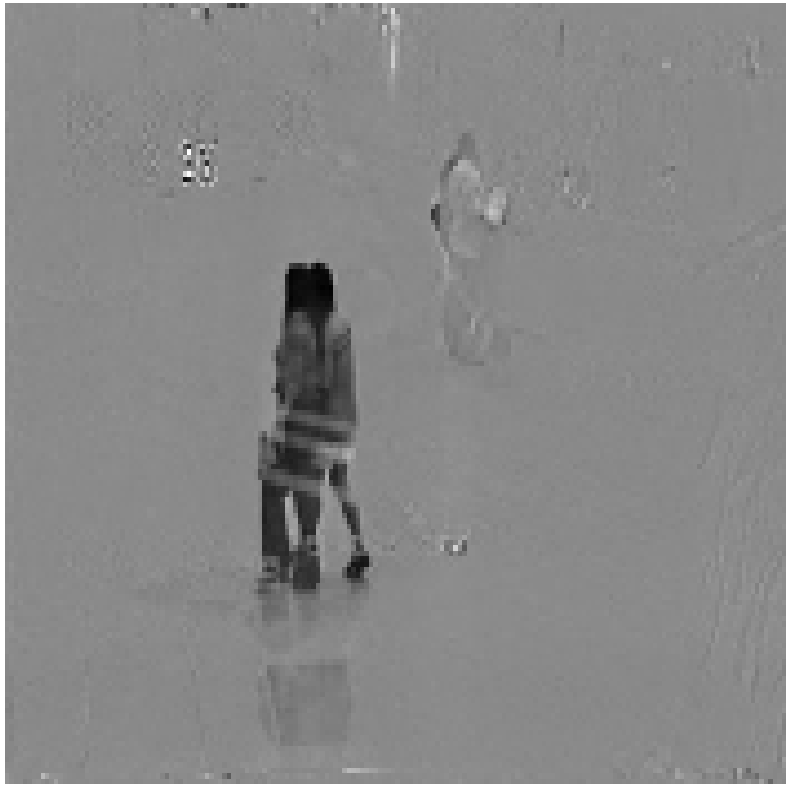} \hspace{-5.2mm} &
\includegraphics[width=1.6cm]{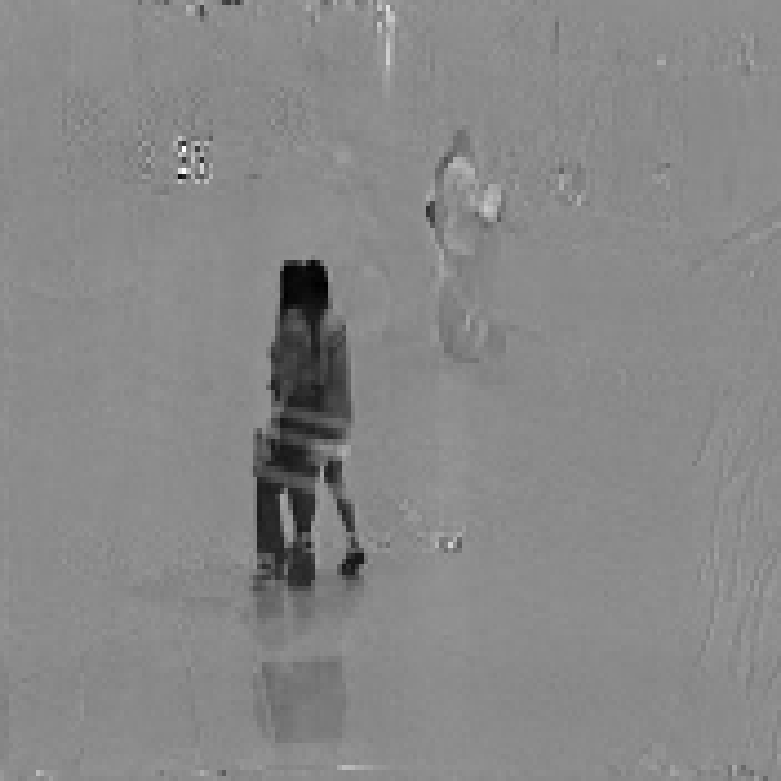} \hspace{-5.2mm} &
\includegraphics[width=1.6cm]{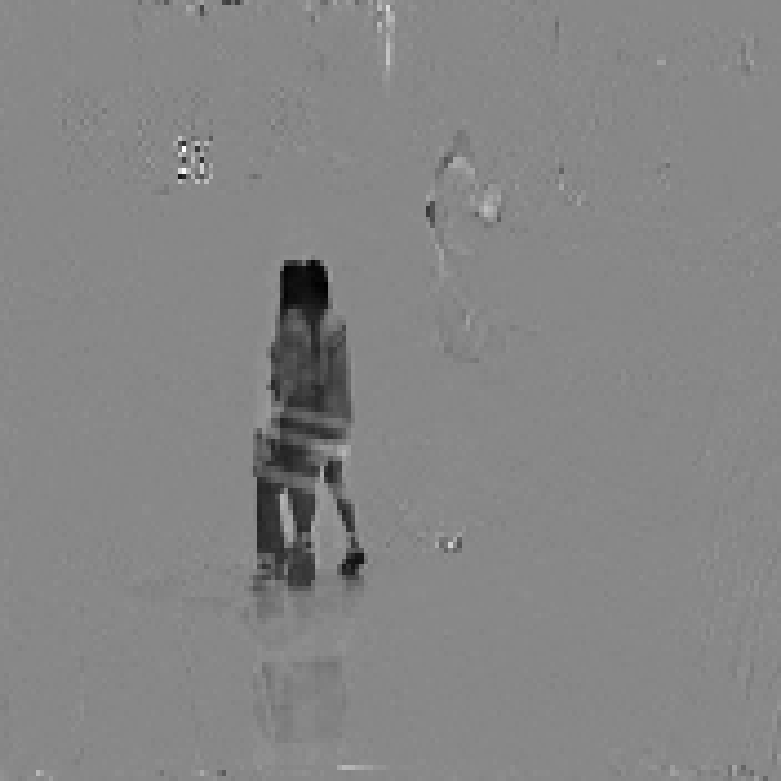} \hspace{-5.2mm} 
 \\  
\includegraphics[width=1.6cm]{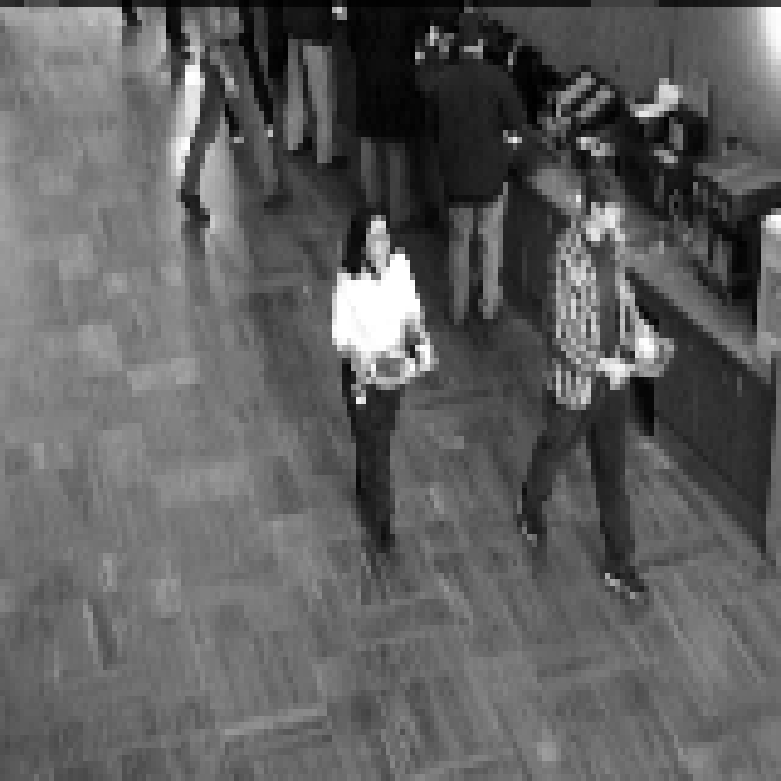} \hspace{-5.2mm} &
\includegraphics[width=1.6cm]{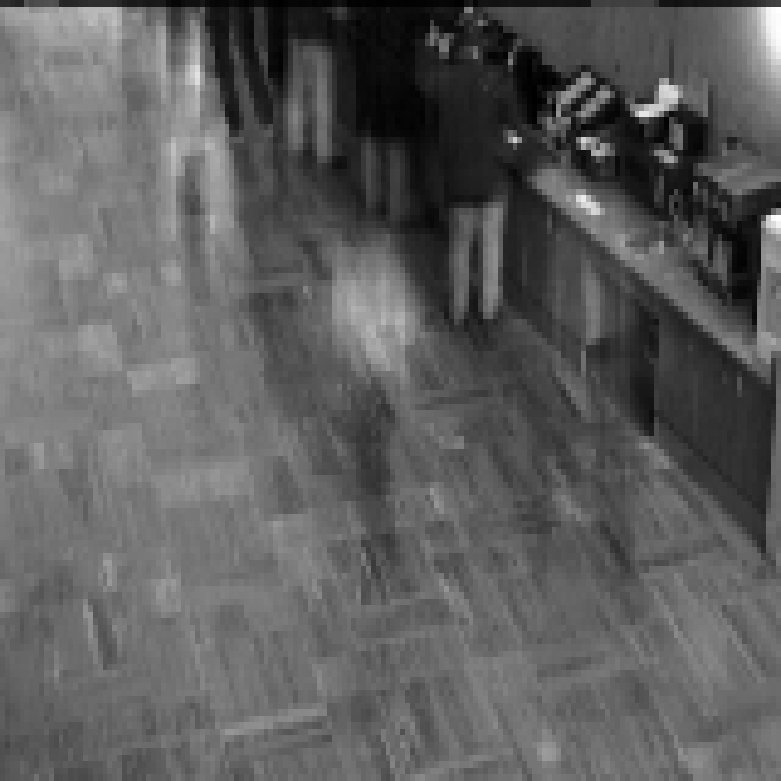} \hspace{-5.2mm} &
\includegraphics[width=1.6cm]{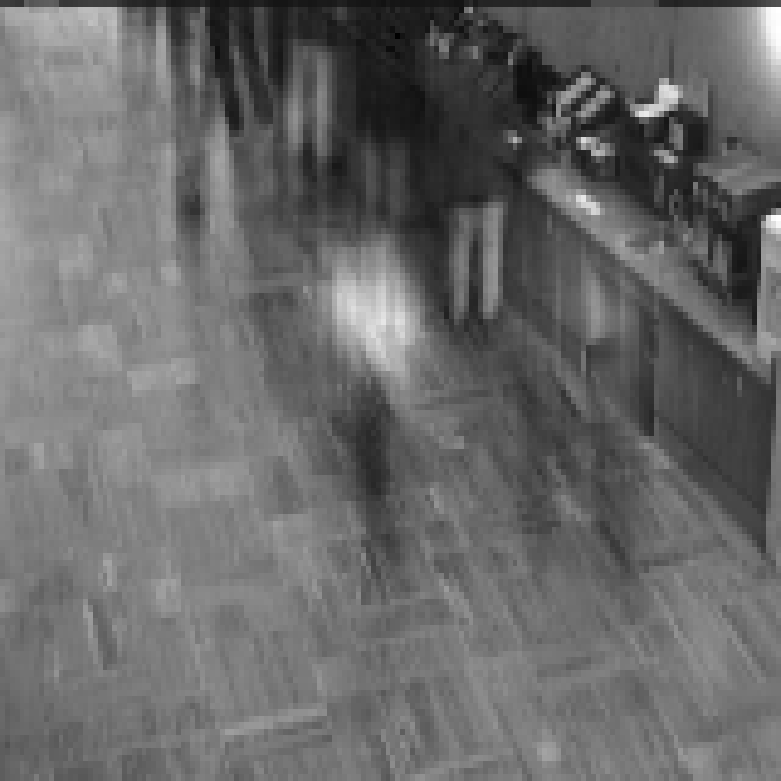} \hspace{-5.2mm} &
\includegraphics[width=1.6cm]{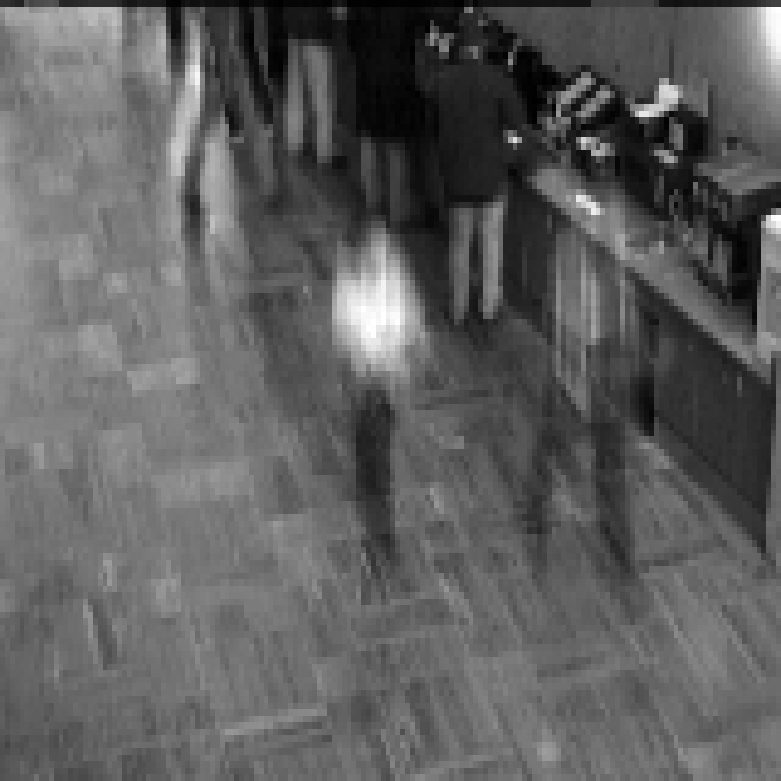} \hspace{-5.2mm} &
\includegraphics[width=1.6cm]{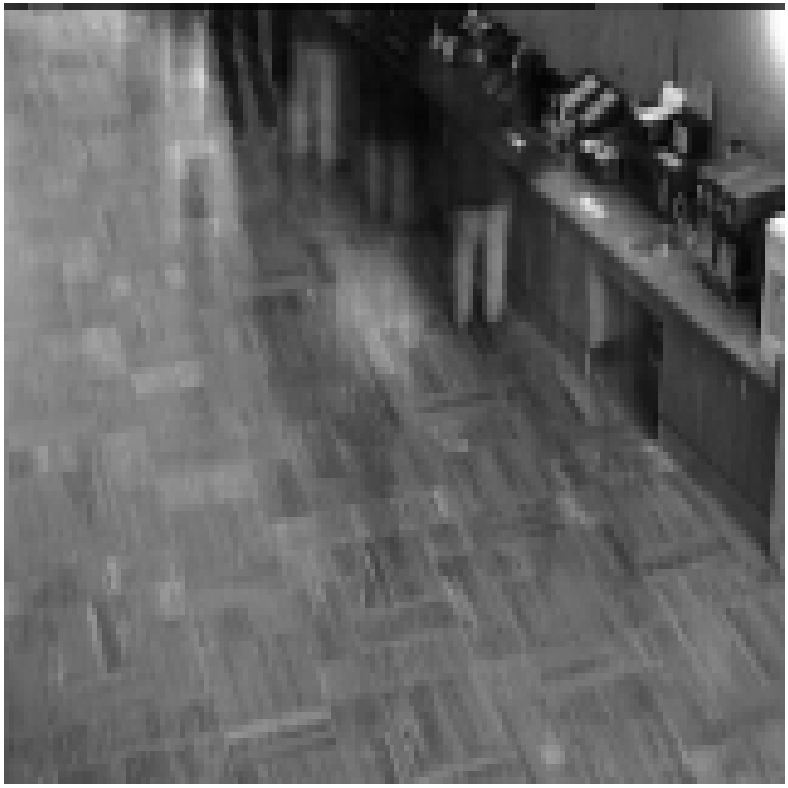} \hspace{-5.2mm} &
\includegraphics[width=1.6cm]{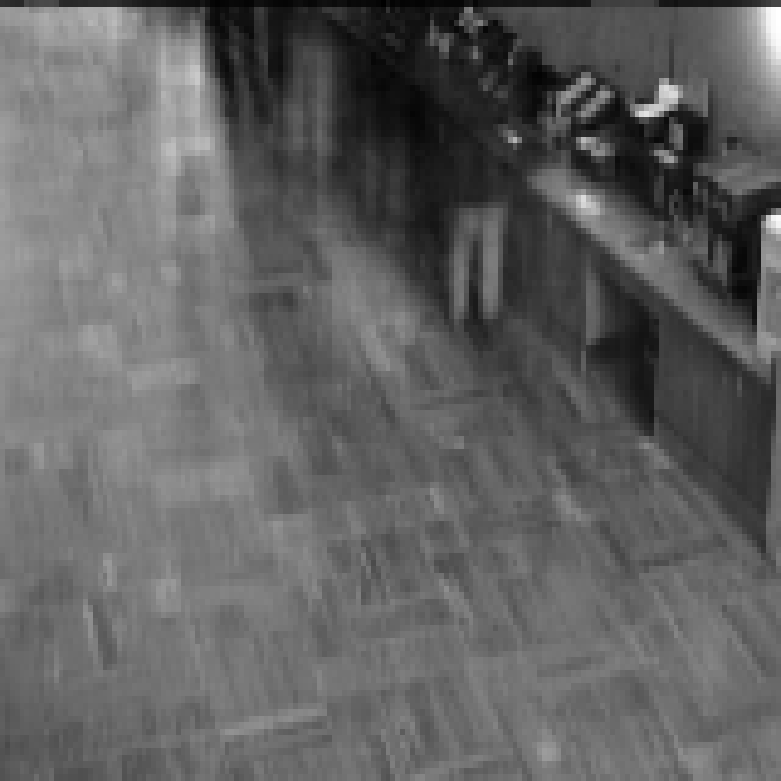} \hspace{-5.2mm} &
\includegraphics[width=1.6cm]{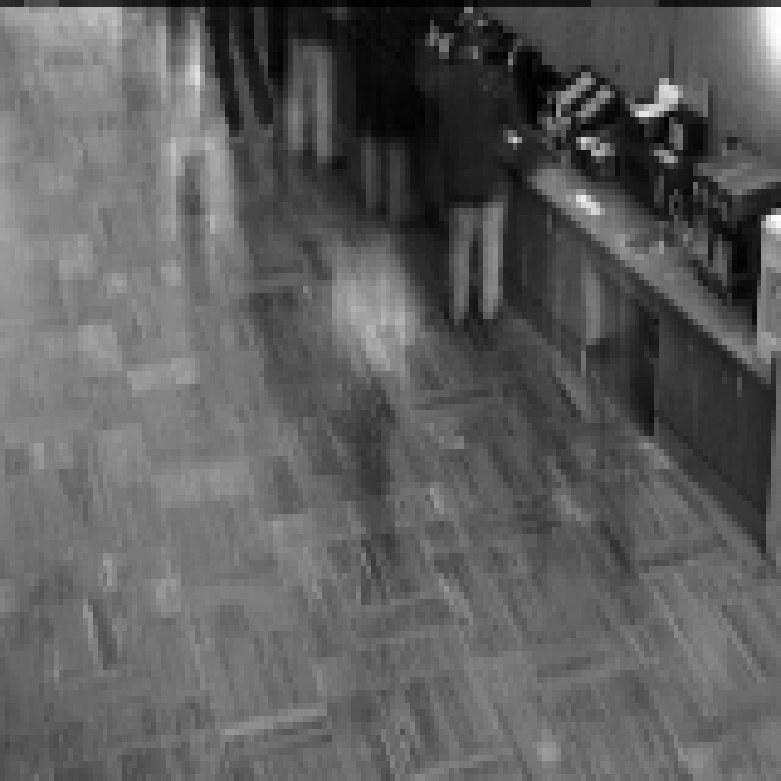} \hspace{-5.2mm} 
 \\  
\includegraphics[width=1.6cm]{figure/white1} \hspace{-5.2mm} &
\includegraphics[width=1.6cm]{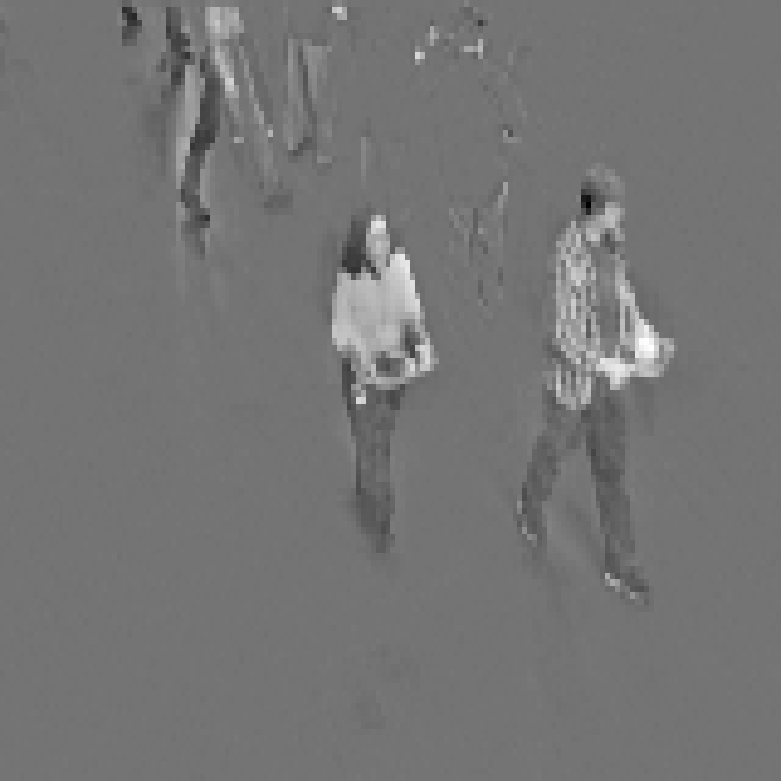} \hspace{-5.2mm} &
\includegraphics[width=1.6cm]{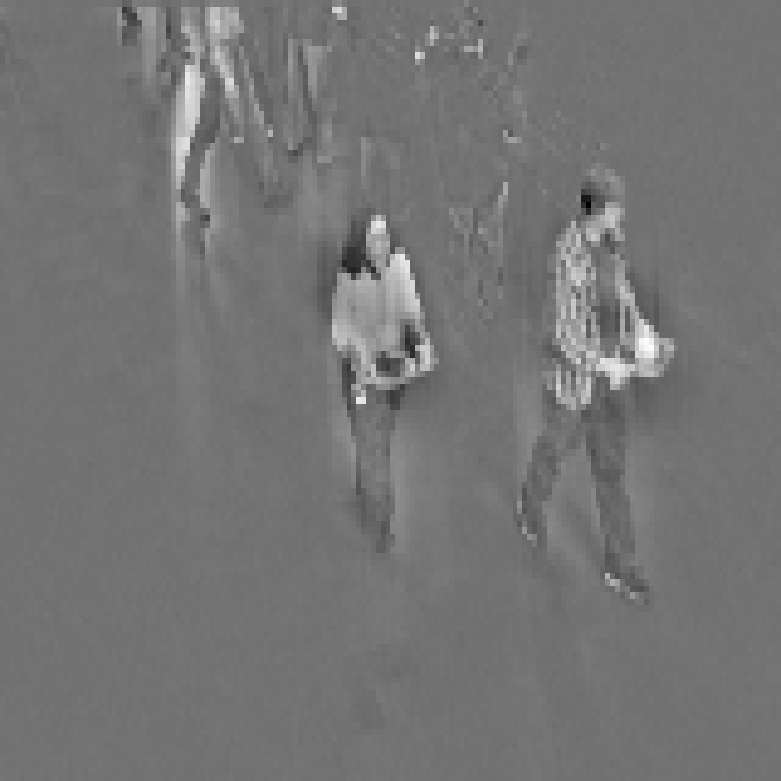} \hspace{-5.2mm} &
\includegraphics[width=1.6cm]{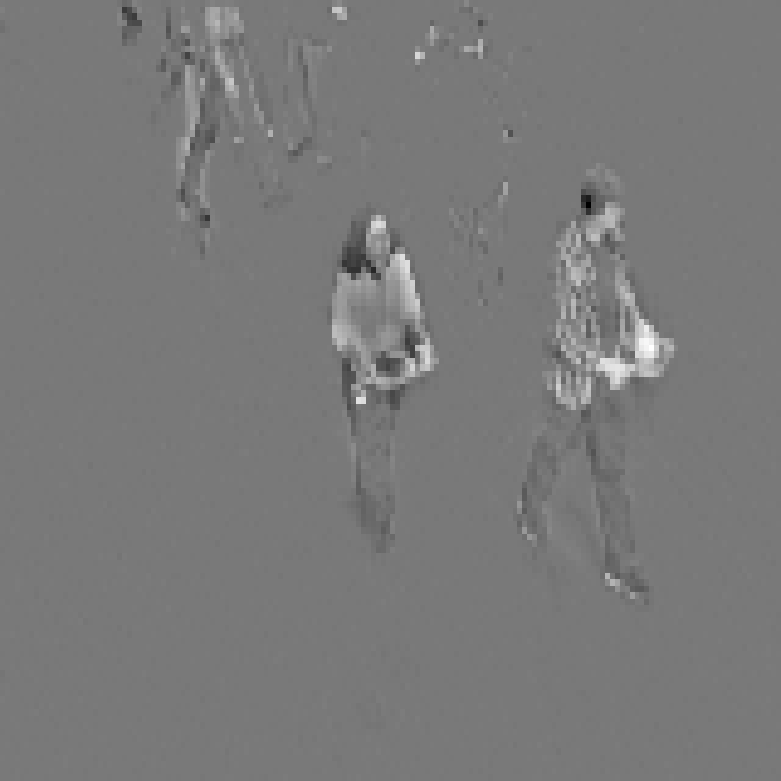} \hspace{-5.2mm} &
\includegraphics[width=1.6cm]{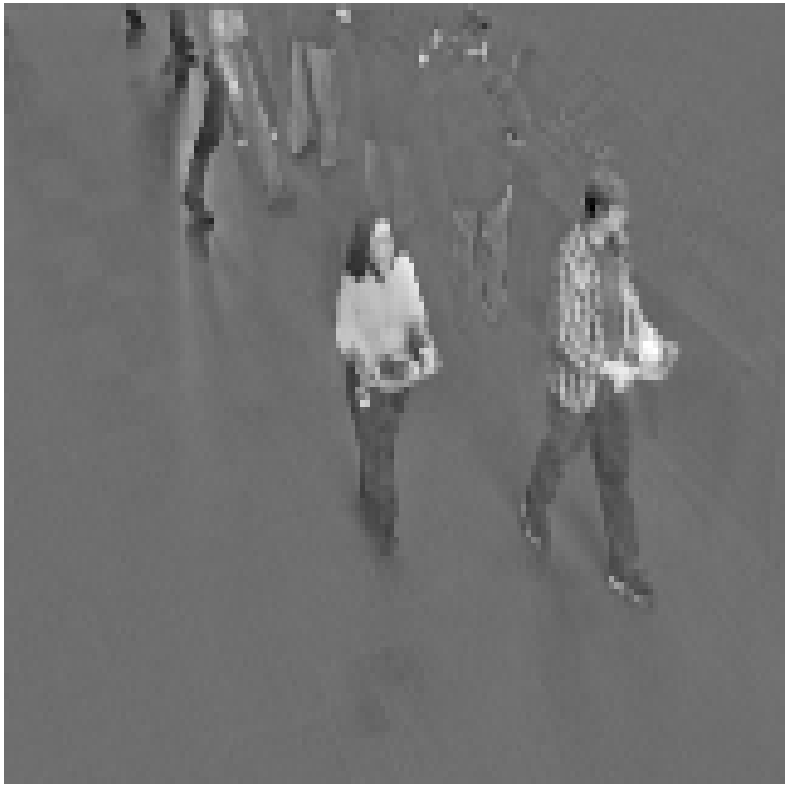} \hspace{-5.2mm} &
\includegraphics[width=1.6cm]{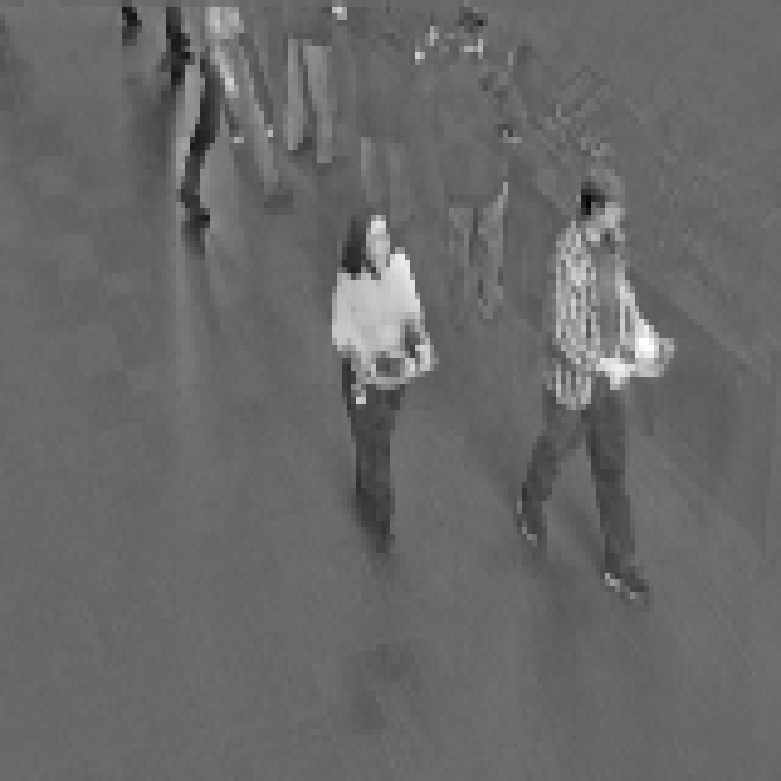} \hspace{-5.2mm} &
\includegraphics[width=1.6cm]{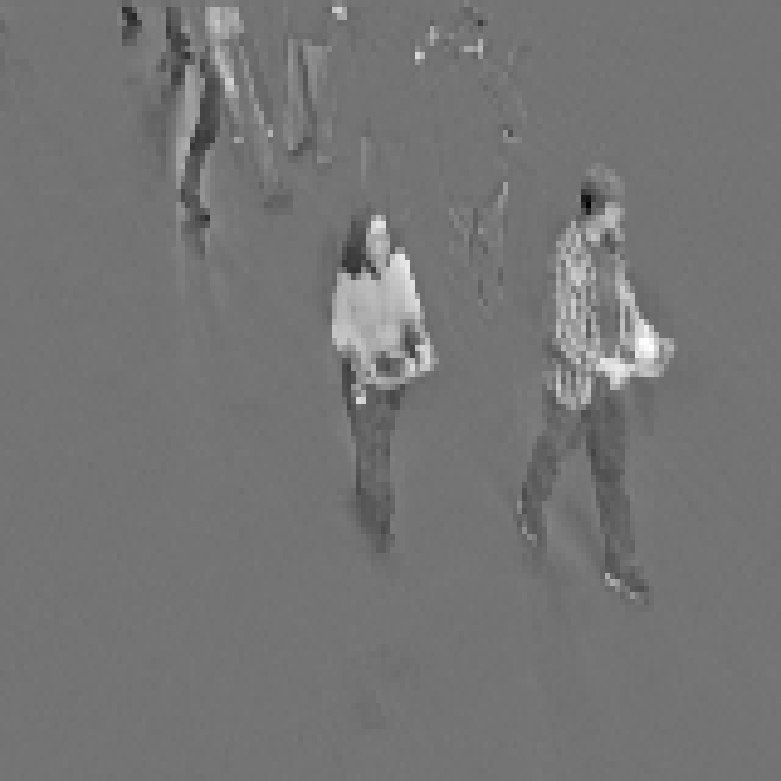} \hspace{-5.2mm} 
\\  		
\includegraphics[width=1.6cm]{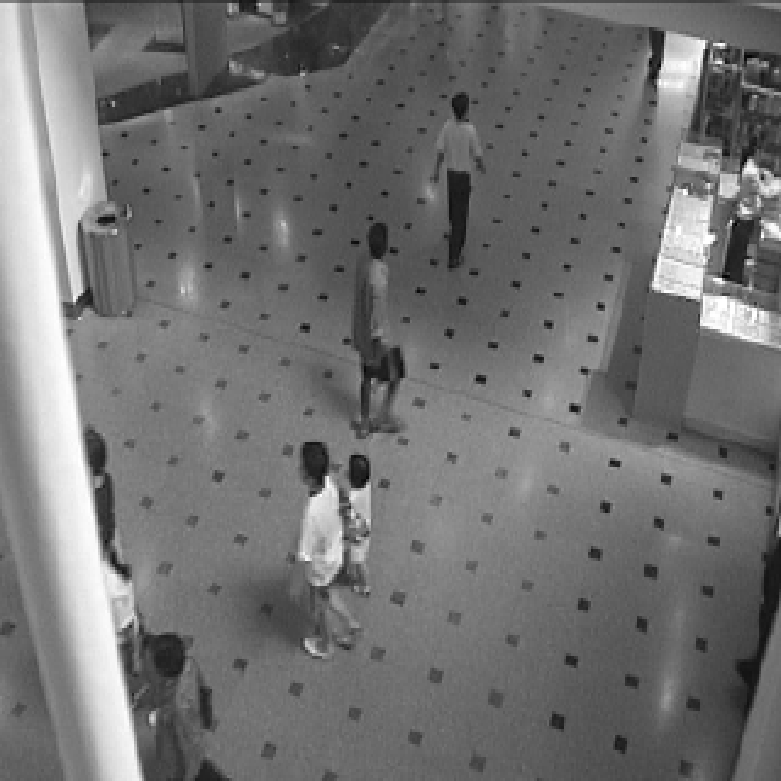} \hspace{-5.2mm} &
\includegraphics[width=1.6cm]{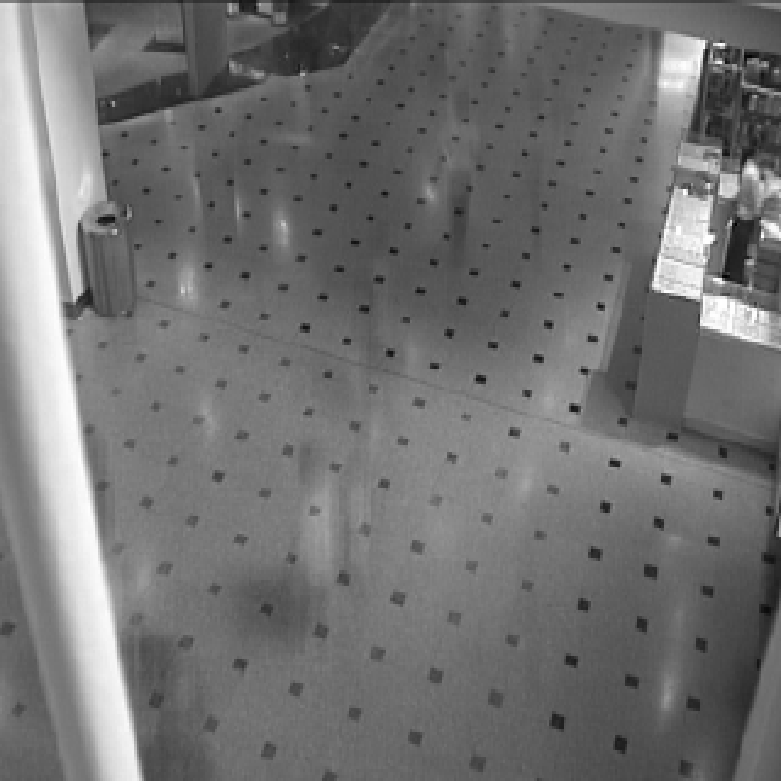} \hspace{-5.2mm} &
\includegraphics[width=1.6cm]{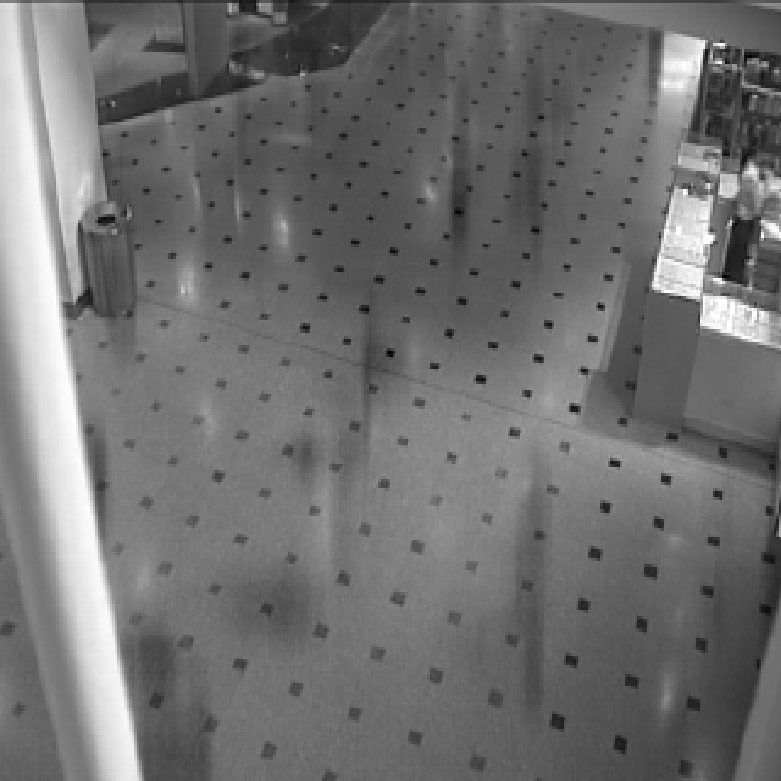} \hspace{-5.2mm} &
\includegraphics[width=1.6cm]{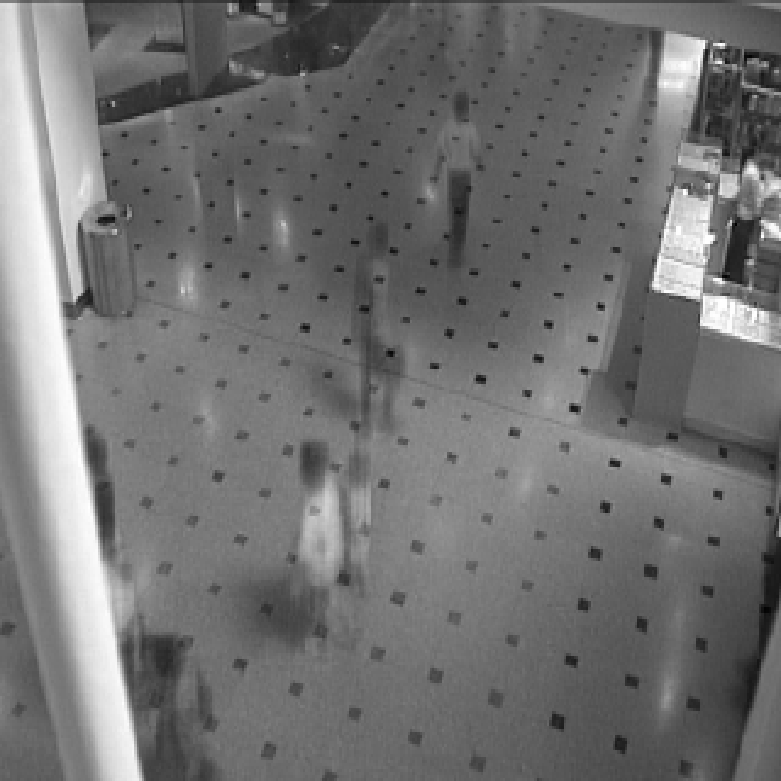} \hspace{-5.2mm} &
\includegraphics[width=1.6cm]{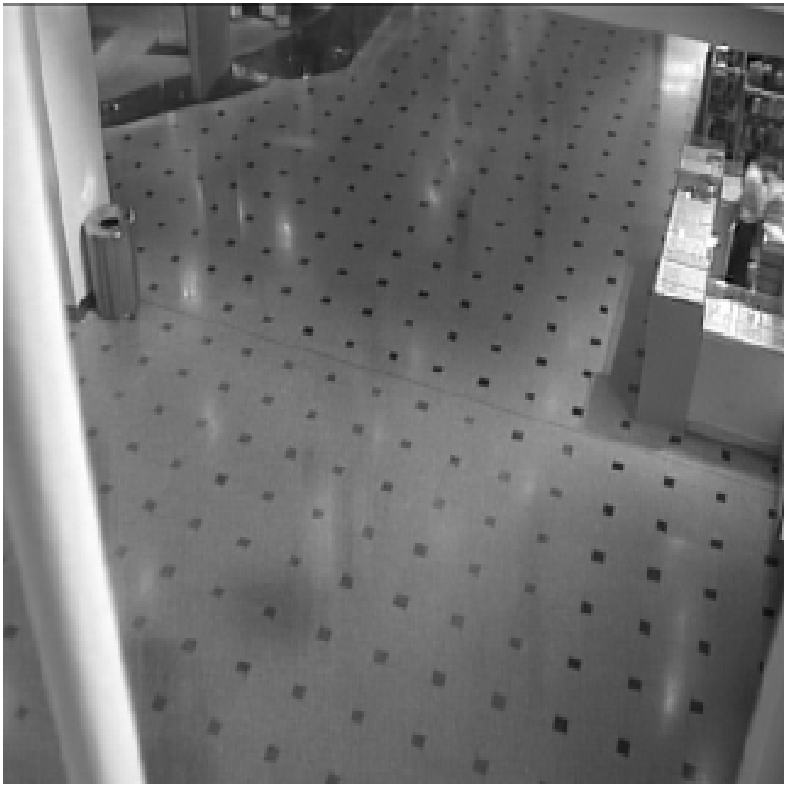} \hspace{-5.2mm} &
\includegraphics[width=1.6cm]{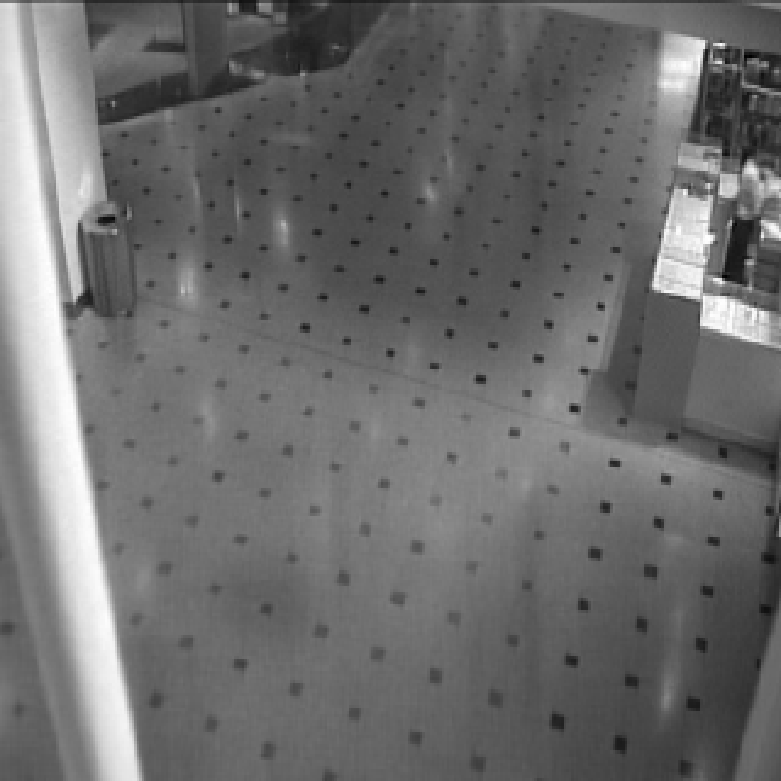} \hspace{-5.2mm} &
\includegraphics[width=1.6cm]{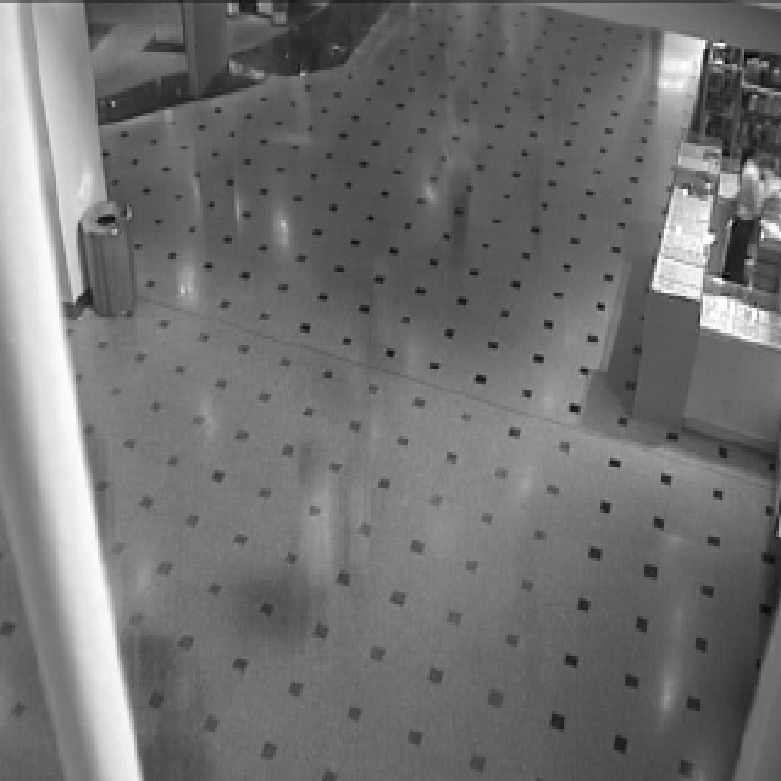} \hspace{-5.2mm} 
\\  		
\includegraphics[width=1.6cm]{figure/white1} \hspace{-5.2mm} &
\includegraphics[width=1.6cm]{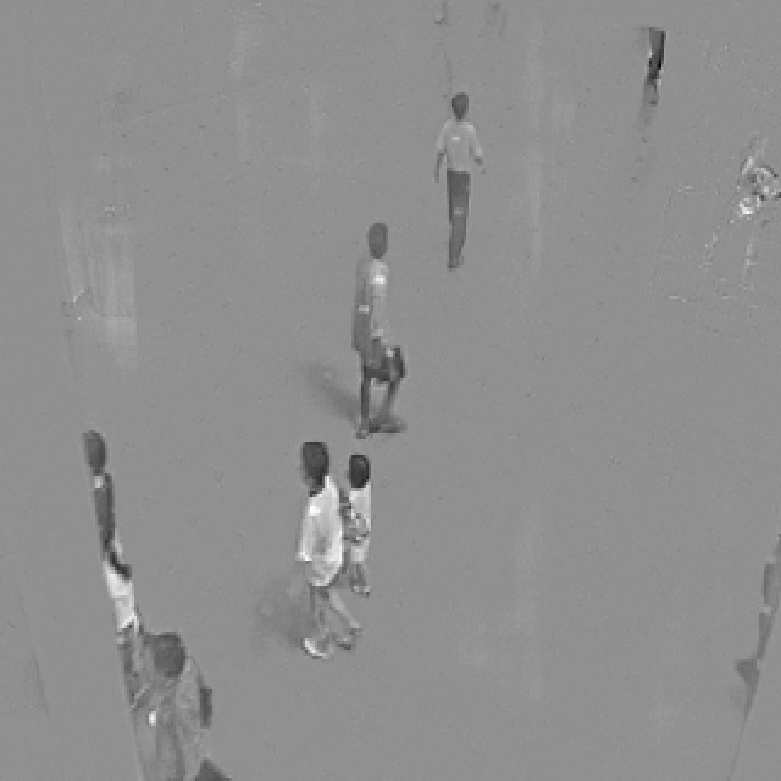} \hspace{-5.2mm} &
\includegraphics[width=1.6cm]{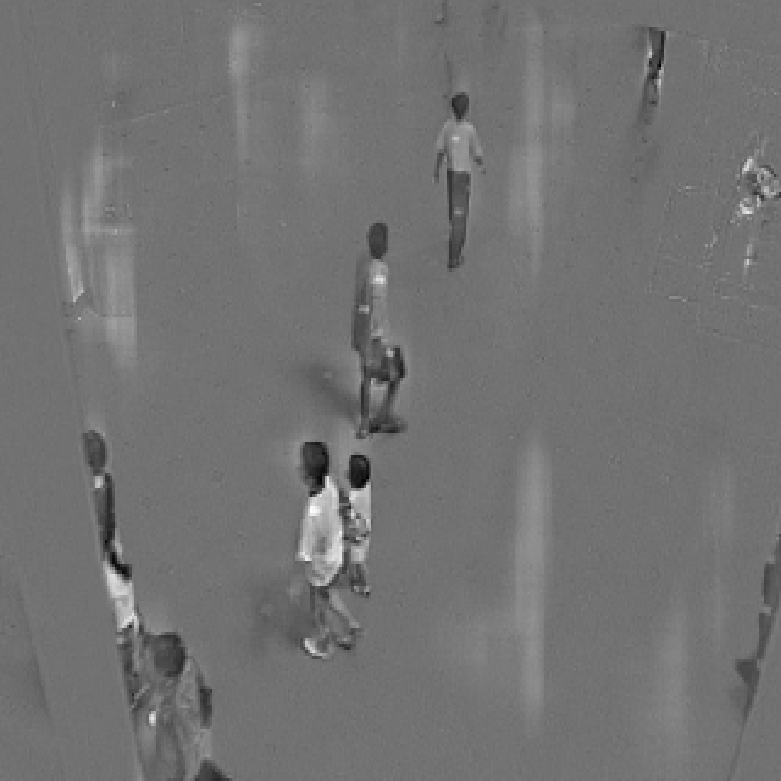} \hspace{-5.2mm} &
\includegraphics[width=1.6cm]{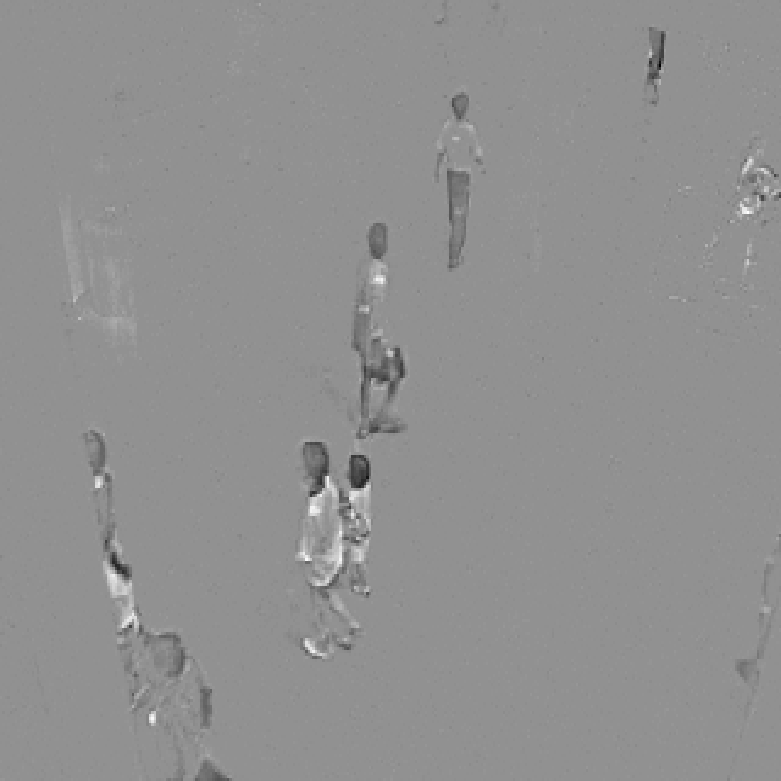} \hspace{-5.2mm} &
\includegraphics[width=1.6cm]{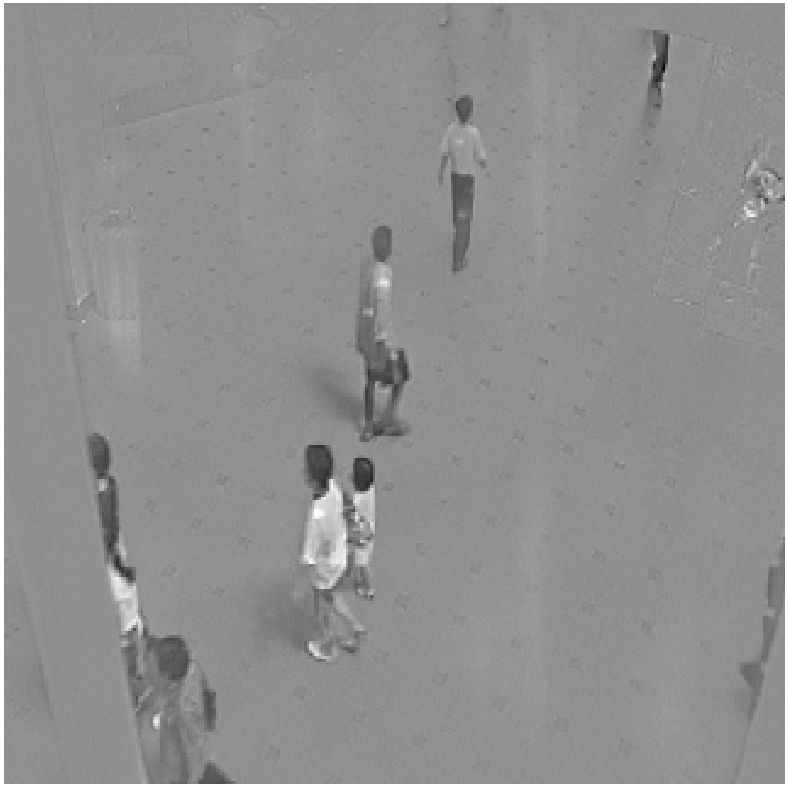} \hspace{-5.2mm} &
\includegraphics[width=1.6cm]{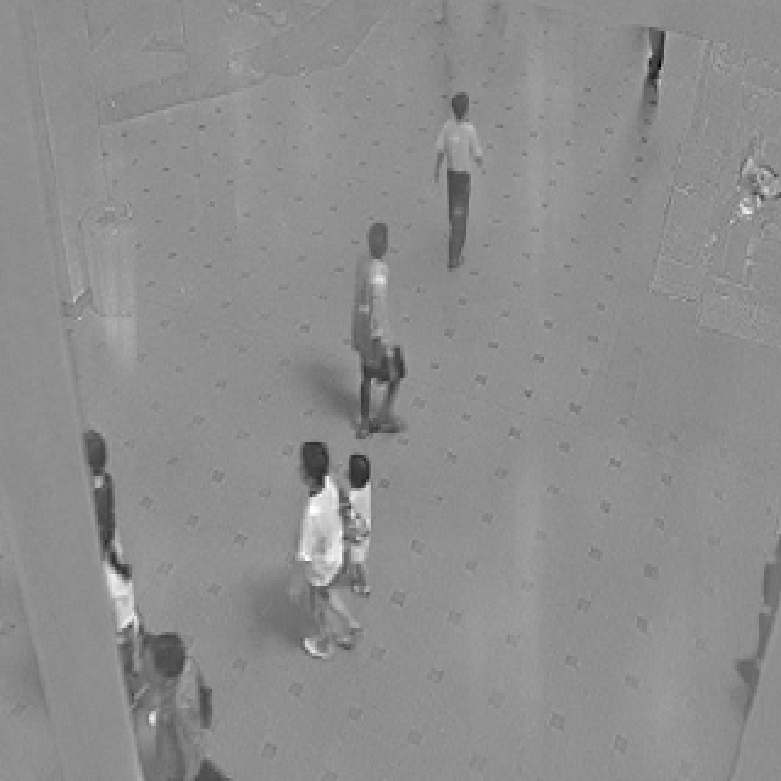} \hspace{-5.2mm} &
\includegraphics[width=1.6cm]{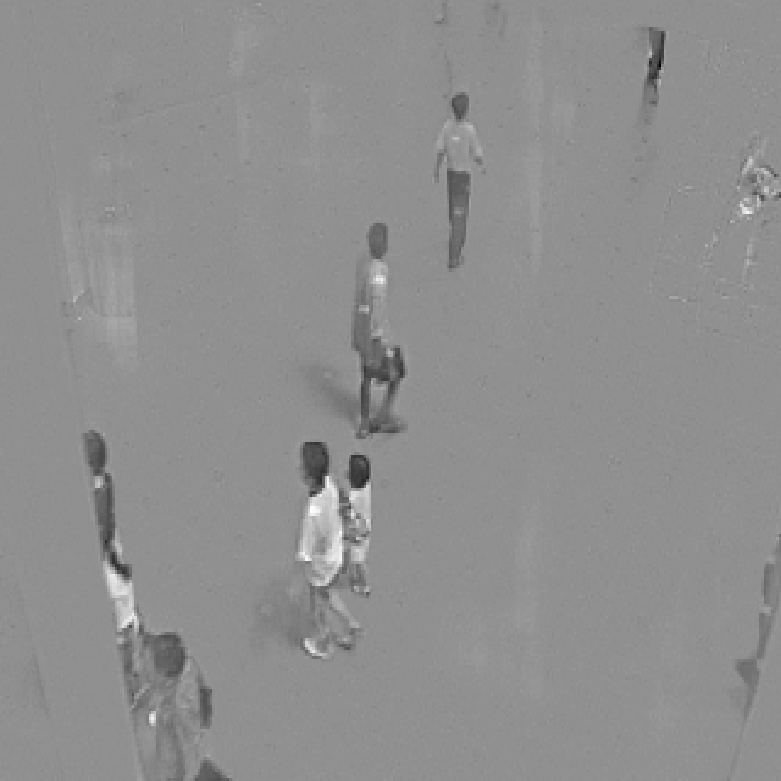} \hspace{-5.2mm} 
\end{tabular}
\end{center}
\vspace{-0.05in}
\caption{Comparison of background model on three example images, labeled by ``airport'' (top two rows), ``bootstrap'' (middle two), and ``shopping mall'' (bottom two). From left to right:
 (a) Original image, background model by  (b) TNN,  (c) Laplace function based nonconvex surrogate, (d) $\mathrm{t}\mbox{-}S_{w, p}(0.9)$, (e) EAP-TRPCA-FFT, (g) TNF, and (h) TNF$+$.
}
     \label{fig:TRPCAback}
     \vspace{-0.05in}
 \end{figure*}

\section {Conclusions}\label{sect:concludePCA}
In this paper, we revisited a nonconvex approximation to the tensor tubal rank, referred to as the tensor nuclear over the Frobenius norms (TNF). Building upon this approximation, we developed two models for the TRPCA problem, where a sparse tensor is identified by minimizing $\ell_1$ and $\ell_1/\ell_F$ regularizations, thus leading to
 TNF and TNF$+$ models, respectively.  We proved that the underlying pair of the low-rank tensor and the sparse tensor is a local minimizer of the proposed TNF model under tensor incoherence conditions. Both TNF and TNF$+$ models can be effectively solved via ADMM with convergence guarantees. Extensive experiments were conducted to showcase the effectiveness of our proposed models compared to state-of-the-art methods. 
 Future endeavors would focus on relaxing the conditions in the theoretical analysis of the two models and adapting the models to various noise distributions. Additionally, we will fill in the gap between  TNF and  TNF$+$ regarding their recovery theory. 

\begin{acknowledgements}
This work was partially supported by the National Science Foundation CAREER Award 2414705, the National Key R\&D Program of China (2023YFA1011400), the Natural Science Foundation of China (No.~12201286), Guangdong Basic and Applied Research Foundation 2024A1515012347, the Shenzhen Science and Technology Program 20231115165836001,  and the Shenzhen Fundamental Research Program JCYJ20220818100602005.
\end{acknowledgements}

\section*{Data Availability}
The MATLAB codes and datasets generated and/or analyzed during the current study will be available after publication.

\section*{Declarations}
The authors have no relevant financial or non-financial interests to disclose.  The authors declare that they have no conflict of interest.


\appendix

\renewcommand{\thedefinition}{\Alph{section}.\arabic{definition}}

\section{Relevant concepts on t-SVD} \label{app:t_svd}
Let $\overline{\mathbf{A}}\in \mathbb{C}^{n_{1}n_{3} \times n_{2}n_{3}}$ be a block diagonal matrix of the tensor $\overline{\mathcal{A}}$, i.e., 
\begin{equation}
\overline{\mathbf{A}}:=\operatorname{bdiag}(\overline{\mathcal{A}})=\left[\begin{array}{llll}
\overline{\mathbf{A}}^{(1)} & & & \\
& \overline{\mathbf{A}}^{(2)} & & \\
& & \ddots & \\
& & & \overline{\mathbf{A}}^{\left(n_{3}\right)}
\end{array}\right]. \label{con:bdiagdef}
\end{equation} 
It follows from \cite{friedland2018nuclear} that $\langle\mathcal{A},\mathcal{B}\rangle=\tfrac{1}{n_{3}}\langle\overline{\mathbf{A}}, \overline{\mathbf{B}}\rangle$ and $ \|\mathcal{A}\|_{F}=\tfrac{1}{\sqrt{n_{3}}}\|\overline{\mathbf{A}}\|_{F}$. 
Using the frontal slices of a tensor $\mathcal A$, we define the block circulant matrix  of ${\mathcal{A}}$ as
\begin{equation}
\operatorname{bcirc}(\mathcal{A}):=\left[\begin{array}{cccc}
\mathbf{A}^{(1)} & \mathbf{A}^{\left(n_{3}\right)} & \cdots & \mathbf{A}^{(2)} \\
\mathbf{A}^{(2)} & \mathbf{A}^{(1)} & \cdots & \mathbf{A}^{(3)} \\
\vdots & \vdots & \ddots & \vdots \\
\mathbf{A}^{\left(n_{3}\right)} & \mathbf{A}^{\left(n_{3}-1\right)} & \cdots & \mathbf{A}^{(1)}
\end{array}\right] \in \mathbb{R}^{n_{1} n_{3} \times n_{2} n_{3}}. 
\end{equation}
We define two operators: 
\begin{equation}
 \operatorname{unfold}(\mathcal{A})=\left[\begin{array}{c}
\mathbf{A}^{(1)} \\
\mathbf{A}^{(2)} \\
\vdots \\
\mathbf{A}^{\left(n_{3}\right)}
\end{array}\right] \quad \text{and}\quad \text {fold }(\operatorname{unfold}(\mathcal{A}))=\mathcal{A},   
\end{equation}
where unfold$(\cdot)$ maps ${\mathcal{A}}$ to a matrix of size ${n_{1} n_{3} \times n_{2}}$ and fold$(\cdot)$ is its inverse operator. 

\begin{definition}[t-product \cite{kilmer2011factorization}]
\label{def:tprod}
Let ${\mathcal{A} \in \mathbb{R}^{n_{1} \times l \times n_{3}}}$ and ${\mathcal{B} \in \mathbb{R}^{l \times n_{2} \times n_{3}}}$, then the t-product ${\mathcal{A} * \mathcal{B}}$ is defined by 
\begin{equation}
{ \mathcal{A} * \mathcal{B}=\operatorname{fold}(\operatorname{bcirc}(\mathcal{A}) \cdot \operatorname{unfold}(\mathcal{B})), }
\end{equation}
resulting a tensor of size ${n_{1} \times n_{2} \times n_{3}}$. Note that $\mathcal{A} * \mathcal{B}= \mathcal{Z} $ if and only if $\overline{\mathbf{A}}\,\overline{\mathbf{B}}=\overline{\mathbf{Z}}.$ 
\end{definition}
\begin{definition}[identity tensor \cite{kilmer2011factorization}]
The identity tensor ${\mathcal{I} \in \mathbb{R}^{n \times n \times n_{3}}}$ is the tensor with its first frontal slice being the ${n \times n}$ identity matrix and other frontal slices being all zeros. It is clear that ${\mathcal{A} * \mathcal{I}=\mathcal{A}}$ and ${\mathcal{I} * \mathcal{A}=\mathcal{A}}$ given the appropriate dimensions. 
\end{definition}
\begin{definition} [tensor conjugate transpose \cite{kilmer2011factorization}]
The conjugate transpose of a tensor ${\mathcal{A}} \in\mathbb{C}^{n_1 \times n_2 \times n_{3}}$ is a tensor $\mathcal{A}^\ast$ obtained by conjugate transposing each of the frontal slices and then reversing the order of transposed frontal slices 2 through $n_3$.
\end{definition}
\begin{definition} [orthogonal tensor  \cite{kilmer2011factorization}]
A tensor ${\mathcal{Q} \in}$ ${\mathbb{R}^{n \times n \times n_{3}}}$ is orthogonal if it satisfies ${\mathcal{Q}^{*}*\mathcal{Q}=\mathcal{Q}*\mathcal{Q}^{*}=\mathcal{I}}$.  
\end{definition}
\begin{definition} [$f$-diagonal tensor \cite{kilmer2011factorization}]
A tensor is called $f$-diagonal if each frontal slice is a diagonal matrix.
\end{definition}


\renewcommand{\thedefinition}{S.\arabic{definition}.}
\renewcommand{\thetheorem}{S.\arabic{theorem}}
\renewcommand{\thelemma}{S.\arabic{lemma}}
\section {Proof of Theorem 1}\label{sect:modelsPCA}

\subsection{Preliminary of definitions and lemmas}

Some assumptions on the low-rank tensor and the sparse tensor are required to avoid the degenerated situations in the TRPCA problem. We assume the signs of the nonzero entries of $\mathcal{E}_0$ are independent symmetric $\pm 1$ random variables, i.e., following the probability $\mathbb{P}(\operatorname{sgn}(\mathcal{E}_0)=1)=\mathbb{P}(\operatorname{sgn}(\mathcal{E}_0)=-1) =\gamma$. 
Let $\boldsymbol{\Omega}$ be the corrupted entries of $\mathcal{L}_{0}$ and $\boldsymbol{\Omega}^{c}$ be locations where data are available and clean, i.e., $\boldsymbol{\Omega}$ is the support set of $\mathcal{E}_0$. For convenience, we define $n_{(1)}:=\text{max}(n_1,n_2)$ and $n_{(2)}:=\text{min}(n_1,n_2)$.  
In addition to the notations introduced in the paper, we require the following definitions for their use in the proofs. 
\begin{definition}[tensor operator \cite{zhang2016exact}]
Suppose ${\mathcal{F}: \mathbb{R}^{n_{1} \times n_{2} \times n_{3}} \rightarrow}{\mathbb{R}^{n_{4} \times n_{2} \times n_{3}}}$ is a tensor operator that maps a tensor ${\mathcal{A}}$ of ${n_{1} \times n_{2} \times n_{3}}$ to a tensor ${\mathcal{B}}$ of ${n_{4} \times n_{2} \times n_{3}}$ , i.e., 
$$ \mathcal{B}=\mathcal{F}(\mathcal{A}). $$ 
\end{definition}
 A special case of tensor operators is through t-product, i.e., $\mathcal{B}=\mathcal{F}(\mathcal{A})=\mathcal{L}*\mathcal{A}$, where ${\mathcal{L}}$ is a tensor of ${n_{4} \times n_{1} \times n_{3}}$. 

\begin{definition}[tensor operator norm \cite{zhang2016exact}]
Suppose $\mathcal{F}$ is a tensor operator, then the operator norm of $\mathcal{F}$ is defined as, $ \|\mathcal{F}\|_{o p}=\sup _{\mathcal{X}:\|\mathcal{X}\|_{F} \leq 1}\|\mathcal{F}(\mathcal{X})\|_{F}$, which is consistent with the matrix case. 
\end{definition}
\begin{definition}[tensor spectral norm \cite{zhang2016exact}]
 The tensor spectral norm of ${\mathcal{X} \in \mathbb{R}^{n_{1} \times n_{2} \times n_{3}}}$, denoted as ${\|\mathcal{X}\|}$, is defined as 
 \begin{equation*}
\|\mathcal{X}\|=\max_{i j}\sigma_{ij},
 \end{equation*} 
 where $\sigma_{ij}$ is the $j$-th singular value of the $i$-th front slice of $\overline{\mathcal{X}}$.
\end{definition}
Note that tensor spectral norm is a special case of the operator norm if the tensor operator $f$ can be represented by t-product in such a way that $\mathcal{X}:\mathcal{F}(\mathcal{X})=\mathcal{L} * \mathcal{X}$, 
then $\|\mathcal{F}\|_{o p}=\|\mathcal{L}\|$. For simple notation, we express the operator norm $\|\cdot\|_{op}=\|\cdot\|$ as the spectral norm in this document.

Given the skinny t-SVD of $\mathcal{L}_0$, i.e.,  $\mathcal{L}_0=\mathcal{U}*\mathcal{S}*\mathcal{V}^*$, where $\mathcal{U}\in \mathbb{R}^{n_{1} \times r \times n_{3}},$ $\mathcal{S}\in \mathbb{R}^{r \times r \times n_{3}},$ and $\mathcal{V}\in \mathbb{R}^{n_{2} \times r \times n_{3}}$, we denote $\mathbf{T}$ by the set 
\begin{equation}
\label{eq:T}
\mathbf{T}=\{\mathcal{U}*\mathcal{Y}^{*}+\mathcal{W}*\mathcal{V}^{*} |\; \mathcal{Y}, \mathcal{W} \in \mathbb{R}^{n_1\times r\times n_3}\}.
\end{equation}
We then define ${\mathbf{T}^{\perp}}$ as the orthogonal complement 
of $\mathbf{T}$. The projections of an arbitrary tensor $\mathcal Z\in \mathbb{R}^{n_1\times n_2\times n_3}$ onto $\mathbf{T}$ and ${\mathbf{T}^{\perp}}$ 
are given by \cite{lu2019tensor} 
\begin{equation}
 \begin{array}{rl} 
 \mathcal{P}_{\mathbf{T}}(\mathcal{Z})&=\mathcal{U}  * \mathcal{U}^{*} * \mathcal{Z}+\mathcal{Z} * \mathcal{V} * \mathcal{V}^{*}-\mathcal{U} * \mathcal{U}^{*} * \mathcal{Z} * \mathcal{V} * \mathcal{V}^{*}, \\ 
 \mathcal{P}_{\mathbf{T}^{\perp}}(\mathcal{Z}) & =\mathcal{Z}-\mathcal{P}_{\mathbf{T}}(\mathcal{Z}) =\left(\mathcal{I}_{n_{1}}-\mathcal{U} * \mathcal{U}^{*}\right) * \mathcal{Z} *\left(\mathcal{I}_{n_{2}}-\mathcal{V} * \mathcal{V}^{*}\right),
 \end{array}  \label{equ:defptptt}
\end{equation}
 where ${\mathcal{I}_{n}}$ denotes the  identity tensor of ${n \times n \times n_{3}}$. It is straightforward that $\left\langle\mathcal{P}_{\mathbf{T}}(\mathcal{A}),\mathcal{P}_{\mathbf{T}^{\perp}}(\mathcal{B})\right\rangle=0$,   $\mathcal{P}_{\mathbf{T}}\mathcal{P}_{\mathbf{T}}(\mathcal{A})=\mathcal{P}_{\mathbf{T}}(\mathcal{A}),$ and $\mathcal{P}_{\mathbf{T}^{\perp}}\mathcal{P}_{\mathbf{T}^{\perp}}(\mathcal{A})=\mathcal{P}_{\mathbf{T}^{\perp}}(\mathcal{A})$ for any tensors $\mathcal{A},\ \mathcal{B} \in \mathbb{R}^{n_1\times n_2\times n_3}$.
To make the paper self-contained, we include the following lemmas; please refer to the respective references for proofs.

\begin{lemma}{\rm\cite[Lemma D.1]{lu2019tensor}}
For the  Bernoulli sign tensor ${\mathcal{M}}\in \mathbb{R}^{n_{1} \times n_{2} \times n_{3}}$ whose entries are distributed as
\begin{equation}
\mathcal{M}_{ijk}=\left\{\begin{aligned}
1 & \quad \text{ w.p. } \quad \gamma,\\
0 & \quad \text{ w.p. } \quad  1-2\gamma, \\
-1 & \quad \text{ w.p. } \quad \gamma,
\end{aligned}\right.
\end{equation}
there exists a function ${\varphi(\gamma)}$ with ${\lim _{\gamma \rightarrow 0^{+}} \varphi(\gamma)=0}$ such that 
$${ \|\mathcal{M}\| \leq \varphi(\gamma) \sqrt{n_{(1)} n_{3}}},$$ 
holds for any $\gamma\in[0,1/2]$ with high probability.   
\label{lemmabernoulli}
\end{lemma}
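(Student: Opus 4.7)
The plan is to reduce the tensor spectral norm to the matrix spectral norms of the Fourier slices of $\overline{\mathcal{M}}$ and then apply a standard concentration inequality for rectangular random matrices with independent entries. By the definition of the tensor spectral norm, $\|\mathcal{M}\| = \max_{i \in \{1,\ldots,n_3\}} \|\overline{\mathbf{M}}^{(i)}\|$, where $\overline{\mathcal{M}}$ is the DFT of $\mathcal{M}$ along the third mode. Thus the task reduces to bounding the matrix spectral norm of each of the $n_3$ Fourier slices with high probability and taking a union bound.

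The first step is to write the $(a,b)$-entry of each Fourier slice as
\begin{equation*}
\overline{\mathbf{M}}^{(i)}_{ab} = \sum_{k=1}^{n_3} \omega^{(i-1)(k-1)} \mathcal{M}_{abk}, \qquad \omega = e^{-2\pi \sqrt{-1}/n_3},
\end{equation*}
and to note that, for any fixed $i$, these entries are mutually independent across the pair $(a,b)$ because the underlying Bernoulli signs are independent across all three tensor indices. Each entry is mean zero, bounded in modulus by $n_3$, and has variance $\operatorname{Var}\bigl(\overline{\mathbf{M}}^{(i)}_{ab}\bigr) = 2\gamma n_3$, using that $|\omega|=1$ and $\operatorname{Var}(\mathcal{M}_{abk}) = 2\gamma$.

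The next step is to invoke a matrix-Bernstein-type concentration inequality (for instance the Bandeira--van Handel bound or Tropp's matrix Bernstein inequality applied to the decomposition of $\overline{\mathbf{M}}^{(i)}$ as a sum of independent rank-one random matrices). For a complex $n_1 \times n_2$ random matrix whose independent mean-zero entries have variance at most $\sigma^2$ and modulus at most $K$, such an inequality gives
\begin{equation*}
\bigl\|\overline{\mathbf{M}}^{(i)}\bigr\| \leq C_1 \sigma \sqrt{n_1 + n_2} + C_2 K \sqrt{\log(n_{(1)} n_3)}
\end{equation*}
with probability at least $1 - (n_{(1)} n_3)^{-c}$. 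Plugging in $\sigma^2 = 2\gamma n_3$ and $K = n_3$, and using that $n_{(1)}$ is sufficiently large so the variance term dominates, yields $\|\overline{\mathbf{M}}^{(i)}\| \leq C\sqrt{\gamma n_{(1)} n_3}$ with high probability. A union bound over $i = 1, \ldots, n_3$ (costing only a multiplicative $n_3$ in the failure probability, absorbed by slightly enlarging $c$) then gives $\|\mathcal{M}\| \leq C\sqrt{\gamma n_{(1)} n_3}$. Setting $\varphi(\gamma) := C\sqrt{\gamma}$ produces the claimed bound and clearly satisfies $\lim_{\gamma \to 0^+} \varphi(\gamma) = 0$.

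\textbf{Main obstacle.} The key technical difficulty is selecting a concentration inequality that simultaneously exploits the small variance $2\gamma n_3$ and the relatively loose $L^\infty$ bound $n_3$ on the entries: a naive sub-Gaussian estimate would replace $\sqrt{\gamma}$ by an absolute constant and destroy the property $\varphi(\gamma) \to 0$. Using a Bernstein-style inequality that tracks both parameters, together with a mild large-dimension assumption to absorb the logarithmic residual term into the variance-dominated rate, is essential. Uniformity in $\gamma \in [0,1/2]$ is automatic, since $\varphi(\gamma) = C\sqrt{\gamma}$ is bounded on that interval.
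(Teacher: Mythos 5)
The paper does not actually prove this lemma: it is imported verbatim from Lu et al.\ (their Lemma D.1), and the text explicitly defers to that reference for the proof. So there is no in-paper argument to compare against line by line. That said, your overall strategy --- reduce $\|\mathcal{M}\|$ to $\max_i\|\overline{\mathbf{M}}^{(i)}\|$, observe that for each fixed $i$ the entries of $\overline{\mathbf{M}}^{(i)}$ are independent, centered, of variance $2\gamma n_3$, apply a nonasymptotic spectral-norm bound for matrices with independent entries, and union-bound over the $n_3$ slices --- is the natural one and is in the same spirit as the cited source. The independence claim, the variance computation, and the target $\varphi(\gamma)=C\sqrt{\gamma}$ are all correct.

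The gap is in how you control the magnitude of the Fourier-domain entries. You take $K=n_3$ (the worst-case modulus of $\overline{\mathbf{M}}^{(i)}_{ab}$), so the residual term in your bound is $C_2\,n_3\sqrt{\log(n_{(1)}n_3)}$, and you assert it is absorbed by the variance term $C_1\sqrt{\gamma\,n_{(1)}n_3}$ ``for $n_{(1)}$ sufficiently large.'' That absorption requires $n_3^2\log(n_{(1)}n_3)\lesssim\gamma\,n_{(1)}n_3$, i.e.\ $n_{(1)}\gtrsim n_3\log(n_{(1)}n_3)/\gamma$, which is not implied by the dimensions being large: for a cubic tensor $n_1=n_2=n_3=n$ your bound reads $C_1\sqrt{\gamma}\,n+C_2\,n\sqrt{2\log n}$, whose second term exceeds the target $\varphi(\gamma)\sqrt{n_{(1)}n_3}=\varphi(\gamma)\,n$ for every fixed $\varphi(\gamma)$ no matter how large $n$ is; and even when $n_{(1)}\gg n_3$ the required threshold blows up as $\gamma\to0^{+}$, defeating the point of a dimension-free $\varphi$ with $\varphi(\gamma)\to0$. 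The deterministic bound $|\overline{\mathbf{M}}^{(i)}_{ab}|\le n_3$ is simply far too crude: each such entry is a sum of $n_3$ independent bounded centered terms with total variance $2\gamma n_3$, so scalar Bernstein plus a union bound gives $\max_{a,b}|\overline{\mathbf{M}}^{(i)}_{ab}|\lesssim\sqrt{\gamma n_3\log(n_1n_2n_3)}+\log(n_1n_2n_3)$ with high probability. Feeding that (after a truncation/conditioning step, since Bandeira--van Handel wants a deterministic entry bound) in place of $K=n_3$ makes the residual term $O(\sqrt{\gamma n_3}\,\log(n_{(1)}n_3))$, which is genuinely lower order than $\sqrt{\gamma\,n_{(1)}n_3}$ once $\log(n_{(1)}n_3)=o(\sqrt{n_{(1)}})$ --- a condition on the dimensions alone, independent of $\gamma$. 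With that repair (or by invoking a Seginer/Lata{\l}a-type bound that avoids the $K\sqrt{\log}$ term altogether), your argument goes through; note that plain matrix Bernstein on the rank-one decomposition still leaves a $\sqrt{\log(n_{(1)}n_3)}$ factor multiplying $\sqrt{\gamma}$, so it alone does not yield a dimension-free $\varphi$ either.
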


\begin{lemma} {\rm\cite[Lemma D.2]{lu2019tensor}}
Suppose the sampling follows the Bernoulli distribution with the probability $\rho$ being in $\boldsymbol{\Theta}$  i.e., ${\boldsymbol{\Theta} \sim \operatorname{Ber}(\rho)}$. Define $\mathcal{P}_{\boldsymbol{\Theta}}$ as a linear projection such that the entries in the set $\boldsymbol{\Theta}$ are known while the remaining entries are unknown. For any $0<\epsilon\leq 1$, there exists a constant $C_{0}>0$ such that for any $\rho \geq C_{0} \epsilon^{-2} \tfrac{\mu r \log \left(n_{(1)} n_{3}\right)}{n_{(2)}n_3}$ with tensor incoherence $\mu$ defined in {\rm\textbf{Definition 2.3}}, the following inequality, 
\begin{equation}
\left\|\rho^{-1} \mathcal{P}_{\mathbf{T}} \mathcal{P}_{\boldsymbol{\Theta}} \mathcal{P}_{\mathbf{T}}-\mathcal{P}_{\mathbf{T}}\right\| \leq \epsilon,    
\end{equation}
holds with high probability at least $1-2(n_{(1)}n_3)^{1-\tfrac{3}{16}C_0}$.\label{lemmaomega1}
\end{lemma}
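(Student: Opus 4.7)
The plan is to reduce the statement to a concentration inequality for a sum of independent, mean-zero, self-adjoint random operators on the space $\mathbb{R}^{n_1\times n_2\times n_3}$, and then apply a non-commutative (operator) Bernstein inequality. The starting point is the identity
\begin{equation*}
\mathcal{P}_{\boldsymbol{\Theta}}(\mathcal{Z})=\sum_{i,j,k}\delta_{ijk}\langle \mathcal{Z},\mathbf{e}_{ijk}\rangle\,\mathbf{e}_{ijk},
\qquad \delta_{ijk}\stackrel{\text{i.i.d.}}{\sim}\mathrm{Ber}(\rho),
\end{equation*}
which, after conjugating by $\mathcal{P}_{\mathbf{T}}$, yields
\begin{equation*}
\rho^{-1}\mathcal{P}_{\mathbf{T}}\mathcal{P}_{\boldsymbol{\Theta}}\mathcal{P}_{\mathbf{T}}-\mathcal{P}_{\mathbf{T}}
=\sum_{i,j,k}\mathcal{S}_{ijk},\quad
\mathcal{S}_{ijk}:=\rho^{-1}(\delta_{ijk}-\rho)\,\bigl\langle\,\cdot\,,\mathcal{P}_{\mathbf{T}}(\mathbf{e}_{ijk})\bigr\rangle\,\mathcal{P}_{\mathbf{T}}(\mathbf{e}_{ijk}),
\end{equation*}
using $\mathbb{E}[\mathcal{P}_{\mathbf{T}}\mathcal{P}_{\boldsymbol{\Theta}}\mathcal{P}_{\mathbf{T}}]=\rho\,\mathcal{P}_{\mathbf{T}}^2=\rho\,\mathcal{P}_{\mathbf{T}}$. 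The summands are independent, self-adjoint, and mean-zero, making them amenable to a Bernstein-type bound.

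Next I would derive the two deterministic ingredients needed by operator Bernstein. Using the expression for $\mathcal{P}_{\mathbf{T}}$ in \eqref{equ:defptptt} together with the tensor incoherence conditions \eqref{condition:incoherence}, I would establish the pointwise inequality
\begin{equation*}
\max_{i,j,k}\bigl\|\mathcal{P}_{\mathbf{T}}(\mathbf{e}_{ijk})\bigr\|_F^{2}
\;\leq\;\tfrac{2\mu r}{n_{(2)}n_3},
\end{equation*}
by expanding $\mathcal{U}*\mathcal{U}^{*}*\mathbf{e}_{ijk}$ and $\mathbf{e}_{ijk}*\mathcal{V}*\mathcal{V}^{*}$ into basis tubes and applying the bounds on $\|\mathcal{U}^{*}*\overrightarrow{\boldsymbol{e}}_i\|_F$ and $\|\mathcal{V}^{*}*\overrightarrow{\boldsymbol{e}}_j\|_F$. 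From this, the uniform operator-norm bound follows:
\begin{equation*}
R:=\max_{i,j,k}\|\mathcal{S}_{ijk}\|
\;\leq\;\rho^{-1}\bigl\|\mathcal{P}_{\mathbf{T}}(\mathbf{e}_{ijk})\bigr\|_F^{2}
\;\leq\;\tfrac{2\mu r}{\rho\,n_{(2)}n_3}.
\end{equation*}
For the variance proxy, since each $\mathcal{S}_{ijk}^{2}$ is a positive-semidefinite rank-one operator and $\sum_{ijk}\mathcal{P}_{\mathbf{T}}(\mathbf{e}_{ijk})\otimes\mathcal{P}_{\mathbf{T}}(\mathbf{e}_{ijk})=\mathcal{P}_{\mathbf{T}}$ (the resolution of identity on $\mathbf{T}$), I would bound
\begin{equation*}
\sigma^{2}:=\Bigl\|\sum_{i,j,k}\mathbb{E}\,\mathcal{S}_{ijk}^{2}\Bigr\|
\;\leq\;\rho^{-1}\max_{i,j,k}\bigl\|\mathcal{P}_{\mathbf{T}}(\mathbf{e}_{ijk})\bigr\|_F^{2}\,\|\mathcal{P}_{\mathbf{T}}\|
\;\leq\;\tfrac{2\mu r}{\rho\,n_{(2)}n_3}.
\end{equation*}

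Finally, I would invoke the non-commutative Bernstein inequality for independent self-adjoint operators on a space of dimension at most $n_1 n_2 n_3$, obtaining
\begin{equation*}
\mathbb{P}\!\left(\bigl\|\rho^{-1}\mathcal{P}_{\mathbf{T}}\mathcal{P}_{\boldsymbol{\Theta}}\mathcal{P}_{\mathbf{T}}-\mathcal{P}_{\mathbf{T}}\bigr\|>\epsilon\right)
\;\leq\;2(n_1 n_2 n_3)\exp\!\Bigl(-\tfrac{\epsilon^{2}/2}{\sigma^{2}+R\epsilon/3}\Bigr).
\end{equation*}
Substituting the bounds for $R$ and $\sigma^{2}$ and plugging in $\rho\geq C_{0}\epsilon^{-2}\tfrac{\mu r\log(n_{(1)}n_3)}{n_{(2)}n_3}$ reduces the exponent to $-C_{0}'\log(n_{(1)}n_3)$, which yields the stated probability of at least $1-2(n_{(1)}n_3)^{1-\tfrac{3}{16}C_{0}}$ after absorbing a logarithmic factor of $\log(n_1 n_2 n_3)\lesssim 3\log(n_{(1)}n_3)$ into $C_{0}$. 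The main obstacle is the careful derivation of the pointwise bound $\|\mathcal{P}_{\mathbf{T}}(\mathbf{e}_{ijk})\|_F^{2}\leq 2\mu r/(n_{(2)}n_3)$, because the three-term formula for $\mathcal{P}_{\mathbf{T}}$ in the t-SVD framework mixes horizontal and lateral slice contributions through t-products, so one must track the Fourier-domain block-diagonal structure to apply the incoherence bounds cleanly; once this step is in place, the remaining computations are a direct application of operator Bernstein, following the same architecture used in the TNN-based TRPCA theory of \cite{lu2019tensor}.
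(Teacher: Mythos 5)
Your proposal is correct, but note that the paper does not prove this statement at all: it is imported verbatim as Lemma D.2 of \cite{lu2019tensor}, and the authors explicitly defer to that reference for the proof. Your reconstruction follows the standard route used there (and in the matrix-completion literature it descends from): write $\rho^{-1}\mathcal{P}_{\mathbf{T}}\mathcal{P}_{\boldsymbol{\Theta}}\mathcal{P}_{\mathbf{T}}-\mathcal{P}_{\mathbf{T}}$ as a sum of independent mean-zero rank-one self-adjoint operators, bound the uniform norm and variance proxy via the incoherence estimate $\|\mathcal{P}_{\mathbf{T}}(\mathbf{e}_{ijk})\|_F^2\lesssim \mu r/(n_{(2)}n_3)$, and apply matrix Bernstein; this is also exactly the architecture the present paper uses in its own supplement to prove the sibling bound on $\tfrac{1}{1-2\gamma}\mathcal{P}_{\mathbf{T}}\mathcal{P}_{\boldsymbol{\Omega}^c}\mathcal{P}_{\mathbf{T}^{\perp}}$, where the tensor operators are lifted to block-diagonal matrices before invoking Tropp's inequality. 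The only loose ends in your sketch are bookkeeping ones — the dimensional prefactor ($n_1n_2n_3$ versus $n_{(1)}n_3$, absorbed into the constant as you note) and the unproved but standard incoherence estimate for $\|\mathcal{P}_{\mathbf{T}}(\mathbf{e}_{ijk})\|_F^2$, which the paper itself simply quotes from \cite{lu2019tensor} wherever it is needed.
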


\begin{lemma} {\rm\cite[Lemma D.4]{lu2019tensor}}
Given a tensor $\mathcal{Z} \in \mathbf{T}$  and ${\boldsymbol{\Theta} \sim \operatorname{Ber}(\rho)}$. For any $0<\epsilon\leq 1$, the following inequality 
\begin{equation}
\left\|\mathcal{Z}-\rho^{-1} \mathcal{P}_{\mathbf{T}} \mathcal{P}_{\boldsymbol{\Theta}} \mathcal{Z}\|_{\infty} \leq \epsilon\right\|\mathcal{Z}\|_{\infty},   \end{equation}
holds with high probability at least $1-2(n_{(1)}n_3)^{-\tfrac{3C_0}{16}}$,
provided that 
$${\rho \geq C_{0} \epsilon^{-2} \tfrac{\mu r \log \left(n_{(1)} n_{3}\right)}{n_{(2)}n_3}}$$ 
for some constant ${C_{0}>0}$. 
\label{lemmaomega2}
\end{lemma}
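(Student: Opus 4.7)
The plan is to bound each entry of $\mathcal{Z}-\rho^{-1}\mathcal{P}_{\mathbf{T}}\mathcal{P}_{\boldsymbol{\Theta}}\mathcal{Z}$ via a Bernstein-type concentration inequality applied to a sum of independent zero-mean random variables, and then take a union bound over all $n_1n_2n_3$ entries. Since $\mathcal{Z}\in\mathbf{T}$, I would use $\mathcal{Z}=\mathcal{P}_{\mathbf{T}}\mathcal{Z}$ and write $\mathcal{P}_{\boldsymbol{\Theta}}\mathcal{A}=\sum_{ijk}\delta_{ijk}\langle\mathbf{e}_{ijk},\mathcal{A}\rangle\mathbf{e}_{ijk}$, where $\delta_{ijk}\sim\mathrm{Ber}(\rho)$ are i.i.d. Using self-adjointness of $\mathcal{P}_{\mathbf{T}}$, the $(a,b,c)$-entry of the residual reads
\begin{equation*}
\bigl(\mathcal{Z}-\rho^{-1}\mathcal{P}_{\mathbf{T}}\mathcal{P}_{\boldsymbol{\Theta}}\mathcal{Z}\bigr)_{abc}
=\sum_{i,j,k}\bigl(1-\rho^{-1}\delta_{ijk}\bigr)\,Z_{ijk}\,\langle\mathcal{P}_{\mathbf{T}}\mathbf{e}_{abc},\mathbf{e}_{ijk}\rangle,
\end{equation*}
which is a sum of independent, mean-zero random variables (since $\mathbb{E}[\delta_{ijk}]=\rho$).

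Next I would control the two Bernstein parameters. Each summand is almost-surely bounded by
\begin{equation*}
B:=\rho^{-1}\|\mathcal{Z}\|_\infty \,\|\mathcal{P}_{\mathbf{T}}\mathbf{e}_{abc}\|_F,
\end{equation*}
by Cauchy--Schwarz on the inner product, and the variance proxy satisfies
\begin{equation*}
\sigma^2:=\sum_{ijk}\mathrm{Var}\bigl(\rho^{-1}\delta_{ijk}\bigr)\,Z_{ijk}^{\,2}\,\langle\mathcal{P}_{\mathbf{T}}\mathbf{e}_{abc},\mathbf{e}_{ijk}\rangle^{2}
\leq \rho^{-1}\|\mathcal{Z}\|_\infty^{\,2}\,\|\mathcal{P}_{\mathbf{T}}\mathbf{e}_{abc}\|_F^{\,2},
\end{equation*}
where I used $\sum_{ijk}\langle\mathcal{P}_{\mathbf{T}}\mathbf{e}_{abc},\mathbf{e}_{ijk}\rangle^2=\|\mathcal{P}_{\mathbf{T}}\mathbf{e}_{abc}\|_F^2$. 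The key geometric ingredient, derived from the explicit formula \eqref{equ:defptptt} for $\mathcal{P}_{\mathbf{T}}$ together with the incoherence bounds \eqref{condition:incoherence}, is
\begin{equation*}
\|\mathcal{P}_{\mathbf{T}}\mathbf{e}_{abc}\|_F^{\,2}
\leq \|\mathcal{U}^{*}*\overrightarrow{\boldsymbol{e}}_{a}\|_F^{\,2}+\|\mathcal{V}^{*}*\overrightarrow{\boldsymbol{e}}_{b}\|_F^{\,2}
\leq \tfrac{2\mu r}{n_{(2)}n_{3}},
\end{equation*}
after canceling the third (product) term in $\mathcal{P}_{\mathbf{T}}$ against an appropriate piece using the orthogonality $\mathcal{U}^{*}*\mathcal{U}=\mathcal{I}$.

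Applying Bernstein's inequality with these parameters gives, for any fixed $(a,b,c)$,
\begin{equation*}
\mathbb{P}\!\left(\bigl|(\mathcal{Z}-\rho^{-1}\mathcal{P}_{\mathbf{T}}\mathcal{P}_{\boldsymbol{\Theta}}\mathcal{Z})_{abc}\bigr|>\epsilon\|\mathcal{Z}\|_\infty\right)
\leq 2\exp\!\left(-\tfrac{3\epsilon^{2}\rho\, n_{(2)}n_{3}}{8\mu r}\right),
\end{equation*}
where the factor $3/8$ reflects the interplay of $\sigma^{2}$ and $B\epsilon\|\mathcal{Z}\|_\infty/3$ in the Bernstein denominator. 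Taking a union bound over the $n_{1}n_{2}n_{3}\leq (n_{(1)}n_{3})^{2}$ indices and plugging in the hypothesis $\rho\geq C_{0}\epsilon^{-2}\mu r\log(n_{(1)}n_{3})/(n_{(2)}n_{3})$ with $C_0$ large yields a failure probability at most $2(n_{(1)}n_{3})^{-3C_{0}/16}$, matching the stated bound.

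The main obstacle is the sharp entrywise estimate $\|\mathcal{P}_{\mathbf{T}}\mathbf{e}_{abc}\|_F^{\,2}\leq 2\mu r/(n_{(2)}n_{3})$. A naive triangle inequality applied to the three terms of $\mathcal{P}_{\mathbf{T}}$ would lose constants and potentially factors of $n_3$; the clean bound requires tracking the cross terms and using $\|\mathcal{U}*\mathcal{U}^{*}*\mathbf{e}_{abc}*\mathcal{V}*\mathcal{V}^{*}\|_F\leq \|\mathcal{U}^{*}*\overrightarrow{\boldsymbol{e}}_{a}\|_F \|\mathcal{V}^{*}*\overrightarrow{\boldsymbol{e}}_{b}\|_F$, which then dominates by $\mu r/(n_{(2)}n_3)$. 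Once this bound is in hand, the rest of the proof is a routine Bernstein-plus-union-bound exercise analogous to the matrix case in \cite{lu2019tensor}.
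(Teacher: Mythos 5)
The paper itself does not prove this statement: it is imported verbatim from Lu et al.\ (their Lemma D.4), and the text explicitly defers to that reference for the proof, so there is no in-paper argument to compare against. Your overall strategy --- writing the $(a,b,c)$-entry of $\mathcal{Z}-\rho^{-1}\mathcal{P}_{\mathbf{T}}\mathcal{P}_{\boldsymbol{\Theta}}\mathcal{Z}$ as a sum of independent mean-zero summands $(1-\rho^{-1}\delta_{ijk})Z_{ijk}\langle\mathcal{P}_{\mathbf{T}}\mathbf{e}_{abc},\mathbf{e}_{ijk}\rangle$, applying Bernstein, and union bounding over entries --- is exactly the standard route, and your variance bound together with the incoherence estimate $\|\mathcal{P}_{\mathbf{T}}\mathbf{e}_{abc}\|_F^{2}\le 2\mu r/(n_{(2)}n_3)$ is correct and correctly identified as the key geometric input.

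There is, however, a genuine gap in your almost-sure bound $B$. You bound each summand by $\rho^{-1}\|\mathcal{Z}\|_\infty\|\mathcal{P}_{\mathbf{T}}\mathbf{e}_{abc}\|_F$ via Cauchy--Schwarz against $\|\mathbf{e}_{ijk}\|_F=1$, which gives $B\sim\rho^{-1}\|\mathcal{Z}\|_\infty\sqrt{2\mu r/(n_{(2)}n_3)}$. Writing $\nu:=2\mu r/(n_{(2)}n_3)$, you then have $\sigma^2\le\rho^{-1}\|\mathcal{Z}\|_\infty^{2}\nu$ but $\sigma^2/B\approx\|\mathcal{Z}\|_\infty\sqrt{\nu}$, so the condition $t\le\sigma^2/R$ required by the Bernstein form quoted in the supplement fails at $t=\epsilon\|\mathcal{Z}\|_\infty$ unless $\epsilon\lesssim\sqrt{\nu}$; equivalently, the linear term $\tfrac{2}{3}Bt$ dominates the denominator $2\sigma^2+\tfrac{2}{3}Bt$ and the exponent degrades to order $\epsilon\rho/\sqrt{\nu}$, which is $o(\log(n_{(1)}n_3))$ under the stated sampling level once $\nu\to 0$. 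So the claimed tail $2\exp\bigl(-3\epsilon^{2}\rho\,n_{(2)}n_3/(8\mu r)\bigr)$ does not follow for general $\epsilon\le 1$. The fix is to project \emph{both} arguments: $|\langle\mathcal{P}_{\mathbf{T}}\mathbf{e}_{abc},\mathbf{e}_{ijk}\rangle|=|\langle\mathcal{P}_{\mathbf{T}}\mathbf{e}_{abc},\mathcal{P}_{\mathbf{T}}\mathbf{e}_{ijk}\rangle|\le\|\mathcal{P}_{\mathbf{T}}\mathbf{e}_{abc}\|_F\,\|\mathcal{P}_{\mathbf{T}}\mathbf{e}_{ijk}\|_F\le\nu$, giving $B\le\rho^{-1}\|\mathcal{Z}\|_\infty\nu$ and hence $\sigma^2/B\ge\|\mathcal{Z}\|_\infty\ge t$ for all $\epsilon\le1$, after which your stated exponent is correct. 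A minor secondary point: the union bound over $n_1n_2n_3$ entries costs an extra factor $(n_{(1)}n_3)^{2}$ in the failure probability, which must be absorbed by enlarging $C_0$; the probability you report is the per-entry one.
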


\begin{lemma} {\rm\cite[Lemma D.5]{lu2019tensor}}
Given any tensor ${\mathcal{Z} \in \mathbb{R}^{n_{1} \times n_{2} \times n_{3}}}$ and ${\boldsymbol{\Theta} \sim \operatorname{Ber}(\rho)}$. Then with high probability at least $1-2(n_{(1)}n_3)^{1-\tfrac{3C_0}{8}}$, 
\begin{equation}
\left\|\left(\mathcal{I}-\rho^{-1} \mathcal{P}_{\boldsymbol{\Theta}}\right) \mathcal{Z}\right\| \leq \sqrt{\tfrac{C_{0}n_{(1)} n_{3} \log \left(n_{(1)} n_{3}\right)}{\rho}}\|\mathcal{Z}\|_{\infty},
\end{equation}
provided that $\rho \geq C_{0} \tfrac{\log \left(n_{(1)} n_{3}\right)}{n_{(2)} n_{3}}$ for some numerical constant ${C_{0}>0}$. \label{lemmaomega3}
\end{lemma}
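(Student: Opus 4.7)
The plan is to decompose the operator as a sum of independent mean-zero random tensors, pass to the block-diagonalized Fourier representation so that the tensor spectral norm becomes an ordinary matrix spectral norm, and then apply the noncommutative matrix Bernstein inequality. Concretely, I would first write
\begin{equation*}
(\mathcal{I}-\rho^{-1}\mathcal{P}_{\boldsymbol{\Theta}})\mathcal{Z}=\sum_{i,j,k}(1-\rho^{-1}\delta_{ijk})\,\mathcal{Z}_{ijk}\,\mathbf{e}_{ijk},
\end{equation*}
where $\delta_{ijk}$ are i.i.d.\ $\mathrm{Ber}(\rho)$ indicators of $\boldsymbol{\Theta}$ and $\mathbf{e}_{ijk}$ is the unit tensor defined before Definition~\ref{defi:incoherence}. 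The summands $\mathcal{S}_{ijk}:=(1-\rho^{-1}\delta_{ijk})\mathcal{Z}_{ijk}\mathbf{e}_{ijk}$ are independent and mean zero.

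Since the tensor spectral norm satisfies $\|\mathcal{A}\|=\|\operatorname{bdiag}(\overline{\mathcal{A}})\|$ by definition together with \eqref{con:bdiagdef}, and both $\mathrm{fft}(\cdot,[\,],3)$ and $\operatorname{bdiag}(\cdot)$ are linear, I would reduce the problem to estimating the spectral norm of the matrix sum $\sum_{i,j,k}\mathbf{S}_{ijk}$ with $\mathbf{S}_{ijk}:=\operatorname{bdiag}(\overline{\mathcal{S}_{ijk}})\in\mathbb{C}^{n_1n_3\times n_2n_3}$. Each frontal slice $\overline{\mathbf{e}_{ijk}}^{(\ell)}$ has a single entry of modulus one at position $(i,j)$, so $\|\operatorname{bdiag}(\overline{\mathbf{e}_{ijk}})\|=1$, which gives the almost-sure uniform bound
\begin{equation*}
R:=\max_{i,j,k}\|\mathbf{S}_{ijk}\|\le \rho^{-1}\|\mathcal{Z}\|_\infty.
\end{equation*}

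For the matrix variance, $\mathbb{E}[(1-\rho^{-1}\delta_{ijk})^{2}]=\rho^{-1}-1\le\rho^{-1}$, and the $\ell$-th block of $\operatorname{bdiag}(\overline{\mathbf{e}_{ijk}})\operatorname{bdiag}(\overline{\mathbf{e}_{ijk}})^{*}$ equals the elementary matrix $E_{ii}\in\mathbb{R}^{n_1\times n_1}$, since the twiddle factors $|e^{-2\pi\mathrm{i}(k-1)(\ell-1)/n_3}|^{2}=1$ cancel. Summing over $j$ and $k$ gives $n_2n_3$ copies of $E_{ii}$ per block, and the remaining sum over $i$ collapses to $n_2n_3\,\mathbf{I}_{n_1n_3}$; the reversed product is handled symmetrically. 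Hence
\begin{equation*}
\sigma^{2}:=\max\!\Bigl(\bigl\|{\textstyle\sum}\mathbb{E}[\mathbf{S}_{ijk}\mathbf{S}_{ijk}^{*}]\bigr\|,\,\bigl\|{\textstyle\sum}\mathbb{E}[\mathbf{S}_{ijk}^{*}\mathbf{S}_{ijk}]\bigr\|\Bigr)\le \rho^{-1}n_{(1)}n_3\,\|\mathcal{Z}\|_\infty^{2}.
\end{equation*}

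With $R$ and $\sigma^{2}$ in hand, matrix Bernstein yields
\begin{equation*}
\mathbb{P}\!\left(\bigl\|{\textstyle\sum}\mathbf{S}_{ijk}\bigr\|\ge t\right)\le (n_1+n_2)n_3\exp\!\left(-\tfrac{t^{2}/2}{\sigma^{2}+Rt/3}\right).
\end{equation*}
Taking $t=\sqrt{C_0n_{(1)}n_3\log(n_{(1)}n_3)/\rho}\,\|\mathcal{Z}\|_\infty$ and using the hypothesis $\rho\ge C_0\log(n_{(1)}n_3)/(n_{(2)}n_3)$ to verify $Rt/3\le\sigma^{2}$, the exponent simplifies to an absolute constant times $C_0\log(n_{(1)}n_3)$; combined with $(n_1+n_2)n_3\le 2n_{(1)}n_3$, this produces the claimed high-probability bound (the explicit exponent $1-\tfrac{3C_0}{8}$ arises from the specific numerical constants in the version of Bernstein invoked). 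The main obstacle is the variance computation: one has to track the Fourier twiddle phases carefully to confirm that every off-diagonal contribution in $\overline{\mathbf{e}_{ijk}}\,\overline{\mathbf{e}_{ijk}}^{*}$ vanishes after summation over the tubal index $k$, leaving a clean identity block; once that algebraic identity is in place, the rest is a routine application of matrix Bernstein together with the lower bound on $\rho$ to stay in the subgaussian tail regime.
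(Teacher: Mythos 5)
The paper does not prove this lemma itself; it is imported verbatim from the cited reference (Lu et al., Lemma D.5), with the reader directed there for the proof. Your argument is correct and coincides with that standard proof: the same decomposition into independent mean-zero summands $(1-\rho^{-1}\delta_{ijk})\mathcal{Z}_{ijk}\mathbf{e}_{ijk}$, the same passage to the block-diagonal Fourier representation so that the tensor spectral norm becomes a matrix spectral norm, the same bounds $R\le\rho^{-1}\|\mathcal{Z}\|_{\infty}$ and $\sigma^{2}\le\rho^{-1}n_{(1)}n_3\|\mathcal{Z}\|_{\infty}^{2}$, and the same application of the matrix Bernstein inequality (the paper's Lemma~\ref{inequality:Ber}) with the hypothesis on $\rho$ guaranteeing $t\le\sigma^{2}/R$, which reproduces the exact exponent $1-\tfrac{3C_0}{8}$.
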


\begin{lemma} {\rm \cite[Theorem 1.6]{tropp2012user}}
Consider a finite sequence ${\left\{\mathbf{Z}_{k}\right\}}$ of independent, random ${n_{1} \times n_{2}}$ matrices that satisfy the assumption  ${\mathbb{E} \mathbf{Z}_{k}=\mathbf{0}}$ and  ${\left\|\mathbf{Z}_{k}\right\| \leq R}$ almost surely,  where $\|\cdot\|$ is the spectral norm. Let 
$$\sigma^{2}=\max \left\{\left\|\sum_{k} \mathbb{E}\left[\mathbf{Z}_{k} \mathbf{Z}_{k}^{*}\right]\right\|, \left\|\sum_{k} \mathbb{E}\left[\mathbf{Z}_{k}^{*} \mathbf{Z}_{k}\right]\right\|\right\}.$$  
Then for any ${0 \leq t \leq \tfrac{\sigma^{2}}{R}}$, we have 
\begin{equation}
 \mathbb{P}\left[\left\|\sum_{k} \mathbf{Z}_{k}\right\| \geq t\right]  \leq\left(n_{1}+n_{2}\right) \exp \left(-\tfrac{t^{2}}{2 \sigma^{2}+\tfrac{2}{3} R t}\right) \leq\left(n_{1}+n_{2}\right) \exp \left(-\tfrac{3 t^{2}}{8 \sigma^{2}}\right). 
 \end{equation} \label{inequality:Ber}
\end{lemma}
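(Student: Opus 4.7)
The plan is to apply Tropp's user-friendly matrix Laplace-transform machinery, which reduces a tail bound on the spectral norm of a sum of independent random matrices to a tight estimate on the matrix moment generating function of each summand. First, to handle the rectangular case, I would pass to the self-adjoint $(n_1+n_2)\times(n_1+n_2)$ Hermitian dilation
$$\mathcal{H}(\mathbf{Z}) = \begin{bmatrix} \mathbf{0} & \mathbf{Z} \\ \mathbf{Z}^* & \mathbf{0} \end{bmatrix},$$
which preserves spectral norm ($\|\mathcal{H}(\mathbf{Z})\|=\|\mathbf{Z}\|$) and is linear, and whose square is the block diagonal $\mathrm{diag}(\mathbf{Z}\mathbf{Z}^*, \mathbf{Z}^*\mathbf{Z})$. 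This identifies the sum's spectral norm with $\lambda_{\max}$ of the Hermitian sum $\mathbf{S}=\sum_k \mathcal{H}(\mathbf{Z}_k)$, whose matrix variance $\|\mathbb{E}[\mathbf{S}^2]\|$ equals precisely the $\sigma^2$ in the hypothesis, and transports the boundedness $\|\mathcal{H}(\mathbf{Z}_k)\|\leq R$. It is therefore enough to prove the Hermitian version of the bound in dimension $n_1+n_2$.

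Next, I would apply the scalar Markov inequality after exponentiation to get the matrix-Laplace tail bound
$$\mathbb{P}[\lambda_{\max}(\mathbf{S}) \geq t] \leq \inf_{\theta>0} e^{-\theta t}\,\mathbb{E}\,\mathrm{tr}\, e^{\theta \mathbf{S}},$$
and combine it with Tropp's \emph{master inequality} for independent Hermitian summands,
$$\mathbb{E}\,\mathrm{tr}\,\exp\!\Bigl(\sum_k \theta \mathbf{Y}_k\Bigr) \;\leq\; \mathrm{tr}\,\exp\!\Bigl(\sum_k \log \mathbb{E}\, e^{\theta \mathbf{Y}_k}\Bigr).$$
The master inequality is the crux and is obtained by iterating the one-step bound that follows from Lieb's concavity theorem (concavity of $\mathbf{A} \mapsto \mathrm{tr}\,\exp(\mathbf{H} + \log \mathbf{A})$) via tower conditioning. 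I would then invoke the standard matrix Bernstein estimate for the individual log-mgf,
$$\log \mathbb{E}\, e^{\theta \mathbf{Y}_k} \;\preceq\; \frac{\theta^2/2}{1 - \theta R/3}\,\mathbb{E}[\mathbf{Y}_k^2] \quad \text{for } 0 < \theta < 3/R,$$
whose derivation is the routine scalar computation $\sum_{p\geq 2}(\theta R)^{p-2}/p! \leq (2-2\theta R/3)^{-1}$ combined with the operator bound $\mathbf{Y}_k^p \preceq R^{p-2}\mathbf{Y}_k^2$ and $\log(\mathbf{I}+\mathbf{A}) \preceq \mathbf{A}$.

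Substituting, using monotonicity of $\mathrm{tr}\,\exp(\cdot)$ in the Loewner order, and bounding $\sum_k \mathbb{E}[\mathbf{Y}_k^2]$ by $\sigma^2 \mathbf{I}$ in operator norm yields
$$\mathbb{E}\,\mathrm{tr}\,e^{\theta \mathbf{S}} \leq (n_1+n_2)\exp\!\Bigl(\frac{\theta^2 \sigma^2/2}{1-\theta R/3}\Bigr).$$
Plugging into the Laplace bound and optimizing at $\theta = t/(\sigma^2 + Rt/3)$ gives the first, Bernstein-type bound $(n_1+n_2)\exp(-t^2/(2\sigma^2 + 2Rt/3))$. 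The cleaner second bound $(n_1+n_2)\exp(-3t^2/(8\sigma^2))$ then drops out of the side condition $t \leq \sigma^2/R$, since that forces $\tfrac{2}{3}Rt \leq \tfrac{2}{3}\sigma^2$ and hence $2\sigma^2 + \tfrac{2}{3}Rt \leq \tfrac{8}{3}\sigma^2$.

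The main obstacle, as always in matrix concentration, is non-commutativity of the summands: $\exp(\mathbf{A}+\mathbf{B}) \neq \exp(\mathbf{A})\exp(\mathbf{B})$ in general, so the naive independence factorization that works in the scalar setting fails, and one cannot afford the dimension-squaring loss that a pure Golden--Thompson argument would incur. The only way to obtain the clean dimension prefactor $(n_1+n_2)$, rather than its square, is to appeal to Lieb's concavity theorem, which I would take as a black-box external ingredient. Everything else---the dilation, Markov, the moment comparison, the optimization of $\theta$, and the deduction of the sub-Gaussian tail from the side condition---is elementary and can be executed without novel ideas.
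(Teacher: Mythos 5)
Your outline is correct and is precisely the argument of the cited source: the paper does not prove this lemma but imports it verbatim from Tropp's Theorem 1.6, whose proof proceeds exactly via the Hermitian dilation, the matrix Laplace-transform bound, Lieb's concavity theorem for the master inequality, the Bernstein-type mgf estimate, and the optimization $\theta = t/(\sigma^2+Rt/3)$, with the sub-Gaussian simplification following from the side condition $t\le\sigma^2/R$ just as you state. No gaps.
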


\subsection{Proof of recovery guarantee}

In this section, we provide the proof of Theorem 1. The idea is to drive conditions under which $(\mathcal L_0, \mathcal E_0)$ is a local minimizer of the TNF model (6), and then show that these conditions are met with overwhelming probability under the assumptions of Theorem 1.
These conditions are stated in terms of a dual variable $\mathcal{Y}$, as characterized in Theorem~\ref{thm:PtPo}.

\begin{theorem} 
Given a low rank tensor $\mathcal L_0\in\mathbb R^{n_1\times n_2\times n_3}$ with tubal rank $r$ and a sparse tensor $\mathcal E_0\in\mathbb R^{n_1\times n_2\times n_3}$ with its support denoted by $\Omega$ being a feasible solution of the TNF model (6), we further define the skinny t-SVD of $\mathcal{L}_0$, i.e.,  $\mathcal{L}_0=\mathcal{U}*\mathcal{S}*\mathcal{V}^*.$
For sufficiently large $n_1, n_2, n_3$ in the sense that $n_1n_2n_3^2>\sqrt{\tfrac{24}{1-2\gamma}}/\|\mathcal L_0\|_F$, $\tfrac{1}{4}\sqrt{\tfrac{n_{(1)}}{2\mu r}}>\sqrt{rn_3}$ and
$$\max\left(\tfrac{2\sqrt{6r}n_1n_2n_3^2}{n_1n_2n_3^2\sqrt{1-2\gamma}\|\mathcal{L}_{0}\|_F-2\sqrt{6}},\sqrt{\tfrac{\log (n_{(1)}n_3)}{n_{(1)}n_3^2}}\right)<\left(\tfrac{1}{4}\sqrt{\tfrac{n_{(1)}}{2\mu r}}-\sqrt{rn_3}\right)n_1n_2n_3^{3/2}, $$ 
if $\lambda$ satisfies
$$\max\left(\tfrac{2\sqrt{6r}n_1n_2n_3^2}{n_1n_2n_3^2\sqrt{1-2\gamma}\|\mathcal{L}_{0}\|_F-2\sqrt{6}},\sqrt{\tfrac{\log (n_{(1)}n_3)}{n_{(1)}n_3^2}}\right)<\lambda< \left(\tfrac{1}{4}\sqrt{\tfrac{n_{(1)}}{2\mu r}}-\sqrt{rn_3}\right)n_1n_2n_3^{3/2},$$
and there exists a tensor $\mathcal Y\in\mathbb R^{n_1\times n_2\times n_3}$ obeying 
\begin{equation}
\left\{\begin{array}{l}
\left\|\mathcal{P}_\mathbf{T}\left(\mathcal{Y} +\lambda \|\mathcal{L}_0\|_F \operatorname{sgn}\left(\mathcal{E}_0\right)-\mathcal{U}  * \mathcal{V} ^*\right)\right\|_F \leq \tfrac{\lambda}{n_1 n_2 n_3^2} \\
\left\|\mathcal{P}_{\mathbf{T}^{\perp}}\left(\mathcal{Y} +\lambda \|\mathcal{L}_0\|_F\operatorname{sgn}\left(\mathcal{E}_0\right)\right)\right\| \leq \tfrac{1}{2} \\
\left\|\mathcal{P}_{_{\boldsymbol{\Omega}^c}}(\mathcal{Y} )\right\|_{\infty} \leq \tfrac{\lambda}{2}\|\mathcal{L}_0\|_F \\
\mathcal{P}_{\boldsymbol{\Omega}}(\mathcal{Y} )=\mathcal{O},
\end{array}\right. \label{con:ptcondition1}
\end{equation}
where $\mathbf{T}$ is defined in \eqref{eq:T} and the projections $\mathcal{P}_{\mathbf{T}},\mathcal{P}_{\mathbf{T}^{\perp}}$ are defined in \eqref{equ:defptptt}, 
then $(\mathcal L_0, \mathcal E_0)$ is a local minimizer of the TNF model (6). In other words, there exists a constant $\overline t>0$ such that the following inequality,
\begin{equation}
\tfrac{\|\mathcal{L}_0\|_*}{\|\mathcal{L}_0\|_F}+\lambda\|\mathcal{E}_0\|_1\leq \tfrac{\|\mathcal{L}_0+\mathcal{Z}\|_*}{\|\mathcal{L}_0+\mathcal{Z}\|_F}+\lambda\|\mathcal{E}_0-\mathcal{Z}\|_1,\label{equ:LZE}
\end{equation}
 holds for any $\|\mathcal Z\|_F\leq \overline t$.
\label{thm:PtPo}
 \end{theorem}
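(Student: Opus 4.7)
The plan is to establish the target inequality \eqref{equ:LZE} deterministically, given the dual certificate \eqref{con:ptcondition1}. Since both $(\mathcal{L}_0,\mathcal{E}_0)$ and $(\mathcal{L}_0+\mathcal{Z},\mathcal{E}_0-\mathcal{Z})$ are feasible for the TNF model \eqref{con:PCAa}, local minimality reduces to showing
$$\Delta(\mathcal{Z}):=\tfrac{\|\mathcal{L}_0+\mathcal{Z}\|_*}{\|\mathcal{L}_0+\mathcal{Z}\|_F}-\tfrac{\|\mathcal{L}_0\|_*}{\|\mathcal{L}_0\|_F}+\lambda\bigl(\|\mathcal{E}_0-\mathcal{Z}\|_1-\|\mathcal{E}_0\|_1\bigr)\geq 0$$
for all $\mathcal{Z}$ with $\|\mathcal{Z}\|_F\leq \overline t$. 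The key observation is that, after introducing $\tilde{\mathcal{Y}}:=\mathcal{Y}+\lambda\|\mathcal{L}_0\|_F\operatorname{sgn}(\mathcal{E}_0)$, the four conditions in \eqref{con:ptcondition1} coincide, up to halved constants, with the TNN-TRPCA dual certificate of \cite{lu2019tensor} for the convex model $\|\mathcal{L}\|_*+\tilde\lambda\|\mathcal{E}\|_1$ with effective sparsity weight $\tilde\lambda:=\lambda\|\mathcal{L}_0\|_F$. In this way TNF behaves locally like a TNN-based TRPCA with a Frobenius-rescaled sparsity weight, plus a single correction term that must be controlled separately.

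For the first step, I would use three first-order lower bounds at $(\mathcal{L}_0,\mathcal{E}_0)$: (i)~the subgradient inequality for the nuclear norm with an extremal $\mathcal{W}^*\in\mathbf{T}^\perp$, $\|\mathcal{W}^*\|\leq 1$ dual to $\mathcal{P}_{\mathbf{T}^\perp}(\mathcal{Z})$, giving $\|\mathcal{L}_0+\mathcal{Z}\|_*\geq\|\mathcal{L}_0\|_*+\langle\mathcal{U}*\mathcal{V}^*,\mathcal{Z}\rangle+\|\mathcal{P}_{\mathbf{T}^\perp}(\mathcal{Z})\|_*$; (ii)~the analogous subgradient inequality for the $\ell_1$ norm with $\mathcal{F}^*$ supported on $\boldsymbol{\Omega}^c$, $\|\mathcal{F}^*\|_\infty\leq 1$ dual to $\mathcal{P}_{\boldsymbol{\Omega}^c}(\mathcal{Z})$, giving $\|\mathcal{E}_0-\mathcal{Z}\|_1\geq\|\mathcal{E}_0\|_1-\langle\operatorname{sgn}(\mathcal{E}_0),\mathcal{Z}\rangle+\|\mathcal{P}_{\boldsymbol{\Omega}^c}(\mathcal{Z})\|_1$; and (iii)~the Taylor expansion $\|\mathcal{L}_0+\mathcal{Z}\|_F=\|\mathcal{L}_0\|_F+\tfrac{\langle\mathcal{L}_0,\mathcal{Z}\rangle}{\|\mathcal{L}_0\|_F}+O(\|\mathcal{Z}\|_F^2)$, valid since $\mathcal{L}_0\neq\mathcal{O}$. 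Combining these with the identity
$$\tfrac{\|\mathcal{L}_0+\mathcal{Z}\|_*}{\|\mathcal{L}_0+\mathcal{Z}\|_F}-\tfrac{\|\mathcal{L}_0\|_*}{\|\mathcal{L}_0\|_F}=\tfrac{\|\mathcal{L}_0+\mathcal{Z}\|_*-\|\mathcal{L}_0\|_*}{\|\mathcal{L}_0+\mathcal{Z}\|_F}-\tfrac{\|\mathcal{L}_0\|_*\bigl(\|\mathcal{L}_0+\mathcal{Z}\|_F-\|\mathcal{L}_0\|_F\bigr)}{\|\mathcal{L}_0\|_F\|\mathcal{L}_0+\mathcal{Z}\|_F}$$
and multiplying by $\|\mathcal{L}_0+\mathcal{Z}\|_F>0$, the target $\Delta(\mathcal{Z})\geq 0$ reduces to
$$\|\mathcal{P}_{\mathbf{T}^\perp}(\mathcal{Z})\|_*+\tilde\lambda\|\mathcal{P}_{\boldsymbol{\Omega}^c}(\mathcal{Z})\|_1+\bigl\langle\mathcal{U}*\mathcal{V}^*-\tilde\lambda\operatorname{sgn}(\mathcal{E}_0),\mathcal{Z}\bigr\rangle-\tfrac{\|\mathcal{L}_0\|_*\langle\mathcal{L}_0,\mathcal{Z}\rangle}{\|\mathcal{L}_0\|_F^2}+O(\|\mathcal{Z}\|_F^2)\geq 0.$$
Using $\mathcal{U}*\mathcal{V}^*\in\mathbf{T}$, $\operatorname{supp}(\operatorname{sgn}(\mathcal{E}_0))\subseteq\boldsymbol{\Omega}$, and $\mathcal{P}_{\boldsymbol{\Omega}}(\mathcal{Y})=\mathcal{O}$, the inner product rewrites as
$$\bigl\langle\mathcal{U}*\mathcal{V}^*-\mathcal{P}_\mathbf{T}(\tilde{\mathcal{Y}}),\mathcal{P}_\mathbf{T}(\mathcal{Z})\bigr\rangle-\bigl\langle\mathcal{P}_{\mathbf{T}^\perp}(\tilde{\mathcal{Y}}),\mathcal{P}_{\mathbf{T}^\perp}(\mathcal{Z})\bigr\rangle+\bigl\langle\mathcal{P}_{\boldsymbol{\Omega}^c}(\tilde{\mathcal{Y}}),\mathcal{P}_{\boldsymbol{\Omega}^c}(\mathcal{Z})\bigr\rangle,$$
which the three quantitative bounds in \eqref{con:ptcondition1} dominate up to a strictly positive slack of $\tfrac12\|\mathcal{P}_{\mathbf{T}^\perp}(\mathcal{Z})\|_*+\tfrac{\tilde\lambda}{2}\|\mathcal{P}_{\boldsymbol{\Omega}^c}(\mathcal{Z})\|_1-\tfrac{\lambda}{n_1n_2n_3^2}\|\mathcal{P}_\mathbf{T}(\mathcal{Z})\|_F$.

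The main obstacle is the correction term $\tfrac{\|\mathcal{L}_0\|_*\langle\mathcal{L}_0,\mathcal{Z}\rangle}{\|\mathcal{L}_0\|_F^2}$, which is specific to TNF and has no analogue in TNN-based TRPCA. I would bound it by $\sqrt{r}\,\|\mathcal{P}_\mathbf{T}(\mathcal{Z})\|_F$ using $\mathcal{L}_0\in\mathbf{T}$ together with the inequality $\|\mathcal{L}_0\|_*\leq\sqrt{r}\|\mathcal{L}_0\|_F$ derived from the skinny t-SVD and Cauchy--Schwarz, and then control $\|\mathcal{P}_\mathbf{T}(\mathcal{Z})\|_F$ by a constant multiple of $\|\mathcal{P}_{\mathbf{T}^\perp}(\mathcal{Z})\|_F+\|\mathcal{P}_{\boldsymbol{\Omega}^c}(\mathcal{Z})\|_F$ via the incoherence-based restricted isometry of $\mathcal{P}_{\boldsymbol{\Omega}^c}$ on $\mathbf{T}$ (Lemmas~S.2--S.4 applied with $\rho=1-2\gamma$). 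The lower bound $\lambda\geq\tfrac{2\sqrt{6r}\,n_1n_2n_3^2}{n_1n_2n_3^2\sqrt{1-2\gamma}\|\mathcal{L}_0\|_F-2\sqrt{6}}$ in the hypothesis is precisely what is needed so that the resulting $(\sqrt{r}+\tfrac{\lambda}{n_1n_2n_3^2})\|\mathcal{P}_\mathbf{T}(\mathcal{Z})\|_F$ term is absorbed into $\tfrac{\tilde\lambda}{2}\|\mathcal{P}_{\boldsymbol{\Omega}^c}(\mathcal{Z})\|_1+\tfrac12\|\mathcal{P}_{\mathbf{T}^\perp}(\mathcal{Z})\|_*$, while the upper bound on $\lambda$ guarantees the dual certificate exists. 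The residual $O(\|\mathcal{Z}\|_F^2)$ term from the Frobenius Taylor expansion is benign: it only forces $\overline t$ to be chosen small enough, which is admissible since local minimality is quantified over an arbitrary neighbourhood. Putting everything together gives $\Delta(\mathcal{Z})\geq 0$ on $\{\|\mathcal{Z}\|_F\leq \overline t\}$, which is the required conclusion.
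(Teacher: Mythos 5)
Your proposal is correct and follows essentially the same route as the paper's proof: the same subgradient lower bounds for $\|\cdot\|_*$ and $\|\cdot\|_1$, the same use of the dual certificate to bound the cross terms (the paper's Lemma~S.7), the same estimate $\|\mathcal{L}_0\|_*\leq\sqrt{r}\|\mathcal{L}_0\|_F$ for the TNF-specific correction term $\|\mathcal{L}_0\|_*\langle\mathcal{L}_0,\mathcal{Z}\rangle/\|\mathcal{L}_0\|_F^2$, and the same restricted-isometry control of $\|\mathcal{P}_{\mathbf{T}}(\mathcal{Z})\|_F$ by $\|\mathcal{P}_{\boldsymbol{\Omega}^c}(\mathcal{Z})\|_1$ and $\|\mathcal{P}_{\mathbf{T}^\perp}(\mathcal{Z})\|_F$ (the paper's Lemma~S.9). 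The only difference is presentational: the paper parametrizes perturbations as $t\mathcal{Z}$ with $\|\mathcal{Z}\|_F=1$ and shows the derivative of the linearized objective is positive on $[0,\overline t)$, whereas you Taylor-expand the Frobenius denominator directly and absorb an explicit $O(\|\mathcal{Z}\|_F^2)$ remainder --- two equivalent first-order arguments with the same ingredients.
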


\subsubsection{Proof of Theorem~\ref{thm:PtPo}.}
Given a tensor $\mathcal Z$ with $\|\mathcal{Z}\|_F=1$, we consider 
a function of a scalar variable $t$, defined by
\begin{equation*}
 F(t)= \tfrac{\|\mathcal{L}_0+t\mathcal{Z}\|_*}{\|\mathcal{L}_0+t\mathcal{Z}\|_F}+\lambda\|\mathcal{E}_0-t\mathcal{Z}\|_1.
\end{equation*}
If $(\mathcal L_0, \mathcal E_0)$ is a feasible solution to the TNF model (6), then so is $(\mathcal{L}_0+\mathcal{Z}, \mathcal{E}_0-\mathcal{Z})$.  
We study the lower bounds of $\|\mathcal{L}_0+t\mathcal{Z}\|_*$, $\|\mathcal{E}_0-t\mathcal{Z}\|_1$ in the following lemmas:

\begin{lemma}
\label{lem:lowerbound_2term}
For any tensor $\mathcal{L}_0,\mathcal{E}_0, \mathcal{Z} \in \mathbb{R}^{n_1 \times  n_2 \times n_3}$ and $t\geq 0$,  we denote $\boldsymbol{\Omega}$ as the support of $\mathcal{E}_0$  and have
\begin{equation}\label{ineq:lowerbound}
    \begin{aligned}
      \|\mathcal{E}_0-t\mathcal{Z}\|_1 & \geq   \|\mathcal{E}_0\|_1 +at \\
      \|\mathcal{L}_0+t\mathcal{Z}\|_*  & \geq \|\mathcal{L}_0\|_* + bt, 
    \end{aligned}
\end{equation}
where  
\begin{equation}
\label{eq:ab}
   \begin{split}
        a & := -\left\langle\operatorname{sgn}(\mathcal{E}_0), \mathcal{Z}\right\rangle+\|\mathcal{P}_{\boldsymbol{\Omega}^c}(\mathcal{Z})\|_1 \\ b & :=\left\langle\mathcal{U} * \mathcal{V}^{*},\mathcal{Z}\right\rangle+ \|\mathcal{P}_{\mathbf{T}^{\perp}}(\mathcal{Z})\|_*.
   \end{split}
\end{equation}
\end{lemma}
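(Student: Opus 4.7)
My plan is to derive both inequalities through convex subgradient inequalities, using the known characterizations of the subdifferentials of the $\ell_1$ norm and the tensor nuclear norm, and then optimizing over the free part of each subgradient to produce the claimed constants $a$ and $b$. Both $\|\cdot\|_1$ and $\|\cdot\|_*$ are convex, so for any subgradient $\mathcal{G}$ of the respective norm at the base point, I have a lower bound of the form $f(x+ty) \geq f(x) + t\langle \mathcal{G}, y\rangle$; the art is to pick $\mathcal{G}$ so that $\langle \mathcal{G}, y\rangle$ equals the desired expression.

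\textbf{Step 1 ($\ell_1$ bound).} The subdifferential of $\|\cdot\|_1$ at $\mathcal{E}_0$ is the set of tensors of the form $\operatorname{sgn}(\mathcal{E}_0) + \mathcal{W}$ where $\operatorname{supp}(\mathcal{W})\subseteq \boldsymbol{\Omega}^c$ and $\|\mathcal{W}\|_\infty \leq 1$. I would choose $\mathcal{W} = -\operatorname{sgn}(\mathcal{P}_{\boldsymbol{\Omega}^c}(\mathcal{Z}))$, which certainly lies in this set. Then the subgradient inequality yields
\begin{equation*}
\|\mathcal{E}_0-t\mathcal{Z}\|_1 \;\geq\; \|\mathcal{E}_0\|_1 + \langle \operatorname{sgn}(\mathcal{E}_0)-\operatorname{sgn}(\mathcal{P}_{\boldsymbol{\Omega}^c}(\mathcal{Z})),\, -t\mathcal{Z}\rangle.
\end{equation*}
Splitting $\mathcal{Z} = \mathcal{P}_{\boldsymbol{\Omega}}(\mathcal{Z}) + \mathcal{P}_{\boldsymbol{\Omega}^c}(\mathcal{Z})$ and using that $\operatorname{sgn}(\mathcal{E}_0)$ is supported on $\boldsymbol{\Omega}$ while $\operatorname{sgn}(\mathcal{P}_{\boldsymbol{\Omega}^c}(\mathcal{Z}))$ is supported on $\boldsymbol{\Omega}^c$ reduces the inner product to $-t\langle \operatorname{sgn}(\mathcal{E}_0), \mathcal{Z}\rangle + t\|\mathcal{P}_{\boldsymbol{\Omega}^c}(\mathcal{Z})\|_1 = at$, giving the first bound in \eqref{ineq:lowerbound}.

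\textbf{Step 2 (nuclear norm bound).} Via the t-SVD framework, the subdifferential of the tensor nuclear norm at $\mathcal{L}_0 = \mathcal{U}*\mathcal{S}*\mathcal{V}^*$ is the set $\{\mathcal{U}*\mathcal{V}^* + \mathcal{W} : \mathcal{P}_{\mathbf{T}}(\mathcal{W}) = \mathcal{O},\; \|\mathcal{W}\| \leq 1\}$, which is the natural analogue of the matrix case and follows from the block-diagonal FFT representation of t-SVD (see the definitions in Appendix~A and the duality identity $\|\mathcal{A}\|_* = \sup_{\|\mathcal{B}\|\leq 1}\langle \mathcal{A},\mathcal{B}\rangle$). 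By this duality applied on the subspace $\mathbf{T}^\perp$, I can choose $\mathcal{W}^\star \in \mathbf{T}^\perp$ with $\|\mathcal{W}^\star\| \leq 1$ and $\langle \mathcal{W}^\star, \mathcal{P}_{\mathbf{T}^\perp}(\mathcal{Z})\rangle = \|\mathcal{P}_{\mathbf{T}^\perp}(\mathcal{Z})\|_*$. The subgradient inequality then gives
\begin{equation*}
\|\mathcal{L}_0 + t\mathcal{Z}\|_* \;\geq\; \|\mathcal{L}_0\|_* + t\langle \mathcal{U}*\mathcal{V}^*, \mathcal{Z}\rangle + t\langle \mathcal{W}^\star, \mathcal{Z}\rangle,
\end{equation*}
and since $\mathcal{W}^\star \in \mathbf{T}^\perp$ the last inner product equals $\langle \mathcal{W}^\star, \mathcal{P}_{\mathbf{T}^\perp}(\mathcal{Z})\rangle = \|\mathcal{P}_{\mathbf{T}^\perp}(\mathcal{Z})\|_*$, producing the $bt$ term.

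\textbf{Main obstacle.} The bookkeeping for the $\ell_1$ bound is routine. The delicate point is the nuclear-norm step: one must justify both the subdifferential characterization and the attainability of the dual pairing on $\mathbf{T}^\perp$ in the tensor setting. I would handle this by translating to the block-diagonal matrix $\overline{\mathbf{L}}_0$ via FFT (using \eqref{con:bdiagdef} and the identity $\|\mathcal{A}\|_* = \tfrac{1}{n_3}\|\overline{\mathbf{A}}\|_*$), invoking the matrix subdifferential formula slice-wise, and then inverting the FFT to read off the tensor form. Once these facts are in hand, the result is immediate from convex-analytic subgradient inequalities, and no probability or incoherence hypothesis is needed, so the lemma holds deterministically.
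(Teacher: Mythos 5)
Your proposal is correct and follows essentially the same route as the paper: the paper's $\ell_1$ bound is the same subgradient inequality (organized as an exact split over $\boldsymbol{\Omega}$ and $\boldsymbol{\Omega}^c$), and its nuclear-norm bound uses the duality inequality $\|\mathcal{A}\|_*\geq\langle\mathcal{B},\mathcal{A}\rangle$ for $\|\mathcal{B}\|\leq 1$ with $\mathcal{B}=\mathcal{U}*\mathcal{V}^*+\mathcal{U}_\perp*\mathcal{V}_\perp^*$, where $\mathcal{U}_\perp*\mathcal{V}_\perp^*$ (from the skinny t-SVD of $\mathcal{P}_{\mathbf{T}^\perp}(\mathcal{Z})$) is exactly your $\mathcal{W}^\star$. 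The t-SVD facts you flag as the main obstacle are imported from \cite{zhang2016exact} in the paper rather than re-derived, so no additional work is needed there.
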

\begin{proof}
To prove the first inequality in \eqref{ineq:lowerbound}, we estimate
\begin{equation*}
\begin{aligned}
\|\mathcal{E}_0-t\mathcal{Z}\|_1 & =
\|\mathcal{P}_{\boldsymbol{\Omega}}(\mathcal{E}_0-t\mathcal{Z})\|_1 +\|\mathcal{P}_{\boldsymbol{\Omega}^c}(\mathcal{E}_0-t\mathcal{Z})\|_1  \\
& = \|\mathcal{E}_0-t\mathcal{P}_{\boldsymbol{\Omega}}(\mathcal{Z})\|_1 +\|\mathcal{P}_{\boldsymbol{\Omega}^c}(\mathcal{Z})\|_1 t\\
&\geq \|\mathcal{E}_0\|_1-\left\langle\operatorname{sgn}(\mathcal{E}_0), \mathcal{P}_{\boldsymbol{\Omega}}(\mathcal{Z})\right\rangle t  + \|\mathcal{P}_{\boldsymbol{\Omega}^c}(\mathcal{Z})\|_1 t\\
& = \|\mathcal{E}_0\|_1 -\left\langle\operatorname{sgn}(\mathcal{E}_0), \mathcal{Z}\right\rangle t + \|\mathcal{P}_{\boldsymbol{\Omega}^c}(\mathcal{Z})\|_1 t,
\end{aligned}
\end{equation*}
where we use $\|\mathcal{A}-\mathcal{B}\|_1 \geq \|\mathcal{A}\|_1-\langle\operatorname{sgn}(\mathcal{A}),\mathcal{B}\rangle$ for any $\mathcal{A}$ and $\mathcal{B}$ in the last inequality.

For the second inequality in \eqref{ineq:lowerbound}, we use two identities from \cite{zhang2016exact}: 
$\|\mathcal X\|_*=\langle \mathcal U*\mathcal V^*+\mathcal U_{\perp}*\mathcal V_{\perp}^*, \mathcal X\rangle$ 
and
$\|\mathcal{U}*\mathcal{V}^{*}+\mathcal{U}_{\perp}*\mathcal{V}_{\perp}^{*}\|=1$, where $\mathcal{U}_{\perp}, \mathcal{V}_{\perp}$ are from the skinny t-SVD of 
$\mathcal{P}_{\mathbf{T}^{\perp}}(\mathcal{Z})=\mathcal{U}_{\perp}*\mathcal{S}_{\perp}*\mathcal{V}_{\perp}^{*}$ with $\mathcal{U}_{\perp}\in \mathbb{R}^{n_{1} \times r \times n_{3}}, \mathcal{V}_{\perp}\in \mathbb{R}^{n_{2} \times r \times n_{3}},$ and f-diagonal tensor $\mathcal S_{\perp}\in  \mathbb{R}^{r \times r \times n_{3}}$.
The simple calculations give 
\begin{equation*}
\begin{aligned}
\|\mathcal{L}_0+t\mathcal{Z}\|_* 
& \geq \left\langle\mathcal{U}*\mathcal{V}^{*}+\mathcal{U}_{\perp}*\mathcal{V}_{\perp}^{*},\mathcal{L}_0+t\mathcal{Z}\right\rangle   \\
& = \left\langle\mathcal{U}*\mathcal{V}^{*}+\mathcal{U}_{\perp}*\mathcal{V}_{\perp}^{*},\mathcal{L}_0\right\rangle +  \left\langle\mathcal{U}*\mathcal{V}^{*},\mathcal{Z}\right\rangle t + \left\langle\mathcal{U}_{\perp}*\mathcal{V}_{\perp}^{*},\mathcal{Z}\right \rangle t  \\ 
&=\|\mathcal{L}_0\|_* +\left\langle\mathcal{U}_{\perp}*\mathcal{V}_{\perp}^{*},\mathcal{P}_{\mathbf{T}^{\perp}}(\mathcal{Z})\right\rangle t + \left\langle\mathcal{U}*\mathcal{V}^{*},\mathcal{Z}\right\rangle t \\
& = \|\mathcal{L}_0\|_* +\|\mathcal{P}_{\mathbf{T}^{\perp}}(\mathcal{Z})\|_* t + \left\langle\mathcal{U}*\mathcal{V}^{*},\mathcal{Z}\right\rangle t. 
\end{aligned}
\end{equation*}
\end{proof}
It follows from Lemma~\ref{lem:lowerbound_2term} that 
\begin{equation*}
\begin{aligned}
F(t) &\geq \tfrac{\|\mathcal{L}_0\|_*+bt}{\|\mathcal{L}_0+t\mathcal{Z}\|_F}+\lambda(\|\mathcal{E}_0\|_1+at). 
\end{aligned}
\end{equation*}
Denote $f(t):=\tfrac{\|\mathcal{L}_0\|_*+bt}{\|\mathcal{L}_0+t\mathcal{Z}\|_F}+\lambda(\|\mathcal{E}_0\|_1+at)$ and its derivative is written by
\begin{equation}
\begin{aligned}\label{eq:f-der}
f^{\prime}(t) &=\tfrac{b\|\mathcal{L}_0+t\mathcal{Z}\|_F^2-(\|\mathcal{L}_0\|_*+bt)\left\langle\mathcal{L}_0+t\mathcal{Z},\mathcal{Z}\right\rangle}{\|\mathcal{L}_0+t\mathcal{Z}\|_F^{3}}+\lambda a \\
& = \tfrac{\lambda a \|\mathcal{L}_0+t\mathcal{Z}\|_F^{3} +b\|\mathcal{L}_0+t\mathcal{Z}\|_F^2-(\|\mathcal{L}_0\|_*+bt)\left\langle\mathcal{L}_0+t\mathcal{Z},\mathcal{Z}\right\rangle}{\|\mathcal{L}_0+t\mathcal{Z}\|_F^{3}}.
\end{aligned}
\end{equation}
We denote the numerator of the right-hand side in \eqref{eq:f-der} by 
\begin{equation*}
g(t):= \lambda a \|\mathcal{L}_0+t\mathcal{Z}\|_F^{3} +b\|\mathcal{L}_0+t\mathcal{Z}\|_F^2-(\|\mathcal{L}_0\|_*+bt)\left\langle\mathcal{L}_0+t\mathcal{Z},\mathcal{Z}\right\rangle,
\end{equation*}
which is continuous for $t \geq 0$. 
 Note that 
\begin{equation}
\label{eq:g_0}
\begin{aligned}
g(0) &= \lambda a \|\mathcal{L}_0\|_F^{3}+b \|\mathcal{L}_{0}\|_F^2-\|\mathcal{L}_0\|_*\left\langle\mathcal{L}_0,\mathcal{Z}\right\rangle \\
 &= \lambda a\|\mathcal{L}_0\|_F^{3}+b\|\mathcal{L}_{0}\|_F^{2}-\|\mathcal{L}_0\|_*\left\langle\mathcal{L}_0,\mathcal{P}_{\mathbf{T}}(\mathcal{Z})\right\rangle\\
 &\geq \lambda a\|\mathcal{L}_0\|_F^{3}+b\|\mathcal{L}_{0}\|_F^{2}-\sqrt{r}\|\mathcal{L}_0\|_F^{2}\|\mathcal{P}_{\mathbf{T}}(\mathcal{Z})\|_F \\
 &\geq \|\mathcal{L}_0\|_F^{2}( \lambda a\|\mathcal{L}_0\|_F+b-\sqrt{r}\|\mathcal{P}_{\mathbf{T}}(\mathcal{Z})\|_F),
\end{aligned}
\end{equation}
where  the first equality is from $\langle\mathcal{L}_0,\mathcal{P}_{\mathbf{T}^{\perp}}(\mathcal{Z})\rangle=\mathcal{O}$  and the first inequality utilizes  $\|\mathcal{L}_0\|_*\leq \sqrt{r}\|\mathcal{L}_0\|_F$.  
We introduce Lemmas \ref{le:part_g}-\ref{le:bound_pt1} to obtain a lower bound of $g(0).$
\begin{lemma}
\label{le:part_g}
Given a tensor $\mathcal{L}_0$, and $a,b$ are defined from \eqref{eq:ab}, then we have 
    \begin{equation}
\label{eq:t4}
    \begin{split}
       &  \lambda a\|\mathcal{L}_0\|_F+b  \geq \tfrac{1}{2}\|\mathcal{P}_{\mathbf{T}^{\perp}} (\mathcal{Z})\|_* + \tfrac{\lambda\|\mathcal{L}_{0}\|_F}{2}\|\mathcal{P}_{\boldsymbol{\Omega}^{c}}(\mathcal{Z})\|_{1} - \tfrac{\lambda}{n_1n_2n_3^2}\|\mathcal{P}_{\mathbf{T}} (\mathcal{Z})\|_F. 
    \end{split}
\end{equation}
\end{lemma}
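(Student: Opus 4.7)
The plan is to use the dual certificate $\mathcal{Y}$ hypothesized in Theorem~\ref{thm:PtPo} as a surrogate for the ``subgradient'' $\mathcal{U}*\mathcal{V}^*-\lambda\|\mathcal{L}_0\|_F\,\operatorname{sgn}(\mathcal{E}_0)$ and exploit its four properties to match, term by term, the quantities appearing in the definitions of $a$ and $b$ from \eqref{eq:ab}. First, I would rewrite
\begin{equation*}
\lambda a\|\mathcal{L}_0\|_F + b = \bigl\langle \mathcal{U}*\mathcal{V}^* - \lambda\|\mathcal{L}_0\|_F\,\operatorname{sgn}(\mathcal{E}_0),\, \mathcal{Z}\bigr\rangle + \|\mathcal{P}_{\mathbf{T}^\perp}(\mathcal{Z})\|_* + \lambda\|\mathcal{L}_0\|_F\,\|\mathcal{P}_{\boldsymbol{\Omega}^c}(\mathcal{Z})\|_1,
\end{equation*}
so that the task reduces to lower-bounding the single inner product on the right.

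Next, I would introduce $\mathcal{Y}$ via the trivial identity
\begin{equation*}
\bigl\langle \mathcal{U}*\mathcal{V}^* - \lambda\|\mathcal{L}_0\|_F\,\operatorname{sgn}(\mathcal{E}_0),\, \mathcal{Z}\bigr\rangle = \bigl\langle \mathcal{U}*\mathcal{V}^* - \lambda\|\mathcal{L}_0\|_F\,\operatorname{sgn}(\mathcal{E}_0) - \mathcal{Y},\, \mathcal{Z}\bigr\rangle + \langle \mathcal{Y}, \mathcal{Z}\rangle,
\end{equation*}
and split the first inner product according to the orthogonal decomposition $\mathcal{Z} = \mathcal{P}_{\mathbf{T}}(\mathcal{Z}) + \mathcal{P}_{\mathbf{T}^\perp}(\mathcal{Z})$. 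Since $\mathcal{U}*\mathcal{V}^* \in \mathbf{T}$, the $\mathbf{T}$-component of $\mathcal{U}*\mathcal{V}^* - \lambda\|\mathcal{L}_0\|_F\,\operatorname{sgn}(\mathcal{E}_0) - \mathcal{Y}$ is precisely the tensor controlled by condition~(i) of \eqref{con:ptcondition1}, and its $\mathbf{T}^\perp$-component is controlled by condition~(ii). Applying Cauchy--Schwarz to the $\mathbf{T}$ piece and the spectral/nuclear duality $|\langle \mathcal{A}, \mathcal{B}\rangle| \leq \|\mathcal{A}\|\cdot\|\mathcal{B}\|_*$ to the $\mathbf{T}^\perp$ piece yields
\begin{equation*}
\bigl\langle \mathcal{U}*\mathcal{V}^* - \lambda\|\mathcal{L}_0\|_F\,\operatorname{sgn}(\mathcal{E}_0) - \mathcal{Y},\, \mathcal{Z}\bigr\rangle \geq -\tfrac{\lambda}{n_1n_2n_3^2}\|\mathcal{P}_{\mathbf{T}}(\mathcal{Z})\|_F - \tfrac{1}{2}\|\mathcal{P}_{\mathbf{T}^\perp}(\mathcal{Z})\|_*.
\end{equation*}

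Third, I would bound $\langle \mathcal{Y}, \mathcal{Z}\rangle$ using the remaining two conditions. Condition~(iv), $\mathcal{P}_{\boldsymbol{\Omega}}(\mathcal{Y}) = \mathcal{O}$, gives $\langle \mathcal{Y}, \mathcal{Z}\rangle = \langle \mathcal{P}_{\boldsymbol{\Omega}^c}(\mathcal{Y}), \mathcal{P}_{\boldsymbol{\Omega}^c}(\mathcal{Z})\rangle$, and then H\"older's inequality together with the entrywise bound in condition~(iii) produces $|\langle \mathcal{Y}, \mathcal{Z}\rangle| \leq \tfrac{\lambda\|\mathcal{L}_0\|_F}{2}\|\mathcal{P}_{\boldsymbol{\Omega}^c}(\mathcal{Z})\|_1$. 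Substituting these two estimates back into the expansion of $\lambda a\|\mathcal{L}_0\|_F + b$, the $\|\mathcal{P}_{\mathbf{T}^\perp}(\mathcal{Z})\|_*$ and $\lambda\|\mathcal{L}_0\|_F\|\mathcal{P}_{\boldsymbol{\Omega}^c}(\mathcal{Z})\|_1$ contributions each lose exactly half of their mass, leaving the asserted bound \eqref{eq:t4}. The argument is almost entirely bookkeeping; the only mild subtlety to watch is matching each of the four properties of $\mathcal{Y}$ with the correct term (and correct sign) on the right-hand side, so that the two ``favorable'' norms survive with coefficient $1/2$. No incoherence or probabilistic input is needed at this stage; those enter only later to construct such a $\mathcal{Y}$.
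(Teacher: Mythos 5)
Your proposal is correct and follows essentially the same route as the paper: expand $\lambda a\|\mathcal{L}_0\|_F+b$ via the definitions of $a,b$, insert the dual certificate $\mathcal{Y}$ by adding and subtracting it, split the correction term along $\mathbf{T}\oplus\mathbf{T}^\perp$ and bound the pieces with Cauchy--Schwarz and spectral/nuclear duality via conditions (i)--(ii) of \eqref{con:ptcondition1}, then control $\langle\mathcal{Y},\mathcal{Z}\rangle$ with conditions (iii)--(iv) and H\"older. The term-by-term matching and the resulting $1/2$ coefficients agree exactly with the paper's argument.
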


\begin{proof}
Inserting \eqref{eq:ab} into $\lambda a\|\mathcal{L}_0\|_F+b$, we have 
\begin{equation}
\label{eq:t00}
    \begin{split}
        &\lambda a\|\mathcal{L}_0\|_F+b
        \\ =&-\lambda \|\mathcal{L}_{0}\|_F\left\langle\operatorname{sgn}(\mathcal{E}_0), \mathcal{Z}\right\rangle+\lambda \|\mathcal{L}_{0}\|_F\|\mathcal{P}_{\boldsymbol{\Omega}^c}(\mathcal{Z})\|_1 + \left\langle\mathcal{U} * \mathcal{V}^{*},\mathcal{Z}\right\rangle 
 + \|\mathcal{P}_{\mathbf{T}^{\perp}} (\mathcal{Z})\|_* \\
 = &\langle-\lambda \|\mathcal{L}_{0}\|_F\operatorname{sgn}(\mathcal{E}_0)+\mathcal{U} * \mathcal{V}^{*}, \mathcal{Z}\rangle 
 +\lambda \|\mathcal{L}_{0}\|_F\|\mathcal{P}_{\boldsymbol{\Omega}^c}(\mathcal{Z})\|_1+ \|\mathcal{P}_{\mathbf{T}^{\perp}} (\mathcal{Z})\|_*.
    \end{split}
\end{equation}
 Introducing an arbitrary tensor $\mathcal{Y}$, we get: 
\begin{equation}
\label{eq:t0}
    \begin{split}
       & \langle-\lambda \|\mathcal{L}_{0}\|_F\operatorname{sgn}(\mathcal{E}_0)+\mathcal{U} * \mathcal{V}^{*}, \mathcal{Z}\rangle  = \left\langle\mathcal{Y},\mathcal{Z}\right\rangle-\left\langle\mathcal{Y}+\lambda \|\mathcal{L}_{0}\|_F \operatorname{sgn}(\mathcal{E}_0)-\mathcal{U} * \mathcal{V}^{*}, \mathcal{Z}\right\rangle \\
         = & \left\langle\mathcal{Y},\mathcal{Z}\right\rangle - \left\langle \mathcal{P}_{\mathbf{T}}( \mathcal{Y}+\lambda \|\mathcal{L}_{0}\|_F \operatorname{sgn}(\mathcal{E}_0)-\mathcal{U} * \mathcal{V}^{*}), \mathcal{P}_{\mathbf{T}}(\mathcal{Z})\right\rangle\\
       &  -\left\langle \mathcal{P}_{\mathbf{T}^{\perp}}( \mathcal{Y}+\lambda \|\mathcal{L}_{0}\|_F \operatorname{sgn}(\mathcal{E}_0)), \mathcal{P}_{\mathbf{T}^{\perp}}(\mathcal{Z})\right\rangle,
    \end{split}
\end{equation}
where we use $\left\langle\mathcal{P}_{\mathbf{T}}(\mathcal{A}),\mathcal{P}_{\mathbf{T}^{\perp}}(\mathcal{B})\right\rangle=0$.
For any $\mathcal{Y}$ satisfying the conditions in \eqref{con:ptcondition1}, we have 
\begin{equation}
\label{eq:t1}
    \begin{split}
          &\left\langle \mathcal{P}_{\mathbf{T}}( \mathcal{Y}+\lambda \|\mathcal{L}_{0}\|_F \operatorname{sgn}(\mathcal{E}_0)-\mathcal{U} * \mathcal{V}^{*}), \mathcal{P}_{\mathbf{T}}(\mathcal{Z})\right\rangle \\ \leq & \| \mathcal{P}_{\mathbf{T}}( \mathcal{Y}+\lambda \|\mathcal{L}_{0}\|_F \operatorname{sgn}(\mathcal{E}_0)-\mathcal{U} * \mathcal{V}^{*})\|_F \| \mathcal{P}_{\mathbf{T}}(\mathcal{Z})\|_F  \leq \tfrac{\lambda}{n_1n_2n_3^2}\|\mathcal{P}_{\mathbf{T}} (\mathcal{Z})\|_F,
    \end{split}
\end{equation}
and 
\begin{equation}
\label{eq:t2}
    \begin{split}
          &\left\langle \mathcal{P}_{\mathbf{T}^{\perp}}( \mathcal{Y}+\lambda \|\mathcal{L}_{0}\|_F \operatorname{sgn}(\mathcal{E}_0)), \mathcal{P}_{\mathbf{T}^{\perp}}(\mathcal{Z})\right\rangle \\\leq &  \| \mathcal{P}_{\mathbf{T}^{\perp}}( \mathcal{Y}+\lambda \|\mathcal{L}_{0}\|_F \operatorname{sgn}(\mathcal{E}_0))\| \| \mathcal{P}_{\mathbf{T}^\perp}(\mathcal{Z})\|_\ast  \leq \frac{1}{2}\|\mathcal{P}_{\mathbf{T}^{\perp}} (\mathcal{Z})\|_\ast.
    \end{split}
\end{equation} 
In addition,
\begin{equation}
\label{eq:t3}
\begin{split}
\left\langle\mathcal{Y},\mathcal{Z}\right\rangle 
& \geq -\left|\left\langle\mathcal{Y},\mathcal{Z}\right\rangle\right|
 =-\left|\left\langle\mathcal{P}_{\boldsymbol{\Omega}}\mathcal{Y}+\mathcal{P}_{\boldsymbol{\Omega}^{c}}\mathcal{Y},\mathcal{P}_{\boldsymbol{\Omega}}\mathcal{Z}+\mathcal{P}_{\boldsymbol{\Omega}^{c}}\mathcal{Z}\right\rangle\right| 
=-\left|\left\langle\mathcal{P}_{\boldsymbol{\Omega}^c}\mathcal{Y},\mathcal{P}_{\boldsymbol{\Omega}^c}\mathcal{Z}\right\rangle\right|\\ 
& \geq -\|\mathcal{P}_{\boldsymbol{\Omega}^c}(\mathcal{Y})\|_\infty \|\mathcal{P}_{\boldsymbol{\Omega}^c}(\mathcal {Z})\|_1 \geq -\tfrac{\lambda}{2}\|\mathcal{L}_0\|_F \|\mathcal{P}_{\boldsymbol{\Omega}^c}(\mathcal {Z})\|_1,
\end{split}
\end{equation}
where we use $\mathcal{P}_{\boldsymbol{\Omega}}(\mathcal{Y})=\mathcal{O}$ by \eqref{con:ptcondition1} and $\langle\mathcal{P}_{\boldsymbol{\Omega}^c}\mathcal{Y},\mathcal{P}_{\boldsymbol{\Omega}}\mathcal{Z}\rangle=\mathcal{O}$. 
Plugging \eqref{eq:t0}-\eqref{eq:t3} into \eqref{eq:t00}, we complete the proof.
\end{proof}

\begin{lemma}
\label{le:boundpts}
For any $\mathcal{Z}$ with $\|\mathcal{Z}\|_F=1$ and a constant  $\xi\in \left(0,\sqrt{\tfrac{2\mu r}{n_{(1)}}}\right)$,  then the inequality
\begin{equation*}
    \left\|\tfrac{1}{1-2\gamma}\mathcal{P}_{\mathbf{T}}\mathcal{P}_{\boldsymbol{\Omega}^c}\mathcal{P}_{\mathbf{T}^{\perp}} (\mathcal{Z})\right\|\leq\xi,
\end{equation*}
holds with probability at least $1-2(n_{(1)}n_3)^{1-\tfrac{3}{16}C_0}$
for some numerical constant $C_0>0$.
\end{lemma}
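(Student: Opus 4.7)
The plan is to bound the tensor spectral norm via a matrix--Bernstein concentration argument applied to a random sum of tensors. First I exploit the identity $\mathcal{P}_\mathbf{T}\mathcal{P}_{\mathbf{T}^\perp}=0$, which already forces $\mathbb{E}\bigl[\tfrac{1}{1-2\gamma}\mathcal{P}_\mathbf{T}\mathcal{P}_{\boldsymbol{\Omega}^c}\mathcal{P}_{\mathbf{T}^\perp}(\mathcal{Z})\bigr]=\mathcal{O}$, by writing
\begin{equation*}
\tfrac{1}{1-2\gamma}\mathcal{P}_\mathbf{T}\mathcal{P}_{\boldsymbol{\Omega}^c}\mathcal{P}_{\mathbf{T}^\perp}(\mathcal{Z})=\sum_{i,j,k}\Bigl(\tfrac{\delta_{ijk}}{1-2\gamma}-1\Bigr)\bigl[\mathcal{P}_{\mathbf{T}^\perp}(\mathcal{Z})\bigr]_{ijk}\,\mathcal{P}_\mathbf{T}(\mathbf{e}_{ijk}),
\end{equation*}
where $\delta_{ijk}:=\mathbf{1}\{(i,j,k)\in\boldsymbol{\Omega}^c\}\sim\operatorname{Ber}(1-2\gamma)$ are independent and each summand is mean-zero. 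Since $\mathcal{P}_{\mathbf{T}^\perp}(\mathcal{Z})=\mathcal{Z}-\mathcal{P}_\mathbf{T}(\mathcal{Z})$, the deterministic factors $[\mathcal{P}_{\mathbf{T}^\perp}(\mathcal{Z})]_{ijk}$ satisfy $\sum_{ijk}[\mathcal{P}_{\mathbf{T}^\perp}(\mathcal{Z})]_{ijk}^{2}=\|\mathcal{P}_{\mathbf{T}^\perp}(\mathcal{Z})\|_F^{2}\leq\|\mathcal{Z}\|_F^{2}=1$ and $\|\mathcal{P}_{\mathbf{T}^\perp}(\mathcal{Z})\|_\infty\leq 1$.

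Next, after block-diagonalizing via the FFT so that tensor spectral norm reduces to matrix spectral norm on $\overline{\mathcal{A}}$, I would invoke Lemma~S.5 on the centered sum above. Two ingredients are needed: a uniform bound $R$ on the summands and a variance proxy $\sigma^{2}$. Tensor incoherence, combined with the definition \eqref{equ:defptptt} of $\mathcal{P}_\mathbf{T}$, yields the standard estimate $\|\mathcal{P}_\mathbf{T}(\mathbf{e}_{ijk})\|_F^{2}\leq \tfrac{2\mu r}{n_{(2)}n_3}$ (the same bound that drives the proof of Lemma~S.6 cited from \cite{lu2019tensor}); this gives $R\leq \tfrac{1}{1-2\gamma}\sqrt{\tfrac{2\mu r}{n_{(2)}n_3}}$. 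For the variance, one uses orthogonality properties of the $\mathcal{P}_\mathbf{T}(\mathbf{e}_{ijk})$ collection together with $\sum_{ijk}[\mathcal{P}_{\mathbf{T}^\perp}(\mathcal{Z})]_{ijk}^{2}\leq 1$ to obtain $\sigma^{2}\leq C\,\tfrac{\mu r}{(1-2\gamma)n_{(2)}n_3}$ for a numerical constant $C>0$.

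Finally I substitute into the sub-Gaussian tail $\exp(-3t^{2}/(8\sigma^{2}))$ of Lemma~S.5 with $t=\xi$. The hypothesis $\gamma\leq \tfrac{1}{2}-\tfrac{c_\gamma \mu r\log(n_{(1)}n_3)}{n_{(2)}n_3}$ of Theorem~1 lower-bounds $1-2\gamma$, and the ceiling $\xi<\sqrt{2\mu r/n_{(1)}}$ imposed in the lemma is precisely what places $\xi$ inside the Bernstein regime $\xi\leq \sigma^{2}/R$ where the sub-Gaussian tail is valid. Substituting the bounds on $R$ and $\sigma^{2}$ shows that $3\xi^{2}/(8\sigma^{2})$ dominates $(\tfrac{3C_{0}}{16}-1)\log(n_{(1)}n_3)$ for a suitable constant $C_{0}>0$, which gives the stated failure probability $2(n_{(1)}n_{3})^{1-3C_{0}/16}$.

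The hard part will be the variance bookkeeping in step two, especially verifying that the trivial estimate $\|\mathcal{P}_{\mathbf{T}^\perp}(\mathcal{Z})\|_\infty\leq 1$ is tight enough to yield $\sigma^{2}$ on the right order $\mu r/((1-2\gamma)n_{(2)}n_{3})$, and that the prescribed upper bound $\sqrt{2\mu r/n_{(1)}}$ on $\xi$ is exactly what matches $\sigma^{2}/R$. Translating tensor spectral norms through the block-diagonal embedding \eqref{con:bdiagdef} so that the Bernstein constants line up with the $3t^{2}/(8\sigma^{2})$ form of Lemma~S.5 is the same bookkeeping that appears in the proof of Lemma~S.7, of which this statement is a structural variant where the role of $\mathcal{Z}$ is replaced by $\mathcal{P}_{\mathbf{T}^\perp}(\mathcal{Z})$ and the outer projection $\mathcal{P}_\mathbf{T}$ is absorbed into each summand via $\mathcal{P}_\mathbf{T}(\mathbf{e}_{ijk})$.
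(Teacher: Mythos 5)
Your proposal follows essentially the same route as the paper's proof: decompose $\tfrac{1}{1-2\gamma}\mathcal{P}_{\mathbf{T}}\mathcal{P}_{\boldsymbol{\Omega}^c}\mathcal{P}_{\mathbf{T}^{\perp}}(\mathcal{Z})$ into a sum of independent mean-zero terms indexed by $(i,j,k)$, pass to the block-diagonal matrix representation, bound $R$ and $\sigma^2$ via $\|\mathcal{P}_{\mathbf{T}}(\mathbf{e}_{ijk})\|_F^2\leq \tfrac{2\mu r}{n_{(1)}n_3}$, and apply the matrix Bernstein inequality of Lemma~S.5 with $t=\xi$ in the regime $\xi\leq\sigma^2/R$. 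Your explicit centering via $\bigl(\tfrac{\delta_{ijk}}{1-2\gamma}-1\bigr)$ together with $\mathcal{P}_{\mathbf{T}}\mathcal{P}_{\mathbf{T}^{\perp}}=0$ is in fact a slightly cleaner justification of the mean-zero step than the paper's, and the remaining discrepancies ($n_{(1)}$ versus $n_{(2)}$, the exact power of $n_3$ in $\sigma^2$) are the bookkeeping you already flag.
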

\begin{proof}
For any tensor $\mathcal{Z}$, we can write
\begin{equation*}
\tfrac{1}{1-2\gamma}\mathcal{P}_{\mathbf{T}}\mathcal{P}_{\boldsymbol{\Omega}^c}\mathcal{P}_{\mathbf{T}^{\perp}} (\mathcal{Z})=\sum_{ijk}\tfrac{1}{1-2\gamma}\delta_{ijk}\langle\mathcal{Z}, \mathcal{P}_{\mathbf{T}^{\perp}}(\mathbf{e}_{ijk})\rangle\mathcal{P}_{\mathbf{T}}(\mathbf{e}_{ijk}):=\sum_{ijk}\mathcal{Q}_{ijk}(\mathcal Z),
\end{equation*}
where $\delta_{ijk}=I_{(i,j,k)\in \boldsymbol{\Omega}^c }$ for the indicator function $I_{(\cdot)}$. It is straightforward that $\mathcal{Q}_{ijk}: \mathbb{R}^{n_1\times n_2\times n_3}\rightarrow \mathbb{R}^{n_1\times n_2\times n_3}$ is a self-adjoint random operator. 
Since we have
$$\mathbb{E}(\tfrac{1}{1-2\gamma}\mathcal{P}_{\mathbf{T}}\mathcal{P}_{\boldsymbol{\Omega}^c}\mathcal{P}_{\mathbf{T}^{\perp}} (\mathcal{Z}))=\tfrac{1}{1-2\gamma}\mathcal{P}_{\mathbf{T}}\mathbb{E}(\mathcal{P}_{\boldsymbol{\Omega}^c})\mathcal{P}_{\mathbf{T}^{\perp}} (\mathcal{Z})=\mathcal{P}_{\mathbf{T}}\mathcal{P}_{\mathbf{T}^{\perp}}(\mathcal{Z})=\mathcal{O},$$
then the following equality
$$\mathbb{E}[\mathcal{Q}_{ijk}(\mathcal{Z})]=\mathbb{E}[\tfrac{1}{1-2\gamma}\delta_{ijk}\langle\mathcal{Z}, \mathcal{P}_{\mathbf{T}^{\perp}}(\mathbf{e}_{ijk})\rangle\mathcal{P}_{\mathbf{T}}(\mathbf{e}_{ijk})]=0,$$
holds for any $i,j,k$-th element.
Define the matrix operator $\overline{\boldsymbol{Q}}_{i j k}: \mathbb{B} \rightarrow \mathbb{B}$, where ${\mathbb{B}=\left\{\overline{\boldsymbol{B}}: \mathcal{B} \in \mathbb{R}^{n_1 \times n_2 \times n_{3}}\right\}}$ denotes a set of block diagonal matrices $\overline{\boldsymbol{B}}$ with the blocks as the frontal slices of ${\overline{\mathcal{B}}}$, then we get
\begin{equation*}
\overline{\boldsymbol{Q}}_{i j k}(\overline{\mathbf{Z}})=\tfrac{1}{1-2\gamma}\delta_{ijk}\langle\mathcal{Z}, \mathcal{P}_{\mathbf{T}^{\perp}}(\mathbf{e}_{ijk})\rangle\overline{\mathbf{P}_{ijk}},
\end{equation*}
where $\overline{\mathbf{P}_{ijk}}=\text{bdiag}\left(\overline{\mathcal{P}_\mathbf{T}\left(\mathbf{e}_{ijk}\right)}\right) \in \mathbb{R}^{n_1n_3\times n_2n_3} $ for a given coordinate $(i,j,k)$.
We can estimate an upper bound for
\begin{equation}
\begin{aligned}
\left\|\overline{\boldsymbol{Q}}_{i j k}\right\| & =\sup _{\|\overline{\mathbf{Z}}\|_F=1}\left\|\overline{\boldsymbol{Q}}_{i j k}(\overline{\mathbf{Z}})\right\|_F \\
& \leq \sup _{\|\overline{\mathbf{Z}}\|_F=1} \tfrac{1}{1-2\gamma}\left\|\mathcal{P}_{\mathbf{T}^{\perp}}\left(\mathbf{e}_{i j k}\right)\right\|_F\left\|\overline{\mathbf{P}_{ijk}}\right\|_F\|\mathcal{Z}\|_F \\
& =\sup _{\|\overline{\mathbf{Z}}\|_F=1} \tfrac{1}{1-2\gamma}\left\|\mathcal { P }_{\mathbf{T}^{\perp}}\left(\mathbf{e}_{i j k}\right)\right\|_F\left\|\mathcal { P }_{\mathbf{T}}\left(\mathbf{e}_{i j k}\right)\right\|_F\|\overline{\mathbf{Z}}\|_F \\
& \leq \tfrac{1}{1-2\gamma}\sqrt{\tfrac{2\mu r}{n_{(1)}n_3}},
\end{aligned} \label{in:Qspectral}
\end{equation}
where the last inequality is from $\|\mathcal{P}_{\mathbf{T}}(\mathbf{e}_{ijk})\|_F^2 \leq  \frac{2\mu r}{n_{(1)}n_3}$ \cite{lu2019tensor}. On the other hand, we compute
\begin{equation*}
\overline{\boldsymbol{Q}}_{i j k}(\overline{\mathbf{Z}})^**\overline{\boldsymbol{Q}}_{i j k}(\overline{\mathbf{Z}})= (\tfrac{1}{1-2\gamma}\delta_{ijk})^2\langle\mathcal{Z}, \mathcal{P}_{\mathbf{T}^{\perp}}(\mathbf{e}_{ijk})\rangle^2\overline{\mathbf{P}_{ijk}}^**\overline{\mathbf{P}_{ijk}}.
\end{equation*}
Using $\|\mathbf{A}^**\mathbf{A}\|_F=\|\mathbf{A}*\mathbf{A}^*\|_F$ for any matrix  $\mathbf A$,  we have
\begin{equation*}
\begin{aligned}
&\left\|\sum_{ijk}\mathbb{E}\left[\overline{\boldsymbol{Q}}_{i j k}(\overline{\mathbf{Z}})^**\overline{\boldsymbol{Q}}_{i j k}(\overline{\mathbf{Z}})\right]\right\|_F =\left\|\sum_{ijk}\mathbb{E}\left[\overline{\boldsymbol{Q}}_{i j k}(\overline{\mathbf{Z}})*\overline{\boldsymbol{Q}}_{i j k}(\overline{\mathbf{Z}})^*\right]\right\|_F \\
= &\tfrac{1}{(1-2\gamma)^2} \left\|\mathbb{E}(\delta_{ijk})^2\sum_{ijk} \langle\mathcal{Z}, \mathcal{P}_{\mathbf{T}^{\perp}}(\mathbf{e}_{ijk})\rangle^2\overline{\mathbf{P}_{ijk}}^**\overline{\mathbf{P}_{ijk}}\right\|_F \\
= & \tfrac{1}{1-2\gamma} \left\|\sum_{ijk} \langle\mathcal{Z}, \mathcal{P}_{\mathbf{T}^{\perp}}(\mathbf{e}_{ijk})\rangle^2\overline{\mathbf{P}_{ijk}}^**\overline{\mathbf{P}_{ijk}}\right\|_F \\
\leq & \tfrac{n_3}{1-2\gamma}\left\|\mathcal { P }_{\mathbf{T}}\left(\mathbf{e}_{i j k}\right)\right\|_F^2\left\| \sum_{ijk} \langle \mathcal{P}_{\mathbf{T}^{\perp}}(\mathcal{Z}), \mathbf{e}_{ijk}\rangle^2 
  \right\|_F 
\leq  \tfrac{n_3}{1-2\gamma}\left\|\mathcal { P }_{\mathbf{T}}\left(\mathbf{e}_{i j k}\right)\right\|_F^2\left\|\mathcal { P }_{\mathbf{T}^{\perp}}\left(\mathcal{Z}\right)\right\|_F^2 \\
\leq & \tfrac{n_3}{1-2\gamma}\left\|\mathcal { P }_{\mathbf{T}}\left(\mathbf{e}_{i j k}\right)\right\|_F^2\left\|\mathcal{Z}\right\|_F^2
= \tfrac{\sqrt{n_3}}{1-2\gamma}\left\|\mathcal { P }_{\mathbf{T}}\left(\mathbf{e}_{i j k}\right)\right\|_F^2\left\|\overline{\mathbf{Z}}\right\|_F \leq \tfrac{2\mu r}{(1-2\gamma)n_{(1)}\sqrt{n_3}},
\end{aligned}
\end{equation*}
which implies 
\begin{equation}
\begin{aligned}
&\max\left\{\left\|\sum_{ijk}\mathbb{E}\left[\overline{\boldsymbol{Q}}_{i j k}(\overline{\mathbf{Z}})^**\overline{\boldsymbol{Q}}_{i j k}(\overline{\mathbf{Z}})\right]\right\|, \left\|\sum_{ijk}\mathbb{E}\left[\overline{\boldsymbol{Q}}_{i j k}(\overline{\mathbf{Z}})*\overline{\boldsymbol{Q}}_{i j k}(\overline{\mathbf{Z}})^*\right]\right\|\right\} \\
\leq & \tfrac{2\mu r}{(1-2\gamma)n_{(1)}\sqrt{n_3}}.       
\end{aligned}\label{in:maxEQQ} 
\end{equation}

Motivated by \eqref{in:Qspectral}, \eqref{in:maxEQQ}, and Lemma~\ref{inequality:Ber}, we choose 
$$\xi\leq  \tfrac{2\mu r}{(1-2\gamma)n_{(1)}\sqrt{n_3}} (\tfrac{1}{1-2\gamma}\sqrt{\tfrac{2\mu r}{n_{(1)}n_3}})^{-1}= \sqrt{\tfrac{2\mu r}{n_{(1)}}}$$ to get
\begin{equation*}
\begin{aligned}
 &\mathbb{P}\left[\left\|\tfrac{1}{1-2\gamma}\mathcal{P}_{\mathbf{T}}\mathcal{P}_{\boldsymbol{\Omega}^c}\mathcal{P}_{\mathbf{T}^{\perp}} (\mathcal{Z})\right\|>\xi\right]=\mathbb{P}\left[\left\|\sum_{ijk}\mathcal{Q}_{ijk}(\mathcal{Z})\right\|>\xi\right] \\
 =& \mathbb{P}\left[\left\|\sum_{ijk}\overline{\mathbf{Q}}_{ijk}(\overline{\mathbf{Z}})\right\|>\xi\right] \leq (n_1+n_2)n_3\exp(-\tfrac{3}{8}\cdot\tfrac{\xi^2}{2\mu r/((1-2\gamma)n_{(1)}\sqrt{n_3})}) \\
 =&(n_1+n_2)n_3\exp\left(-\tfrac{3\xi^2(1-2\gamma)n_{(1)}\sqrt{n_3}}{16\mu r}\right).\\
 \leq&2(n_{(1)}n_3)^{1-\tfrac{3}{16}C_0},
\end{aligned}
\end{equation*}
where $C_0\leq \tfrac{\xi^2(1-2\gamma)n_{(1)}n_{3}^{1/2}}{nr\log(n_{(1)}n_3)}.$
In other words, if $0<\xi\leq \sqrt{\tfrac{2\mu r}{n_{(1)}}}$, the following estimate $$\left\|\tfrac{1}{1-2\gamma}\mathcal{P}_{\mathbf{T}}\mathcal{P}_{\boldsymbol{\Omega}^c}\mathcal{P}_{\mathbf{T}^{\perp}} (\mathcal{Z})\right\|\leq\xi,$$ holds with probability at least $1-2(n_{(1)}n_3)^{1-\tfrac{3}{16}C_0}$
for some numerical constant $C_0>0$.
\end{proof}

\begin{lemma}
\label{le:bound_pt1}
 For any $\mathcal{Z}$ with $\|\mathcal{Z}\|_F=1$ and $\xi\in \left(0,\sqrt{\tfrac{2\mu r}{n_{(1)}}}\right)$,  we have 
\begin{equation*}
  \|\mathcal{P}_{\mathbf{T}}(\mathcal{Z})\|_F  
  \leq \sqrt{\tfrac{6}{1-2\gamma}}\|\mathcal{P}_{\boldsymbol{\Omega}^{c}}(\mathcal{Z})\|_1+2\xi\|\mathcal{P}_{\mathbf{T}}(\mathcal{Z})\|_F, 
\end{equation*}
holds with high probability, provided $\gamma \leq \tfrac{1}{2}- 2C_0\frac{\mu r \log \left(n_{(1)} n_{3}\right)}{n_{(2)}n_3}$ for a constant $C_0$.
\end{lemma}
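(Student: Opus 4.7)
My plan is to invoke the two concentration bounds already derived in the supplementary material---the restricted isometry of Lemma~S.2 and the off-diagonal operator bound of Lemma~S.6---both applied to the Bernoulli-type projection $\mathcal{P}_{\boldsymbol{\Omega}^{c}}$ with sampling probability $\rho := 1-2\gamma$. The threshold $\gamma \le \tfrac{1}{2}-2C_0\mu r\log(n_{(1)}n_3)/(n_{(2)}n_3)$ is precisely what guarantees the hypothesis of Lemma~S.2 with accuracy parameter $\epsilon=\tfrac{1}{2}$, while the range $\xi\in(0,\sqrt{2\mu r/n_{(1)}})$ is the range in which Lemma~S.6 is valid. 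A single union bound over the two failure events preserves a high-probability guarantee of the form $1-O((n_{(1)}n_3)^{-c})$.

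On this event, specializing Lemma~S.2 to $\mathcal{W} = \mathcal{P}_{\mathbf{T}}(\mathcal{Z}) \in \mathbf{T}$ yields the lower estimate
\[
\tfrac{1-2\gamma}{2}\|\mathcal{P}_{\mathbf{T}}(\mathcal{Z})\|_F^2 \;\le\; \|\mathcal{P}_{\boldsymbol{\Omega}^{c}}\mathcal{P}_{\mathbf{T}}(\mathcal{Z})\|_F^2.
\]
I would then split $\mathcal{P}_{\boldsymbol{\Omega}^{c}}\mathcal{P}_{\mathbf{T}}(\mathcal{Z}) = \mathcal{P}_{\boldsymbol{\Omega}^{c}}(\mathcal{Z}) - \mathcal{P}_{\boldsymbol{\Omega}^{c}}\mathcal{P}_{\mathbf{T}^\perp}(\mathcal{Z})$ and expand the right-hand side as
$\langle \mathcal{P}_{\mathbf{T}}(\mathcal{Z}),\mathcal{P}_{\boldsymbol{\Omega}^{c}}(\mathcal{Z})\rangle - \langle \mathcal{P}_{\mathbf{T}}(\mathcal{Z}),\mathcal{P}_{\boldsymbol{\Omega}^{c}}\mathcal{P}_{\mathbf{T}^\perp}(\mathcal{Z})\rangle$. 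The first inner product is controlled by H\"older's inequality together with the elementary domination $\|\cdot\|_F\le\|\cdot\|_1$ to yield $\|\mathcal{P}_{\mathbf{T}}(\mathcal{Z})\|_F\|\mathcal{P}_{\boldsymbol{\Omega}^{c}}(\mathcal{Z})\|_1$. For the second, since $\mathcal{P}_{\mathbf{T}}(\mathcal{Z})\in\mathbf{T}$, I can insert a $\mathcal{P}_{\mathbf{T}}$ and rewrite it as $\langle \mathcal{P}_{\mathbf{T}}(\mathcal{Z}),\mathcal{P}_{\mathbf{T}}\mathcal{P}_{\boldsymbol{\Omega}^{c}}\mathcal{P}_{\mathbf{T}^\perp}(\mathcal{Z})\rangle$; Cauchy-Schwarz together with Lemma~S.6 then bounds this by $\xi(1-2\gamma)\|\mathcal{P}_{\mathbf{T}}(\mathcal{Z})\|_F\|\mathcal{P}_{\mathbf{T}^\perp}(\mathcal{Z})\|_F \le \xi(1-2\gamma)\|\mathcal{P}_{\mathbf{T}}(\mathcal{Z})\|_F$.

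Collecting the pieces yields a quadratic-in-$\|\mathcal{P}_{\mathbf{T}}(\mathcal{Z})\|_F$ inequality of the form $\tfrac{1-2\gamma}{2}x^2 \le xy + \xi(1-2\gamma)x$, with $x := \|\mathcal{P}_{\mathbf{T}}(\mathcal{Z})\|_F$ and $y := \|\mathcal{P}_{\boldsymbol{\Omega}^{c}}(\mathcal{Z})\|_1$. Dividing by $x$ (the conclusion is trivial when $x=0$) and then applying the weighted AM-GM identity $xy \le \tfrac{1-2\gamma}{6}x^2 + \tfrac{3}{2(1-2\gamma)}y^2$ to re-balance the bilinear term into quadratics gives, after moving the resulting $2\xi x$ contribution to the left and taking square roots, exactly the stated form with coefficients $\sqrt{6/(1-2\gamma)}$ and $2\xi$. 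The main obstacle I foresee is this final numerical bookkeeping: the three free parameters---the accuracy $\epsilon$ in Lemma~S.2, the AM-GM splitting weight, and the constant $C_0$ appearing in the hypothesis on $\gamma$---must be calibrated in concert so that the advertised coefficients appear, and the union bound over the two concentration events has to be tracked so as not to degrade the failure probability beyond order $(n_{(1)}n_3)^{-c}$.
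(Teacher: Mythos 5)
Your two probabilistic inputs are the right ones — the paper also invokes Lemma~\ref{lemmaomega1} with $\epsilon=\tfrac12$, $\rho=1-2\gamma$ and the cross-term bound of Lemma~\ref{le:boundpts} — but the deterministic assembly you propose does not deliver the stated constant, and this is a genuine gap rather than mere bookkeeping. Your quadratic inequality reads $\tfrac{\rho}{2}x^2\le xy+\xi\rho x$ with $\rho=1-2\gamma$, $x=\|\mathcal{P}_{\mathbf{T}}(\mathcal{Z})\|_F$, $y=\|\mathcal{P}_{\boldsymbol{\Omega}^{c}}(\mathcal{Z})\|_1$; dividing by $x$ gives $x\le\tfrac{2}{\rho}y+2\xi$. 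The coefficient $\tfrac{2}{\rho}$ is dominated by the advertised $\sqrt{6/\rho}$ only when $\rho\ge\tfrac23$, i.e.\ $\gamma\le\tfrac16$, whereas the lemma must hold for $\gamma$ up to nearly $\tfrac12$. The AM--GM rebalancing you suggest cannot repair this: the $\rho$-scaling of the $y$-coefficient is fixed by the Cauchy--Schwarz step $\langle\mathcal{P}_{\mathbf{T}}(\mathcal{Z}),\mathcal{P}_{\boldsymbol{\Omega}^{c}}(\mathcal{Z})\rangle\le xy$, and splitting $xy$ into quadratics and then taking square roots additionally degrades the last term from $2\xi$ to something of order $\sqrt{\xi x}$, which is far larger than $2\xi$ for small $\xi$.

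The ingredient you are missing is the operator-norm identity $\|\mathcal{P}_{\mathbf{T}}\mathcal{P}_{\boldsymbol{\Omega}^{c}}\|^{2}=\|\mathcal{P}_{\mathbf{T}}\mathcal{P}_{\boldsymbol{\Omega}^{c}}\mathcal{P}_{\mathbf{T}}\|$ (valid because $\mathcal{P}_{\boldsymbol{\Omega}^{c}}$ is an orthogonal projection), which together with \eqref{ine:pt1d2} and $\|\mathcal{P}_{\mathbf{T}}\|\le1$ yields $\|\mathcal{P}_{\mathbf{T}}\mathcal{P}_{\boldsymbol{\Omega}^{c}}\|\le\sqrt{\tfrac{3\rho}{2}}$; this is precisely what converts a $\tfrac{1}{\rho}$ prefactor into $\tfrac{1}{\sqrt{\rho}}$ and produces $\sqrt{6/\rho}$. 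The paper therefore stays linear throughout: from \eqref{ine:pt1d2} it deduces $\|\mathcal{P}_{\mathbf{T}}(\mathcal{Z})\|_F\le 2\|\tfrac{1}{\rho}\mathcal{P}_{\mathbf{T}}\mathcal{P}_{\boldsymbol{\Omega}^{c}}\mathcal{P}_{\mathbf{T}}(\mathcal{Z})\|_F$, writes $\mathcal{P}_{\mathbf{T}}(\mathcal{Z})=\mathcal{Z}-\mathcal{P}_{\mathbf{T}^{\perp}}(\mathcal{Z})$ inside, bounds the first piece by $\tfrac{2}{\rho}\|\mathcal{P}_{\mathbf{T}}\mathcal{P}_{\boldsymbol{\Omega}^{c}}\|\,\|\mathcal{P}_{\boldsymbol{\Omega}^{c}}(\mathcal{Z})\|_F\le\sqrt{6/\rho}\,\|\mathcal{P}_{\boldsymbol{\Omega}^{c}}(\mathcal{Z})\|_1$, and the second by $2\xi\|\mathcal{P}_{\mathbf{T}^{\perp}}(\mathcal{Z})\|_F$ via Lemma~\ref{le:boundpts}. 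Incidentally, your derivation (like the paper's actual proof) naturally terminates in $2\xi\|\mathcal{P}_{\mathbf{T}^{\perp}}(\mathcal{Z})\|_F$ rather than the $2\xi\|\mathcal{P}_{\mathbf{T}}(\mathcal{Z})\|_F$ printed in the lemma statement; the $\mathbf{T}^{\perp}$ version is what the subsequent estimate of $g(0)$ uses, so that discrepancy is a typo in the statement, not a defect of either argument.
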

\begin{proof}
Since ${\boldsymbol{\Omega}^{c} \sim \operatorname{Ber}(1-2\gamma)}$, we get from Lemma~\ref{lemmaomega1} by setting  $\epsilon=\frac{1}{2}$ and $\rho=1-2\gamma$ that 
\begin{equation}
\left\|\tfrac{1}{1-2\gamma}\mathcal{P}_{\mathbf{T}}\mathcal{P}_{\boldsymbol{\Omega}^c}\mathcal{P}_{\mathbf{T}}-\mathcal{P}_{\mathbf{T}}\right\| \leq \frac{1}{2}, \label{ine:pt1d2}
\end{equation}
holds with high probability provided $\gamma \leq \tfrac{1}{2}- 2C_0\frac{\mu r \log \left(n_{(1)} n_{3}\right)}{n_{(2)}n_3}$ for a constant $C_0$.
Then we have
\begin{equation*}
    \begin{split}
       \tfrac{1}{2} \|\mathcal{P}_{\mathbf{T}}(\mathcal{Z})\|_F^2 & \geq  \|\mathcal{P}_{\mathbf{T}}(\mathcal{Z})\|_F^2 \left\|\tfrac{1}{1-2\gamma}\mathcal{P}_{\mathbf{T}}\mathcal{P}_{\boldsymbol{\Omega^c}}\mathcal{P}_{\mathbf{T}}-\mathcal{P}_{\mathbf{T}}\right\|  \\ 
       & \geq \|\mathcal{P}_{\mathbf{T}}(\mathcal{Z})\|_F \left\|\left(\tfrac{1}{1-2\gamma}\mathcal{P}_{\mathbf{T}}\mathcal{P}_{\boldsymbol{\Omega^c}}\mathcal{P}_{\mathbf{T}}-\mathcal{P}_{\mathbf{T}}\right)\mathcal{P}_{\mathbf{T}}(\mathcal{Z})\right\|_F \\
       & \geq  \left|\left\langle \mathcal{P}_{\mathbf{T}}(\mathcal{Z}), (\tfrac{1}{1-2\gamma}\mathcal{P}_{\mathbf{T}}\mathcal{P}_{\boldsymbol{\Omega^c}}\mathcal{P}_{\mathbf{T}}-\mathcal{P}_{\mathbf{T}})\mathcal{P}_{\mathbf{T}}(\mathcal{Z}) \right\rangle\right|,
    \end{split}
\end{equation*}
which directly leads to 
\begin{equation}
\label{eq: pt1}
    \left\langle \mathcal{P}_{\mathbf{T}}(\mathcal{Z}), \left(\tfrac{1}{1-2\gamma}\mathcal{P}_{\mathbf{T}}\mathcal{P}_{\boldsymbol{\Omega}^c}\mathcal{P}_{\mathbf{T}}-\mathcal{P}_{\mathbf{T}}\right)\mathcal{P}_{\mathbf{T}}(\mathcal{Z}) \right\rangle \geq -\frac{1}{2}\|\mathcal{P}_{\mathbf{T}}(\mathcal{Z})\|_F^2. 
\end{equation}
On the other hand, \begin{equation}
\label{eq: pt2}
\begin{aligned}
&\left\langle \mathcal{P}_{\mathbf{T}}(\mathcal{Z}), \left(\tfrac{1}{1-2\gamma}\mathcal{P}_{\mathbf{T}}\mathcal{P}_{\boldsymbol{\Omega}^c}\mathcal{P}_{\mathbf{T}}-\mathcal{P}_{\mathbf{T}}\right)\mathcal{P}_{\mathbf{T}}(\mathcal{Z}) \right\rangle \\
&= \left\langle \mathcal{P}_{\mathbf{T}}(\mathcal{Z}), \tfrac{1}{1-2\gamma}\mathcal{P}_{\mathbf{T}}\mathcal{P}_{\boldsymbol{\Omega}^c}\mathcal{P}_{\mathbf{T}}(\mathcal{Z})\right\rangle -\|\mathcal{P}_{\mathbf{T}}(\mathcal{Z})\|_F^2 \\
& \leq \|\mathcal{P}_{\mathbf{T}}(\mathcal{Z})\|_F \left\|\tfrac{1}{1-2\gamma}\mathcal{P}_{\mathbf{T}}\mathcal{P}_{\boldsymbol{\Omega}^c}\mathcal{P}_{\mathbf{T}}(\mathcal{Z})\right\|_F-\|\mathcal{P}_{\mathbf{T}}(\mathcal{Z})\|_F^2.
\end{aligned} 
\end{equation}
Combining the inequalities \eqref{eq: pt1} and \eqref{eq: pt2} yields
$$-\tfrac{1}{2}\|\mathcal{P}_{\mathbf{T}}(\mathcal{Z})\|_F^2\leq \|\mathcal{P}_{\mathbf{T}}(\mathcal{Z})\|_F \left\|\tfrac{1}{1-2\gamma}\mathcal{P}_{\mathbf{T}}\mathcal{P}_{\boldsymbol{\Omega}^c}\mathcal{P}_{\mathbf{T}}(\mathcal{Z})\right\|_F-\|\mathcal{P}_{\mathbf{T}}(\mathcal{Z})\|_F^2,$$
which implies that
\begin{equation}
\label{eq:term1}
    \begin{split}
     \|\mathcal{P}_{\mathbf{T}}(\mathcal{Z})\|_F & \leq 2\left\|\tfrac{1}{1-2\gamma}\mathcal{P}_{\mathbf{T}}\mathcal{P}_{\boldsymbol{\Omega}^c}\mathcal{P}_{\mathbf{T}}(\mathcal{Z})\right\|_F \\
    &\leq 2\left(\left\|\tfrac{1}{1-2\gamma}\mathcal{P}_{\mathbf{T}}\mathcal{P}_{\boldsymbol{\Omega}^c}(\mathcal{Z})\right\|_F+\left\|\tfrac{1}{1-2\gamma}\mathcal{P}_{\mathbf{T}}\mathcal{P}_{\boldsymbol{\Omega}^c}\mathcal{P}_{\mathbf{T}^{\perp}}(\mathcal{Z})\right\|_F\right)
    \end{split}
\end{equation}
For the first term in \eqref{eq:term1}, we use the identity $\|\mathcal{P}_{\mathbf{T}}\mathcal{P}_{\boldsymbol{\Omega}^c}\|^2=\|\mathcal{P}_{\mathbf{T}}\mathcal{P}_{\boldsymbol{\Omega}^c}\mathcal{P}_{\mathbf{T}}\|$ in \cite{lu2019tensor} to obtain
\begin{equation}
    \begin{split}
        \left\|\tfrac{1}{\sqrt{1-2\gamma}}\mathcal{P}_{\mathbf{T}}\mathcal{P}_{\boldsymbol{\Omega}^c}\right\|^2 & = \left\|\tfrac{1}{1-2\gamma}\mathcal{P}_{\mathbf{T}}\mathcal{P}_{\boldsymbol{\Omega}^c}\mathcal{P}_{\mathbf{T}}\right\|\\
        & = \left\|\tfrac{1}{1-2\gamma}\mathcal{P}_{\mathbf{T}}\mathcal{P}_{\boldsymbol{\Omega}^c}\mathcal{P}_{\mathbf{T}}\right\| -\left\|\mathcal{P}_{\mathbf{T}}\right\| + \left\|\mathcal{P}_{\mathbf{T}}\right\| \\
        & \leq 
        \left\|\tfrac{1}{1-2\gamma}\mathcal{P}_{\mathbf{T}}\mathcal{P}_{\boldsymbol{\Omega}^c}\mathcal{P}_{\mathbf{T}}-\mathcal{P}_{\mathbf{T}}\right\| + \left\|\mathcal{P}_{\mathbf{T}}\right\|
        \leq \tfrac{3}{2},
    \end{split}
    \label{ineq:32}
\end{equation}
where we further use $ \|\mathcal{P}_{\mathbf{T}}\| \leq 1$ and \eqref{ine:pt1d2}. 
Plugging \eqref{ineq:32} into \eqref{eq:term1}, we obtain:  
\begin{equation}
\begin{aligned}
 \tfrac{2}{1-2\gamma}\|\mathcal{P}_{\mathbf{T}} \mathcal{P}_{\boldsymbol{\Omega}^{c}}(\mathcal{Z})\|_F &= \tfrac{2}{1-2\gamma}\|\mathcal{P}_{\mathbf{T}} \mathcal{P}_{\boldsymbol{\Omega}^{c}}\mathcal{P}_{\boldsymbol{\Omega}^{c}}(\mathcal{Z})\|_F
\leq \tfrac{2}{1-2\gamma} \|\mathcal{P}_{\mathbf{T}} \mathcal{P}_{\boldsymbol{\Omega}^{c}}\| \|\mathcal{P}_{\boldsymbol{\Omega}^{c}}(\mathcal{Z})\|_F\\
&\leq \sqrt{\tfrac{6}{1-2\gamma}}\|\mathcal{P}_{\boldsymbol{\Omega}^{c}}(\mathcal{Z})\|_F
 \leq \sqrt{\tfrac{6}{1-2\gamma}}\|\mathcal{P}_{\boldsymbol{\Omega}^{c}}(\mathcal{Z})\|_1,  
\end{aligned} \label{firstpart}
\end{equation}
where the last inequality is due to $\|\mathcal{P}_{\boldsymbol{\Omega}^{c}}(\mathcal{Z})\|_F \leq \|\mathcal{P}_{\boldsymbol{\Omega}^{c}}(\mathcal{Z})\|_1 $.
For the second term in \eqref{eq:term1}, we use Lemma~\ref{le:boundpts} to get
\begin{equation}
\begin{aligned}
&\left\|\tfrac{1}{1-2\gamma}\mathcal{P}_{\mathbf{T}}\mathcal{P}_{\boldsymbol{\Omega}^c}\mathcal{P}_{\mathbf{T}^{\perp}} (\mathcal{Z})\right\|_F \\
\leq &\left\|\tfrac{1}{1-2\gamma}\mathcal{P}_{\mathbf{T}}\mathcal{P}_{\boldsymbol{\Omega}^c}\mathcal{P}_{\mathbf{T}^{\perp}} (\mathcal{Z})\right\| \left\|\mathcal{P}_{\mathbf{T}^{\perp}} (\mathcal{Z})\right\|_F\leq 2\xi \left\|\mathcal{P}_{\mathbf{T}^{\perp}} (\mathcal{Z})\right\|_F.     
\end{aligned}
 \label{secondpart}
\end{equation}
 Combining \eqref{firstpart} and \eqref{secondpart}, we get the following inequality 
 $$ \|\mathcal{P}_{\mathbf{T}}(\mathcal{Z})\|_F  \leq \sqrt{\tfrac{6}{1-2\gamma}}\|\mathcal{P}_{\boldsymbol{\Omega}^{c}}(\mathcal{Z})\|_1+2\xi\|\mathcal{P}_{\mathbf{T}^{\perp}}(\mathcal{Z})\|_F,$$
holds with high probability.
\end{proof}

By incorporating the lower bounds obtained by Lemma~\ref{le:part_g} and Lemma~\ref{le:bound_pt1} into \eqref{eq:g_0}, we get 
\begin{equation*}
    \begin{split}
 g(0)  &= \lambda a \|\mathcal{L}_0\|_F^{3} +b\|\mathcal{L}_0\|_F^2-\|\mathcal{L}_0\|_*\left\langle\mathcal{L}_0,\mathcal{Z}\right\rangle \\
&\geq  \|\mathcal{L}_0\|_F^{2} \left(\tfrac{1}{2}\|\mathcal{P}_{\mathbf{T}^{\perp}} (\mathcal{Z})\|_* + \tfrac{\lambda\|\mathcal{L}_{0}\|_F}{2}\|\mathcal{P}_{\boldsymbol{\Omega}^{c}}(\mathcal{Z})\|_{1}\right) \\
&\quad - \|\mathcal{L}_0\|_F^{2} \left(\tfrac{\lambda}{n_1n_2n_3^2}\|\mathcal{P}_{\mathbf{T}} (\mathcal{Z})\|_F-\sqrt{r}\|\mathcal{P}_{\mathbf{T}}(\mathcal{Z})\|_F\right)\\
&\geq  \|\mathcal{L}_0\|_F^{2} \left(\tfrac{1}{2}\|\mathcal{P}_{\mathbf{T}^{\perp}} (\mathcal{Z})\|_* + \tfrac{\lambda\|\mathcal{L}_{0}\|_F}{2}\|\mathcal{P}_{\boldsymbol{\Omega}^{c}}(\mathcal{Z})\|_{1} \right)\\
&\quad-\|\mathcal{L}_{0}\|_F^2\left(\tfrac{\lambda}{n_1n_2n_3^2} + \sqrt{r}\right)\left(\sqrt{\tfrac{6}{1-2\gamma}} \|\mathcal{P}_{\boldsymbol{\Omega}^{c}}(\mathcal{Z})\|_1+2\xi\|\mathcal{P}_{\mathbf{T}^{\perp}}(\mathcal{Z})\|_F\right)\\
&\geq \|\mathcal{L}_{0}\|_F^2\left[\left(\tfrac{1}{2} - \tfrac{2\lambda\xi}{n_1n_2n_3^{3/2}}-2\xi\sqrt{rn_3}\right)\|\mathcal{P}_{\mathbf{T}^{\perp}} (\mathcal{Z})\|_* \right]\\
&\quad + \|\mathcal{L}_{0}\|_F^2\left[\left(\tfrac{\lambda\|\mathcal{L}_{0}\|_F}{2}-\tfrac{\lambda}{n_1n_2n_3^2}\sqrt{\tfrac{6\lambda^2}{1-2\gamma}}-\sqrt{\tfrac{6r}{1-2\gamma}}\right)\|\mathcal{P}_{\boldsymbol{\Omega}^{c}}(\mathcal{Z})\|_{1}\right]. 
\end{split}
\end{equation*}
If the coefficients in front of $\|\mathcal{P}_{\mathbf{T}^{\perp}}(\mathcal{Z})\|_*$ and $\|\mathcal{P}_{\boldsymbol{\Omega}^{c}}(\mathcal{Z})\|_1$ are positive, then $g(0)>0$. 
Specifically for $\|\mathcal{P}_{\mathbf{T}^{\perp}} (\mathcal{Z})\|_*$, we have
$$ \lambda< \left(\tfrac{1}{2}-2\xi\sqrt{rn_3}\right)\left(\tfrac{2\xi}{n_1n_2n_3^{3/2}}\right)^{-1}=\left(\tfrac{1}{4\xi}-\sqrt{rn_3}\right)n_1n_2n_3^{3/2},$$ 
which can be relaxed to 
$$\lambda<\left(\tfrac{1}{4}\sqrt{\tfrac{n_{(1)}}{2\mu r}}-\sqrt{rn_3}\right)n_1n_2n_3^{3/2},$$
by $\xi \leq \sqrt{\tfrac{2\mu r}{n_{(1)}}}$.

As for $\|\mathcal{P}_{\boldsymbol{\Omega}^{c}}(\mathcal{Z})\|_1$, we have
\begin{equation}
\lambda >\sqrt{\tfrac{6r}{1-2\gamma}}\left(\tfrac{\|\mathcal{L}_{0}\|_F}{2}-\tfrac{\sqrt{6}}{n_1n_2n_3^2\sqrt{1-2\gamma}} \right)^{-1}=\tfrac{2\sqrt{6r}n_1n_2n_3^2}{n_1n_2n_3^2\sqrt{1-2\gamma}\|\mathcal{L}_{0}\|_F-2\sqrt{6}}.    
\end{equation}
Therefore, $g(0)>0$ if
\begin{equation}
\tfrac{2\sqrt{6r}n_1n_2n_3^2}{n_1n_2n_3^2\sqrt{1-2\gamma}\|\mathcal{L}_{0}\|_F-2\sqrt{6}}<\lambda <\left(\tfrac{1}{4}\sqrt{\tfrac{n_{(1)}}{2\mu r}}-\sqrt{rn_3}\right)n_1n_2n_3^{3/2}. \label{ine:lambdabound}   
\end{equation}

Now, we finalize the proof of Theorem~\ref{thm:PtPo}.
For any tensor $\mathcal{Z}$ with $\|\mathcal{Z}\|_F=1$, let 
 \begin{equation}
 l(t, \mathcal{Z})=\lambda a \|\mathcal{L}_0+t\mathcal{Z}\|_F^{3} +b\|\mathcal{L}_0+t\mathcal{Z}\|_F^2-(\|\mathcal{L}_0\|_*+bt)\left\langle\mathcal{L}_0+t\mathcal{Z},\mathcal{Z}\right\rangle,  \label{equ:ltzTR}  
 \end{equation}
which is continuous.
We also let 
\begin{equation}
\begin{aligned}
&h(\mathcal{Z})\\
=&\|\mathcal{L}_{0}\|_F^2\left(\tfrac{1}{2}\|\mathcal{P}_{\mathbf{T}^{\perp}} (\mathcal{Z})\|_*  + (\tfrac{\lambda\|\mathcal{L}_{0}\|_F}{2}-\sqrt{\tfrac{6\lambda^2}{(1-2\gamma)n_1^2n_2^2n_3^4}}-\sqrt{\tfrac{6r}{1-2\gamma}})\|\mathcal{P}_{\boldsymbol{\Omega}^{c}}(\mathcal{Z})\|_{1}\right). \label{equ:hzTR}
\end{aligned}
\end{equation}
Clearly, $h(\mathcal{Z})>0, \forall \mathcal{Z}\in \mathbb{R}^{n_1\times n_2\times n_3}$, if $\lambda$ satisfies \eqref{ine:lambdabound}.
Combining \eqref{equ:ltzTR} and \eqref{equ:hzTR}, we get
 \begin{equation}
  l(0,\mathcal{Z})\geq h(\mathcal{Z})>0 \quad \forall \mathcal{Z}\in \mathbb{R}^{n_1\times n_2\times n_3}, \|\mathcal{Z}\|_F=1,
  \label{ine:l}
 \end{equation}
which implies that $\lim_{t\to 0^+}l(t,\mathcal{Z})>0$. 
By the continuity of $l(t,\mathcal{Z})$ with respect to $t$, there exists a constant $\overline t>0$ (independent of $\mathcal Z$), such that $l(t,\mathcal{Z})>0$ for $t \in \left[0, \overline t\right)$ and any $\mathcal Z$ with $\|\mathcal Z\|_F=1$.  Consequently, there exists $\overline t>0$, 
such that $g(t)>0$ for $t \in \left[0,\overline t\right)$. Then, we get $f^{\prime}(t)>0$ when $t \in \left[0,\overline t\right)$. Hence $f(0)\leq f(t) $ for any $t \in \left[0,\overline t\right)$, i.e.,
\begin{equation*}
\tfrac{\|\mathcal{L}_0+t\mathcal{Z}\|_*}{\|\mathcal{L}_0+t\mathcal{Z}\|_F}+\lambda\|\mathcal{E}_0-t\mathcal{Z}\|_1 \geq \tfrac{\|\mathcal{L}_0\|_*}{\|\mathcal{L}_0\|_F}+\lambda\|\mathcal{E}_0\|_1 \quad t\in \left[0,\overline t\right).
\end{equation*}
So, there exists a positive $\overline t$, such that when $\|\mathcal{Z}\|_F \leq \overline t$, ${\left(\mathcal{L}_{0}, \mathcal{E}_{0}\right)}$ satisfies \eqref{equ:LZE}.
We complete the proof of Theorem~\ref{thm:PtPo}.

\subsubsection{The construction of tensor $\mathcal{Y}$.}
We apply the golfing scheme that was used in \cite{lu2019tensor} to construct the dual tensor $\mathcal{Y}$, whose support is $\boldsymbol{\Omega}^{c}$. Let  the distribution of ${\boldsymbol{\Omega}^{c}}$ be the same as that of ${\boldsymbol{\Omega}^{c}=\boldsymbol{\Omega}_{1} \cup}{\boldsymbol{\Omega}_{2} \cup \cdots \cup \boldsymbol{\Omega}_{J}}$, where each ${\boldsymbol{\Omega}_{j}}$ follows the Bernoulli model with parameter ${q}$ and $J=\lceil3\log_2(n_{(1)}n_3)\rceil$. 
Hence we have $2\gamma=(1-q)^{J}$. 

Now we construct a sequence of tensors $\{\mathcal Y_j\}_{j=0}^J$ as follows, 
\begin{equation}
\begin{split}
    \mathcal{Y}_0 & =\mathcal{P}_{\mathbf{T}}( \mathcal{U} * \mathcal{V}^{*}-\lambda \|\mathcal{L}_{0}\|_F \operatorname{sgn}(\mathcal{E}_0)), \\
\mathcal{Y}_j & = \left(\mathcal{P}_{\mathbf{T}}-\tfrac{1}{q}\mathcal{P}_{\mathbf{T}}\mathcal{P}_{\boldsymbol{\Omega}_{j}}\mathcal{P}_{\mathbf{T}}\right)\mathcal{Y}_{j-1}, \ j = 1, 2, \cdots, J. 
\end{split}
\label{eq:definitionyiterate}
\end{equation}
We intend to show that a tensor defined by
\begin{equation}
\mathcal{Y}:=\sum_{j=1}^{J} \tfrac{1}{q} \mathcal{P}_{\boldsymbol{\Omega}_j}\left(\mathcal{Y}_{j-1}\right), \label{eq:definitionylast}
\end{equation}
satisfies all the conditions in \eqref{con:ptcondition1}.
Obviously, ${\mathcal{P}_{\boldsymbol{\Omega}}(\mathcal{Y})=\mathcal{O}}$.
 Before verifying the remaining condition of \eqref{con:ptcondition1}, 
we first give the upper bounds of $\left\|\mathcal{Y}_0\right\|_{\infty}$ and $\left\|\mathcal{Y}_0\right\|_{F}$.
\begin{lemma}
\label{lem:y0_bound}
For $\mathcal{Y}_0$ defined as \eqref{eq:definitionyiterate}, there exists a constant $C$ such that
\begin{equation}
\left\|\mathcal{Y}_0\right\|_{\infty} \leq C\lambda \sqrt{\tfrac{\mu r \log(n_{(1)}n_3)}{n_{(2)}}}, \label{inequality:infinityY}
\end{equation}
holds with high probability.
\end{lemma}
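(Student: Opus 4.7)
The plan is to decompose $\mathcal{Y}_0$ into a deterministic part and a stochastic part, bounding the former via the incoherence conditions and the latter via a Hoeffding-type concentration inequality followed by a union bound over all $n_1 n_2 n_3$ coordinates. Since $\mathcal{U}*\mathcal{V}^* \in \mathbf{T}$ by definition of $\mathbf{T}$ in \eqref{eq:T}, we have $\mathcal{P}_{\mathbf{T}}(\mathcal{U}*\mathcal{V}^*) = \mathcal{U}*\mathcal{V}^*$, so the triangle inequality yields
$$\|\mathcal{Y}_0\|_\infty \;\leq\; \|\mathcal{U}*\mathcal{V}^*\|_\infty \;+\; \lambda\|\mathcal{L}_0\|_F\,\|\mathcal{P}_{\mathbf{T}}(\operatorname{sgn}(\mathcal{E}_0))\|_\infty.$$
The first summand is bounded directly by $\sqrt{\mu r/(n_1 n_2 n_3^2)}$ from the third incoherence condition in \eqref{condition:incoherence}, and this term is already dominated by the target bound in view of the lower bound $\lambda \geq \sqrt{\log(n_{(1)}n_3)/(n_{(1)}n_3^2)}$ imposed in Theorem~\ref{thm:TRPCA1}.

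For the second summand, I would fix an index $(i,j,k)$ and use self-adjointness of $\mathcal{P}_{\mathbf{T}}$ to rewrite the entry as a weighted sum of independent, mean-zero, $[-1,1]$-valued random variables,
$$[\mathcal{P}_{\mathbf{T}}(\operatorname{sgn}(\mathcal{E}_0))]_{ijk} \;=\; \langle\mathcal{P}_{\mathbf{T}}(\mathbf{e}_{ijk}),\operatorname{sgn}(\mathcal{E}_0)\rangle \;=\; \sum_{i',j',k'}[\mathcal{P}_{\mathbf{T}}(\mathbf{e}_{ijk})]_{i'j'k'}\,\operatorname{sgn}(\mathcal{E}_0)_{i'j'k'}.$$
Applying Hoeffding's inequality together with the standard tangent-space estimate $\|\mathcal{P}_{\mathbf{T}}(\mathbf{e}_{ijk})\|_F^2 \leq 2\mu r/(n_{(1)}n_3)$ (a consequence of the first two incoherence conditions, as used in \cite{lu2019tensor}) gives
$$\mathbb{P}\bigl(\,|[\mathcal{P}_{\mathbf{T}}(\operatorname{sgn}(\mathcal{E}_0))]_{ijk}|>t\,\bigr) \;\leq\; 2\exp\!\Bigl(-\tfrac{t^2 n_{(1)}n_3}{4\mu r}\Bigr).$$
Choosing $t=c\sqrt{\mu r\log(n_{(1)}n_3)/(n_{(1)}n_3)}$ for a sufficiently large constant $c$ and union-bounding over the $n_1 n_2 n_3 \leq (n_{(1)}n_3)^2$ coordinates gives $\|\mathcal{P}_{\mathbf{T}}(\operatorname{sgn}(\mathcal{E}_0))\|_\infty \leq c\sqrt{\mu r\log(n_{(1)}n_3)/(n_{(1)}n_3)}$ with probability at least $1-(n_{(1)}n_3)^{-c_0}$. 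Since $n_{(1)}n_3\geq n_{(2)}$, this is further bounded by $c\sqrt{\mu r\log(n_{(1)}n_3)/n_{(2)}}$, and combining with the first summand and absorbing $\|\mathcal{L}_0\|_F$ into the universal constant $C$ yields the claimed inequality.

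The main obstacle lies in arranging the concentration step so that the Frobenius-norm variance proxy $\|\mathcal{P}_{\mathbf{T}}(\mathbf{e}_{ijk})\|_F^2$, rather than the cruder entry-wise magnitude of the summands, is what drives the exponent; this is what turns a trivial $\sqrt{\mu r/n_3}$-type bound into the sharper $\sqrt{\mu r/n_{(2)}}$ rate. Additional care is needed to absorb the prefactor $\|\mathcal{L}_0\|_F$ into $C$ (using the fact that, at the fixed ground-truth pair $(\mathcal{L}_0,\mathcal{E}_0)$, this is a constant independent of the concentration step). The remaining steps---the splitting into deterministic and stochastic pieces, the invocation of the third incoherence condition, and the union-bound bookkeeping with the $\log(n_{(1)}n_3)$ factor---are routine.
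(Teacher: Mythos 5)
Your proof is correct and follows essentially the same route as the paper: the same triangle-inequality split of $\mathcal{Y}_0$ into $\mathcal{U}*\mathcal{V}^*$ and $\lambda\|\mathcal{L}_0\|_F\,\mathcal{P}_{\mathbf{T}}(\operatorname{sgn}(\mathcal{E}_0))$, the same use of the third incoherence condition together with $\lambda\geq\sqrt{\log(n_{(1)}n_3)/(n_{(1)}n_3^2)}$ to absorb the deterministic term, and the same entrywise concentration driven by $\|\mathcal{P}_{\mathbf{T}}(\mathbf{e}_{ijk})\|_F^2\leq 2\mu r/(n_{(1)}n_3)$ (your weights $[\mathcal{P}_{\mathbf{T}}(\mathbf{e}_{ijk})]_{i'j'k'}=\langle\mathcal{P}_{\mathbf{T}}(\mathbf{e}_{ijk}),\mathcal{P}_{\mathbf{T}}(\mathbf{e}_{i'j'k'})\rangle$ are exactly the summands the paper feeds into its Bernstein bound). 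Two differences are worth noting. First, you use Hoeffding where the paper uses the Bernstein inequality of Lemma S.5; Bernstein exploits the variance proxy $\sigma^2\leq 4\gamma\mu r/(n_{(1)}n_3)$ and so is sharper for small $\gamma$, but both yield the same $\sqrt{\mu r\log(n_{(1)}n_3)/n_{(2)}}$ rate, and your explicit union bound over the $\leq(n_{(1)}n_3)^2$ coordinates is actually cleaner than the paper's remark about ``i.i.d.\ copies.'' Second, and more substantively, you absorb $\|\mathcal{L}_0\|_F$ into $C$, whereas the paper sets the threshold $t=\tfrac{C'}{\|\mathcal{L}_0\|_F}\sqrt{\mu r\log(n_{(1)}n_3)/n_{(2)}}$ so that the prefactor cancels exactly and $C=\tfrac{1}{\log(n_{(1)}n_3)}+C'$ stays independent of $\mathcal{L}_0$ (at the price of a success probability whose exponent degrades like $1/\|\mathcal{L}_0\|_F^2$). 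This is not merely cosmetic: the constant $C$ reappears in the downstream rank condition $r\leq n_{(1)}n_3/(\mu C^2\log(n_{(1)}n_3))$, and if $C\propto\|\mathcal{L}_0\|_F\sim\sqrt{n_1n_2n_3}$ that condition becomes vacuous, so if you keep the Hoeffding route you should mirror the paper and build the $1/\|\mathcal{L}_0\|_F$ into the concentration threshold rather than into $C$.
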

\begin{proof}
Note that
the $(u, v, w)$-th element of $\mathcal{P}_\mathbf{T}\left(\operatorname{sgn}\left(\mathcal{E}_0\right)\right)$ can be obtained by,
\begin{equation*}
\begin{aligned}
\left\langle\mathcal{P}_\mathbf{T}\left(\operatorname{sgn}\left(\mathcal{E}_0\right)\right), \mathbf{e}_{uvw}\right\rangle 
& = \left\langle
\sum_{i j k}\left[\operatorname{sgn}\left(\mathcal{E}_0\right)\right]_{i j k}\mathcal{P}_\mathbf{T}\left(\mathbf{e}_{ijk}\right),\mathbf{e}_{uvw} \right\rangle \\
&=\sum_{i j k}\left[\operatorname{sgn}\left(\mathcal{E}_0\right)\right]_{i j k}\left\langle\mathcal{P}_\mathbf{T}\left(\mathbf{e}_{ijk}\right), \mathbf{e}_{uvw}\right\rangle  
\\
&=\sum_{i j k}\left[\operatorname{sgn}\left(\mathcal{E}_0\right)\right]_{i j k}\left\langle\mathcal{P}_\mathbf{T}\left(\mathbf{e}_{ijk}\right), \mathcal{P}_\mathbf{T}\left(\mathbf{e}_{uvw}\right)\right\rangle\\
&=\sum_{i j k}\tfrac{1}{n_3}\left[\operatorname{sgn}\left(\mathcal{E}_0\right)\right]_{i j k}\langle\overline{\mathbf{P}_{ijk}}, \overline{\mathbf{P}_{uvw}} \rangle,
\end{aligned}
\end{equation*}
where $\overline{\mathbf{P}_{uvw}}=\text{bdiag}\left(\overline{\mathcal{P}_\mathbf{T}\left(\mathbf{e}_{uvw}\right)}\right) \in \mathbb{R}^{n_1n_3\times n_2n_3} $.  
By $\mathbb{P}(\mathcal{E}_0=1)=\mathbb{P}(\mathcal{E}_0=-1)$, it is straightforward to get $$\mathbb{E}\left( \tfrac{1}{n_3}\left[\operatorname{sgn}\left(\mathcal{E}_0\right)\right]_{i j k}\langle\overline{\mathbf{P}_{ijk}}, \overline{\mathbf{P}_{uvw}} \rangle\right)=0.$$
Additionally, we have
\begin{equation}
\begin{aligned}
&\left\|\tfrac{1}{n_3}\left[\operatorname{sgn}\left(\mathcal{E}_0\right)\right]_{i j k}\langle\overline{\mathbf{P}_{ijk}}, \overline{\mathbf{P}_{uvw}} \rangle\right\|=\left\|\left[\operatorname{sgn}\left(\mathcal{E}_0\right)\right]_{i j k}\left\langle\mathcal{P}_\mathbf{T}\left(\mathbf{e}_{ijk}\right), \mathcal{P}_\mathbf{T}\left(\mathbf{e}_{uvw}\right)\right\rangle \right\| \\
& \leq \left|\left[\operatorname{sgn}\left(\mathcal{E}_0\right)\right]_{i j k}\right|\left\|\mathcal{P}_\mathbf{T}\left(\mathbf{e}_{ijk}\right)\right\|_F\left\|\mathcal{P}_\mathbf{T}\left(\mathbf{e}_{uvw}\right)\right\|_F \leq  \tfrac{2\mu r}{n_{(1)}n_3},
\end{aligned} \label{ine:spectralnorm}
\end{equation}
where the last inequality is from $\|\mathcal{P}_{\mathbf{T}}(\mathbf{e}_{ijk})\|_F^2 \leq  \tfrac{2\mu r}{n_{(1)}n_3}$ \cite{lu2019tensor}. Denoting $R=\tfrac{2 \mu r}{n_{(2)}n_3}$, we  get
\begin{equation}
\begin{split}
\mathbb{P}\left(\left|\left\langle\mathcal{P}_\mathbf{T}\left(\operatorname{sgn}\left(\mathcal{E}_0\right)\right), \mathbf{e}_{uvw}\right\rangle\right| \geq t\right) &
= \mathbb{P} (|\sum_{i j k}\tfrac{1}{n_3}\left[\operatorname{sgn}\left(\mathcal{E}_0\right)\right]\langle\overline{\mathbf{P}_{ijk}}, \overline{\mathbf{P}_{uvw}} \rangle | \geq t ) \\ 
& \leq 2\exp \left(-\tfrac{t^2}{2\sigma^2+\tfrac{2}{3}Rt}\right),
\end{split}
\label{ineq:prob-t}
\end{equation}
where $\sigma^2$ is calculated by Lemma~\ref{inequality:Ber}, or more specifically,
\begin{equation*}
\begin{aligned}
\sigma^2&=\sum_{ijk} \mathbb{E}\left[\operatorname{sgn}\left(\mathcal{E}_0\right)\right]^2\left\langle\mathcal{P}_\mathbf{T}\left(\mathbf{e}_{ijk}\right), \mathcal{P}_\mathbf{T}(\mathbf{e}_{uvw})\right\rangle^2 \\ &=\mathbb{E}\left[\operatorname{sgn}\left(\mathcal{E}_0\right)\right]^2\sum_{ijk}\left\langle\mathcal{P}_\mathbf{T}\left(\mathbf{e}_{ijk}\right), \mathcal{P}_\mathbf{T}(\mathbf{e}_{uvw})\right\rangle^2 
\\ & =\mathbb{E}\left[\operatorname{sgn}\left(\mathcal{E}_0\right)\right]^2\sum_{ijk}\left\langle\mathbf{e}_{ijk}, \mathcal{P}_\mathbf{T}(\mathbf{e}_{uvw})\right\rangle^2
=\mathbb{E}\left[\operatorname{sgn}\left(\mathcal{E}_0\right)\right]^2\left\|\mathcal{P}_\mathbf{T}\left(\mathbf{e}_{uvw}\right)\right\|_F^2 \\
& =2\gamma\left\|\mathcal{P}_\mathbf{T}\left(\mathbf{e}_{uvw}\right)\right\|_F^2 \leq
\tfrac{4 \gamma \mu r}{n_{(1)}n_3}.
\end{aligned}
\end{equation*}

Considering that the entries of $\mathcal{P}_\mathbf{T}\left(\operatorname{sgn}\left(\mathcal{E}_0\right)\right)$ can be understood as i.i.d. copies of the $(u, v, w)$-th entry and setting $t=\tfrac{C^{\prime}}{\|\mathcal{L}_0\|_F}\sqrt{\tfrac{\mu r \log(n_{(1)}n_3)}{n_{(2)}}}$ in \eqref{ineq:prob-t} with some positive constant $C^{\prime}$, we get
\begin{equation}
\|\mathcal{P}_\mathbf{T}\left(\operatorname{sgn}\left(\mathcal{E}_0\right)\right)\|_{\infty} \leq \tfrac{C^{\prime}}{\|\mathcal{L}_0\|_F}\sqrt{\tfrac{\mu r \log(n_{(1)}n_3)}{n_{(2)}}} , \label{ine:ptsgne} 
\end{equation}
with the probability $\mathbb{P}$ at least by
\begin{equation*}
\begin{aligned}
\mathbb{P} &\geq 1-2 \exp \left(-\tfrac{t^2}{2\sigma^2+\tfrac{2}{3}Rt}\right) 
\geq 1-2 \exp (-\tfrac{C^{\prime2}}{\|\mathcal{L}_0\|_F^2}  \tfrac{\mu r\log(n_{(1)}n_3)}{n_{(2)}}  \tfrac{3n_{(2)}n_3}{24 \gamma \mu r+4\mu rt})\\
&\geq  1-2 \exp (-\tfrac{C^{\prime2}}{\|\mathcal{L}_0\|_F^2}  \tfrac{n_3\log(n_{(1)}n_3)}{8\gamma}).  
\end{aligned}
\end{equation*}
On the other hand, according to tensor incoherence conditions (7), we have 
\begin{equation}
\begin{aligned}
\left\|\mathcal{U} * \mathcal{V}^*\right\|_{\infty} &\leq \sqrt{\tfrac{\mu r}{n_1 n_2 n_3^{2}}} =\tfrac{1}{\sqrt{n_{(1)}n_3^2 \log(n_{(1)}n_3 )}}\sqrt{\tfrac{\mu r \log(n_{(1)}n_3)}{n_{(2)}}}\\
&\leq \tfrac{\lambda}{\log(n_{(1)}n_3)}\sqrt{\tfrac{\mu r \log(n_{(1)}n_3)}{n_{(2)}}},    
\end{aligned}
  \label{ine:uvinfinity}
\end{equation}
if $\lambda \geq\sqrt{\tfrac{\log (n_{(1)}n_3)}{n_{(1)}n_3^2}}$. 
Then, using \eqref{ine:ptsgne} and \eqref{ine:uvinfinity}, we get
\begin{equation*}
\begin{aligned}
\left\|\mathcal{Y}_0\right\|_{\infty}&=\left\|\mathcal{P}_{\mathbf{T}}( \mathcal{U} * \mathcal{V}^{*}-\lambda \|\mathcal{L}_{0}\|_F \operatorname{sgn}(\mathcal{E}_0))\right\|_{\infty} \\
& \leq \left\|\mathcal{P}_{\mathbf{T}}( \mathcal{U} * \mathcal{V}^{*})\right\|_{\infty}+\lambda \|\mathcal{L}_{0}\|_F \left\|\mathcal{P}_{\mathbf{T}}(\operatorname{sgn}(\mathcal{E}_0))\right\|_{\infty}\\
&=\left\| \mathcal{U} * \mathcal{V}^{*}\right\|_{\infty}+\lambda \|\mathcal{L}_{0}\|_F \left\|\mathcal{P}_{\mathbf{T}}(\operatorname{sgn}(\mathcal{E}_0))\right\|_{\infty}\\
&\leq   (\tfrac{1}{\log(n_{(1)}n_3)}+  C^{\prime }) \lambda \sqrt{\tfrac{\mu r \log(n_{(1)}n_3)}{n_{(2)}}} = C\lambda \sqrt{\tfrac{\mu r \log(n_{(1)}n_3)}{n_{(2)}}} ,
\end{aligned}
\end{equation*}
where $C=\tfrac{1}{\log(n_{(1)}n_3)}+ C^{\prime }$.
\end{proof}
From Lemma~\ref{lem:y0_bound}, we can easily get:
\begin{equation}
\left\|\mathcal{Y}_0\right\|_{F} \leq \sqrt{n_1n_2n_3}\left\|\mathcal{Y}_0\right\|_{\infty} \leq C\lambda \sqrt{\mu r n_{(1)}n_3 \log(n_{(1)}n_3)}. \label{ine:yfupper}
\end{equation}

Next, we  prove $$\left\|\mathcal{P}_\mathbf{T}\left(\mathcal{Y} +\lambda \|\mathcal{L}_0\|_F \operatorname{sgn}\left(\mathcal{E}_0\right)-\mathcal{U}  * \mathcal{V} ^*\right)\right\|_F \leq \tfrac{\lambda}{n_1 n_2 n_3^2}. $$ 
By setting $\xi=\tfrac{1}{2}$ in Lemma~\ref{lemmaomega1} and assuming  $q\geq 4C_{0} \tfrac{\mu r \log \left(n_{(1)} n_{3}\right)}{n_{(2)}n_3}$, we obtain  
\begin{equation}
\left\|\mathcal{Y}_j\right\|_F \leq \left\|\mathcal{P}_{\mathbf{T}}-\tfrac{1}{q}\mathcal{P}_{\mathbf{T}}\mathcal{P}_{\boldsymbol{\Omega}_{j}}\mathcal{P}_{\mathbf{T}}\right\| \left\|\mathcal{Y}_{j-1} \right\|_F
 \leq \tfrac{1}{2}\left\|\mathcal{Y}_{j-1}\right\|_F. \label{ine:yjyjFTR}
\end{equation}
By the definition of $\mathcal{Y}_0$ in \eqref{eq:definitionyiterate}, $\mathcal{Y}$ in \eqref{eq:definitionylast}, and using \eqref{ine:yjyjFTR}, we get
\begin{equation*}
\begin{aligned}
& \|\mathcal{P}_\mathbf{T}\left(\mathcal{Y} +\lambda \|\mathcal{L}_0\|_F \operatorname{sgn}\left(\mathcal{E}_0\right)-\mathcal{U}  * \mathcal{V} ^*\right) \|_F \\
= &\left\|\mathcal{P}_\mathbf{T}(\mathcal{Y})-\mathcal{P}_\mathbf{T}\left(\mathcal{U} * \mathcal{V}^*-\lambda \|\mathcal{L}_0\|_F\operatorname{sgn}\left(\mathcal{E}_0\right)\right)\right\|_F \\
  = &  \|\sum\limits_{j=1}^{J} \tfrac{1}{q} \mathcal{P}_\mathbf{T} \mathcal{P}_{\boldsymbol{\Omega}_j}\left(\mathcal{Y}_{j-1}\right)-\mathcal{Y}_0\|_F \\
=& \|\sum_{j=2}^{J} \tfrac{1}{q} \mathcal{P}_\mathbf{T} \mathcal{P}_{\boldsymbol{\Omega}_j}\left(\mathcal{Y}_{j-1}\right)-\left(\mathcal{P}_{\mathbf{T}}-\tfrac{1}{q}\mathcal{P}_{\mathbf{T}}\mathcal{P}_{\boldsymbol{\Omega}_{1}}\mathcal{P}_{\mathbf{T}}\right)\mathcal{Y}_{0}\|_F\\
= &  \|\sum_{j=2}^{J} \tfrac{1}{q} \mathcal{P}_\mathbf{T} \mathcal{P}_{\boldsymbol{\Omega}_j}\left(\mathcal{Y}_{j-1}\right)-\mathcal Y_1\|_F = \|\mathcal{Y}_{J}\|_F\leq 2^{-J} \|\mathcal Y_0\|_F \\
\leq& C  2^{-J} \lambda \sqrt{\mu r n_{(1)}n_3 \log(n_{(1)}n_3)} \\
 \leq & \tfrac{C \lambda \sqrt{ \mu r n_{(1)} n_3 \log \left(n_{(1)} n_3\right)} }{n_{(1)}^3 n_3^{3}}  
 \leq \tfrac{\lambda}{n_{(1)}^2 n_3^2} \leq \tfrac{\lambda}{n_1 n_2 n_3^2} ,
\end{aligned}
\end{equation*}
where $J=\lceil3\log_2(n_{(1)}n_3)\rceil\geq 3\log_2(n_{(1)}n_3)$   and $r\leq \tfrac{n_{(1)}n_3}{\mu C^2\log (n_{(1)}n_3)}. $ 
Therefore, $\left\|\mathcal{P}_\mathbf{T}\left(\mathcal{Y} +\lambda \|\mathcal{L}_0\|_F \operatorname{sgn}\left(\mathcal{E}_0\right)-\mathcal{U}  * \mathcal{V} ^*\right)\right\|_F \leq \tfrac{\lambda}{n_1 n_2 n_3^2}$.

In order to prove $\left\|\mathcal{P}_{\mathbf{T}^{\perp}}\left(\mathcal{Y}\right)\right\| \leq \tfrac{1}{4}$, we use the construction of $\mathcal{Y}$ \eqref{eq:definitionylast} and $\mathcal{P}_{\mathbf{T}^{\perp}}\left(\mathcal{Y}_{j-1}\right)=\mathcal{O}$ to get
\begin{equation*}
\begin{aligned}
\left\|\mathcal{P}_{\mathbf{T}^{\perp}}(\mathcal{Y})\right\| 
& =\left\|\mathcal{P}_{\mathbf{T}^{\perp}} \left( \sum_{j=1}^{J} \tfrac{1}{q} \mathcal{P}_{\boldsymbol{\Omega}_j}\left(\mathcal{Y}_{j-1}\right)\right)\right\| 
 \leq \sum_{j=1}^{J}\left\|\tfrac{1}{q} \mathcal{P}_{\mathbf{T}^{\perp}} \mathcal{P}_{\boldsymbol{\Omega}_j}\left(\mathcal{Y}_{j-1}\right)\right\| \\
& =\sum_{j=1}^{J}\left\|\mathcal{P}_{\mathbf{T}^{\perp}}\left(\tfrac{1}{q} \mathcal{P}_{\boldsymbol{\Omega}_j}\left(\mathcal{Y}_{j-1}\right)-\mathcal{Y}_{j-1}\right)\right\| 
 \leq \sum_{j=1}^{J}\left\|\tfrac{1}{q} \mathcal{P}_{\boldsymbol{\Omega}_j}\left(\mathcal{Y}_{j-1}\right)-\mathcal{Y}_{j-1}\right\|, 
\end{aligned}
\end{equation*}
where the last inequality is from $\|\mathcal{P}_{\mathbf{T}^{\perp}}(\mathcal{Z})\|\leq \|\mathcal{Z}\|$ \cite{lu2019tensor}. 
By Lemma~\ref{lemmaomega3}, with an assumption $q \geq  C_{0} \tfrac{\log \left(n_{(1)} n_{3}\right)}{n_{(2)} n_{3}}$, we get
\begin{equation}
\left\|\tfrac{1}{q} \mathcal{P}_{\boldsymbol{\Omega}_j}\left(\mathcal{Y}_{j-1}\right)-\mathcal{Y}_{j-1}\right\|  = \left\|\left(\mathcal{I}-q^{-1} \mathcal{P}_{\boldsymbol{\Omega}_j}\right) \mathcal{Y}_{j-1}\right\| \leq \sqrt{\tfrac{C_{0} n_{(1)} n_{3} \log \left(n_{(1)} n_{3}\right)}{q}}\|\mathcal{Y}_{j-1}\|_{\infty}.  \label{ine: qyjinfinityTR}  
\end{equation}
On the other hand, by setting $\epsilon=\tfrac{1}{2 \sqrt{\log \left(n_{(1)} n_3\right)}}$ in  \eqref{lemmaomega2}
and assuming $q\geq 4C_0\tfrac{\mu r(\log(n_{(1)}n_3))^2}{n_{(2)}n_3}$, we have 
\begin{equation}
\begin{aligned}
\left\|\mathcal{Y}_j\right\|_{\infty}&= \left\|\left(\mathcal{P}_{\mathbf{T}}-\tfrac{1}{q}\mathcal{P}_{\mathbf{T}}\mathcal{P}_{\boldsymbol{\Omega}_{j}}\mathcal{P}_{\mathbf{T}}\right)\mathcal{Y}_{j-1}\right\|_{\infty} \\
&\leq \tfrac{1}{2 \sqrt{\log \left(n_{(1)} n_3\right)}}\left\|\mathcal{P}_{\mathbf{T}}(\mathcal{Y}_{j-1})\right\|_{\infty}\leq \tfrac{1}{2 \sqrt{\log \left(n_{(1)} n_3\right)}}\left\|\mathcal{Y}_{j-1}\right\|_{\infty}.    
\end{aligned}
 \label{ine:yjyjinfinityTR}   
\end{equation}
Hence, combining \eqref{ine: qyjinfinityTR}, \eqref{ine:yjyjinfinityTR} and using (8), it is easy to get
\begin{equation}
\begin{aligned}
 &\sum_{j=1}^{J}\left\|\tfrac{1}{q} \mathcal{P}_{\boldsymbol{\Omega}_j}\left(\mathcal{Y}_{j-1}\right)-\mathcal{Y}_{j-1}\right\| 
 \leq \sum_{j=1}^{J} C_0 \sqrt{\tfrac{n_{(1)} n_3 \log \left(n_{(1)} n_3\right)}{q}}\left\|\mathcal{Y}_{j-1}\right\|_{\infty} \\
 \leq& C_0 \sqrt{\tfrac{n_{(1)} n_3 \log \left(n_{(1)} n_3\right)}{q}} \sum_{j=1}^{J} \left(\tfrac{1}{2 \sqrt{\log(n_{(1)}n_3)}}\right)^{j-1}\left\|\mathcal{Y}_{0}\right\|_{\infty}\\
 \leq & C_0 \sqrt{\tfrac{n_{(1)} n_3 \log \left(n_{(1)} n_3\right)}{q}}  C\lambda \sqrt{\tfrac{\mu r \log(n_{(1)}n_3)}{n_{(2)}}} \sum_{j=1}^{J} \left(\tfrac{1}{2}\right)^{j-1} \\
 \leq & 2C_0C\lambda\log(n_{(1)}n_3)\sqrt{\tfrac{n_{(1)}n_3\mu r}{n_{(2)}q}} \leq \tfrac{1}{4}, 
\end{aligned} \label{ine:ptprepystep2TR}
\end{equation}
where the last inequality is satisfied if $r$ is sufficient small, e.g., $$r\leq \tfrac{n_{(2)}q}{64C_0^2C^2\lambda^2\mu n_{(1)}n_3(\log(n_{(1)}n_3))^2}. $$

To prove $\lambda \|\mathcal{L}_0\|_F\left\|\mathcal{P}_{\mathbf{T}^{\perp}}(\operatorname{sgn}\left(\mathcal{E}_0\right))\right\| \leq \tfrac{1}{4} $, we apply
 Lemma~\ref{lemmabernoulli} that leads to a function ${\varphi(\gamma)}$ satisfying ${\lim _{\gamma \rightarrow 0^{+}} \varphi(\gamma)=0}$, such that
\begin{equation*}
\|\operatorname{sgn}(\mathcal{E}_0)\| \leq \varphi(\gamma) \sqrt{n_{(1)} n_{3}}. 
\end{equation*}
Therefore, if $\gamma$ is sufficiently small,
\begin{equation*}
\lambda \|\mathcal{L}_0\|_F\left\|\mathcal{P}_{\mathbf{T}^{\perp}}(\operatorname{sgn}\left(\mathcal{E}_0\right))\right\| \leq \lambda \|\mathcal{L}_0\|_F \varphi(\gamma) \sqrt{n_{(1)} n_{3}} \leq \tfrac{1}{4},    
\end{equation*}
holds with a high probability.

Lastly, we prove $\left\|\mathcal{P}_{_{\boldsymbol{\Omega}^c}}(\mathcal{Y} )\right\|_{\infty} \leq \tfrac{\lambda}{2}\|\mathcal{L}_0\|_F$. By the construction of $\mathcal{Y}$ \eqref{eq:definitionylast} and $\boldsymbol{\Omega}^{c}=\boldsymbol{\Omega}_{1} \cup\boldsymbol{\Omega}_{2} \cup \cdots \cup \boldsymbol{\Omega}_{J}$, we get
\begin{equation*}
\begin{aligned}
&\left\|\mathcal{P}_{\boldsymbol{\Omega}^c}(\mathcal{Y})\right\|_{\infty} 
 =\left\|\mathcal{P}_{\boldsymbol{\Omega}^c}\left( \sum_{j=1}^{J} \tfrac{1}{q} \mathcal{P}_{\boldsymbol{\Omega}_j}\left(\mathcal{Y}_{j-1}\right)\right)\right\|_{\infty} \\
=&\left\| \sum_{j=1}^{J} \tfrac{1}{q} \mathcal{P}_{\boldsymbol{\Omega}_j}\left(\mathcal{Y}_{j-1}\right)\right\|_{\infty} \leq \sum_{j=1}^{J} \tfrac{1}{q}\left\|\mathcal{Y}_{j-1}\right\|_{\infty}.
\end{aligned}
\end{equation*}
We further use \eqref{ine:ptprepystep2TR}, thus getting
\begin{equation}
    \begin{split}
        &\quad\sum_{j=1}^{J} \tfrac{1}{q}\left\|\mathcal{Y}_{j-1}\right\|_{\infty}\\ 
& \leq (n_{(1)} n_3 \log \left(n_{(1)} n_3\right) q C_0^2)^{-\tfrac{1}{2}}
\sum_{j=1}^{J} C_0 \sqrt{\tfrac{n_{(1)} n_3 \log \left(n_{(1)} n_3\right)}{q}}\left\|\mathcal{Y}_{j-1}\right\|_{\infty} \\
& \leq \tfrac{1}{4} (n_{(1)} n_3 \log \left(n_{(1)} n_3\right) q C_0^2)^{-\tfrac{1}{2}} \leq \tfrac{\lambda}{2}\|\mathcal{L}_0\|_F ,\end{split} \label{ine:thelastyinfinity}
\end{equation}
where the last inequality holds if $\lambda$ is sufficient large such that $$q \geq \tfrac{1}{2\|\mathcal{L}_0\|_F C_0\sqrt{n_{(1)} n_3 \log \left(n_{(1)} n_3\right) \lambda }}. $$

By using $2\gamma=(1-q)^{J}$, 
we set 
\begin{equation*}
c_{\gamma} = \tfrac{1}{2}(1-q_0)^{\lceil3\log_2(n_{(1)}n_3)\rceil}, \end{equation*}
where $q_0=\max\left( C_{0} \tfrac{\log \left(n_{(1)} n_{3}\right)}{n_{(2)} n_{3}}, 4C_0\tfrac{\mu r\log^2(n_{(1)}n_3)}{n_{(2)}n_3}, \tfrac{1}{2\|\mathcal{L}_0\|_F C_0\sqrt{n_{(1)} n_3 \log \left(n_{(1)} n_3\right) \lambda }}\right).$
Meanwhile, we set 
\begin{equation*}
c_r = \max\left(\tfrac{n_{(1)}\log(n_{(1)}n_3)}{C^2}, \tfrac{q_0}{64C_0^2C^2\lambda^2n_{(1)}n_3} \right).     
\end{equation*}
Consequently, if $r \leq \tfrac{c_{r} n_{(2)}n_3}{\mu\left(\log \left(n_{(1)} n_{3}\right)\right)^{2}} \quad \text{and} \quad \gamma \leq c_{\gamma}$, we can construct a tensor $\mathcal Y$ defined in \eqref{eq:definitionylast} 
that satisfies
\eqref{con:ptcondition1}. 

\section{Convergence proof} \label{sec:algorithm}
This section is devoted to the convergence analysis of the two Algorithms for solving the TNF and the TNF$+$ models. We need the following lemma.
\begin{lemma}{\rm\cite[Lemma B.4]{zheng2024scale}}
Given a function $g(\mathcal{X})=\tfrac{1}{\|\mathcal{X}\|_{F}}$ and a set ${\mathcal{M}_{\delta}:=\left\{\mathcal{X}|\| \mathcal{X}\|_{F} \geq \delta\right\}}$ with a positive constant $\delta > 0$, we have
\begin{equation*}
 \left\|\nabla g(\mathcal{X})-\nabla g(\mathcal{Y})\right\|_{F} 
\leq \tfrac{2}{\delta^{3}}\|\mathcal{X}-\mathcal{Y}\|_{F}, \quad  \forall \mathcal{X}, \mathcal{Y} \in \mathcal{M}_{\delta}.    
\end{equation*}
 \label{con:lemmaA4}
\end{lemma}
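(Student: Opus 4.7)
The plan is to prove this Lipschitz estimate by direct algebraic computation rather than a mean-value argument, because $\mathcal{M}_\delta$ is non-convex: the straight segment between two points in $\mathcal{M}_\delta$ may pierce the ball $\{\|\mathcal{X}\|_F<\delta\}$ where the Hessian of $g$ blows up, so one cannot simply invoke MVT along the segment with a uniform Hessian bound. First I would compute, via the chain rule applied to $g=h\circ\|\cdot\|_F$ with $h(t)=1/t$, the gradient
\begin{equation*}
\nabla g(\mathcal{X}) \;=\; -\,\mathcal{X}/\|\mathcal{X}\|_F^{3}.
\end{equation*}

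Next, introduce the three scalars $a:=\|\mathcal{X}\|_F$, $b:=\|\mathcal{Y}\|_F$, $c:=\langle\mathcal{X},\mathcal{Y}\rangle$, so $a,b\ge\delta$ and $|c|\le ab$ by Cauchy--Schwarz. Squaring the desired inequality and expanding both Frobenius norms in terms of $a,b,c$ reduces it to
\begin{equation*}
\tfrac{1}{a^{4}}+\tfrac{1}{b^{4}}-\tfrac{2c}{a^{3}b^{3}}\;\le\;\tfrac{4}{\delta^{6}}\bigl(a^{2}+b^{2}-2c\bigr).
\end{equation*}
Viewed as an affine function of $c$ with $a,b$ fixed, the difference (right minus left) has slope $2/(a^{3}b^{3})-8/\delta^{6}\le 0$ (since $a^{3}b^{3}\ge\delta^{6}$), so it is minimized at the largest admissible value $c=ab$.

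Substituting $c=ab$ collapses the left side to $(1/a^{2}-1/b^{2})^{2}=(a+b)^{2}(a-b)^{2}/(a^{4}b^{4})$ and the right side to $(4/\delta^{6})(a-b)^{2}$, so after dividing by $(a-b)^{2}$ (the case $a=b$ is trivial) the claim collapses to the scalar inequality $2a^{2}b^{2}\ge\delta^{3}(a+b)$. This final inequality I would verify by observing that $\varphi(a,b):=2a^{2}b^{2}/(a+b)$ satisfies $\partial_{a}\varphi=2ab^{2}(a+2b)/(a+b)^{2}>0$ and symmetrically in $b$, so $\varphi$ is strictly increasing in each variable on $\{a,b\ge\delta\}$ and attains its minimum value $\varphi(\delta,\delta)=\delta^{3}$ at the corner, yielding $\varphi(a,b)\ge\delta^{3}$ as required.

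The main obstacle is recognizing that a convexity-based proof fails and identifying the right reduction to an algebraic inequality that preserves the sharp constant. A naive triangle-inequality split $\nabla g(\mathcal{X})-\nabla g(\mathcal{Y})=(\mathcal{Y}-\mathcal{X})/a^{3}+\mathcal{Y}\bigl(1/b^{3}-1/a^{3}\bigr)$ combined with the factorization $b^{3}-a^{3}=(b-a)(a^{2}+ab+b^{2})$ only delivers the looser constant $4/\delta^{3}$; recovering the claimed $2/\delta^{3}$ seems to require working directly with squared Frobenius norms so that the geometry enters only through the single inner product $c$, whose extremal value $c=ab$ corresponds to the collinear configuration where the Hessian bound $\|\nabla^{2}g(\mathcal{X})\|_{\mathrm{op}}=2/\|\mathcal{X}\|_F^{3}$ is tight.
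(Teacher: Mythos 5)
The paper does not actually prove this lemma: it is imported verbatim as \cite[Lemma B.4]{zheng2024scale}, so there is no in-paper proof to compare against. Judged on its own, your argument is correct and self-contained. The gradient formula $\nabla g(\mathcal{X})=-\mathcal{X}/\|\mathcal{X}\|_F^{3}$ is right; the reduction of the squared inequality to
\begin{equation*}
\tfrac{1}{a^{4}}+\tfrac{1}{b^{4}}-\tfrac{2c}{a^{3}b^{3}}\leq \tfrac{4}{\delta^{6}}\bigl(a^{2}+b^{2}-2c\bigr)
\end{equation*}
is an exact expansion; the slope in $c$ of the difference is $2/(a^{3}b^{3})-8/\delta^{6}\leq 0$ since $a^{3}b^{3}\geq\delta^{6}$, so checking the extremal case $c=ab$ suffices; and the resulting scalar inequality $2a^{2}b^{2}\geq\delta^{3}(a+b)$ does follow from monotonicity of $2a^{2}b^{2}/(a+b)$ on $[\delta,\infty)^{2}$ with value $\delta^{3}$ at the corner. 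Your two side remarks are also accurate: the naive split $\nabla g(\mathcal{X})-\nabla g(\mathcal{Y})=(\mathcal{Y}-\mathcal{X})/a^{3}+\mathcal{Y}(1/b^{3}-1/a^{3})$ together with $|b^{3}-a^{3}|=|b-a|(a^{2}+ab+b^{2})$ only yields the constant $4/\delta^{3}$, and a mean-value/Hessian argument cannot be invoked blindly because $\mathcal{M}_{\delta}$ is the complement of a ball and hence non-convex (the Hessian operator norm $2/\|\mathcal{X}\|_F^{3}$ would give the right constant only along segments staying in $\mathcal{M}_{\delta}$). Your direct expansion in the three scalars $a,b,c$ is arguably cleaner than either of those routes and recovers the sharp constant $2/\delta^{3}$ used throughout the convergence analysis (e.g., in the bounds \eqref{con:proofL2} and \eqref{con:proofE2}).
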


\subsection {Convergence analysis of TNF}\label{sect:RPCA1}
\paragraph{Proof of Lemma 1}
\begin{proof}
The optimality condition of the $\mathcal{H}$ subproblem in (11) indicates that
\begin{equation}
-\tfrac{\left\|\mathcal{L}^{(k+1)}\right\|_{*}}{\left\|\mathcal{H}^{(k+1)}\right\|_{F}^{3}} \mathcal{H}^{(k+1)}+\mu_1\left(\mathcal{H}^{(k+1)}-\mathcal{L}^{(k+1)}-\tfrac{\mathcal{Y}^{(k)}}{\mu_1}\right)=\mathcal{O}, \label{Hoptimal}
\end{equation}
where $\mathcal{O}\in \mathbb{R}^{n_1\times n_2\times n_3}$ is the zero tensor. Using the dual update $\mathcal{Y}^{(k+1)}=\mathcal{Y}^{(k)}+\mu_1\left(\mathcal{L}^{(k+1)}-\mathcal{H}^{(k+1)}\right)$, we have
\begin{equation}
\mathcal{Y}^{(k+1)}=-\tfrac{\left\|\mathcal{L}^{(k+1)}\right\|_{*}}{\left\|\mathcal{H}^{(k+1)}\right\|_{F}^{3}} \mathcal{H}^{(k+1)},  \label{con:Yk1} 
\end{equation}
which directly deduces to
\begin{equation}
\mathcal{Y}^{(k)}=-\tfrac{\left\|\mathcal{L}^{(k)}\right\|_{*}}{\left\|\mathcal{H}^{(k)}\right\|_{F}^{3}} \mathcal{H}^{(k)}. \label{con:Yk}   
\end{equation}
Then it is straightforward to have
\begin{equation}
\begin{aligned}
&\left\|\mathcal{Y}^{(k+1)}-\mathcal{Y}^{(k)}\right\|_F
=\left\|\tfrac{\left\|\mathcal{L}^{(k+1)}\right\|_*}{\left\|\mathcal{H}^{(k+1)}\right\|_F^3} \mathcal{H}^{(k+1)}-\tfrac{\left\|\mathcal{L}^{(k)}\right\|_*}{\left\|\mathcal{H}^{(k)}\right\|_F^3} \mathcal{H}^{(k)}\right\|_F \\
 \leq&\left\|\tfrac{\mathcal{H}^{(k+1)}}{\left\|\mathcal{H}^{(k+1)}\right\|_F^3}\left(\left\|\mathcal{L}^{(k+1)}\right\|_*-\left\|\mathcal{L}^{(k)}\right\|_*\right)\right\|_F+\left\|\mathcal{L}^{(k)}\right\|_*\left\|\tfrac{\mathcal{H}^{(k+1)}}{\left\|\mathcal{H}^{(k+1)}\right\|_F^3}-\tfrac{\mathcal{H}^{(k)}}{\left\|\mathcal{H}^{(k)}\right\|_F^3}\right\|_F \\
=&\tfrac{1}{\left\|\mathcal{H}^{(k+1)}\right\|_F^2}\left| \| \mathcal{L}^{(k+1)}\left\|_*-\right\| \mathcal{L}^{(k)}\|_{*}\right|+\left\|\mathcal{L}^{(k)}\right\|_{*}\left\|\tfrac{\mathcal{H}^{(k+1)}}{\left\|\mathcal{H}^{(k+1)}\right\|_F^3}-\tfrac{\mathcal{H}^{(k)}}{\left\|\mathcal{H}^{(k)}\right\|_F^3}\right\|_F.  
\end{aligned} \label{con:proofAstep}
\end{equation}
Note that $\left\|\mathcal{L}\right\|_{*} \leq \sqrt{r}\left\|\mathcal{L}\right\|_{F} \leq \sqrt{n_{(2)}}\left\|\mathcal{L}\right\|_{F}$, where $r$ is the tubal rank of tensor $\mathcal{L} \in \mathbb{R}^{ n_1\times n_2\times n_3}$.
Then, the first term in \eqref{con:proofAstep} turns to
\begin{equation}
\left| \|\mathcal{L}^{(k+1)}\|_*-\|\mathcal{L}^{(k)}\|_*\right| \leq \left\|\mathcal{L}^{(k+1)}-\mathcal{L}^{(k)}\right\|_* \leq \sqrt{n_{(2)}} \left\|\mathcal{L}^{(k+1)}-\mathcal{L}^{(k)}\right\|_{F}.\label{con:proofL1}
\end{equation} 
It follows from Lemma~\ref{con:lemmaA4} that
\begin{equation}
\left\|\mathcal{L}^{(k)}\right\|_{*}\left\|\tfrac{\mathcal{H}^{(k+1)}}{\left\|\mathcal{H}^{(k+1)}\right\|_F^3}-\tfrac{\mathcal{H}^{(k)}}{\left\|\mathcal{H}^{(k)}\right\|_F^3}\right\|_F \leq \tfrac{2\|\mathcal{L}^{(k)}\|_{*}}{\delta^{3}}\left\|\mathcal{H}^{(k+1)}-\mathcal{H}^{(k)}\right\|_{F}. \label{con:proofL2}
\end{equation}
Putting \eqref{con:proofL1}-\eqref{con:proofL2} together with Cauchy-Schwarz inequality leads to the desired inequality (17).
\end{proof}

\paragraph{Proof of Lemma 2}
\begin{proof}
 The function $L_{1}\left(\mathcal{L},\mathcal{H}^{(k)},\mathcal{E}^{(k)},\mathcal{Y}^{(k)},\mathcal{Z}^{(k)}\right)$, derived from (10) with fixed $\mathcal{H}^{(k)}$, $\mathcal{E}^{(k)}$, $\mathcal{Y}^{(k)}$ and $\mathcal{Z}^{(k)}$, consists of a TNN term and two quadratic terms, hence it is strongly convex in terms of $\mathcal{L}$ with constant $ \mu_{1}+\mu_{2}$.
 The convex property leads to
 \begin{equation}
 \begin{aligned}
& L_{1}\left(\mathcal{L}^{(k+1)},\mathcal{H}^{(k)},\mathcal{E}^{(k)},\mathcal{Y}^{(k)},\mathcal{Z}^{(k)}\right)\\ \leq &L_{1}\left(\mathcal{L}^{(k)},\mathcal{H}^{(k)},\mathcal{E}^{(k)},\mathcal{Y}^{(k)},\mathcal{Z}^{(k)}\right) - \tfrac{\mu_1+\mu_2}{2}\left\|\mathcal{L}^{(k+1)}-\mathcal{L}^{(k)}\right\|_{F}^{2}.   
\end{aligned}\label{proofPCAL}
\end{equation}

Now we examine the change in $L_1$ caused by the $\mathcal H$, that is,
\begin{equation}
\begin{aligned}
&L_{1}\left(\mathcal{L}^{(k+1)},\mathcal{H}^{(k+1)},\mathcal{E}^{(k)},\mathcal{Y}^{(k)},\mathcal{Z}^{(k)}\right)-L_{1}\left(\mathcal{L}^{(k+1)},\mathcal{H}^{(k)},\mathcal{E}^{(k)},\mathcal{Y}^{(k)},\mathcal{Z}^{(k)}\right) \\
=& \tfrac{\|\mathcal{L}^{(k+1)}\|_{*}}{\|\mathcal{H}^{(k+1)}\|_{F}}-\tfrac{\|\mathcal{L}^{(k+1)}\|_{*}}{\|\mathcal{H}^{(k)}\|_{F}} + \tfrac{\mu_{1}}{2}\left\|\mathcal{L}^{(k+1)}-\mathcal{H}^{(k+1)}\right\|_{F}^{2}-\tfrac{\mu_{1}}{2}\left\|\mathcal{L}^{(k+1)}-\mathcal{H}^{(k)}\right\|_{F}^{2} \\
 &\quad +\left\langle\mathcal{Y}^{(k)}, \mathcal{L}^{(k+1)}-\mathcal{H}^{(k+1)}\right\rangle-\left\langle\mathcal{Y}^{(k)}, \mathcal{L}^{(k+1)}-\mathcal{H}^{(k)}\right\rangle. 
\end{aligned} \label{eq:A10TRPCA}
\end{equation}

It follows from Lemma~\ref{con:lemmaA4} along with the assumption {\rm A2} that $\tfrac{1}{\|\mathcal{H}\|_{F}}$ is Lipschitz continuous with parameter $\tfrac{2}{\delta^{3}}.$ Hence, we obtain 
\begin{equation}
\begin{aligned}
\tfrac{\|\mathcal{L}^{(k+1)}\|_{*}}{\|\mathcal{H}^{(k+1)}\|_{F}} &\leq \tfrac{\|\mathcal{L}^{(k+1)}\|_{*}}{\|\mathcal{H}^{(k)}\|_{F}}- \left\langle\tfrac{\|\mathcal{L}^{(k+1)}\|_{*}\mathcal{H}^{(k)}}{\left\|\mathcal{H}^{(k)}\right\|_{F}^3}, \mathcal{H}^{(k+1)}-\mathcal{H}^{(k)}\right\rangle \\
&\quad+ \tfrac{\|\mathcal{L}^{(k+1)}\|_{*}}{\delta^{3}}\left\|\mathcal{H}^{(k+1)}-\mathcal{H}^{(k)}\right\|_{F}^{2}. 
\end{aligned} \label{eq:B10TRPCA}
\end{equation}
Simple calculations of the third and the fourth terms in \eqref{eq:A10TRPCA} yield 
\begin{equation}
\begin{aligned}
&\tfrac{\mu_{1}}{2}\left\|\mathcal{L}^{(k+1)}-\mathcal{H}^{(k+1)}\right\|_{F}^{2}-\tfrac{\mu_{1}}{2}\left\|\mathcal{L}^{(k+1)}-\mathcal{H}^{(k)}\right\|_{F}^{2} \\
=&\tfrac{\mu_{1}}{2}\left\|\mathcal{H}^{(k+1)}\right\|_{F}^{2}-\tfrac{\mu_{1}}{2}\left\|\mathcal{H}^{(k)}\right\|_{F}^{2}-\mu_{1}\left\langle\mathcal{L}^{(k+1)}, \mathcal{H}^{(k+1)}-\mathcal{H}^{(k)}\right\rangle \\
=& \tfrac{\mu_{1}}{2}\left\|\mathcal{H}^{(k+1)}\right\|_{F}^{2}-\tfrac{\mu_{1}}{2}\left\|\mathcal{H}^{(k)}\right\|_{F}^{2}-\left\langle\mu_{1}\mathcal{H}^{(k+1)}+\mathcal{Y}^{(k+1)}-\mathcal{Y}^{(k)}, \mathcal{H}^{(k+1)}-\mathcal{H}^{(k)}\right\rangle \\
=&-\tfrac{\mu_{1}}{2}\left\|\mathcal{H}^{(k+1)}-\mathcal{H}^{(k)}\right\|_{F}^{2}-\left\langle\mathcal{Y}^{(k+1)}-\mathcal{Y}^{(k)}, \mathcal{H}^{(k+1)}-\mathcal{H}^{(k)}\right\rangle,
\end{aligned} \label{eq:B11TRPCA}
\end{equation}
where the second equality is from the $\mathcal{Y}$-update.
Putting together \eqref{eq:A10TRPCA}, \eqref{eq:B10TRPCA}, \eqref{eq:B11TRPCA}, we have
\begin{equation}
\begin{aligned}
&L_{1}\left(\mathcal{L}^{(k+1)},\mathcal{H}^{(k+1)},\mathcal{E}^{(k)},\mathcal{Y}^{(k)},\mathcal{Z}^{(k)}\right)-L_{1}\left(\mathcal{L}^{(k+1)},\mathcal{H}^{(k)},\mathcal{E}^{(k)},\mathcal{Y}^{(k)},\mathcal{Z}^{(k)}\right)  \\
\leq& - \left\langle\tfrac{\|\mathcal{L}^{(k+1)}\|_{*}\mathcal{H}^{(k)}}{\|\mathcal{H}^{(k)}\|_{F}^3}, \mathcal{H}^{(k+1)}-\mathcal{H}^{(k)}\right\rangle 
+ \tfrac{\|\mathcal{L}^{(k+1)}\|_{*}}{ \delta^{3}}\left\|\mathcal{H}^{(k+1)}-\mathcal{H}^{(k)}\right\|_{F}^{2}\\
&\quad-\tfrac{\mu_{1}}{2}\left\|\mathcal{H}^{(k+1)}-\mathcal{H}^{(k)}\right\|_{F}^{2}-\left\langle\mathcal{Y}^{(k+1)}-\mathcal{Y}^{(k)}, \mathcal{H}^{(k+1)}-\mathcal{H}^{(k)}\right\rangle\\
&\quad-\left\langle\mathcal{Y}^{(k)}, \mathcal{H}^{(k+1)}-\mathcal{H}^{(k)}\right\rangle \\
=&  \left\langle\tfrac{\|\mathcal{L}^{(k+1)}\|_{*}}{\|\mathcal{H}^{(k+1)}\|_{F}^3}\mathcal{H}^{(k+1)}-\tfrac{\|\mathcal{L}^{(k+1)}\|_{*}}{\|\mathcal{H}^{(k)}\|_{F}^3}\mathcal{H}^{(k)}, \mathcal{H}^{(k+1)}-\mathcal{H}^{(k)}\right\rangle \\
&\quad+ \tfrac{\tfrac{2}{\delta^{3}}\|\mathcal{L}^{(k+1)}\|_{*}-\mu_{1}}{2}\left\|\mathcal{H}^{(k+1)}-\mathcal{H}^{(k)}\right\|_{F}^{2} \\
\leq& -(\tfrac{\mu_{1}}{2}-\tfrac{3M}{\delta^3})\left\|\mathcal{H}^{(k+1)}-\mathcal{H}^{(k)}\right\|_{F}^{2},
\end{aligned}\label{proofPCAH} 
\end{equation}
where the equality is from \eqref{con:Yk1}.

The function $L_{1}\left(\mathcal{L}^{(k+1)},\mathcal{H}^{(k+1)},\mathcal{E},\mathcal{Y}^{(k)}, \mathcal{Z}^{(k)}\right)$ with fixed $\mathcal{L}^{(k+1)}$, $\mathcal{H}^{(k+1)}$, $\mathcal{Y}^{(k)}$ and $\mathcal{Z}^{(k)}$ consists of a $\ell_1$-norm term and one quadratic term, hence it is strongly convex in terms of $\mathcal{E}$ with constant $\mu_{2}$, thus leading to 
\begin{equation}
\begin{aligned}
&L_{1}\left(\mathcal{L}^{(k+1)},\mathcal{H}^{(k+1)},\mathcal{E}^{(k+1)},\mathcal{Y}^{(k)},\mathcal{Z}^{(k)}\right)-L_{1}\left(\mathcal{L}^{(k+1)},\mathcal{H}^{(k+1)},\mathcal{E}^{(k)},\mathcal{Y}^{(k)},\mathcal{Z}^{(k)}\right) \\
\leq& -\tfrac{\mu_{2}}{2}\left\|\mathcal{E}^{(k+1)}-\mathcal{E}^{(k)}\right\|_{F}^{2}. 
\end{aligned}\label{proofPCAZ}
\end{equation}
Lastly, we have 
\begin{equation}
\begin{aligned}
&L_{1}\left(\mathcal{L}^{(k+1)},\mathcal{H}^{(k+1)},\mathcal{E}^{(k+1)},\mathcal{Y}^{(k+1)},\mathcal{Z}^{(k+1)}\right)\\
\quad&-L_{1}\left(\mathcal{L}^{(k+1)},\mathcal{H}^{(k+1)},\mathcal{E}^{(k+1)},\mathcal{Y}^{(k)}, \mathcal{Z}^{(k)}\right)   \\
= & \left\langle\mathcal{Y}^{(k+1)}-\mathcal{Y}^{(k)}, \mathcal{L}^{(k+1)}-\mathcal{H}^{(k+1)}\right\rangle+\left\langle\mathcal{Z}^{(k+1)}-\mathcal{Z}^{(k)}, \mathcal{L}^{(k+1)}+\mathcal{E}^{(k+1)}-\mathcal{X}\right\rangle \\
= &\tfrac{1}{\mu_{1}} \left\|\mathcal{Y}^{(k+1)}-\mathcal{Y}^{(k)}\right\|_{F}^{2}+\tfrac{1}{\mu_{2}} \left\|\mathcal{Z}^{(k+1)}-\mathcal{Z}^{(k)}\right\|_{F}^{2}.
\end{aligned}\label{proofPCAY}
\end{equation}

By putting together \eqref{proofPCAL}, \eqref{proofPCAH}, \eqref{proofPCAZ},  \eqref{proofPCAY}, 
and (17), we have
\begin{equation}
\begin{aligned}
& L_{1}\left(\mathcal{L}^{(k+1)},\mathcal{H}^{(k+1)},\mathcal{E}^{(k+1)},\mathcal{Y}^{(k+1)},\mathcal{Z}^{(k+1)}\right)  \\
\leq &L_{1}\left(\mathcal{L}^{(k)},\mathcal{H}^{(k)},\mathcal{E}^{(k)},\mathcal{Y}^{(k)},\mathcal{Z}^{(k)}\right) 
-c_1\left\| \mathcal{L}^{(k+1)}-\mathcal{L}^{(k)} \right\|_{F}^{2}-c_2\left\|\mathcal{H}^{(k+1)}-\mathcal{H}^{(k)}\right\|_{F}^{2} \\
&-c_3 \left\|\mathcal{E}^{(k+1)}-\mathcal{E}^{(k)}\right\|_{F}^{2}+c_4 \left\|\mathcal{Z}^{(k+1)}-\mathcal{Z}^{(k)}\right\|_{F}^{2},\\
\end{aligned} \label{LTRPCA}
\end{equation}
where $c_{1}=\tfrac{\mu_{1}+\mu_{2}}{2}-\tfrac{2  n_{(2)} }{\mu_1\delta^{4}}$, $c_{2}=\tfrac{\mu_{1}}{2}-\tfrac{3M}{\delta^{3}}-\tfrac{4M^2}{\mu_{1}\delta^{6}}$,  $c_{3}=\tfrac{\mu_2}{2}$ and $c_4=\tfrac{1}{\mu_2}$. 
\end{proof}

\paragraph{Proof of Lemma 3 }
\begin{proof}
By the optimal condition of $\mathcal{L}$ in  iterative steps (11), there exits  $\mathcal{Q}^{(k+1)} \in \partial(\left\|(\mathcal{L}^{(k+1)})\right\|_{*})$ such that
\begin{equation}
\tfrac{\mathcal{Q}^{(k+1)}}{\left\|\mathcal{H}^{(k)}\right\|_F} +\mu_1(\mathcal{L}^{(k+1)}-\mathcal{H}^{(k)})+\mathcal{Y}^{(k)}+\mu_2(\mathcal{L}^{(k+1)}+\mathcal{E}^{(k)}-\mathcal{X})+\mathcal{Z}^{(k)}=\mathcal{O}. \label{con:Q1}
\end{equation}
We denote
\begin{equation}
\begin{aligned}
&\mathcal{V}_{1}^{(k+1)}:=\tfrac{\mathcal{Q}^{(k+1)}}{\left\|\mathcal{H}^{(k+1)}\right\|_F} +\mu_1\left(\mathcal{L}^{(k+1)}-\mathcal{H}^{(k+1)}\right)+\mathcal{Y}^{(k+1)}\\
 \quad&+\mu_2(\mathcal{L}^{(k+1)}+\mathcal{E}^{(k+1)}-\mathcal{X})+\mathcal{Z}^{(k+1)},    
\end{aligned}
  \label{con:Q2}
\end{equation}
which belongs to $\partial_{\mathcal{L}}L_{1}\left(\mathcal{L}^{(k+1)}, \mathcal{H}^{(k+1)}, \mathcal{E}^{(k+1)}, \mathcal{Y}^{(k+1)}, \mathcal{Z}^{(k+1)}\right)$ by the definition of subgradient.
Combining \eqref{con:Q1} and \eqref{con:Q2} yields 
\begin{equation*}
\begin{aligned}
 \mathcal{V}_{1}^{(k+1)} 
 =&\left(\tfrac{1}{\left\|\mathcal{H}^{(k+1)}\right\|_F}-\tfrac{1}{\left\|\mathcal{H}^{(k)}\right\|_F}\right) \mathcal{Q}^{(k+1)}
 +\mu_1(-\mathcal{H}^{(k+1)}+\mathcal{H}^{(k)})+\mathcal{Y}^{(k+1)}
 \\
 &\quad-\mathcal{Y}^{(k)} +\mu_2(\mathcal{E}^{(k+1)}-\mathcal{E}^{(k)})+\mathcal{Z}^{(k+1)}-\mathcal{Z}^{(k)}. 
\end{aligned}
\end{equation*}
When expressed by skinny t-SVD, $\mathcal{A}=\mathcal{U}*\mathcal{S}*\mathcal{V}^{*}$ has its subgradient defined by
\begin{equation}
    \partial(\left\|\mathcal{A}\right\|_{*})=\left\{\mathcal{U}*\mathcal{V}^{*}+\mathcal{J} \ | \ \mathcal{U}^**\mathcal{J}=\mathcal{O},\mathcal{J}*\mathcal{V}=\mathcal{O}, \|\mathcal{J}\| \leq 1\right\}.
    \label{eq:partial_nuclnTRPCA}
\end{equation}

Additionally,  for any tensor  $\mathcal A \in \mathbb{R}^{n_1\times n_2\times n_3}$, we have
 \begin{equation}
    \|\mathcal{A}\|_F = \tfrac{1}{\sqrt{n_3}}\|\overline{\mathbf{A}}\|_F\leq \sqrt{\tfrac{\mathrm{rank}(\overline{\mathbf{A}})}{n_3}}\|\overline{\mathbf{A}}\|\leq 
    \sqrt{n_{(2)}}\|\mathcal{A}\|.
    \label{FinequalityTRPCA}
\end{equation}
Based on \eqref{eq:partial_nuclnTRPCA} and \eqref{FinequalityTRPCA},
we define
  the skinny t-SVD of $\mathcal{X}^{(k+1)}$ by $\mathcal{X}^{(k+1)} = \mathcal{U}^{(k+1)}*\mathcal{S}^{(k+1)}*(\mathcal{V}^{(k+1)})^{*}$, then we get
\begin{equation*}
\begin{aligned}
\|\mathcal{Q}^{(k+1)}\|_F & = \|\mathcal{U}^{(k+1)}*(\mathcal{V}^{(k+1)})^{*}+\mathcal{J}^{(k+1)}\|_F \\
 & \leq \sqrt{n_{(2)}}\|\mathcal{U}^{(k+1)}*(\mathcal{V}^{(k+1)})^{*}+\mathcal{J}^{(k+1)}\| \\
&\leq   \sqrt{n_{(2)}}\|\mathcal{U}^{(k+1)}*(\mathcal{V}^{(k+1)})^{*}\|+\sqrt{n_{(2)}}\|\mathcal{J}^{(k+1)}\|\\ 
&\leq  2\sqrt{n_{(2)}} \leq 2\sqrt{n_{(2)}}.   
\end{aligned}
\end{equation*}

By the property of Frobenius norm, we get
\begin{equation}
\begin{aligned}
\left \|\mathcal{V}_{1}^{(k+1)}\right\|_{F} 
\leq &  \left\|\tfrac{1}{\left\|\mathcal{H}^{(k+1)}\right\|_F}-\tfrac{1}{\left\|\mathcal{H}^{(k)}\right\|_F}\right\|_F \| \mathcal{Q}^{(k+1)}\|_F + \mu_{1} \left\|\mathcal{H}^{(k+1)}-\mathcal{H}^{(k)}\right\|_{F} \\
&+\left\|\mathcal{Y}^{(k+1)}-\mathcal{Y}^{(k)}\right\|_{F} +\mu_{2}\left\|\mathcal{E}^{(k+1)}-\mathcal{E}^{(k)}\right\|_{F} 
 +\left\|\mathcal{Z}^{(k+1)}-\mathcal{Z}^{(k)}\right\|_{F}\\
 =&\left\|\tfrac{\left\|\mathcal{H}^{(k+1)}\right\|_F-\left\|\mathcal{H}^{(k)}\right\|_F}{\left\|\mathcal{H}^{(k+1)}\right\|_F\left\|\mathcal{H}^{(k)}\right\|_F}\right\|_{F}\left\| \mathcal{P}^{(k+1)}  \right\|_{F} 
  + \mu_{1} \left\|\mathcal{H}^{(k+1)}-\mathcal{H}^{(k)}\right\|_{F}\\
  &+\left\|\mathcal{Y}^{(k+1)}-\mathcal{Y}^{(k)}\right\|_{F} +\mu_{2}\left\|\mathcal{E}^{(k+1)}-\mathcal{E}^{(k)}\right\|_{F} 
 +\left\|\mathcal{Z}^{(k+1)}-\mathcal{Z}^{(k)}\right\|_{F}\\
\leq &(\tfrac{2\sqrt{n_{(2)}}}{\delta^{2}}+\mu_{1}) \left\|\mathcal{H}^{(k+1)}-\mathcal{H}^{(k)}\right\|_{F}+ \left\|\mathcal{Y}^{(k+1)}-\mathcal{Y}^{(k)}\right\|_{F} \\
 &+\mu_{2}\left\|\mathcal{E}^{(k+1)}-\mathcal{E}^{(k)}\right\|_{F} 
 +\left\|\mathcal{Z}^{(k+1)}-\mathcal{Z}^{(k)}\right\|_{F}.
\end{aligned} \label{eq:v1}
\end{equation}

Choosing $\mathcal{U}^{(k+1)}\in \partial (\|\mathcal{E}^{(k+1)}\|_{1})$, we define $\mathcal{V}_{2}^{(k+1)}$, $\mathcal{V}_{3}^{(k+1)}$, $\mathcal{V}_{4}^{(k+1)}$, $\mathcal{V}_{5}^{(k+1)}$ as follows:
 
\begin{equation*}
    \begin{split}
        \mathcal{V}_{2}^{(k+1)}& :=  -\tfrac{\left\|\mathcal{L}^{(k+1)}\right\|_{*}}{\left\|\mathcal{H}^{(k+1)}\right\|_{F}^{3}} \mathcal{H}^{(k+1)}-\mu_1\left(\mathcal{L}^{(k+1)}-\mathcal{H}^{(k+1)}\right)-\mathcal{Y}^{(k+1)}, \\
        \mathcal{V}_{3}^{(k+1)}& := \lambda\mathcal{U}^{(k+1)}+\mu_2(\mathcal{L}^{(k+1)}+\mathcal{E}^{(k+1)}-\mathcal{X}) +\mathcal{Z}^{(k+1)}, \\
        \mathcal{V}_{4}^{(k+1)}& := \mathcal{L}^{(k+1)}-\mathcal{H}^{(k+1)},\\
         \mathcal{V}_{5}^{(k+1)}& := \mathcal{L}^{(k+1)}+\mathcal{E}^{(k+1)}-\mathcal{X}.
    \end{split}
\end{equation*}

By the optimal condition of $L_{1}$ in (10), we know that
\begin{equation}
    \begin{split}
         \mathcal{V}_{2}^{(k+1)}& =\mathcal{Y}^{(k)}-\mathcal{Y}^{(k+1)}, \quad \quad \quad \quad 
        \mathcal{V}_{3}^{(k+1)} =\mathcal{Z}^{(k+1)}-\mathcal{Z}^{(k)}, \\
        \mathcal{V}_{4}^{(k+1)}& =\tfrac{1}{\mu_1}(\mathcal{Y}^{(k+1)}-\mathcal{Y}^{(k)}), \quad \quad 
        \mathcal{V}_{5}^{(k+1)} =\tfrac{1}{\mu_2}(\mathcal{Z}^{(k+1)}-\mathcal{Z}^{(k)}).
    \end{split}\label{eq:v2-v5}
\end{equation}

By the subgradient definition, we get 
\begin{equation*}
    \begin{split}
         \mathcal{V}_{2}^{(k+1)} & \in \partial_{\mathcal{H}}L_{1}\left(\mathcal{L}^{(k+1)}, \mathcal{H}^{(k+1)}, \mathcal{E}^{(k+1)}, \mathcal{Y}^{(k+1)}, \mathcal{Z}^{(k+1)}\right),\\
       \mathcal{V}_{3}^{(k+1)} &\in \partial_{\mathcal{E}}L_{1}\left(\mathcal{L}^{(k+1)}, \mathcal{H}^{(k+1)}, \mathcal{E}^{(k+1)}, \mathcal{Y}^{(k+1)}, \mathcal{Z}^{(k+1)}\right) \\
        \mathcal{V}_{4}^{(k+1)} &\in \partial_{\mathcal{Y}}L_{1}\left(\mathcal{L}^{(k+1)}, \mathcal{H}^{(k+1)}, \mathcal{E}^{(k+1)}, \mathcal{Y}^{(k+1)}, \mathcal{Z}^{(k+1)}\right),\\
        \mathcal{V}_{5}^{(k+1)} &\in \partial_{\mathcal{Z}}L_{1}\left(\mathcal{L}^{(k+1)}, \mathcal{H}^{(k+1)}, \mathcal{E}^{(k+1)}, \mathcal{Y}^{(k+1)}, \mathcal{Z}^{(k+1)}\right).
    \end{split}
\end{equation*}

Let $\mathcal{V}^{(k+1)}=\left(\mathcal{V}_{1}^{(k+1)}, \mathcal{V}_{2}^{(k+1)}, \mathcal{V}_{3}^{(k+1)}, \mathcal{V}_{4}^{(k+1)}, \mathcal{V}_{5}^{(k+1)}\right)^{T}$, we get 
\begin{equation}
\mathcal{V}^{(k+1)}\in \partial L_{1}\left(\mathcal{L}^{(k+1)}, \mathcal{H}^{(k+1)}, \mathcal{E}^{(k+1)}, \mathcal{Y}^{(k+1)}, \mathcal{Z}^{(k+1)}\right).  
\end{equation}

Incorporating Lemma 1, \eqref{eq:v1} and \eqref{eq:v2-v5}, we have
\begin{equation*}
\begin{aligned}
\left\|\mathcal{V}^{(k+1)}\right\|_{F}^{2}=&\left\|\mathcal{V}_{1}^{(k+1)}\right\|_{F}^{2}+\left\|\mathcal{V}_{2}^{(k+1)}\right\|_{F}^{2}+\left\|\mathcal{V}_{3}^{(k+1)}\right\|_{F}^{2}+\left\|\mathcal{V}_{4}^{(k+1)}\right\|_{F}^{2}+\left\|\mathcal{V}_{5}^{(k+1)}\right\|_{F}^{2} \\
\leq&\kappa_{3}\left\| \mathcal{L}^{(k+1)}-\mathcal{L}^{(k)}\right \|_{F}^{2} +\kappa_4\left\|\mathcal{H}^{(k+1)}-\mathcal{H}^{(k)}\right\|_{F}^{2}\\
+&\kappa_5\left\|\mathcal{E}^{(k+1)}-\mathcal{E}^{(k)}\right\|_{F}^{2}+\kappa_6\left\|\mathcal{Z}^{(k+1)}-\mathcal{Z}^{(k)}\right\|_{F}^{2},
\end{aligned}
\end{equation*}
where $\kappa_{3}=\tfrac{2n_{(2)}(3\mu_1^2+1)}{\mu_1^2\delta^4}$, $\kappa_{4}=\tfrac{16n_{(2)}}{\delta^{4}}+4\mu_{1}^{2}+\tfrac{4M^{2}(3\mu_1^2+1)}{\mu_1^2\delta^6}$, $\kappa_{5}=2\mu_2^2$ and $\kappa_{6}=3+\tfrac{1}{\mu_2^2}$.
 we can choose a proper value $\kappa>0$ such that the desired inequality (19)) holds.
\end{proof}

\paragraph{Proof of Theorem 2}
\begin{proof}
 {\rm \textrm{(i)}}  We first show $\{\mathcal{Y}^{(k)}\}$ is bounded. From \eqref{con:Yk}, we have
$$\|\mathcal{Y}^{(k)}\|_{F}=\left\|-\tfrac{\left\|\mathcal{L}^{(k)}\right\|_{*}}{\left\|\mathcal{H}^{(k)}\right\|_{F}^{3}} \mathcal{H}^{(k)}\right\|_{F}=\tfrac{\left\|\mathcal{L}^{(k)}\right\|_{*}}{\left\|\mathcal{H}^{(k)}\right\|_{F}^{2}}, $$
which suggests that $\{\mathcal{Y}^{(k)}\}$ is bounded under assumptions (C1) and (C2). Therefore, $\{\mathcal{H}^{(k)}\}$ is also bounded due to the $\mathcal{H}$-update in (11).   
 
 We further use the optimality condition of $\mathcal{E}$ in (11) to show that $\{\mathcal{E}^{(k)}\}$ is bounded.
In particular, $\mathcal{E}^{(k+1)}$ is updated by 
\begin{equation*}
 \mathcal{E}^{(k+1)}_{ijl}=\left\{\begin{array}{cc}
\mathcal{X}_{ijl}-\mathcal{L}^{(k+1)}_{ijl}-\tfrac{\mathcal{Z}^{(k)}_{ijl}}{\mu_2}-\tfrac{\lambda}{\mu_2}, & \mathcal{X}_{ijl}-\mathcal{L}^{(k+1)}_{ijl}-\tfrac{\mathcal{Z}^{(k)}_{ijl}}{\mu_2} \geq \tfrac{\lambda}{\mu_2} \\
0, & -\tfrac{\lambda}{\mu_2} \leq \mathcal{X}_{ijl}-\mathcal{L}^{(k+1)}_{ijl}-\tfrac{\mathcal{Z}^{(k)}_{ijl}}{\mu_2} \leq \tfrac{\lambda}{\mu_2} \\
\mathcal{X}_{ijl}-\mathcal{L}^{(k+1)}_{ijl}-\tfrac{\mathcal{Z}^{(k)}_{ijl}}{\mu_2}+\tfrac{\lambda}{\mu_2}, & \mathcal{X}_{ijl}-\mathcal{L}^{(k+1)}_{ijl}-\tfrac{\mathcal{Z}^{(k)}_{ijl}}{\mu_2}<-\tfrac{\lambda}{\mu_2},
\end{array}\right.   
\end{equation*}
where $\mathcal{X}_{ijl}$, $\mathcal{L}_{ijl},$ and $\mathcal{Z}_{ijl}$ is the $(i,j,l)$ element of third-order tensor $\mathcal{X}$, $\mathcal{L}$ and $\mathcal{Z}$ respectively. 
In other words, we have
\begin{equation*}
\begin{aligned}
 &\mathcal{E}^{(k+1)}_{ijl}-\mathcal{X}_{ijl}+\mathcal{L}^{(k+1)}_{ijl}+\tfrac{\mathcal{Z}^{(k)}_{ijl}}{\mu_2}\\
=&\left\{\begin{array}{cc}
-\tfrac{\lambda}{\mu_2}, & \mathcal{X}_{ijl}-\mathcal{L}^{(k+1)}_{ijl}-\tfrac{\mathcal{Z}^{(k)}_{ijl}}{\mu_2} \geq \tfrac{\lambda}{\mu_2} \\
-\mathcal{X}_{ijl}+\mathcal{L}^{(k+1)}_{ijl}+\tfrac{\mathcal{Z}^{(k)}_{ijl}}{\mu_2},& -\tfrac{\lambda}{\mu_2} \leq \mathcal{X}_{ijl}-\mathcal{L}^{(k+1)}_{ijl}-\tfrac{\mathcal{Z}^{(k)}_{ijl}}{\mu_2} \leq \tfrac{\lambda}{\mu_2} \\
\tfrac{\lambda}{\mu_2}. & \mathcal{X}_{ijl}-\mathcal{L}^{(k+1)}_{ijl}-\tfrac{\mathcal{Z}^{(k)}_{ijl}}{\mu_2}<-\tfrac{\lambda}{\mu_2}.
\end{array}\right.     
\end{aligned}
\end{equation*}
Therefore, $\left(\mathcal{E}^{(k+1)}_{ijl}-\mathcal{X}_{ijl}+\mathcal{L}^{(k+1)}_{ijl}+\tfrac{\mathcal{Z}^{(k)}_{ijl}}{\mu_2}\right)^{2} \leq \tfrac{\lambda^{2}}{\mu_{2}^{2}}$, which implies that 
\begin{equation*}
    \begin{split}
    \left\|\mathcal{Z}^{(k+1)}\right\|_{F}^{2} = \mu_2^2\left\|\mathcal{L}^{(k+1)}+\mathcal{E}^{(k+1)}-\mathcal{X}+\tfrac{\mathcal{Z}^{(k)}}{\mu_2}\right\|_{F}^{2}\leq n_{1}n_{2}n_{3}\lambda^{2},
    \end{split}
    \label{con:Zlimit}
\end{equation*} is bounded. It further follows from the assumption A1 that $\mathcal E$ is bounded due to the boundedness of $\mathcal L$ and $\mathcal Z$.

\rm{\textrm{(ii)}} 
Since the sequence $\{\mathcal{L}^{(k)},\mathcal{H}^{(k)},\mathcal{E}^{(k)},\mathcal{Y}^{(k)},\mathcal{Z}^{(k)}\}$ is bounded, Bolzano-Weierstrass theorem states that there exists a subsequence defined as 
$$\{\mathcal{L}^{(k_{i})}, \mathcal{H}^{(k_{i})}, \mathcal{E}^{(k_{i})}, \mathcal{Y}^{(k_{i})}, \mathcal{Z}^{(k_{i})}\} \rightarrow \{\mathcal{L}^{*}, \mathcal{H}^{*}, \mathcal{E}^{*}, \mathcal{Y}^{*}, \mathcal{Z}^{*}\}.$$ 

It follows from Lemma 2 that $L_{1}\left(\mathcal{L}^{(k+1)},\mathcal{H}^{(k+1)},\mathcal{E}^{(k+1)},\mathcal{Y}^{(k+1)},\mathcal{Z}^{(k+1)} \right)$ has upper bound if $\|\mathcal{Z}^{(k+1)}-\mathcal{Z}^{(k)} \|_{F}^{2}\rightarrow 0$.
Using (10), we get
\begin{equation*}
    \begin{aligned}
&L_{1}\left(\mathcal{L},\mathcal{H},\mathcal{E},\mathcal{Y},\mathcal{Z}\right)\\
    =&\tfrac{\|\mathcal{L}\|_{*}}{\left\|\mathcal{H}^{(k)}\right\|_{F}}+\tfrac{\mu_{1}}{2}\left\|\mathcal{L}-\mathcal{H}^{(k)}+\tfrac{\mathcal{Y}^{(k)}}{\mu_1}\right\|_{F}^{2} 
       +\tfrac{\mu_{2}}{2}\left\|\mathcal{L}+\mathcal{E}^{(k)}
       -\mathcal{X}+\tfrac{\mathcal{Z}^{(k)}}{\mu_2}\right\|_{F}^{2}\\
       &-\tfrac{1}{2\mu_1}\|\mathcal{Y}^{(k)}\|_F^2-\tfrac{1}{2\mu_2}\|\mathcal{Z}^{(k)}\|_F^2 \\
       \geq &-\tfrac{1}{2\mu_1}\|\mathcal{Y}^{(k)}\|_F^2-\tfrac{1}{2\mu_2}\|\mathcal{Z}^{(k)}\|_F^2,
    \end{aligned}
\end{equation*}
due to the boundness of $\{\mathcal{Y}^{(k)}\}$ and $\{\mathcal{Z}^{(k)}\}$. 
Let $k\rightarrow \infty$, by $L_{1}\left(\mathcal{L},\mathcal{H},\mathcal{E},\mathcal{Y},\mathcal{Z}\right)$ having a lower bound, we know that $\sum_{j=0}^{k}\| \mathcal{L}^{(j+1)}-\mathcal{L}^{(j)} \|_{F}^{2}$, $\sum_{j=0}^{k}\| \mathcal{H}^{(j+1)}-\mathcal{H}^{(j)} \|_{F}^{2}$ and $\sum_{j=0}^{k}\| \mathcal{E}^{(j+1)}-\mathcal{E}^{(j)} \|_{F}^{2}$ are finite, which implies that $\| \mathcal{L}^{(k+1)}-\mathcal{L}^{(k)} \|_{F}^{2} \rightarrow 0$, $\| \mathcal{H}^{(k+1)}-\mathcal{H}^{(k)} \|_{F}^{2} \rightarrow 0$ and $\| \mathcal{E}^{(k+1)}-\mathcal{E}^{(k)} \|_{F}^{2} \rightarrow 0$ as $k\rightarrow \infty$. Then we can  get $\| \mathcal{Y}^{(k+1)}-\mathcal{Y}^{(k)} \|_{F}^{2} \rightarrow 0$ due to (17). 

Therefore, we have 
$$\{\mathcal{L}^{(k_{i}+1)}, \mathcal{H}^{(k_{i}+1)}, \mathcal{E}^{(k_{i}+1)}, \mathcal{Y}^{(k_{i}+1)}, \mathcal{Z}^{(k_{i}+1)}\} \rightarrow \{\mathcal{L}^{*}, \mathcal{H}^{*}, \mathcal{E}^{*}, \mathcal{Y}^{*}, \mathcal{Z}^{*}\},$$ 
which implies that 
$\| \mathcal{H}^{(k_i+1)}-\mathcal{H}^{(k_i)} \|_{F} \rightarrow 0$, $\| \mathcal{E}^{(k_i+1)}-\mathcal{E}^{(k_i)} \|_{F} \rightarrow 0$, $\| \mathcal{Y}^{(k_i+1)}-\mathcal{Y}^{(k_i)} \|_{F} \rightarrow 0$ and $\| \mathcal{Z}^{(k_i+1)}-\mathcal{Z}^{(k_i)} \|_{F} \rightarrow 0$. 
Hence Lemma 3 guarantees that  the zero tensor $\mathcal{O}\in \partial  L_{1}(\mathcal{X}^{\ast}, \mathcal{H}^{\ast}, \mathcal{A}^{\ast}). $
\end{proof}

\subsection{Convergence analysis of TNF$+$}
\paragraph{Proof of Lemma 4}
\begin{proof}
As the proof of (27) is the same as (17), we omit it. We only prove for (28).
The optimality condition of the $\mathcal{D}$ subproblem in (23)  indicates that
\begin{equation}
-\tfrac{\left\|\mathcal{E}^{(k+1)}\right\|_{1}}{\left\|\mathcal{D}^{(k+1)}\right\|_{F}^{3}} \mathcal{D}^{(k+1)}+\mu_3\left(\mathcal{D}^{(k+1)}-\mathcal{E}^{(k+1)}-\tfrac{\mathcal{U}^{(k)}}{\mu_3}\right)=\mathcal{O}, \label{Doptimal}
\end{equation}
where $\mathcal{O}\in \mathbb{R}^{n_1\times n_2\times n_3}$ is the zero tensor.
\end{proof}
Using the dual update $\mathcal{U}^{(k+1)}=\mathcal{U}^{(k)}+\mu_{3}\left(\mathcal{E}^{(k+1)}-\mathcal{D}^{(k+1)}\right)$, we have
\begin{equation}
\mathcal{U}^{(k+1)}=-\lambda\tfrac{\left\|\mathcal{E}^{(k+1)}\right\|_{1}}{\left\|\mathcal{D}^{(k+1)}\right\|_{F}^{3}} \mathcal{D}^{(k+1)},  \label{con:Uk1} 
\end{equation}
which directly deduces 
\begin{equation}
\mathcal{U}^{(k)}=-\tfrac{\left\|\mathcal{E}^{(k)}\right\|_{1}}{\left\|\mathcal{D}^{(k)}\right\|_{F}^{3}} \mathcal{D}^{(k)}. \label{con:Uk}   
\end{equation}
It is straightforward to have
\begin{equation}
\begin{aligned}
&\left\|\mathcal{U}^{(k+1)}-\mathcal{U}^{(k)}\right\|_F
=\lambda\left\|\tfrac{\left\|\mathcal{E}^{(k+1)}\right\|_1}{\left\|\mathcal{D}^{(k+1)}\right\|_F^3} \mathcal{D}^{(k+1)}-\tfrac{\left\|\mathcal{E}^{(k)}\right\|_1}{\left\|\mathcal{D}^{(k)}\right\|_F^3} \mathcal{D}^{(k)}\right\|_F \\
 \leq&\lambda\left\|\tfrac{\mathcal{D}^{(k+1)}}{\left\|\mathcal{D}^{(k+1)}\right\|_F^3}\left(\left\|\mathcal{E}^{(k+1)}\right\|_1-\left\|\mathcal{E}^{(k)}\right\|_*\right)\right\|_F+\lambda\left\|\mathcal{E}^{(k)}\right\|_1\left\|\tfrac{\mathcal{D}^{(k+1)}}{\left\|\mathcal{D}^{(k+1)}\right\|_F^3}-\tfrac{\mathcal{D}^{(k)}}{\left\|\mathcal{D}^{(k)}\right\|_F^3}\right\|_F \\
=&\tfrac{\lambda}{\left\|\mathcal{D}^{(k+1)}\right\|_F^2}\left| \| \mathcal{E}^{(k+1)}\left\|_1-\right\| \mathcal{E}^{(k)}\|_{1}\right|+\lambda\left\|\mathcal{E}^{(k)}\right\|_{1}\left\|\tfrac{\mathcal{D}^{(k+1)}}{\left\|\mathcal{D}^{(k+1)}\right\|_F^3}-\tfrac{\mathcal{D}^{(k)}}{\left\|\mathcal{D}^{(k)}\right\|_F^3}\right\|_F.  
\end{aligned} \label{con:proofUstep}
\end{equation}
Notice that 
\begin{equation}
\left| \|\mathcal{E}^{(k+1)}\|_1-\|\mathcal{E}^{(k)}\|_1\right| \leq \left\|\mathcal{E}^{(k+1)}-\mathcal{E}^{(k)}\right\|_1 \leq \sqrt{n_1n_2n_3} \left\|\mathcal{E}^{(k+1)}-\mathcal{E}^{(k)}\right\|_{F} \label{con:proofE1}
\end{equation} 
and
\begin{equation}
\left\|\mathcal{E}^{(k)}\right\|_{1}\left\|\tfrac{\mathcal{D}^{(k+1)}}{\left\|\mathcal{D}^{(k+1)}\right\|_F^3}-\tfrac{\mathcal{D}^{(k)}}{\left\|\mathcal{D}^{(k)}\right\|_F^3}\right\|_F \leq \tfrac{2\|\mathcal{E}^{(k)}\|_{*}}{\delta^{3}}\left\|\mathcal{D}^{(k+1)}-\mathcal{D}^{(k)}\right\|_{F}. \label{con:proofE2}
\end{equation}
Clearly, we have
 \begin{equation*}
    \left\|\mathcal{U}^{(k+1)}-\mathcal{U}^{(k)}\right\|_{F}^{2} \leq \tfrac{2 \lambda^2 n_1n_{2}n_3}{\delta_2^{4}}\left\|\mathcal{E}^{(k+1)}-\mathcal{E}^{(k)}\right\|_F^2+\tfrac{4\lambda^2 m^{2}}{\delta_2^{6}}\left\|\mathcal{D}^{(k+1)}-\mathcal{D}^{(k)}\right\|_{F}^{2},
 \end{equation*}
where we use $\sup_k \{\|\mathcal{E}^{(k)}\|_{1}\}\leq m$ from A3.

\paragraph{Proof of Lemma 5}
\begin{proof}
Similar to proof of Lemma 2, we can get
\begin{equation}
 \begin{aligned}
& L_{2}\left(\mathcal{L}^{(k+1)},\mathcal{H}^{(k)},\mathcal{E}^{(k)},\mathcal{D}^{(k)},\mathcal{Y}^{(k)},\mathcal{Z}^{(k)},\mathcal{U}^{(k)}\right)\\ \leq &L_{2}\left(\mathcal{L}^{(k)},\mathcal{H}^{(k)},\mathcal{E}^{(k)},\mathcal{D}^{(k)},\mathcal{Y}^{(k)},\mathcal{Z}^{(k)},\mathcal{U}^{(k)}\right) - \tfrac{\mu_1+\mu_2}{2}\left\|\mathcal{L}^{(k+1)}-\mathcal{L}^{(k)}\right\|_{F}^{2}.   
\end{aligned}\label{proofPCAL2}
\end{equation}

\begin{equation}
\begin{aligned}
&L_{2}\left(\mathcal{L}^{(k+1)},\mathcal{H}^{(k+1)},\mathcal{E}^{(k)},\mathcal{D}^{(k)},\mathcal{Y}^{(k)},\mathcal{Z}^{(k)},\mathcal{U}^{(k)}\right) \\
\leq&L_{2}\left(\mathcal{L}^{(k+1)},\mathcal{H}^{(k)},\mathcal{E}^{(k)},\mathcal{D}^{(k)},\mathcal{Y}^{(k)},\mathcal{Z}^{(k)},\mathcal{U}^{(k)}\right)  
 -(\tfrac{\mu_{1}}{2}-\tfrac{3M}{\delta_1^3})\left\|\mathcal{H}^{(k+1)}-\mathcal{H}^{(k)}\right\|_{F}^{2},
\end{aligned}\label{proofPCAH2} 
\end{equation}

The function $L_{2}\left(\mathcal{L}^{(k+1)},\mathcal{H}^{(k+1)},\mathcal{E},\mathcal{D}^{(k)},\mathcal{Y}^{(k)},\mathcal{Z}^{(k)},\mathcal{U}^{(k)}\right)$ defined in (22)  with fixed $\mathcal{L}^{(k+1)}$, $\mathcal{H}^{(k+1)}$, $\mathcal{D}^{(k)}$, $\mathcal{Y}^{(k)}$, $\mathcal{Z}^{(k)}$ and $\mathcal{U}^{(k)}$ have a $\ell_1$-norm term with two quadratic terms, hence it is strongly convex in terms of $\mathcal{E}$ with constant $\mu_{2}+\mu_3$, thus leading to
\begin{equation}
\begin{aligned}
&L_{2}\left(\mathcal{L}^{(k+1)},\mathcal{H}^{(k+1)},\mathcal{E}^{(k+1)},\mathcal{D}^{(k)},\mathcal{Y}^{(k)},\mathcal{Z}^{(k)},\mathcal{U}^{(k)}\right) \\
\leq&L_{2}\left(\mathcal{L}^{(k+1)},\mathcal{H}^{(k+1)},\mathcal{E}^{(k)},\mathcal{D}^{(k)},\mathcal{Y}^{(k)},\mathcal{Z}^{(k)},\mathcal{U}^{(k)}\right) -\tfrac{\mu_2+\mu_{3}}{2}\left\|\mathcal{E}^{(k+1)}-\mathcal{E}^{(k)}\right\|_{F}^{2}. 
\end{aligned}\label{proofPCAZ2}
\end{equation}
As for the function $L_{2}\left(\mathcal{L}^{(k+1)},\mathcal{H}^{(k+1)},\mathcal{E}^{(k+1)},\mathcal{D},\mathcal{Y}^{(k)},\mathcal{Z}^{(k)},\mathcal{U}^{(k)}\right)$  (22), we use the similar computation of \eqref{eq:A10TRPCA} to get
\begin{equation}
\begin{aligned}
&L_{2}\left(\mathcal{L}^{(k+1)},\mathcal{H}^{(k+1)},\mathcal{E}^{(k+1)},\mathcal{D}^{(k+1)},\mathcal{Y}^{(k)},\mathcal{Z}^{(k)},\mathcal{U}^{(k)}\right) \\
\leq&L_{2}\left(\mathcal{L}^{(k+1)},\mathcal{H}^{(k+1)},\mathcal{E}^{(k+1)},\mathcal{D}^{(k)},\mathcal{Y}^{(k)},\mathcal{Z}^{(k)},\mathcal{U}^{(k)}\right) \\ 
 &\quad-(\tfrac{\mu_{3}}{2}-\tfrac{3m}{\delta_2^3})\left\|\mathcal{D}^{(k+1)}-\mathcal{D}^{(k)}\right\|_{F}^{2},
\end{aligned}\label{proofPCAH22} 
\end{equation}
In addition, we get
\begin{equation}
\begin{aligned}
&L_{2}\left(\mathcal{L}^{(k+1)},\mathcal{H}^{(k+1)},\mathcal{E}^{(k+1)},\mathcal{D}^{(k+1)},\mathcal{Y}^{(k+1)},\mathcal{Z}^{(k+1)},\mathcal{U}^{(k+1)}\right)\\
&-L_{2}\left(\mathcal{L}^{(k+1)},\mathcal{H}^{(k+1)},\mathcal{E}^{(k+1)},,\mathcal{D}^{(k+1)},\mathcal{Y}^{(k)},\mathcal{Z}^{(k)},\mathcal{U}^{(k)}\right)   \\
= & \left\langle\mathcal{Y}^{(k+1)}-\mathcal{Y}^{(k)}, \mathcal{L}^{(k+1)}-\mathcal{H}^{(k+1)}\right\rangle+\left\langle\mathcal{Z}^{(k+1)}-\mathcal{Z}^{(k)}, \mathcal{L}^{(k+1)}+\mathcal{E}^{(k+1)}-\mathcal{X}\right\rangle \\
&+\left\langle\mathcal{U}^{(k+1)}-\mathcal{U}^{(k)}, \mathcal{E}^{(k+1)}-\mathcal{D}^{(k+1)}\right\rangle\\
= &\tfrac{1}{\mu_{1}} \left\|\mathcal{Y}^{(k+1)}-\mathcal{Y}^{(k)}\right\|_{F}^{2}+\tfrac{1}{\mu_{2}} \left\|\mathcal{Z}^{(k+1)}-\mathcal{Z}^{(k)}\right\|_{F}^{2}+\tfrac{1}{\mu_{3}} \left\|\mathcal{U}^{(k+1)}-\mathcal{U}^{(k)}\right\|_{F}^{2}.\\
\end{aligned}\label{proofPCAY2}
\end{equation}
By putting together \eqref{proofPCAL2}, \eqref{proofPCAZ2}, \eqref{proofPCAH2}, \eqref{proofPCAY2} 
with (27), (28), we obtain
\begin{equation*}
\begin{aligned}
& L_{2}\left(\mathcal{L}^{(k+1)},\mathcal{H}^{(k+1)},\mathcal{E}^{(k+1)},\mathcal{D}^{(k+1)},\mathcal{Y}^{(k+1)},\mathcal{Z}^{(k+1)},\mathcal{U}^{(k+1)} \right)\\
\leq &L_{2}\left(\mathcal{L}^{(k)},\mathcal{H}^{(k)},\mathcal{E}^{(k)},\mathcal{D}^{(k)},\mathcal{Y}^{(k)},\mathcal{Z}^{(k)},\mathcal{U}^{(k)},\right)-c_{5} \| \mathcal{L}^{(k+1)}-\mathcal{L}^{(k)} \|_{F}^{2} \\
&-c_{6} \|\mathcal{H}^{(k+1)}-\mathcal{H}^{(k)} \|_{F}^{2} 
-c_{7} \|\mathcal{E}^{(k+1)}-\mathcal{E}^{(k)} \|_{F}^{2}
-c_{8} \|\mathcal{D}^{(k+1)}-\mathcal{D}^{(k)} \|_{F}^{2}\\
&+c_{9} \|\mathcal{Z}^{(k+1)}-\mathcal{Z}^{(k)} \|_{F}^{2} 
\end{aligned} 
\end{equation*}
where $c_{5}=\tfrac{\mu_{1}+\mu_{2}}{2}-\tfrac{2  n_{(2)} }{\mu_1\delta^{4}}$, $c_{6}=\tfrac{\mu_{1}}{2}-\tfrac{3M}{\delta_1^{3}}-\tfrac{4M^2}{\mu_{1}\delta^{6}}$, $c_{7}=\tfrac{\mu_2+\mu_3}{2}-\tfrac{2 \lambda^2 n_1n_{2}n_3}{\mu_3\delta_2^{4}}$,
$c_8=\tfrac{\mu_{3}}{2}-\tfrac{3m}{\delta^{3}}-\tfrac{4\lambda^2m^2}{\mu_{3}\delta_2^{6}},$ and
$c_9=\tfrac{1}{\mu_2}$.
\end{proof}

\bibliographystyle{spmpsci}      
\bibliography{references} 

\end{document}